\newcommand{\bigslant}[2]{{\raisebox{.2em}{$#1$}\left/\raisebox{-.2em}{$#2$}\right.}}
 \newlength{\baseunit}               
\newtheorem{theorem}{Theorem}[section]
\newtheorem{lemma}[theorem]{Lemma}
\newtheorem{prop}[theorem]{Proposition}
\newtheorem{corollary}[theorem]{Corollary}
\newtheorem{conjecture}[theorem]{Conjecture}
\theoremstyle{definition}
\newtheorem{definition}[theorem]{Definition}
\newtheorem{notation}[theorem]{Notation}
\newtheorem{ex}[theorem]{Example}
\newtheorem{remark}[theorem]{Remark}
\newcommand{\cB}{{\mathcal B}}
\newcommand{\cC}{{\mathcal C}}
\newcommand{\cE}{{\mathcal E}}
\newcommand{\cF}{{\mathcal F}}
\newcommand{\cI}{{\mathcal I}}
\newcommand{\cO}{{\mathcal O}}
\newcommand{\cP}{{\mathcal P}}
\newcommand{\cR}{{\mathcal R}}
\newcommand{\cS}{{\mathcal S}}
\newcommand{\cV}{{\mathcal V}}
\newcommand{\cY}{{\mathcal Y}}
\newcommand{\ii}{{\bf i}}
\def\down{\vee}
\def\up{\wedge}
\newcommand{\mg}{\mathfrak{g}}
\newcommand{\ms}{\mathfrak{s}}
\newcommand{\gl}{\mathfrak{gl}}
\newcommand{\mh}{\mathfrak{h}}
\newcommand{\mN}{\mathbb{N}}
\newcommand{\mB}{\mathbb{B}}
\newcommand{\mK}{\mathbb{K}}
\newcommand{\mR}{\mathbb{R}}
\newcommand{\mC}{\mathbb{C}}
\newcommand{\mP}{\mathbb{P}}
\newcommand{\mZ}{\mathbb{Z}}
\newcommand{\la}{\lambda}
\newcommand{\oneone}{\stackrel{1:1}\leftrightarrow}
\newcommand{\HOM}{\operatorname{Hom}}
\newcommand{\END}{\operatorname{End}}
\newcommand{\p}{\mathfrak{p}}
\newcommand{\op}{\operatorname}
  \DeclareMathOperator{\Ind}{Ind} 
\DeclareMathOperator{\End}{End}
\newcommand{\CupConnect}{\text{\----}}
\newcommand{\DCupConnect}{\text{\----}\hspace{-10pt}\bullet\hspace{2pt}}
\newcommand{\RayConnect}{\text{\----}\hspace{-5pt}\shortmid}
\newcommand{\DRayConnect}{\text{\----}\hspace{-7pt}\bullet\hspace{-3pt}\shortmid}
\DeclareMathOperator{\pt}{p}
\newcommand{\renc}{\renewcommand}
\renewcommand{\mp}{\mathfrak{p}}
\renc{\thefigure}{\thesection.\arabic{figure}}
\begin{document}
\title[2-row Springer fibres in Type D]{2-row Springer fibres and Khovanov diagram algebras for type $\rm D$}
\author{Michael Ehrig}
\author{Catharina Stroppel}

\address{Department of Mathematics, University of Bonn, 53115 Bonn, Germany}
\email{mehrig@math.uni-bonn.de}
\address{Department of Mathematics, University of Bonn, 53115 Bonn, Germany}
\email{stroppel@math.uni-bonn.de}

\begin{abstract}
We study in detail two row Springer fibres of even orthogonal type from an algebraic as well as topological point of view. We show that the irreducible components and their pairwise intersections are iterated $\mP^1$-bundles. Using results of Kumar and Procesi we compute the cohomology ring with its action of the Weyl group. The main tool is a type $\rm D$ diagram calculus labelling the irreducible components in a convenient way which relates to a diagrammatical algebra describing the category of perverse sheaves on isotropic Grassmannians based on work of Braden. The diagram calculus generalizes Khovanov's arc algebra to the type $\rm D$ setting and should be seen as setting the framework for generalizing well-known connections of these algebras in type $\rm A$ to other types.
\end{abstract}
\maketitle

\tableofcontents

\section{Introduction}
\renc{\thetheorem}{\Alph{theorem}}

This paper is part of a series of four quite different papers, \cite{ES}, \cite{ESBrauer}, \cite{ESnew} dealing with type $\rm D$ generalizations of Khovanov's arc algebra. We develop in detail the geometric background of this algebra using the geometry of topological and algebraic Springer fibres and explain connections to category $\cO$ for the orthogonal Lie algebra and to categories of finite dimensional representations of the associated $\mathcal{W}$-algebras.

The type $\rm D$ Khovanov algebra is defined in \cite{ES} using Braden's description of the category of perverse sheaves on isotropic Grassmannians, hence it gives an elementary description of parabolic category $\cO$ of type $({\rm D}_k,{\rm A}_{k-1})$ with all its nice properties like Koszulness, quasi-hereditary, cellularity, etc. Based on \cite{CdV} we introduce in \cite{ESBrauer} suitable idempotent truncations of the type ${\rm D}$ Khovanov algebra and show in \cite{ESnew} that these algebras are indeed graded analogues of Brauer algebras $\op{Br}_d(\delta)$ for arbitrary integral parameter $\delta$. These idempotent truncations can also be viewed as a kind of limit, in which case this is very much in the spirit of a similar result obtained in \cite{BS5} for the walled Brauer algebra using a limit version of generalized Khovanov algebras of type $\rm A$. Although the flavour of results is very similar to the known type $\rm A$ results, the techniques and difficulties are very different, including quite a few surprising new phenomena. In particular, the underlying geometry of Springer fibres is much more subtle and has not been studied so far. Let us now describe the content of this paper in more detail.

Let $V=\mC^{2k}$ be an even dimensional vector space with a non-degenerate symmetric bilinear form $\beta$. Let $G=O(2k,\mC)$ be the group of automorphisms preserving $\beta$ and $\mg=\mathfrak{o}(2k,\mC)$ the corresponding orthogonal Lie algebra. By a result of John Williamson, \cite{WilliamsonJ}, the $G$-conjugacy classes of nilpotent elements in $\mg$ are classified by their Jordan normal form, i.e. by partitions of $2k$ which are admissible in the sense that even parts appear with even multiplicity.

Let $\cF$ be the flag variety for $G$ of full isotropic flags with respect to $\beta$,
\begin{eqnarray}
\{F_0\subset F_1\subset \cdots \subset F_{2k}=\mC^{2k}\mid \op{dim}(F_i)=i, F_i^\perp=F_{2k-i}\}.
\end{eqnarray}

Given a nilpotent element $N\in\mg$ consider the Springer fibre $\cF_N$ of all flags $F_\bullet\in\cF$ fixed by $N$, i.e. satisfying $N(F_i)\subseteq F_{i-1}$ for all $i>0$. This is the fibre of the famous Springer resolution of the nilpotent cone $\mathcal{N}\subset\mg$ over $N$. The Springer fibres and their corresponding irreducible components are in general poorly understood. Even in type $\rm A$ the components are not smooth in general; the first case where singular components appear is the two columns case, \cite{Fressesing}. Recently, Fresse and Melnikov found a parametrisation of all smooth irreducible components in type $\rm A$ in terms of standard tableaux, \cite{FresseMelnikov}. This includes the quite easy cases of two row partitions, were Fung showed already much earlier, \cite{Fung}, that irreducible components are iterated $\mP(\mC)^1$-bundles and therefore smooth.

In this paper we are interested in generalizing the latter result to the orthogonal case and moreover to construct an easy topological model for these Springer fibres, analogous to the type $\rm A$ model of Khovanov, \cite{Khovanov_Springer}. Our result in Section~\ref{Section6} is the following:

\begin{theorem}
\label{maintheoremintro}
Let $N\in\mg$ be a nilpotent element of Jordan type $(\la_1,\la_2)$, where $\la_1\geq \la_2$. Every irreducible component $Y$ of $\mathcal{F}_N$ is an iterated fibre bundle of base type $(\mP^1(\mC), \ldots,\mP^1(\mC))$ where each $\mP^1(\mC)$ corresponds canonically to a cup in the cup diagram associated with the signed admissible standard tableau labelling $Y$. In particular $\mathcal{F}_N$ is equidimensional of dimension $\la_2$.
\end{theorem}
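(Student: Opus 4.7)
The plan is to induct on the number $m$ of cups in the cup diagram $D$ associated to the signed admissible standard tableau labelling the irreducible component $Y$, following the spirit of Fung's argument in type $A$ but adapted to the orthogonal setting. Throughout, one uses the assumed parameterization of irreducible components of $\cF_N$ by signed admissible standard tableaux of shape $(\la_1,\la_2)$, together with the bijection between such tableaux and cup diagrams on $2k$ points developed earlier in the paper.

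For the base case $m=0$ the diagram consists only of (possibly decorated) rays. In this situation the conditions $N(F_i)\subset F_{i-1}$ and $F_i^\perp = F_{2k-i}$ together with the ray pattern should completely determine the flag $F_\bullet$ up to a finite choice of connected component, so that $Y$ reduces to a point. For the inductive step, I would pick an innermost cup $c$ in $D$, say joining positions $i$ and $i+1$, and consider the forgetful map
\begin{equation*}
\pi : Y \longrightarrow Y', \qquad (F_0\subset\cdots\subset F_{2k})\longmapsto (F_0\subset\cdots\subset \widehat{F_i}\subset\cdots\subset \widehat{F_{2k-i}}\subset\cdots\subset F_{2k}),
\end{equation*}
where the isotropy condition $F_{2k-i}=F_i^\perp$ means that removing $F_i$ automatically removes its partner $F_{2k-i}$ so that no information is lost on the dual side. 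The target $Y'$ should be identified with an irreducible component of a $2$-row Springer fibre in an orthogonal group of smaller rank, whose cup diagram $D'$ is obtained from $D$ by deleting $c$ and renumbering; induction applied to $Y'$ gives the iterated $\mP^1(\mC)$-bundle structure there.

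The fibre analysis is the geometric heart of the argument. Since the cup $c$ is innermost, the combinatorics of $D$ forces $N(F_{i+1})\subset F_{i-1}$ on $Y$ (and dually $N(F_{2k-i+1})\subset F_{2k-i-1}$). Consequently, once the flag outside positions $i$ and $2k-i$ is fixed, the quotient $F_{i+1}/F_{i-1}$ is a $2$-dimensional space on which $N$ acts trivially, and $F_i$ ranges freely over the lines in this quotient, a $\mP^1(\mC)$; the isotropy constraint then determines $F_{2k-i}$ uniquely. I would then verify that $\pi$ is a Zariski locally trivial $\mP^1(\mC)$-bundle, for instance by exhibiting local trivializations using a splitting of $F_{i+1}/F_{i-1}$ coming from a local choice of Jordan basis for $N$, giving the desired iterated fibre bundle structure with one $\mP^1(\mC)$-factor per cup.

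The main obstacle I expect is handling the type $D$-specific phenomena that have no analogue in Fung's type $A$ situation, namely the \emph{decorated} cups that can occur in the middle of the diagram and reflect the two connected components of the orthogonal flag variety. For a decorated innermost cup, the geometric interpretation of the $\mP^1(\mC)$ is more subtle: the forgetful map has to be defined so that the decoration precisely records the choice between the two isotropic components meeting along a maximal isotropic subspace, and one must check that this still yields a $\mP^1(\mC)$-fibration and that the induction hypothesis applies to the smaller diagram. A secondary subtlety is the case where an innermost cup straddles the middle positions $k,k+1$, where the isotropy condition $F_k=F_k^\perp$ identifies $F_i$ with its own partner and the relative position of $F_k$ inside the maximal isotropic pencil has to be handled explicitly; this is where the bookkeeping between the signed tableaux and the cup diagram decorations genuinely enters the geometry.
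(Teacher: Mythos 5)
Your approach is genuinely different from the paper's, and it has a real gap at the step you flag as the "geometric heart" of the argument.

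The paper inducts by stripping the \emph{largest domino} of the signed tableau, which via the Spaltenstein map corresponds to the rightmost vertex of the cup diagram and to the first step $F_1$ of the flag: if that vertex carries a ray, $F_1$ is forced (up to the sign choice recorded by the dot); if it is the right endpoint of a cup, $F_1$ ranges over a $\mP^1(\mC)$. One then passes to $F_1^\perp/F_1$, which really is a $(2k-2)$-dimensional orthogonal space with an induced nilpotent of the expected smaller Jordan type, and inducts. The dotted/undotted distinction is handled simply as a choice between the two degenerate points of the same $\mP^1(\mC)$, matching the two connected components of the isotropic flag variety; Lemma \ref{twobilinears} and the bases $B_{\rm new}$ do the bookkeeping.

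Your plan instead forgets $F_i$ for an innermost cup at $(i,i+1)$ and asserts that the image $Y'$ "should be identified with an irreducible component of a 2-row Springer fibre in an orthogonal group of smaller rank." That identification is the gap. Forgetting $F_i$ lands in a partial isotropic flag variety for the \emph{same} group $O(2k,\mC)$; there is no quotient of $W$ by $F_{i+1}/F_{i-1}$ (this is a subquotient, not a subspace), and neither $F_{i-1}^\perp/F_{i-1}$ nor $F_{i+1}^\perp/F_{i+1}$ sees the rest of the flag in a way that reproduces a smaller-rank Springer fibre together with the remaining cups/rays. By contrast, $F_1^\perp/F_1$ works precisely because $F_1$ is the first step of the flag, so the whole remaining flag descends. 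So either you have to prove the claimed identification directly — which I do not see how to do — or you have to abandon the smaller Springer fibre and instead induct over the tower of partial flag varieties themselves, which is a different (and not set up) argument. Two smaller points: the cup diagrams in this paper are on $k$ (more generally $\tfrac{\la_1+\la_2}{2}$) vertices, not $2k$; and a dotted cup is never nested inside another cup, so "decorated innermost cup" can only occur when the cup is simultaneously outermost, a restriction worth noting before worrying about how to treat it.
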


Originally, irreducible components in the orthogonal case were classified by Spaltenstein, \cite{SpaltensteinLNM}, and van Leeuwen, \cite{vanLeeuwen}, using (signed) domino tableaux. Our results rely on a new parametrisation in terms of certain decorated cup diagrams on $k$ vertices.

Fixing the points $P=\{(1,0),(2,0),\ldots,(k,0)\}$ and the rectangle $R=\{(x,y)\mid 0\leq x\leq k+1,-2\leq y\leq 0\}$ in the plane, an \emph{undecorated cup diagram}  is the isotopy class of a diagram consisting of $k$ non-intersecting lines in $R$, each of them connecting two distinct vertices in $P$ or one vertex in $P$ with a point on the bottom face of $R$. A (decorated) cup diagram, is a cup diagram with possibly "$\bullet$"'s (called {\it dots}) on the lines such that every dot is accessible from the left side of the rectangle,
meaning for each dot there exists a line in the rectangle connecting the
dot with the left face of $R$ not intersecting the diagram. We
allow at most one dot per line; here are some examples:
\begin{eqnarray}
\label{introcup}
\begin{tikzpicture}[thick]
\node at (-.5,0) {i)};

\draw (0,0) node[above]{$1$} .. controls +(0,-.5) and +(0,-.5) .. +(.5,0) node[above]{$2$};
\fill (0.25,-.36) circle(2.5pt);
\draw (1,0) node[above]{$3$} .. controls +(0,-.5) and +(0,-.5) .. +(.5,0) node[above]{$4$};
\fill (1.25,-.36) circle(2.5pt);

\begin{scope}[xshift=4cm]
\node at (-.5,0) {ii)};

\draw (0,0) node[above]{$1$} .. controls +(0,-1) and +(0,-1) .. +(1.5,0) node[above]{$4$};
\fill (0.75,-.74) circle(2.5pt);
\draw (0.5,0) node[above]{$2$} .. controls +(0,-.5) and +(0,-.5) .. +(.5,0) node[above]{$3$};
\end{scope}

\begin{scope}[xshift=8cm]
\node at (-.5,0) {iii)};

\draw (0,0) node[above]{$1$} .. controls +(0,-.5) and +(0,-.5) .. +(.5,0) node[above]{$2$};
\draw (1,0) node[above]{$3$} .. controls +(0,-.5) and +(0,-.5) .. +(.5,0) node[above]{$4$};
\end{scope}
\end{tikzpicture}
\end{eqnarray}
The set $\cC_k$ of cup diagrams on $k$ vertices describes combinatorially a cell partition of $\cF_N$ where the set of the diagrams $\mB_k$ with the maximal possible number of cups correspond to the irreducible components when taking the closure. We show in Section~\ref{Section7} that the $\cF_N$ for $N$ of Jordan type $(k,k)$ have a filtration by subvarieties isomorphic to the Springer fibres of type $(2k-j,j)$ corresponding to cup diagrams in $\cC_k$ with $j$ cups. There is also a natural action of the Weyl group of type ${\rm D}_k$ on $\cC_k$ in terms of type $\rm D$-Temperley-Lieb diagrams, \cite{LS}. This action preserves the parity of dots on the diagrams. Moreover $\cC_k$ labels a basis of the top degree part of the cohomology of the Springer fibre, see Theorem \ref{H} and Lemma \ref{decomp_disjoint}. In fact this illustrates nicely the Springer representation on the top degree part and on the whole cohomology, see Remark \ref{rem:weyl_action}. It is the nontrivial component group which relates the subsets $\cC_k^{\rm even}$ and $\cC_k^{\rm odd}$ of diagrams with fixed parity; $\mZ_2$ acts by adding or removing a dot on the line attached to vertex $1$. Note that some sort of decorated diagrams also appear in \cite{RussellTymoczko} but the dots have a very different meaning and the diagrams are used to describe the whole cohomology ring. We expect that an analogue of such a description also exists for our setup, but we do not pursue this direction here.

The cup diagrams should be seen as "foldings" or fixed point sets of an involution on the set of type $\rm A$ cup diagrams of doubled size, see \cite{LS}. This folding on the level of Springer fibres corresponds to a folding on the level of Nakajima quiver varieties recently studied in \cite{HL}.

The cup diagrams are the only ingredients to construct the topological Springer fibre $\widetilde{S}$ following the main idea of \cite{Khovanov_Springer}. For fixed $k$ we consider the $k$-fold product of $2$-spheres, $(\mathbb{S}^2)^k$ and assign to each $a\in\mB_k$ the subspace $S_a$ of all tuples $(x_1,x_2,\ldots, x_k)\in(\mathbb{S}^2)^k$ with the condition $x_i=x_j$ (resp. $x_i=-x_j$) if there is an undotted (resp. dotted) cup connecting vertices $i$ and $j$, and $x_i$ equals the north or south pole if there is a vertical ray at $i$, depending if the ray is dotted or not, see \eqref{Sa}. The topological Springer fibre is then simply the union
\begin{eqnarray*}
\widetilde{S} &=& \bigcup_{a\in\mB_k} S_a\subset (\mathbb{S}^2)^k.
\end{eqnarray*}
We prove that the cohomology rings of the topological and algebraic Springer fibres agree and are described as follows:

 \begin{theorem}
 \label{Hintro}
 Let $N\in\mg$ be nilpotent of Jordan type $(k,k)$. Then we have isomorphisms of graded rings
 \begin{eqnarray*}
 H(\widetilde{S})\;\cong\; H(\mathcal{F}_N)
  \;\cong\;\bigoplus_{r=1}^2
 \bigslant{\mC[x_1,\ldots,x_k]}{\left\langle x_i^2, y_I\;
  \begin{array}{|c}
  1 \leq i \leq k,\\
  I\subset  \{1,\ldots,k\}, |I|=\frac{k+\epsilon}{2}
  \end{array}
  \right\rangle},
 \end{eqnarray*}
where $\epsilon=0$ if $k$ is even and $\epsilon=1$ if $k$ is $odd$ and
\begin{eqnarray*}
y_I&=&
\begin{cases}
x_I - x_{\{1,\ldots,k\} \setminus I}&\text{if $k$ is even,}\\
x_I &\text{if $k$ is odd,}
\end{cases}
\end{eqnarray*}
using the abbreviation $x_I = \prod_{i \in I} x_i$ for $I \subset \{1,\ldots,k\}$.
\end{theorem}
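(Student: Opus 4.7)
The plan is to prove the two isomorphisms separately: first compute $H(\widetilde{S})$ from the concrete topological description, then compute $H(\mathcal{F}_N)$ via the Kumar-Procesi machinery previewed in the abstract, and finally match the two presentations.

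\textbf{Topological side.} I would start from the closed embedding $\widetilde{S}\hookrightarrow(\mathbb{S}^2)^k$ and analyse the restriction map $\iota^*:\mC[x_1,\ldots,x_k]/(x_i^2)\to H(\widetilde{S})$. On each piece $S_a$ the defining relations translate into $x_i=\pm x_j$ whenever vertices $i,j$ are joined by a cup (the sign depending on whether the cup is decorated) and $x_i=0$ whenever $i$ is on a ray, since then the corresponding coordinate is pinned to a point. Hence $H(S_a)$ is a tensor product of copies of $\mC[x]/(x^2)$, one per cup. A Mayer-Vietoris/cellular argument using the even-dimensional cells of $\widetilde{S}$ indexed by $\mB_k$ shows $\iota^*$ is surjective, so $H(\widetilde{S})$ is the quotient by $\ker(\iota^*)$. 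To identify this kernel with $(y_I)$ one checks by a direct parity count on dots that every $y_I$ vanishes on every $S_a$ and matches codimensions. The decomposition into two summands reflects the two connected components $\bigcup_{a\in\mB_k^{\rm even}} S_a$ and $\bigcup_{a\in\mB_k^{\rm odd}} S_a$.

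\textbf{Algebraic side.} By Theorem~A, $\mathcal{F}_N$ admits an affine paving whose cell closures are the iterated $\mathbb{P}^1$-bundles indexed by $\mB_k$, so $H(\mathcal{F}_N)$ is concentrated in even degrees with basis matching the cells, giving the correct Poincaré polynomial and graded module structure, including the split into two copies from the two connected components of $\mathcal{F}_N$. For the ring structure, I would invoke Kumar and Procesi's presentation of $H(\mathcal{F}_N)$ as the quotient of the coinvariant ring of $W(D_k)$ by the Springer ideal of $N$, and specialise to Jordan type $(k,k)$: the relations $x_i^2=0$ come from restricting squared Chern classes along a flag respecting the two-row Jordan structure, while the higher relations $y_I$ encode the defining equations of the nilpotent orbit closure cut out on the isotropic Grassmannian.

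The two computations produce the same presentation, which proves the theorem; alternatively, one could avoid the direct ring identification by constructing a continuous map $\widetilde{S}\to\mathcal{F}_N$ inducing the isomorphism on cohomology, built inductively by matching the parallel $\mathbb{S}^2$- and $\mathbb{P}^1$-bundle structures together with the dot decorations. The main obstacle will be the delicate combinatorics of dot parities: the identification of $\ker\iota^*$ with $(y_I)$ must correctly track the signs contributed by each decorated cup relative to the chosen subset $I$, and extracting the type $D$ Springer ideal in the precise form $\langle x_i^2, y_I\rangle$ from the general Kumar-Procesi framework requires genuinely type-$D$ combinatorics distinguishing the two parity components.
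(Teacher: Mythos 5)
Your overall plan—compute $H(\widetilde S)$ and $H(\mathcal F_N)$ separately and match presentations—is a reasonable-looking route, but it is not the route the paper takes, and on the topological side it has a real gap.

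\textbf{Difference from the paper.} The paper never directly identifies the kernel of the restriction map $\iota^*:H(X)\to H(\widetilde S)$ with $\langle x_i^2,y_I\rangle$. Instead it proves Theorem~\ref{tau_is_iso} to get $H(\widetilde S)\cong\operatorname{Eq}(\psi)$, identifies this equalizer with the centre $Z(\mK_k)$ of the type $D$ Khovanov algebra via \eqref{cEq}, and then cites \cite{ES} for $Z(\mK_k)\cong H(\mathcal F_N)$. The explicit presentation of $H(\mathcal F_N)$ is then obtained purely on the algebraic side via the Kumar--Procesi description of $\mC[\mathcal Z_k]$ together with a dimension count (Theorem~\ref{equivcohothm}). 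So the paper proves $H(\widetilde S)\cong H(\mathcal F_N)$ abstractly and only presents the ring once, on the $\mathcal F_N$ side. The route through $\mK_k$ is what makes this work without ever confronting the sign issue described below.

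\textbf{Gap in the topological side.} Your key step is that ``every $y_I$ vanishes on every $S_a$,'' and this is false. On $H(S_a)$ one has $x_i=-x_j$ for an undotted cup and $x_i=+x_j$ for a dotted cup (Lemma~\ref{cohomology_realization}). Take $k$ even and $I$ with $|I|=k/2$ transversal to the cups of $a$, so each cup has one endpoint in $I$ and one in $I^c$. Substituting endpoint by endpoint gives $x_I=(-1)^{u}x_{I^c}$ in $H(S_a)$, where $u$ is the number of undotted cups. Hence $y_I=x_I-x_{I^c}$ vanishes on $S_a$ iff $u$ is even. Since $a$ has $k/2$ cups and the number of dotted cups has fixed parity on each component, $u$ has the same parity for all $a\in\mB_k^{\rm even}$ and the opposite one for all $a\in\mB_k^{\rm odd}$; for $k\equiv 2\pmod 4$ it is $\mB_k^{\rm even}$ where $u$ is odd and $y_I$ does \emph{not} lie in $\ker\pi_a$. (Already for $k=2$ and the undotted cup, $\ker\pi_a=\langle x_1+x_2\rangle\neq\langle x_1-x_2\rangle$.) The two rings are abstractly isomorphic by a sign change $x_i\mapsto-x_i$, which is exactly the meaning of the unordered direct sum ``$\bigoplus_{r=1}^2$'' in the statement—but your argument via $\ker\iota^*$ would need this extra re-normalization made explicit, and the phrase ``parity count on dots'' does not currently carry that weight. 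Relatedly, $\iota^*$ itself is not surjective (already in $H^0$, since $\widetilde S$ has two components); you would have to work with each component $\widetilde S^{\rm even/odd}$ separately, and surjectivity onto each equalizer is not an automatic consequence of the even cell decomposition—this is essentially what the chain of isomorphisms through $\mK_k$ buys the authors.

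\textbf{On the algebraic side.} You correctly identify Kumar--Procesi as the tool, but their theorem requires the surjectivity of $H(\mathcal F)\to H(\mathcal F_N)$, which fails for general nilpotents; the paper verifies it in Proposition~\ref{surjectivity} using the filtration \eqref{filtration} by smaller two-row Springer fibres together with a Hotta--Springer argument. This verification is an essential ingredient your proposal skips. Also, the relations in $H(\mathcal F_N')$ come from the vanishing ideal of $\mathcal Z_k\subset\mh\times\ms$ and a rank count over $\mC[T]$, not from Chern classes of a flag or orbit-closure equations as you describe. Finally, the ``alternative'' of constructing a homeomorphism $\widetilde S\to\mathcal F_N$ is explicitly left as a conjecture in the paper (Remark~\ref{conj}), so it cannot be used as a shortcut here.
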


The two summands appear due to the natural decomposition of both the topological as well as the algebraic Springer fibre into an "even" and "odd" component. The proof of the second isomorphism is based on the Kumar-Procesi description of equivariant cohomology of Springer fibres, \cite{KumarProcesi}. This result requires $N$ to be of standard Levi type and also the surjectivity of the canonical map $H(\cF)\rightarrow H(\cF_N)$ which fails in general. Using the filtration mentioned above we verify this assumption in our special case (Proposition \ref{filtration}). The proof of the first isomorphism follows very closely the arguments in \cite{Khovanov_Springer}, although most of the small ingredients have to be reproved in a slightly different way. As in type $\rm A$, and pointed out there by Khovanov, the irreducible components of the topological Springer fibre are always trivial fibre bundles, whereas algebraically they are not. The non-triviality is nicely encoded in the degree of nestedness in the cup diagram and should be compared with the theory of embedded cobordisms and TQFT developed in \cite{SW}.
\begin{conjecture}
\label{conj}
The topological and algebraic Springer fibres are homeomorphic in case $N$ is of Jordan type $(k,k)$.
\end{conjecture}
In the general two row case such a homeomorphism cannot exist since for instance  the dimensions of pairwise intersections of components differ, see Section \ref{explicitexample}. For these cases a different construction of a topological Springer fibre is required.

The case of Jordan type $(k,k)$ is however particularly nice. We show that the pairwise intersections $Y_1\cap Y_2$ of irreducible components in $\cF_N$ are again iterated $\mP^1(\mC)$-bundles of dimension equal the number of possible orientations $a(Y_1)^*\la a(Y_2)$ of the circle diagram obtained by putting the cup diagram $a(Y_1)$ assigned to $Y_1$ upside down on top of $a(Y_2)$. Generalizing Khovanov's original construction \cite{KhovanovJones}, see also \cite{BS1}, we consider the vector space $\mK_k$ of all such oriented circle diagrams and establish the following

\begin{theorem} \label{boringtheorem}
Let $N\in\mg$ be nilpotent of Jordan type $(k,k)$.  There is an isomorphism of vector spaces
\begin{eqnarray}
\label{vectiso}
\mathbb{K}_k&\cong&\bigoplus_{(Y_1,Y_2)\in\op{Irr}(\cF_N)\times\op{Irr}(\cF_N)}H(Y_1\cap Y_2).
\end{eqnarray}
\end{theorem}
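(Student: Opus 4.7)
The plan is to read the asserted structure of pairwise intersections against the definition of $\mathbb{K}_k$ and match bases. The theorem is a vector space statement, so it suffices to exhibit a canonical bijection between an explicit basis of each $H(Y_1\cap Y_2)$ and the set of orientations of the circle diagram $a(Y_1)^*a(Y_2)$, and then sum over all pairs $(Y_1,Y_2)$.

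First, I would use the description already stated in the text preceding the theorem: $Y_1\cap Y_2$ is an iterated $\mathbb{P}^1(\mC)$-bundle whose dimension $d(Y_1,Y_2)$ equals the number of circles in the diagram $a(Y_1)^*a(Y_2)$ (the number of orientations of a circle diagram being $2^{\#\text{circles}}$, and the cohomology of an iterated $\mathbb{P}^1$-bundle of dimension $d$ having dimension $2^d$). By Leray--Hirsch, the cohomology splits as a tensor product
\begin{equation*}
H(Y_1\cap Y_2)\;\cong\;\bigotimes_{i=1}^{d(Y_1,Y_2)} H(\mathbb{P}^1(\mC)),
\end{equation*}
with the tensor factors indexed canonically by the steps of the iterated bundle structure. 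Each such step is, by Theorem~\ref{maintheoremintro} applied componentwise together with the combinatorics of how the cups of $a(Y_1)$ and $a(Y_2)$ glue (putting $a(Y_1)$ upside down on top of $a(Y_2)$), attached to exactly one circle of $a(Y_1)^*a(Y_2)$.

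Next I would fix the basis $\{1,x\}$ of $H(\mathbb{P}^1(\mC))$ and declare the two choices to record the two possible orientations (say clockwise/anticlockwise) of the corresponding circle. This produces a basis of $H(Y_1\cap Y_2)$ indexed bijectively by the orientations of $a(Y_1)^*a(Y_2)$. Summing over all $(Y_1,Y_2)\in\op{Irr}(\cF_N)\times\op{Irr}(\cF_N)$ and using the bijection $\op{Irr}(\cF_N)\leftrightarrow\mB_k$ from Theorem~\ref{maintheoremintro}, the resulting basis is precisely indexed by the oriented circle diagrams of the form $a_1^*a_2$ with $a_1,a_2\in\mB_k$, which is the defining basis of $\mathbb{K}_k$.

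The main obstacle is the canonical matching of the $\mathbb{P}^1$-factors in the iterated bundle structure on the intersection with the circles of the glued diagram. For a single component $Y$ the fibres correspond to cups in $a(Y)$; to extend this to intersections I would argue inductively along the iterated bundle, showing that at each step the base-change that cuts down $Y_1$ to its intersection with $Y_2$ either merges two cups (from $a(Y_1)$ and the flipped $a(Y_2)$) into a single circle, contributing one $\mathbb{P}^1$-factor, or collapses a would-be fibre to a point. This identification, together with the decorations (dots) behaving compatibly with orientation parities, makes the basis bijection canonical and yields the isomorphism~\eqref{vectiso}.
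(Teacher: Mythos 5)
Your proof is correct and takes essentially the same approach as the paper: both reduce to the iterated $\mathbb{P}^1(\mathbb{C})$-bundle structure of $Y_1\cap Y_2$, with one factor per circle of $a(Y_1)^*a(Y_2)$, and identify the resulting basis of $H(Y_1\cap Y_2)$ with the orientations of the circle diagram. Where the paper obtains this identification through the $S$-fixed-point description of Proposition~\ref{hurray} together with the equivariant-cohomology isomorphism of Theorem~\ref{equivcohothm}, you invoke Leray--Hirsch and label the $\{1,x\}$-basis of each tensor factor by the two orientations of its circle; these are interchangeable formulations of the same dimension count.
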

This vector space can be equipped with an algebra structure using a geometric convolution product construction, following the arguments in \cite{SW}.

We finally explain partly conjectural connections with $W$-algebras and category $\cO$. Let $e\in\mg$ be nilpotent and consider the associated finite $W$-algebra $\mathcal{W}(e):=\mathcal{U}(\mg,e)$ as introduced originally by Premet, \cite{Premet}, see \cite{LosevICM} for an overview. In general it is a hard problem to describe and understand the categories $\mathcal{F}in(e)$ of finite dimensional $\mathcal{W}(e)$-modules or its subcategory $\mathcal{F}in_0(e)$ of polynomial representations with generalized trivial central character, \cite{LosevGoldie}, \cite{LO}. However, the original definition of $\mathcal{W}(e)$ and its realisation as deformation quantization of Slodowy slices, indicates a strong relationship between the representation of $\mathcal{W}(e)$ and the geometry of the Springer fibre. Our setup is related to the case where the nilpotent has Jordan type $2^k$. This is an extremal case of a rectangular Jordan shape, were a complete purely combinatorial classification of finite dimensional $\mathcal{W}(e)$-modules is available, \cite{Brownrectangular}, \cite{BrownGoodwin}. In the last section we give an easy bijection between Brown's parameter set of simple objects and our set $\mathbb{B}_k$ of decorated cup diagrams labelling the irreducible components of $\cF_N$, where $N$ is of Jordan type $(k,k)$. Built on this we conjecture the following:

\begin{conjecture}
\label{bigconj}
Let $k\geq 4$ even and $e, N\in\mg$ nilpotent of Jordan type $2^k$ and $(k,k)$ respectively. Let $P$ be a minimal projective generator of $\mathcal{F}in_0(e)$.
Under the identification \eqref{vectiso} there exists an isomorphism of vector spaces
\begin{eqnarray}
\label{conjiso}
\mathbb{K}_k=\bigoplus_{(Y_1,Y_2)\in\op{Irr}(\cF_N)\times\op{Irr}(\cF_N)}H(Y_1\cap Y_2) &\cong&\END_{\mathcal{W}(e)}(P),
\end{eqnarray}
even of algebras with the diagrammatical multiplication in $\mathbb{K}_k$.
  \end{conjecture}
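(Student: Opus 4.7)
The plan is to prove Conjecture \ref{bigconj} by combining three ingredients: (i) Brown's combinatorial classification of simples in $\mathcal{F}in_0(e)$, (ii) an equivalence between $\mathcal{F}in_0(e)$ and a parabolic block of category $\cO$ of type $(D_k,A_{k-1})$, and (iii) the diagrammatic description of such a block developed in the companion paper \cite{ES}.

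First I would match the parameter sets. Brown's work classifies the simple modules in $\mathcal{F}in_0(e)$ for rectangular $e$ of Jordan type $2^k$ by explicit combinatorial data, and the bijection with the set $\mB_k$ of decorated cup diagrams is precisely the bijection constructed in the last section of the present paper. This guarantees that the minimal projective generator $P$ has $|\mB_k|$ indecomposable summands, so $\dim \END_{\mathcal{W}(e)}(P)$ agrees with $\dim \mK_k$ provided one first verifies that $\dim H(Y_1 \cap Y_2)$ equals the number of orientations of the circle diagram $a(Y_1)^* a(Y_2)$; this count follows from the iterated $\mP^1(\mC)$-bundle structure of the intersections established en route to Theorem \ref{boringtheorem}.

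Second, I would establish a highest weight equivalence $\mathcal{F}in_0(e) \simeq \cO^{\mathfrak{p}}_0$, where $\cO^{\mathfrak{p}}_0$ denotes the principal block of parabolic category $\cO$ of type $(D_k, A_{k-1})$. The chosen parabolic is forced by Lusztig--Spaltenstein duality: the dual of the orbit $2^k$ in $\mathfrak{o}(2k,\mC)$ is precisely the orbit $(k,k)$, and the centralizer of $e$ has a Levi whose Weyl group is of type $A_{k-1}$. Under such an equivalence, the minimal projective generator $P$ corresponds to a minimal projective generator $P'$ of $\cO^{\mathfrak{p}}_0$. Finally, the main theorem of \cite{ES} identifies $\END_{\cO}(P')$ with the diagrammatic algebra $\mK_k$, and composing these isomorphisms yields the desired statement, even as an isomorphism of graded algebras.

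The main obstacle is unquestionably the second step. In type $A$ the analogous equivalence (for rectangular nilpotents) is Brundan--Kleshchev's theorem, whose proof exploits the presentation of shifted Yangians and their embedding into $U(\gl_n)$. No such presentation is currently known for type $D$ $W$-algebras attached to a non-principal rectangular nilpotent, and the intervention of the non-trivial component group of the centralizer means even the expected parametrisation of the block decomposition requires extra care. Losev's general framework (\cite{LosevGoldie}, \cite{LO}) supplies a highest weight structure on $\mathcal{F}in_0(e)$, but lifting Brown's bijection on simples to an equivalence matching standard, projective, and tilting objects (so that endomorphism algebras can be compared) is exactly what is missing. A possible route is to verify the equivalence on the level of translation functors acting on $\mathcal{F}in_0(e)$ and on $\cO^{\mathfrak{p}}_0$, using the cell theory of type $D$ Temperley--Lieb diagrams already appearing in the Weyl group action on $\cC_k$, and then to upgrade the resulting match of combinatorial data to a functor via a Soergel-type endomorphism-ring argument. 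Completing this programme is precisely what separates the conjecture from a theorem.
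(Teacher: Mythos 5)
The statement you were asked to prove is labelled a \emph{Conjecture} in the paper, and the authors supply no proof of it; what they do prove is the vector-space isomorphism \eqref{vectiso} (Theorem~\ref{boringtheorem}/\ref{vectorspaceisoeasy}), the combinatorial bijection between Brown's $s$-tables and $\mB_k$ (Lemma~\ref{skewpyramids}), and the consequences that would follow \emph{if} the conjecture held. So there is no proof in the paper to compare your attempt against. Your proposal quite honestly does not claim to be a proof either; its final paragraph names the missing ingredient, namely a type $D$ analogue of the Brundan--Kleshchev equivalence, and this is exactly the obstruction the authors also implicitly acknowledge when they write that such a result is not available in general. On that central point you and the paper agree.

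Two details in the route you sketch need correcting, though, because as stated they would lead you astray. First, you write that you would ``establish a highest weight equivalence $\mathcal{F}in_0(e)\simeq \cO^{\mathfrak{p}}_0$'' and that ``the main theorem of \cite{ES} identifies $\END_{\cO}(P')$ with $\mK_k$'' for $P'$ a minimal projective generator. Neither is what is needed. The algebra $\mK_k$ in this paper is built only from circle diagrams whose cup and cap parts have the \emph{maximal} number of cups; it is the idempotent truncation of the full type $D$ Khovanov algebra corresponding to the projective-\emph{injective} modules, i.e.\ $\mK_k\cong \End_{\mg}(P_{\rm inj})$, not the endomorphism ring of a full projective generator. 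Accordingly, the expected equivalence is with a \emph{quotient category} of parabolic category $\cO$ (the Serre quotient whose projectives are the projective-injectives), as the paper's own Corollary spells out, not with $\cO^{\mathfrak{p}}_0$ itself. Second, you work with a single parabolic $\mathfrak{p}$, whereas the paper's Corollary requires the direct sum $\cO^{\mathfrak{p}}_0\oplus\cO^{\mathfrak{p}'}_0$ over the \emph{two} maximal parabolics of type $A_{k-1}$: the orthogonal flag variety is disconnected and $\mB_k$ splits into $\mB_k^{\rm even}\sqcup\mB_k^{\rm odd}$, which match the two parabolics; dropping one of them loses half the indecomposable projective-injectives. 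With those two corrections your outline agrees with the strategy the authors envisage, and the remaining gap is exactly the missing type $D$ Brundan--Kleshchev functor, which is why the statement is a conjecture rather than a theorem.
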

The conjecture implies in particular that with the appropriate grading shifts, $\bigoplus_{(Y_1,Y_2)\in\op{Irr}(\cF_N)\times\op{Irr}(\cF_N)}H(Y_1\cap Y_2)\langle d(Y_1,Y_2)\rangle$ induces a positive grading on $\END_{\mathcal{W}(e)}(P)$ which is the shadow of a Koszul grading. More precisely, by \cite{ES}, the conjecture directly implies
\begin{corollary}
Let $\mp,\mp'\subset\mg$ be the two maximal parabolic subalgebras of type ${\rm A}_{k-1}$. Let $\cO_0^\mp(D_k)\oplus \cO_0^{\mp'}(D_k)$ be the direct sum of the principal blocks of the corresponding parabolic category $\cO$ for $\mg$. Let $P_{\rm inj}=P^\mp_{\rm inj}\oplus P^{\mp'}_{\rm inj}$ be a minimal projective-injective generator. Then there is an isomorphism of algebras
\begin{eqnarray}
\End_\mg(P_{\rm inj}) & \cong & \END_{\mathcal{W}(e)}(P).
\end{eqnarray}
in particular, $\mathcal{F}in_0(e)$ is a quotient category of $\cO_0^\mp(D_k)\oplus \cO_0^{\mp'}(D_k)$. The latter is in fact a quasi-hereditary cover in the sense of Rouquier, \cite{Rouquier}.
\end{corollary}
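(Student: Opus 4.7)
My plan is to deduce this corollary formally from Conjecture \ref{bigconj} together with the main result of \cite{ES}. The argument is a straightforward composition of three ingredients: the diagrammatic description of $\End_\mg(P_{\rm inj})$ from \cite{ES}, the conjectured isomorphism \eqref{conjiso}, and general formalism (Morita theory, Soergel's double centralizer property, and Rouquier's cover theorem).

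First I would invoke the central result of \cite{ES}, in which the type $D$ Khovanov algebra $\mathbb{K}_k$, equipped with its diagrammatic multiplication, is identified with the endomorphism algebra of a minimal projective-injective generator of $\cO_0^\mp(D_k)\oplus\cO_0^{\mp'}(D_k)$. Combining this with the algebra isomorphism asserted in Conjecture \ref{bigconj} gives
$$\End_\mg(P_{\rm inj})\;\cong\;\mathbb{K}_k\;\cong\;\END_{\mathcal{W}(e)}(P),$$
which is exactly the required isomorphism of algebras. Passing to module categories yields the equivalence $\End_\mg(P_{\rm inj})\text{-}\MOD\cong\END_{\mathcal{W}(e)}(P)\text{-}\MOD$.

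For the quotient category statement, since $P$ is a minimal projective generator of $\mathcal{F}in_0(e)$, Morita theory produces an equivalence $\mathcal{F}in_0(e)\simeq\END_{\mathcal{W}(e)}(P)\text{-}\MOD$. On the category $\cO$ side, the functor $\HOM_\mg(P_{\rm inj},-)\colon\cO_0^\mp(D_k)\oplus\cO_0^{\mp'}(D_k)\to\End_\mg(P_{\rm inj})\text{-}\MOD$ is a Gabriel quotient functor by Soergel's Struktursatz, since $P_{\rm inj}$ exhausts all indecomposable projective-injectives in both blocks and hence satisfies the double centralizer property. Composing with the algebra isomorphism above exhibits $\mathcal{F}in_0(e)$ as a quotient of $\cO_0^\mp(D_k)\oplus\cO_0^{\mp'}(D_k)$.

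Finally, for the Rouquier cover claim, I would verify the two axioms from \cite{Rouquier}: first, $\cO_0^\mp(D_k)\oplus\cO_0^{\mp'}(D_k)$ is a highest weight category, which is part of the standard theory of parabolic category $\cO$; second, the quotient functor $\HOM_\mg(P_{\rm inj},-)$ is $1$-faithful on the $\Delta$-filtered subcategory, meaning it induces isomorphisms on $\HOM$ and $\EXT^1$ between standard modules. This $1$-faithfulness reduces, via the BGG-duality and Ringel self-duality available in the two extremal parabolic cases $(D_k,A_{k-1})$, to the well-known fact that the minimal projective-injective generator admits a $\Delta$-filtration and that morphisms and first extensions between standards are detected by this generator.

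The main obstacle is of course Conjecture \ref{bigconj} itself, which is assumed; granting the conjecture and the Khovanov algebra description from \cite{ES}, the only substantive remaining input is the $1$-faithfulness step in the Rouquier cover verification. This is precisely the place where one uses the very special structure of the parabolic blocks of type $(D_k,A_{k-1})$, namely that projective-injectives coincide with the tilting modules associated to the maximal elements of the Weyl group coset, making the faithfulness check essentially automatic.
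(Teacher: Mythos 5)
Your argument mirrors the paper's: it is a formal deduction from Conjecture~\ref{bigconj} together with the identification $\End_\mg(P_{\rm inj})\cong\mathbb{K}_k$ from \cite{ES}, Morita theory for the minimal projective generator $P$, and the quotient-functor/quasi-hereditary-cover machinery applied to $\HOM_\mg(P_{\rm inj},-)$, which is exactly the chain of reductions the paper indicates (compare the reformulated corollary near the end of Section~\ref{Section8}, where the cover statement is obtained by invoking \cite[Theorem~10.1]{Stproj}).

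The one place where you cut a corner is the faithfulness step in the Rouquier-cover verification. You assert that $1$-faithfulness on the $\Delta$-filtered subcategory is ``essentially automatic'' once one knows the projective-injectives are tilting modules for the maximal coset elements, but that is precisely the substantive input, not a shortcut: the assertion that $\HOM_\mg(P_{\rm inj},-)$ induces isomorphisms on $\HOM$ and $\EXT^1$ between $\Delta$-filtered objects is a Struktursatz-type theorem for parabolic category $\cO$, and the paper delegates it to \cite[Theorem~10.1]{Stproj}. The Ringel self-duality and projective-injective structure you mention are exactly the hypotheses making that theorem applicable, not a replacement for it. Swapping your heuristic for the citation (or at least acknowledging this is where the nontrivial content sits) closes the argument.
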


Note that this Corollary is analogous to the results in \cite{BK} for arbitrary nilpotent elements $e\in\mathfrak{gl}_n$, which realizes the corresponding $\mathcal{F}in_0(e)$ as a quotient category of $\cO_0^\mp(\mathfrak{gl}_n)$ where $\mp$ has type transposed to the Jordan type of $e$. Such a general result is clearly false in general outside of type $\rm A$. It already fails for Jordan types $(\la_1,\la_2)$ with $\la_1\not=\la_2$ or $\la_1=\la_2$ odd. It is expected that the above holds more general for all nilpotent $e$ with an even good grading in the sense of \cite{BrundanGoodwin} and trivial component group, but of course one cannot hope for such an explicit and elementary description of the endomorphism ring as in the case described above.

\subsection*{\bf The paper is organized as follows:}
In Section \ref{Section2} we introduce the algebraic Springer fibre and fix notation. Section \ref{Section3} sets up the diagram combinatorics which is crucial for the whole paper. Section \ref{Section4} defines the topological Springer fibre with its cell partition and computes the cohomology rings of the components and their intersections and finally connects it with the diagram algebra $\mK_k$ and its centre. Section~ \ref{Section5} establishes several combinatorial bijections between signed admissible standard tableaux, standard tableaux and cup diagrams and relates it to the combinatorics of representations of the Weyl group $W(D_k)$.  This is necessary to connect our cup diagram combinatorics to the original work of Spaltenstein and van Leeuwen and Hotta-Springer. For this paper the bijections are merely a tool, but we feel they are interesting on their own. Section \ref{Section6} gives the description of the irreducible components for the algebraic Springer fibre with an explicit algorithm for constructing the component attached to a cup diagram. Section~\ref{Section7} contains the proof of the main result, Theorem \ref{Hintro}. In Section~\ref{Section8} we finally describe the connection and applications and prove Theorem \ref{boringtheorem}.

\subsection*{Acknowledgment}
M.E. was financed by the DFG Priority program 1388. C.S. thanks Eric Sommers for many extremely helpful detailed explanations, and Pramod Achar and George Lusztig for examples of Springer fibres where the surjectivity assumption of \cite{KumarProcesi} fails. We thank Simon Goodwin, Jim Humphreys, Shrawan Kumar and Ivan Losev for helpful discussions. Both authors thank the university of Chicago where most of the research was done.

\section{Algebraic Springer fibre} \label{Section2}
\renc{\thetheorem}{\arabic{section}.\arabic{theorem}}

\subsubsection*{General assumptions}
We fix the ground field $\mC$ and denote by $\ii=\sqrt{-1}\in\mC$.

Fix an even natural number $n=2k$ and consider a complex $n$-dimensional vector space $W$ with fixed basis $w_i$, where $i\in \{\pm 1, \ldots, \pm k\}$. Equip $W$ with a symmetric non-degenerate bilinear form $\beta$ by setting $\beta(w_i,w_j)=\delta_{i,-j}$. Let $G=O(W,\beta)=O(2k,\mathbb{C})$ be the corresponding orthogonal group with Lie algebra  $\mg = \mathfrak{o}(2k,\mC)$.

\begin{definition}
A \emph{full isotropic flag} $F_\bullet$ in $W$ is a sequence of subspaces in $W$, $\{0\} \subset F_0 \subset F_1 \subset \cdots \subset F_k$, such that ${\rm dim}\, F_i = i$ and $\beta$ vanishes on $F_i \times F_i$ for all $0\leq i\leq k$ (i.e. $F_i$ is isotropic). Let $\mathcal{F}$  be the set of all such flags.
\end{definition}

Note that $F_k$ is maximal isotropic and $\op{dim}(F_i)=i$ for all $i$. Setting $F_{n-i}=(F_i)^\perp$ defines a full flag in $\mC^n$ in the usual sense.
We denote by $B$ the standard Borel of $G$ fixing the flag $F_\bullet$  where $F_i=\langle w_j\mid 1\leq j\leq i\rangle$. The set $\mathcal{F}$ inherits the structure of a projective algebraic variety, by identifying it with $G/B$.

\begin{remark} \label{Springer_two_components}
{\rm
In contrast to types $\rm A$, $\rm B$, $\rm C$, this flag variety of type $\rm D$ is not connected, but decomposes into two components isomorphic to $\mathcal{F}'=G'/B'$, where $G'=SO(2n,\mathbb{C})$ and $B'=G'\cap B$. The component containing $F_\bullet$ is determined by $F_k$. Given $F_\bullet$ there is a unique flag $F_\bullet'$ (called the \emph{companion flag}) which satisfies $F_i'=F_i$ for $0\leq i<k$ and $F_k'\not=F_k$. The two flags lie in different components.
}\end{remark}

A \emph{partition} $\lambda$ of $n=2k$ is a weakly decreasing sequence $\la_1\geq \la_2\geq\cdots\geq \la_r$ of positive integers summing up to $n$. Such partitions label the nilpotent orbits in $\mathfrak{gl}(n,\mC)$ under the conjugation action of $\op{GL}(n,\mC)$, where the parts $\la_i$ of $\la$ correspond to the sizes of the Jordan blocks.
\begin{definition}
\label{defpartition}
A partition $\la$ of $n=2k$ is called \emph{admissible} if even parts occur with even multiplicity.
\end{definition}

The classification of nilpotent orbits for $\mathfrak{gl}(n,\mC)$ restricts to $\mg$: the $G$-conjugation classes of nilpotent elements in $\mg$ are in bijection to admissible partitions of $n$, \cite{WilliamsonJ}, \cite{Gerstenhaber}. We now fix a nilpotent element $N\in\mg$.

\begin{definition}
\label{Springerfibre}
The \emph{Springer fibre} associated to $(G,N)$ is the algebraic variety $\mathcal{F}_N:=\mathcal{F}^{1+N}$ of $(1+N)$-fixed points in $\mathcal{F}$, i.e. of all flags $F_\bullet\in\mathcal{F}$ such that $NF_i\subset F_{i-1}$.
\end{definition}
The Springer fibre again decomposes into two connected components isomorphic to $\mathcal{F}_N':=\mathcal{F'}^{1+N}$ and is in contrast to $\cF$ in general not smooth. By a result of Spaltenstein it is equidimensional, \cite{Spaltensteinequi} (see also \cite[II.1.12]{SpaltensteinLNM}), i.e. all irreducible components have the same dimension. Let $\op{Irr}(\cF_N)$ denote the set of irreducible components.

Let $u\in G$ be the unipotent element corresponding to $N$ via the exponential map. The centraliser $C:=C_G(u)$ acts on $\mathcal{F}$, $\mathcal{F'}$ and the set  $\op{Irr}(\cF_N)$. In general $C$ is not connected and  we have the component group $A(u)=C_G(u)/C_G(u)^\circ$.

In the following we are interested in nilpotent orbits corresponding to 2-row partitions. Hence $N$ has Jordan type $(k,k)$ or $(n-l, l)$ with $l$ odd.

This situation is special in two perspectives: we will show in Theorem~\ref{maintheorem}, that all components are smooth (in analogy to 2-block nilpotents of type $\rm A$, \cite{Fung}, \cite{SW}, and in huge contrast to the general case \cite{Fressesing}) and secondly the component group is easy to calculate, \cite[2.26]{SpringerSteinberg}, as $A(u)=(\mathbb{Z}_2)^c$, where $c=0$ for the partition $(k,k)$ with $k$ even, $c=1$ if $k$ is odd, and $c=2$ otherwise.

\section{Diagram Combinatorics}
\label{Section3}
We start by introducing a diagram combinatorics which will later be used to describe the geometry of the topological Springer fibres. It generalizes the well-known approaches from
\cite{Khovanov_Springer}, \cite{Russell}, \cite{SW} to the more involved type $\rm D$ setting and is motivated by its connections to the type $\rm D$ highest weight Lie theory \cite{ESBrauer} and \cite{LS}.

\begin{definition}
We fix a rectangle $R$ in the lower half plane $\mR\times\mR_{\leq 0}$
with the set of points $P=\{1,\ldots, k\}$ on the top face.

An \emph{undecorated cup diagram}, is a diagram consisting of non-intersecting lines, each of them either connecting two distinct vertices in $P$ or one vertex in
$P$ with a point on the bottom face of $R$. The lines of the first type are called \emph{cups}, the others \emph{rays}. We consider two such diagrams as the same if they differ only by a planar isotopy of $R$ fixing $P$ pointwise and the bottom boundary line of the rectangle setwise.
A \emph{decorated cup diagram}, or short  \emph{cup diagram},  is a diagram with possibly "$\bullet$"'s (called {\it dots}) on the lines such that every dot is accessible from the left side of the rectangle,
meaning for each dot there exists a line in the rectangle connecting the
dot with the left face of $R$ not intersecting the diagram. We
allow at most one dot per line.
\end{definition}

\begin{ex} \label{example_cup diagrams}
The following first two diagrams are examples of cup diagrams, whereas the third is not, as the rightmost dot cannot be connected with the left face of $R$.
\begin{eqnarray*}
\begin{tikzpicture}[thick,scale=0.70]
\node at (-.5,-.5) {i)};
\node at (.5,.25) {\tiny{$1$}};
\node at (1,.25) {\tiny{$2$}};
\node at (1.5,.25) {\tiny{$3$}};
\node at (2,.25) {\tiny{$4$}};
\node at (2.5,.25) {\tiny{$5$}};
\draw (0.5,0) .. controls +(0,-1) and +(0,-1) .. +(1.5,0);
\draw (1,0) .. controls +(0,-.5) and +(0,-.5) .. +(.5,0);
\draw (2.5,0) -- (2.5,-1.5);
\draw [thin] (0,0) rectangle (3,-1.5);
\draw [thin,->] (-.25,0) -- (3.25,0);
\draw [thin,->] (0,0) -- (0,-1.75);

\begin{scope}[xshift=5cm]
\node at (-.5,-.5) {ii)};
\node at (.5,.25) {\tiny{$1$}};
\node at (1,.25) {\tiny{$2$}};
\node at (1.5,.25) {\tiny{$3$}};
\node at (2,.25) {\tiny{$4$}};
\node at (2.5,.25) {\tiny{$5$}};
\draw (0.5,0) .. controls +(0,-1) and +(0,-1) .. +(1.5,0);
\draw (1,0) .. controls +(0,-.5) and +(0,-.5) .. +(.5,0);
\draw (2.5,0) -- (2.5,-1.5);
\fill (1.25,-0.75) circle(2.5pt);
\fill (2.5,-0.75) circle(2.5pt);
\draw [thin] (0,0) rectangle (3,-1.5);
\draw [thin,->] (-.25,0) -- (3.25,0);
\draw [thin,->] (0,0) -- (0,-1.75);
\end{scope}

\begin{scope}[xshift=10cm]
\node at (-.5,-.5) {iii)};
\node at (.5,.25) {\tiny{$1$}};
\node at (1,.25) {\tiny{$2$}};
\node at (1.5,.25) {\tiny{$3$}};
\node at (2,.25) {\tiny{$4$}};
\node at (2.5,.25) {\tiny{$5$}};
\draw (1,0) .. controls +(0,-1) and +(0,-1) .. +(1.5,0);
\draw (1.5,0) .. controls +(0,-.5) and +(0,-.5) .. +(.5,0);
\draw (.5,0) -- (.5,-1.5);
\fill (1.75,-0.75) circle(2.5pt);
\fill (.5,-0.75) circle(2.5pt);
\draw [thin] (0,0) rectangle (3,-1.5);
\draw [thin,->] (-.25,0) -- (3.25,0);
\draw [thin,->] (0,0) -- (0,-1.75);
\end{scope}

\end{tikzpicture}
\end{eqnarray*}
\end{ex}

\begin{remark}
\label{counting}
Let $k$ be even. Then it is a well-known fact, see e.g. \cite{Stanley},  that the number of undecorated cup diagrams on $k$ vertices equals $\frac{1}{k+1} \binom{k}{k/2}$ (the $k$-th Catalan number) which can be identified with the dimension of the irreducible $S_k$-module indexed by the partition $(k/2,k/2)$, see e.g. \cite[Lemma 4.3.3]{Stperv}. On the other hand, the number of (decorated) cup diagrams on $k$ vertices equals, see Lemma \ref{bijchi}, twice the dimension of the irreducible representation of the Weyl group $S_k\ltimes(\mZ/2\mZ)^{k-1}$ of type ${\rm D}_k$ indexed by the pair of partitions $((k/2),(k/2))$. So, there are
$\binom{k}{k/2}$ such diagrams.
\end{remark}

\begin{notation}
\label{notation}
For a cup diagram $c$ and $i,j \in \{1,\ldots, k\}$ we write
\begin{enumerate}[i)]
\item $i \CupConnect j$ if $c$ contains a cup connecting $i$ and $j$,
\item $i \DCupConnect j$ if $c$ contains a dotted cup connecting $i$ and $j$,
\item $i \RayConnect$ if $c$ contains a ray that ends at $i$,
\item $i \DRayConnect$ if $c$ contains a dotted ray that ends at $i$.
\end{enumerate}

\begin{ex}
In the cup diagram i) from Example \ref{example_cup diagrams} we have $1 \CupConnect 4$, $2 \CupConnect 3$, and $5 \RayConnect$, while in the cup diagram ii) we have: $1 \DCupConnect 4$, $2 \CupConnect 3$, and $5 \DRayConnect$.
\end{ex}

We define the followings sets of (undotted/dotted/all) cups respectively rays appearing in a cup diagram $c$:
\begin{eqnarray*}
\begin{array}{lllllll}
{\rm cups_\circ}(c) &=& \{ (i<j) \mid i \CupConnect j \},&&
{\rm rays_\circ}(c) &=& \{ i \mid i \RayConnect \},\\
{\rm cups_\bullet}(c) &=& \{ (i<j) \mid i \DCupConnect j \},&&
{\rm rays_\bullet}(c) &=& \{ i \mid i \DRayConnect \},\\
{\rm cups}(c) &=& {\rm cups_\circ}(c) \cup {\rm cups_\bullet}(c),
&&
{\rm rays}(c) &= &{\rm rays_\circ}(c) \cup {\rm rays_\bullet}(c).
\end{array}
\end{eqnarray*}
\end{notation}

Reflecting cup diagrams along the horizontal middle line of $R$ gives us \emph{cap diagrams}, e.g. the cap diagrams arising from Example \ref{example_cup diagrams} are
\begin{eqnarray*}
\begin{tikzpicture}[thick,scale=0.70]
\node at (-.5,.5) {i)};
\node at (.5,-.25) {\tiny{$1$}};
\node at (1,-.25) {\tiny{$2$}};
\node at (1.5,-.25) {\tiny{$3$}};
\node at (2,-.25) {\tiny{$4$}};
\node at (2.5,-.25) {\tiny{$5$}};
\draw (0.5,0) .. controls +(0,1) and +(0,1) .. +(1.5,0);
\draw (1,0) .. controls +(0,.5) and +(0,.5) .. +(.5,0);
\draw (2.5,0) -- (2.5,1.5);
\draw [thin] (0,0) rectangle (3,1.5);
\draw [thin,->] (-.25,0) -- (3.25,0);
\draw [thin,->] (0,0) -- (0,1.75);

\begin{scope}[xshift=5cm]
\node at (-.5,.5) {ii)};
\node at (.5,-.25) {\tiny{$1$}};
\node at (1,-.25) {\tiny{$2$}};
\node at (1.5,-.25) {\tiny{$3$}};
\node at (2,-.25) {\tiny{$4$}};
\node at (2.5,-.25) {\tiny{$5$}};
\draw (0.5,0) .. controls +(0,1) and +(0,1) .. +(1.5,0);
\draw (1,0) .. controls +(0,.5) and +(0,.5) .. +(.5,0);
\draw (2.5,0) -- (2.5,1.5);
\fill (1.25,0.75) circle(2.5pt);
\fill (2.5,0.75) circle(2.5pt);
\draw [thin] (0,0) rectangle (3,1.5);
\draw [thin,->] (-.25,0) -- (3.25,0);
\draw [thin,->] (0,0) -- (0,1.75);
\end{scope}
\end{tikzpicture}
\end{eqnarray*}

\begin{definition}
As above we assume to have fixed the rectangle $R$ and the set of points $P = \{1, \ldots, k \}$.
A \emph{cap diagram} is a diagram that is obtained from a cup diagram by reflecting it in the horizontal middle axis of R. If $c$ is a cup diagram we denote by $c^*$ the corresponding cap diagram. We use again the Notation \ref{notation} for cap diagrams instead of cup diagrams.
\end{definition}

To talk about oriented cup/cap diagrams we need the notion of a weight.

\begin{definition}
A \emph{(combinatorial) weight} of length $k$ is a sequence $\lambda = (\lambda_1,\ldots,\lambda_k)$ with $\lambda_i \in \{\up,\down\}$.

A cup (resp. cap) diagram $c$ together with a weight $\lambda$ is called an \emph{oriented cup (resp. cap) diagram} if the following holds:
\begin{enumerate}
\item if $i \CupConnect j$ then $\lambda_i \neq \lambda_j$,
\item if $i \DCupConnect j$ then $\lambda_i = \lambda_j$,
\item if $i \RayConnect$ then $\lambda_i = \down$,
\item if $i \DRayConnect$ then $\lambda_i = \up$.
\end{enumerate}
We denote the resulting diagram $\lambda c$ (resp. $c\lambda$).
\end{definition}
For instance, out of the following three only the first two diagrams are oriented. Note also that $\lambda c$ is oriented if and only if $c^*\lambda$ is oriented.
\begin{eqnarray}
\label{exoriented}
%
%
%
%
\begin{tikzpicture}[thick,scale=0.70]

\node at (-.8,.3) {$\lambda$};
\node at (-.8,-.9) {$c$};

\node at (.5,.3) {$\down$};
\node at (1,.3) {$\up$};
\node at (1.5,.3) {$\down$};
\node at (2,.3) {$\up$};
\node at (2.5,.3) {$\down$};
\draw (0.5,0) .. controls +(0,-1) and +(0,-1) .. +(1.5,0);
\draw (1,0) .. controls +(0,-.5) and +(0,-.5) .. +(.5,0);
\draw (2.5,0) -- (2.5,-1.5);
\draw [thin] (0,0) rectangle (3,-1.5);
\draw [thin,->] (-.25,0) -- (3.25,0);
\draw [thin,->] (0,0) -- (0,-1.75);

\begin{scope}[xshift=5cm]
\node at (-.8,.3) {$\lambda$};
\node at (-.8,-.9) {$c$};

\node at (.5,.3) {$\down$};
\node at (1,.3) {$\up$};
\node at (1.5,.3) {$\down$};
\node at (2,.3) {$\down$};
\node at (2.5,.3) {$\down$};
\draw (0.5,0) .. controls +(0,-1) and +(0,-1) .. +(1.5,0);
\draw (1,0) .. controls +(0,-.5) and +(0,-.5) .. +(.5,0);
\draw (2.5,0) -- (2.5,-1.5);
\fill (1.25,-.75) circle(2.5pt);
\draw [thin] (0,0) rectangle (3,-1.5);
\draw [thin,->] (-.25,0) -- (3.25,0);
\draw [thin,->] (0,0) -- (0,-1.75);
\end{scope}

\begin{scope}[xshift=10cm]
\node at (-.8,.3) {$\lambda$};
\node at (-.8,-.9) {$c$};

\node at (.5,.3) {$\up$};
\node at (1,.3) {$\down$};
\node at (1.5,.3) {$\down$};
\node at (2,.3) {$\up$};
\node at (2.5,.3) {$\down$};

\draw (0.5,0) .. controls +(0,-1) and +(0,-1) .. +(1.5,0);
\draw (1,0) .. controls +(0,-.5) and +(0,-.5) .. +(.5,0);
\draw (2.5,0) -- (2.5,-1.5);
\draw [thin] (0,0) rectangle (3,-1.5);
\draw [thin,->] (-.25,0) -- (3.25,0);
\draw [thin,->] (0,0) -- (0,-1.75);
\end{scope}
\end{tikzpicture}
\end{eqnarray}

Gluing a cap diagram on top of a cup diagram yields a {\it circle diagram}.

\begin{definition}
\label{defcircle}
A \emph{circle diagram} $(bc)$ consists of a cap diagram $b$ and a cup diagram $c$ where we identify the lower border of the rectangle of $b$ with the upper border of the rectangle of $c$ (identifying the marked points from left to right). An \emph{oriented circle diagram} $b\lambda c$ consists of a cap diagram $b$, a cup diagram $c$ and a weight $\lambda$ such that $b\lambda$ and $\lambda c$ are oriented diagrams:
\begin{eqnarray}
\label{circlediag}
\begin{tikzpicture}[thick,scale=0.90,decoration={zigzag,pre length=1mm,post length=1mm}]
\draw (.5,1) .. controls +(0,.5) and +(0,.5) .. +(.5,0);
\draw (1.5,1) .. controls +(0,.5) and +(0,.5) .. +(.5,0);
\draw (2.5,1) -- (2.5,2.5);
\fill (.75,1.36) circle(2.5pt);
\fill (1.75,1.36) circle(2.5pt);
\draw [thin] (0,1) rectangle (3,2.5);
\draw [thin,->] (-.25,1) -- (3.25,1);
\draw [thin,->] (0,1) -- (0,2.75);

\node at (-.8,.4) {$\lambda$};
\node at (-.8,-1) {$c$};
\node at (-.8,2) {$b$};

\node at (.5,.4) {$\down$};
\node at (1,.4) {$\down$};
\node at (1.5,.4) {$\up$};
\node at (2,.4) {$\up$};
\node at (2.5,.4) {$\down$};

\draw (0.5,0) .. controls +(0,-1) and +(0,-1) .. +(1.5,0);
\draw (1,0) .. controls +(0,-.5) and +(0,-.5) .. +(.5,0);
\draw (2.5,0) -- (2.5,-1.5);
\draw [thin] (0,0) rectangle (3,-1.5);
\draw [thin,->] (-.25,0) -- (3.25,0);
\draw [thin,->] (0,0) -- (0,-1.75);

\draw [->,decorate](3.7,.5) -- (5.7,.5);

\begin{scope}[xshift=7.2cm,yshift=.5cm]
\draw (.5,0) .. controls +(0,.5) and +(0,.5) .. +(.5,0);
\draw (1.5,0) .. controls +(0,.5) and +(0,.5) .. +(.5,0);
\draw (2.5,0) -- (2.5,1.5);
\fill (.75,.36) circle(2.5pt);
\fill (1.75,.36) circle(2.5pt);
\draw [thin] (0,0) rectangle (3,1.5);
\draw [thin,->] (0,0) -- (0,1.75);

\node at (-.8,-.1) {$b\lambda c$};

\node at (.505,.1) {$\down$};
\node at (1.005,.1) {$\down$};
\node at (1.505,-.15) {$\up$};
\node at (2.005,-.15) {$\up$};
\node at (2.505,.1) {$\down$};

\draw (0.5,0) .. controls +(0,-1) and +(0,-1) .. +(1.5,0);
\draw (1,0) .. controls +(0,-.5) and +(0,-.5) .. +(.5,0);
\draw (2.5,0) -- (2.5,-1.5);
\draw [thin] (0,0) rectangle (3,-1.5);
\draw [thin,->] (-.25,0) -- (3.25,0);
\draw [thin,->] (0,0) -- (0,-1.75);

\end{scope}

\end{tikzpicture}
\end{eqnarray}
\end{definition}

A circle diagram thus consists out of circles (=closed connected components) and lines. Note that each line has at most one allowed orientation, whereas each circle has precisely two, \cite[Lemma 4]{ES}.

\begin{definition}
A circle in an oriented circle diagram is called \emph{anticlockwise} if its rightmost label is $\up$ and \emph{clockwise} if its rightmost label is $\down$.
\end{definition}

\begin{center}
In the following we will usually omit drawing the rectangle $R$.
\end{center}

\noindent Fix $k \in \mN$. Then denote by $\mB_k$ the set of all cup diagrams on $k$ points with the maximal possible number of cups. We want to further divide this set into those cup diagrams with an even (respectively odd) number of dots, $\mB_k = \mB_k^{\rm even} \coprod \mB_k^{\rm odd}$.

\begin{ex}
For $k=3$ the set $\mB_3$ contains the following six diagrams
\begin{equation*}
\left\lbrace
\begin{array}{ccccccccccc}
\begin{tikzpicture}[thick, scale=0.7]
\begin{scope}[xshift=0cm]
\draw (0,0) .. controls +(0,-.5) and +(0,-.5) .. +(.5,0);

\draw (1,0) -- +(0,-.75);
\end{scope}
\end{tikzpicture}
&,&
\begin{tikzpicture}[thick, scale=0.7]
\begin{scope}[xshift=3cm]
\draw (0,0) .. controls +(0,-.5) and +(0,-.5) .. +(.5,0);
\fill (.25,-.365) circle(2pt);

\draw (1,0) -- +(0,-.75);
\fill (1,-.5) circle(2pt);
\end{scope}
\end{tikzpicture}
&,&
\begin{tikzpicture}[thick, scale=0.7]

\begin{scope}[xshift=6cm]
\draw (.5,0) .. controls +(0,-.5) and +(0,-.5) .. +(.5,0);

\draw (0,0) -- +(0,-.75);
\end{scope}
\end{tikzpicture}
&,&
\begin{tikzpicture}[thick, scale=0.7]
\begin{scope}[xshift=9cm]
\draw (0,0) .. controls +(0,-.5) and +(0,-.5) .. +(.5,0);
\fill (.25,-.365) circle(2pt);

\draw (1,0) -- +(0,-.75);
\end{scope}

\end{tikzpicture}
&,&
\begin{tikzpicture}[thick, scale=0.7]
\begin{scope}[xshift=12cm]
\draw (0,0) .. controls +(0,-.5) and +(0,-.5) .. +(.5,0);

\draw (1,0) -- +(0,-.75);
\fill (1,-.5) circle(2pt);
\end{scope}
\end{tikzpicture}
&,&
\begin{tikzpicture}[thick, scale=0.7]

\begin{scope}[xshift=15cm]
\draw (.5,0) .. controls +(0,-.5) and +(0,-.5) .. +(.5,0);

\draw (0,0) -- +(0,-.75);
\fill (0,-.5) circle(2pt);
\end{scope}
\end{tikzpicture}
\end{array}
\right\rbrace
\end{equation*}
with $\mB_3^{\rm even}$ and $\mB_3^{\rm odd}$ the first three respectively last three diagrams.
\end{ex}

\section{Topological Springer fibre}
\label{Section4}
In this section we construct the topological Springer fibre and establish basic properties and a cell decomposition.

\subsection{The definition of the topological Springer fibre}
In the following denote by $S$ the 2-sphere $\mathbb{S}^2 \subset \mR^3$ and by $X= S^{k}$ its $k$th power. By $H(Y)$ we denote the cohomology of a topological space $Y$ with complex coefficients, in particular, $H(S) \cong \mC[x]/\langle x^2\rangle$ with $\op{deg}(x)=2$. The isomorphism sends $x$ to the top class and $1$ to the class of a point which we fix and call $\pt$. We denote by $x \mapsto -x$ the antipodal map (i.e., the involution on $S$ that maps every point to its antipodal point).
Corresponding to $a \in \mB_k$ we define a subspace
$S_a \subset X$ as follows
\begin{equation}
\label{Sa}
(z_1,\ldots,z_k) \in S_a :\Leftrightarrow \left\lbrace \begin{array}{ll}
z_i = z_j & \text{ if } i \CupConnect j \\
z_i = -z_j & \text{ if } i \DCupConnect j \\
z_i = \pt & \text{ if } i \RayConnect \\
z_i = -\pt & \text{ if } i \DRayConnect.
\end{array} \right.
\end{equation}
Obviously, $S_a$ is homeomorphic to  $S^{\lfloor \frac{k}{2}\rfloor}=S^{{\rm cups}(a)}$. Hence we have
$$H(S_a)\cong\left(\mC[x]/\langle x^2\rangle\right)^{\otimes \lfloor \frac{k}{2}\rfloor}.$$
We will later use an explicit homeomorphism: For $a\in\mB_k$, $1 \leq i \leq k$ set
$$
\sigma(i) \, = \, (-1)^{ \begin{array}{c}
\#\left\lbrace (k<l) \in {\rm cups_\bullet}(a) \mid i \leq k \right\rbrace + \#\left\lbrace k \in {\rm rays_\bullet}(a) \mid i \leq k\right\rbrace
\end{array}},
$$
i.e., $\sigma$ encodes the parity of the number of dotted cups and rays to the right of the position $i$, including cups with left endpoint at $i$.

\begin{lemma}
Let $\{(i_r<j_r)\}={\rm cups}(a)$ with $i_1<i_2<\ldots <i_{\lfloor \frac{k}{2}\rfloor}$. The map
\begin{eqnarray}\label{local_coordinates}
\begin{array}{cccl}
\Psi_a :&S_a&\longrightarrow&S^{{\rm cups}(a)}=S^{\lfloor \frac{k}{2}\rfloor}\\
&(z_1,\ldots,z_k)&\longmapsto&(y_1,\ldots,y_{\lfloor \frac{k}{2}\rfloor})
\end{array}
\end{eqnarray}
where $y_{r}=\sigma(r)z_{i_r}$, $1\leq r\leq \lfloor \frac{k}{2}\rfloor$ defines a homeomorphism.
\end{lemma}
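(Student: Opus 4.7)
The strategy is to verify that $\Psi_a$ is a continuous bijection between compact Hausdorff spaces, whence automatically a homeomorphism. Both $S^{\lfloor k/2\rfloor}$ and $S_a$ are compact Hausdorff, the latter being closed in the compact Hausdorff space $S^k$, cut out by the equalities listed in \eqref{Sa}. Each coordinate $y_r=\sigma(r)z_{i_r}$ of $\Psi_a$ is the composition of the projection $S^k\to S$ onto the $i_r$-th factor with multiplication by the fixed scalar $\sigma(r)\in\{\pm 1\}$ on $S\subset\mR^3$, and is therefore continuous.

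For bijectivity I would construct the inverse map explicitly. Given $(y_1,\ldots,y_{\lfloor k/2\rfloor})\in S^{\lfloor k/2\rfloor}$, define $(z_1,\ldots,z_k)$ by setting $z_{i_r}:=\sigma(r)y_r$ for each $r$, and then propagating through the diagram: $z_{j_r}:=z_{i_r}$ when $(i_r<j_r)\in{\rm cups_\circ}(a)$, $z_{j_r}:=-z_{i_r}$ when $(i_r<j_r)\in{\rm cups_\bullet}(a)$, $z_i:=\pt$ when $i\in{\rm rays_\circ}(a)$, and $z_i:=-\pt$ when $i\in{\rm rays_\bullet}(a)$. Since $a\in\mB_k$ is a cup diagram, every index of $\{1,\ldots,k\}$ falls into exactly one of these cases, so the prescription is well-defined, and by construction the resulting tuple satisfies every defining relation of $S_a$ in \eqref{Sa} and thus lies in $S_a$. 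This map is continuous because each coordinate is either a constant or the image of one of the $y_r$ under multiplication by $\pm 1$.

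Verifying that the two composites are the identity is then immediate: applying $\Psi_a$ to the reconstructed tuple returns $(\sigma(r)^2 y_r)_{r}=(y_r)_{r}$ since $\sigma(r)=\pm 1$, while starting from $(z_1,\ldots,z_k)\in S_a$ and feeding $\Psi_a(z)$ into the inverse recovers $z_{i_r}$ by $\sigma(r)^2=1$ and then recovers every remaining entry $z_{j_r}$ or ray entry $z_i$ precisely by the relations that $(z_1,\ldots,z_k)$ already satisfies as a point of $S_a$. There is no genuine obstacle: the substance of the lemma is the transparent observation that a point of $S_a$ is determined by its values at the cup left endpoints; the signs $\sigma(r)$ are mere rescalings irrelevant to the topological statement, and are introduced here only for compatibility with the cohomological computations that follow.
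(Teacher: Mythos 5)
Your proof is correct and takes essentially the same approach as the paper: both simply exhibit the explicit inverse that reconstructs each coordinate from $y_r$ via the cup/ray relations of \eqref{Sa}. The appeal to the compact Hausdorff criterion is superfluous (you also verify continuity of the inverse directly), but harmless.
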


\begin{proof}
The inverse map $(\Psi_a)^{-1}$ is given by $z_{t}=\sigma(i_r)y_r$ if $t=i_r$, $z_{t}=\sigma(i_r)y_r$ if $(i_r<j_r=t)\in {\rm cups_\circ}(a)$, $z_{t}=-\sigma(i_r)y_r$ if $(i_r<j_r=t)\in {\rm cups_\bullet}(a)$ and $z_t=-p$ or $z_t=p$ otherwise, depending on if the ray is dotted or not.
\end{proof}

\begin{ex}
Consider the following cup diagram
\begin{eqnarray*}
\begin{tikzpicture}[thick,scale=0.7]
\node at (-1,-.5) {$a\;=$};
\node at (.5,.25) {\tiny $(1)$};
\node at (1.5,.25) {\tiny $(2)$};
\node at (2.5,.25) {\tiny $(3)$};
\node at (3,.25) {\tiny $(4)$};

\draw (.5,0) .. controls +(0,-.5) and +(0,-.5) .. +(.5,0);
\draw (1.5,0) .. controls +(0,-.5) and +(0,-.5) .. +(.5,0);
\fill (1.75,-.36) circle(2.5pt);
\draw (3,0) .. controls +(0,-.5) and +(0,-.5) .. +(.5,0);
\draw (2.5,0) .. controls +(0,-1) and +(0,-1) .. +(1.5,0);
\fill (3.25,-.74) circle(2.5pt);
\draw (4.5,0) -- (4.5,-1);
\fill (4.5,-.5) circle(2.5pt);
\end{tikzpicture}
\end{eqnarray*}
and number the cups according to their left endpoints from left to right (as indicated by the bracketed numbers). Then
$$ (\Psi_a)^{-1}(x_1,x_2,x_3,x_4) = (-x_1,-x_1,-x_2,x_2,x_3,-x_4,-x_4,-x_3,-\pt).$$
\end{ex}

The following lemma follows directly from the definitions:

\begin{lemma} \label{Intersection_empty}
The subset $S_a \cap S_b \neq \emptyset$ if and only if the corresponding circle diagram $(a^*b)$ can be oriented. Furthermore if $(a^*b)$ can be oriented then $S_a \cap S_b \cong S^r$, where $r$ is the number of closed circles in $(a^*b)$. For the cohomology this implies
\begin{eqnarray*}
H(S_a \cap S_b) &\cong& \left\lbrace \begin{array}{ll}
\left(\mC[x]/\langle x^2\rangle\right)^{\otimes r} & \text{if } S_a \cap S_b \neq \emptyset \text{ and $r$ as above,} \\
0 & \text{otherwise.}
\end{array} \right.
\end{eqnarray*}
Each orientation of $(a^*b)$ corresponds then exactly to one standard basis vector $b_1\otimes \cdots \otimes b_l$, $b_l\in \{1,x\}$ in cohomology, by setting $b_i=1$ if the corresponding circle is oriented anticlockwise and $b_i=x$ if it is oriented clockwise.
\end{lemma}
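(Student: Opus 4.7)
The plan is to analyse the defining equations for $S_a\cap S_b$ component by component along the connected components of the circle diagram $(a^*b)$. Each cup of $b$, each cap of $a^*$ (coming from a cup of $a$), and each ray of either contributes one of the four defining relations in \eqref{Sa}, namely a relation of the form $z_i=z_j$ or $z_i=-z_j$ for internal edges (the sign $-$ precisely when the edge is dotted), or a relation $z_i=\pm \pt$ when the edge is a ray. Thus the relations defining $S_a\cap S_b$ are naturally organised by the connected components of $(a^*b)$.

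First I would treat a closed component (a circle) $C$ of $(a^*b)$ with $d$ dots. Fixing a starting vertex and traversing $C$ once gives by composition the single relation $z=(-1)^d z$, which admits a non-trivial solution iff $d$ is even; when $d$ is even, the value $z\in S$ is a free parameter that determines all other coordinates along $C$ by the intermediate relations. Hence each such circle contributes exactly one factor of $S$ to $S_a\cap S_b$. Next I would treat a non-closed component (a line) $L$: $L$ has precisely two ray-endpoints, each of which fixes its coordinate to $\pt$ or $-\pt$ according to whether the corresponding ray is dotted. Tracing through $L$ propagates this fixed value with a sign determined by the parity of the dots along $L$, and a solution exists iff this propagated value matches the prescribed value at the other endpoint; in that case every coordinate on $L$ is fixed, so $L$ contributes nothing to the dimension.

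Now I would identify these two conditions with the orientability of $(a^*b)$ in the sense of Section \ref{Section3}. By definition, crossing a dotted edge flips the $\up/\down$-label, undotted rays force $\down$ and dotted rays force $\up$. Therefore a circle admits an orientation iff it contains an even number of dots, and a line admits an orientation iff the labels imposed by its two rays are compatible with the parity of dots along it; these are exactly the conditions obtained above. Moreover, an orientable circle has precisely two orientations (determined by the label at its rightmost vertex) while an orientable line has exactly one, in agreement with \cite[Lemma 4]{ES}. Combining, $S_a\cap S_b\neq\emptyset$ iff $(a^*b)$ is orientable, and in this case the product of the free $S$-parameters, one per circle, gives a homeomorphism $S_a\cap S_b\cong S^r$ where $r$ is the number of circles.

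Finally I would read off the cohomology: since $H(S)\cong\mC[x]/\langle x^2\rangle$ with $1$ the class of a point and $x$ the top class, the Künneth formula gives $H(S_a\cap S_b)\cong (\mC[x]/\langle x^2\rangle)^{\otimes r}$, and its $2^r$ standard tensor basis vectors $b_1\otimes\cdots\otimes b_r$ with $b_i\in\{1,x\}$ correspond bijectively to the $2^r$ orientations of $(a^*b)$, identifying $b_i=1$ with an anticlockwise orientation (rightmost label $\up$, i.e.\ the generic class) and $b_i=x$ with a clockwise orientation (rightmost label $\down$, i.e.\ the top class). The only mildly delicate point is the sign book-keeping for the line components; this is uniform since the defining relations of $S_a$ involve only cups of $a$ (equivalently, caps of $a^*$) in exactly the same way as those of $S_b$ involve cups of $b$, so no additional reflection signs enter.
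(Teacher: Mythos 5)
Your overall strategy---analysing the defining relations of $S_a\cap S_b$ component by component along the connected components of the circle diagram $(a^*b)$---is exactly what the paper intends (the paper merely states that the lemma ``follows directly from the definitions''), so the shape of the argument is right. However, there is a sign error in your orientability analysis: you write that ``crossing a dotted edge flips the $\up/\down$-label'', but by the paper's definition of an oriented cup diagram it is the \emph{undotted} cup that forces $\la_i\neq\la_j$ (a flip) and the \emph{dotted} cup that forces $\la_i=\la_j$ (no flip). Consequently, for a component with $m$ internal cup/cap edges and $d$ dots, the orientability constraint accumulates $(-1)^{m-d}$, whereas the solvability constraint for $S_a\cap S_b$ (where $z_i=-z_j$ across a dotted cup) accumulates $(-1)^{d}$.

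These two conditions coincide only because $m$ has the right parity on each component, a fact your proof never records. On a closed circle $m$ is automatically even, since the edges alternate between caps of $a^*$ and cups of $b$. On a line this is no longer automatic in general, but it does hold here because $a,b\in\mB_k$ both have $\lfloor k/2\rfloor$ cups: if $k$ is even there are no rays and hence no lines; if $k$ is odd there is exactly one ray in each of $a^*$ and $b$, so the unique line joins a ray of $a^*$ to a ray of $b$, which forces the internal edges to alternate cup--cap--$\cdots$--cap, an even number. Only after this observation do you get $(-1)^{m-d}=(-1)^{d}$ and hence the asserted matching of the two criteria. The conclusion of the lemma is unaffected, but to make the argument correct you should state the flip rule the right way around and verify the parity of $m$ on circles and lines; the rest of your proof (free $S$-parameter per circle, rigid line components, K\"unneth, and the two-orientations-per-circle versus one-per-line count) is fine.
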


\begin{remark}{\rm
Note the difference to \cite[Section 3]{Khovanov_Springer}, where the subsets $S_a \cap S_b$ are never empty.
}\end{remark}

Let $\widetilde{S} = \bigcup_{a \in \mB_k} S_a \subset X$. If $a \in \mB_k^{\rm even}$ and $b \in \mB_k^{\rm odd}$ then $S_a \cap S_b = \emptyset$ by Lemma~\ref{Intersection_empty}; thus we can decompose $\widetilde{S}$ into the disjoint subsets
\begin{eqnarray}
\label{soversuso}
\widetilde{S}^{\rm even} = \bigcup_{a \in \mB_k^{\rm even}} S_a& \text{ and }&\widetilde{S}^{\rm odd} = \bigcup_{a \in \mB_k^{\rm odd}} S_a.
\end{eqnarray}

We call $\widetilde{S}$ the \textit{topological Springer fibre}, as it is the topological substitute for the Springer fibre $\mathcal{F}_N$, see Remark \ref{conj}, and \eqref{soversuso} corresponds to the decomposition of $\mathcal{F}_N$ into the two copies of $\mathcal{F}_N'$.
Our goal is to show that the cohomology $H(\widetilde{S})$ of  $\widetilde{S}$ is isomorphic to the cohomology of the Springer fibre. We do this by identifying $H(\widetilde{S})$ with the centre of a diagrammatically defined algebra introduced in \cite{LS}, \cite{ES} of which we know that it has the desired centre.

\subsection{The cohomology rings $H(S_a)$}
We first realize all the involved cohomology rings as quotients of
\begin{eqnarray}
\label{HX}
H(X)=\mC[x]/\langle x^2\rangle^{\otimes {k}}=\mC[x_1,\ldots, x_k]/\left\langle x_i^2\mid 1\leq i\leq k\right\rangle.
\end{eqnarray}
where $x_i$ denotes the top class of the $i$th copy of $S$.

\begin{notation}
For $a,b \in \mB_k$ and $1 \leq i,j \leq k$ we write $i \thicksim_{a;b} j$ if there exists a sequence $i=i_1,i_2,\ldots,i_r=j$ such that for all $l$ the points $i_l$ and $i_{l+1}$ are connected by a cup in ${\rm cups}(a) \cup {\rm cups}(b)$ (i.e. $(i_l,i_{l+1}) \in {\rm cups}(a) \cup {\rm cups}(b)$ or $(i_{l+1},i_l) \in {\rm cups}(a) \cup {\rm cups}(b)$). We call such a sequence a \emph{path} connecting $i$ and $j$. This is obviously an equivalence relation on the set $\{1,\ldots,k\}$. For a chosen path from $i$ to $j$ we denote by $\alpha(i,j)$ the number of undotted cups used in the path. The parity of $\alpha(i,j)$ is independent of the chosen path and so $(-1)^{\alpha(i,j)}$ is well-defined. For $i,j \in \{1,\ldots,k\}$ we define
$$
\epsilon(i,j) = \left\lbrace \begin{array}{ll}
0 & \text{ if } i \nsim_{a;b} j, \\
(-1)^{\alpha(i,j)} & \text{ if } i \thicksim_{a;b} j.
\end{array} \right.
$$

We denote the set of equivalence classes by ${\rm Conn}(a,b)$, (they are in canonical bijection with the connected components of the corresponding circle diagram) and by $\overline{i}$ the equivalence class corresponding to the point $i$. We denote by ${\rm Conn}_\circ(a,b)$ those classes that correspond to closed circles in the diagram. We will later use the map
\begin{eqnarray*}
{\rm mx}_{a;b}:\{1,\ldots,k\} & \longrightarrow & \{1,\ldots,k\}\\
i &\mapsto& {\rm max}\{j \mid j \thicksim_{a;b} i\}.
\end{eqnarray*}
which maps each element to the maximal element in its equivalence class.
\end{notation}

\begin{lemma} \label{cohomology_realization}
\begin{enumerate}
\item Let $a \in \mB_k$. The canonical map induces a surjection
$$ \pi_a:\quad H(X) \twoheadrightarrow H(S_a) $$
with kernel
$$J_a=\left\langle \begin{array}{rrcl}
x_i + x_j & \text{if }& (i,j) &\in {\rm cups_\circ}(a), \\
x_i - x_j & \text{if }& (i,j) &\in {\rm cups_\bullet}(a), \\
x_i & \text{if }& i &\in {\rm rays}(a)
\end{array} \right\rangle.$$
\item Let $a,b \in \mB_k$, then the canonical map induces a surjection
$$ \pi_{a;b}:\quad H(X) \twoheadrightarrow H(S_a \cap S_b) $$
which in case $S_a \cap S_b\not=\emptyset$ (i.e. $(a^*b)$ can be oriented) has kernel
$$J_{a;b}=\left\langle \begin{array}{rrl}
x_i - \epsilon(i,{\rm mx_{a;b}}(i))x_{{\rm mx_{a;b}}(i)} & \text{if }& \overline{i} \in {\rm Conn}_\circ(a,b), \\
x_i & \text{if }& \overline{i} \notin {\rm Conn}_\circ(a,b)
\end{array} \right\rangle.$$
\end{enumerate}
\end{lemma}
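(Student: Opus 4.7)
The plan is to handle both parts by the same scheme: realize $\pi_a$ (resp.\ $\pi_{a;b}$) as the restriction along the inclusion $S_a\hookrightarrow X$ (resp.\ $S_a\cap S_b\hookrightarrow X$), use explicit coordinates to compute $\pi_a(x_i)$ (resp.\ $\pi_{a;b}(x_i)$), verify that the stated generators lie in the kernel, and close the argument by matching dimensions.

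For part (1), first I would use the homeomorphism $\Psi_a$ from~\eqref{local_coordinates} and its inverse to compute the pullback of each generator $x_i\in H(X)$. Each $x_i$ is the pullback of the top class of $S$ via the $i$-th projection $p_i\colon X\to S$, so $\pi_a(x_i)$ is obtained by post-composing with $\Psi_a^{-1}$: on $S_a$ the coordinate $z_i$ is $\pm\pt$ when $i\in\mathrm{rays}(a)$, and $\pm y_r$ when $i$ sits on the $r$-th cup. In cohomology this gives $\pi_a(x_i)=0$ for $i\in\mathrm{rays}(a)$ (constant maps annihilate the top class) and $\pi_a(x_i)=\pm y_r$ otherwise, the sign being dictated by $\sigma(i)$ together with the rule that the antipodal map on $S=S^2$ acts as $-1$ on $H^2(S)$. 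This sign bookkeeping shows directly that all three families of generators of $J_a$ lie in $\ker\pi_a$. Surjectivity is then immediate, because each generator $y_r$ of $H(S_a)$ equals $\pm\pi_a(x_{i_r})$. To upgrade $J_a\subseteq\ker\pi_a$ to equality I compare dimensions: modulo $J_a$ each ray kills one variable and each cup identifies two variables up to sign, so $\dim H(X)/J_a=2^{|\mathrm{cups}(a)|}=2^{\lfloor k/2\rfloor}=\dim H(S_a)$.

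Part (2) runs on the same template, but for the subvariety cut out by the $a$- and $b$-constraints simultaneously. If $S_a\cap S_b=\emptyset$ there is nothing to show. Otherwise Lemma~\ref{Intersection_empty} identifies $S_a\cap S_b\cong S^r$ with $r=|\mathrm{Conn}_\circ(a,b)|$. The combined cup relations couple coordinates across each class of $\sim_{a;b}$: on a closed component every $z_j$ is forced to $\pm z_{\mathrm{mx}_{a;b}(i)}$, with sign given by the parity of dotted cups along the connecting path, while a non-closed component must terminate at two rays of $(a^*b)$ and so pins every coordinate in it to $\pm\pt$. Combining the resulting homeomorphism $S_a\cap S_b\cong S^{r}$ with the antipodal sign rule on $H^2(S)$ converts these statements into the assertion that every generator of $J_{a;b}$ lies in $\ker\pi_{a;b}$. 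Surjectivity again follows from each closed component contributing a generator, and the dimension count $\dim H(X)/J_{a;b}=2^r=\dim H(S_a\cap S_b)$ forces equality.

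The main technical point, and where I expect the bookkeeping to be delicate, is the well-definedness of the sign $\epsilon(i,j)=(-1)^{\alpha(i,j)}$ used in part (2). Two different paths from $i$ to $j$ differ by traversing full closed circles of $(a^*b)$, so path-independence reduces to the claim that each closed circle contains an even number of undotted cups. Since $S_a\cap S_b\neq\emptyset$ forces $(a^*b)$ to be orientable, every such circle has an even number of dotted arcs, and because the total number of arcs in a circle is always even, so is the number of undotted arcs. With this combinatorial identity in place, the iterated antipodal sign picked up along any path from $i$ to $\mathrm{mx}_{a;b}(i)$ matches $\epsilon(i,\mathrm{mx}_{a;b}(i))$, and the remainder of the argument is routine verification.
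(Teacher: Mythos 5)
Your plan follows essentially the same scheme as the paper's proof: identify $\pi_a$ (resp.\ $\pi_{a;b}$) with restriction along the inclusion, check that the stated generators lie in the kernel, note surjectivity, and close with a dimension count (using Lemma~\ref{Intersection_empty} in part (2)). The one structural difference is in part (2): the paper simply observes $\ker\pi_{a;b}\supset J_a+J_b$, notes that $J_a+J_b$ already has the asserted form of $J_{a;b}$ by part (1), and finishes by comparing dimensions — a slightly cleaner reduction to part (1) than your independent reconstruction of the kernel relations class by class, though the two amount to the same thing.

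There is, however, a step that will not survive being carried out. You write that "this sign bookkeeping shows directly" that the generators of $J_a$ lie in $\ker\pi_a$, but you never perform it, and if you do it from exactly the data you invoke you will hit a mismatch. For an undotted cup $(i,j)$ the defining constraint of $S_a$ is $z_i=z_j$, so the two coordinate maps $S_a\to S$ coincide and $\pi_a(x_i)=\pi_a(x_j)$; hence $x_i-x_j\in\ker\pi_a$, not $x_i+x_j$. For a dotted cup $z_i=-z_j$, the antipodal map has degree $-1$ on $H^2(S^2)$, giving $\pi_a(x_i)=-\pi_a(x_j)$ and $x_i+x_j\in\ker\pi_a$. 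This is the opposite sign assignment from the stated $J_a$ (and likewise from the $\epsilon(i,j)=(-1)^{\alpha(i,j)}$ used in part (2), where $\alpha$ counts \emph{un}dotted arcs; the geometry naturally produces a sign governed by the \emph{dotted} arcs along the path). The paper's own sketch — "embedding a sphere diagonally $\ldots$ gives the first type of relations" — has the same tension, so this may be a convention inherited from [ES]; but since the entire advantage of your $\Psi_a$-based approach over the paper's terse version is that it makes the sign computation concrete, you need to actually run the computation and reconcile it with the stated $J_a$, not assert it. A smaller point: in your argument for well-definedness of $\epsilon$, the orientation rule forces an even number of \emph{undotted} arcs in each oriented circle directly (each undotted arc flips $\up\leftrightarrow\down$); evenness of dotted arcs is the corollary (via the total being even), not the premise as your phrasing suggests.
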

\begin{proof}
Part a): The map is induced from the embedding of $S_a$ into $X$. In case of an undotted cup this is the induced map of embedding a sphere diagonally into a product of two spheres, hence gives the first type of relations in the ideal.  If there is a dotted cup, then the embedding is twisted by the antipodal map on one side, which produces a sign change and we obtain the second line of generators for the ideal. The last line is the induced map of embedding a point into a sphere. The generators of $H(S_a)$ are obviously in the image of the map. By comparing the dimensions the statement follows.

For part b) it is again clear that the kernel ${\rm ker}(\pi_{a;b})$ contains both $J_a$ and $J_b$ and thus we have a well-defined map
$$ H(X)/(J_a + J_b) \longrightarrow H(S_a \cap S_b)$$
 which contains the generators of  $H(S_a \cap S_b)$ in the image.
By part a), $J_a+J_b$ has the asserted generators. Comparing the dimensions using Lemma \ref{Intersection_empty} implies the claim.
\end{proof}

For $a,b\in\mB_k$, the canonical maps
$$ \psi_{a;a,b}:H(S_a) \longrightarrow H(S_a \cap S_b) \text{ and } \psi_{b;a,b}:H(S_b) \longrightarrow H(S_a \cap S_b),$$
induced by the inclusions of the corresponding spaces, satisfy $\pi_{a;b}= \psi_{a;a,b}\pi_{a}$ and  $\pi_{b;a}= \psi_{b;a,b}\pi_{b}$, and they are compatible with the $(H(S_a))$-module structure (resp. $(H(S_b))$-module structure) given by the cup product.

\subsection{Connection with the type $\rm D$ Khovanov arc algebra}
We shortly recall the main features of the type $\rm D$ Khovanov arc algebra.
\begin{definition}
For $k$ still fixed let $\mK_k$ be the vector space with basis
\begin{equation}
\mathcal{B}_k:=\left\{ b^*\la c \mid b,c\in\mB_k\text{ such that $b\la$ and $\la c$ are oriented}\right\},
\end{equation}
the set of oriented circle diagrams for $k$ points, where the involved cup and cap diagrams have the maximal possible number of cups. The \emph{degree}, $\op{deg}(b\la c)$, of a basis vector $b\la c$ is defined to be the number of \emph{clockwise} oriented cups and caps in the sense of \cite{LS} (i.e. oriented cups and caps labelled $\up\down$ or $\down\down$ in this order from left to right; e.g. the first two diagrams in \eqref{exoriented} have $1$ respectively $2$ clockwise cups).
\end{definition}

We have $\mK_k=\bigoplus_{(a,b)\in\mB_k\times \mB_k} {}_a(\mK_k)_b$, where ${}_a(\mK_k)_b$ is spanned by all basis vectors $a\la b\in\mathcal{B}_k$ with $a,b$ fixed.  By \cite{ES}, $\mK_k$ can be equipped with a multiplication turning it into a non-negatively graded associative algebra, $\mK_k$, where the grading is given by the degree function $\op{deg}$. The degree zero part is semisimple and spanned by the idempotents ${}_b \mathbbm{1}_b:=b\la b$ for $b \in \mB_k$, where $\la$ is the unique weight such that $b\la b$ is oriented of degree zero.

\begin{definition}
\label{min}
More generally, if $a,b \in \mB_k$ such that $a\la b\in\cB_k$ for some $\la$
we denote by ${}_a \mathbbm{1}_b$ the unique basis vector of ${}_a(\mK_k)_b$ of minimal degree, $d(a,b)$ (for an explicit formula see Lemma \ref{mindegree}).
\end{definition}
By \cite{ES} and Lemma \ref{cohomology_realization} we have for $a,b \in \mB_k$ canonical isomorphisms of graded vector spaces
\begin{equation}
\label{isoHK}
\begin{array}{llllllll}
{}_a(\mK_k)_a&\cong&H(S_a), &&{}_a(\mK_k)_b&\cong& H(S_a \cap S_b)\langle d(a,b)\rangle
\end{array}
\end{equation}
sending a basis vector $a\la b$ to $b_1\otimes \cdots \otimes b_l$, $b_l\in \{1,x\}$ where $b_i=1$ if the circle at vertex $i$ is anticlockwise and $b_i=x$ if it is clockwise. Here $\langle d\rangle$ denotes the grading shift upwards by $d$.
By the multiplication rules of the algebra $\mK_k$, a product $(a\la b)(c\mu d)$ of two basis vectors in $\mK_k$ can only be nonzero if $b^*=c$, moreover there are
induced linear maps
\begin{eqnarray}
\label{multidempotentwise}
 \gamma_{a;a,b}:{}_a(\mK_k)_a \longrightarrow {}_a(\mK_k)_b \text{ and } \gamma_{b;a,b}:{}_b(\mK_k)_b \longrightarrow {}_a(\mK_k)_b,
 \end{eqnarray}
defined as $d \longmapsto d\; {}_a \mathbbm{1}_b$ and $d \longmapsto {}_a \mathbbm{1}_b \;d$,
and turns them into left ${}_a(\mK_k)_a$-module respectively right ${}_b(\mK_k)_b$-module homomorphisms. With the vertical isomorphisms \eqref{isoHK} we obtain

\begin{prop} \label{cohomology_diagram_commutes}
The following diagram commutes
\begin{eqnarray}
\label{equ}
\begin{xy}
  \xymatrix{
      H(S_a) \ar[rr]^{\!\!\psi_{a;a,b}} \ar[d]^\cong && H(S_a \cap S_b) \ar[d]^\cong && H(S_b) \ar[ll]_{\;\;\psi_{b;a,b}\:} \ar[d]^\cong \\
      {}_a(\mK_k)_a \ar[rr]^{\gamma_{a;a,b}} && {}_a(\mK_k)_b && {}_b(\mK_k)_b \ar[ll]_{\gamma_{b;a,b}}
  }
\end{xy}
\end{eqnarray}
\end{prop}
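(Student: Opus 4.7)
The plan is to reduce the commutativity to a check on a single generator, exploiting that every map in the diagram is linear over an appropriate ring. First I would note that the horizontal maps of the top row, $\psi_{a;a,b}$ and $\psi_{b;a,b}$, are restriction maps on cohomology induced by the closed embeddings $S_a \cap S_b \hookrightarrow S_a$ and $S_a \cap S_b \hookrightarrow S_b$; hence they are homomorphisms of graded rings, and in particular linear over $H(S_a)$ and $H(S_b)$ respectively. On the bottom row, $\gamma_{a;a,b}$ and $\gamma_{b;a,b}$ are by their very definition in \eqref{multidempotentwise} left ${}_a(\mK_k)_a$-linear and right ${}_b(\mK_k)_b$-linear.

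Next I would verify that the vertical isomorphisms intertwine these module structures. For the outer columns, the isomorphisms $H(S_a) \cong {}_a(\mK_k)_a$ and $H(S_b) \cong {}_b(\mK_k)_b$ are in fact isomorphisms of graded rings: by \cite{ES} and Lemma \ref{cohomology_realization} both sides are identified with the same quotient $H(X)/J_a$ (respectively $H(X)/J_b$), and under this identification the diagrammatic multiplication in ${}_a(\mK_k)_a$ agrees with the cup product. For the middle column the crucial fact I would use is that $1 \in H(S_a \cap S_b)$ corresponds, under the isomorphism in \eqref{isoHK}, to the minimal degree basis element ${}_a\mathbbm{1}_b$ of ${}_a(\mK_k)_b\langle d(a,b)\rangle$. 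This is immediate from the explicit rule stated after \eqref{isoHK}: a basis element $a\la b$ corresponds to $b_1\otimes\cdots\otimes b_l$ with $b_i = 1$ for each anticlockwise circle, and the fully anticlockwise orientation is precisely ${}_a\mathbbm{1}_b$.

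With these compatibilities in place, commutativity of the left square reduces to checking equality on the single element $1 \in H(S_a)$. Along the top, $\psi_{a;a,b}(1) = 1 \in H(S_a \cap S_b)$, which corresponds to ${}_a\mathbbm{1}_b$ under the middle vertical iso. Along the bottom, the left vertical iso sends $1$ to ${}_a\mathbbm{1}_a$, and $\gamma_{a;a,b}({}_a\mathbbm{1}_a) = {}_a\mathbbm{1}_a \cdot {}_a\mathbbm{1}_b = {}_a\mathbbm{1}_b$, since ${}_a\mathbbm{1}_a$ is the identity idempotent of ${}_a(\mK_k)_a$. Both paths therefore agree on every element of $H(S_a)$ because both are $H(S_a)$-module homomorphisms from a module cyclically generated by $1$. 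The analogous argument handles the right square.

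The main obstacle I expect is establishing the compatibility claim for the middle vertical map, namely that the $H(S_a)$-module structure on $H(S_a \cap S_b)$ given by pullback along $S_a \cap S_b \hookrightarrow S_a$ matches the left ${}_a(\mK_k)_a$-module structure on ${}_a(\mK_k)_b\langle d(a,b)\rangle$ given by diagrammatic multiplication. Unpacking the surgery rules for $\mK_k$ from \cite{ES} and comparing them with the explicit ideals $J_a, J_b, J_{a;b}$ of Lemma \ref{cohomology_realization} makes this a direct verification on basis vectors, but bookkeeping of the sign conventions arising from dotted cups (encoded by the parity function $\sigma$ in the homeomorphism $\Psi_a$ and the $\epsilon(i,j)$ factors of Lemma \ref{cohomology_realization}) is the delicate part of the argument.
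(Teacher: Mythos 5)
Your proposal is correct and follows essentially the same route as the paper. The paper reduces to the square over $H(X)/J_a \to H(X)/J_{a;b}$ and checks the two compositions directly on the ring generators $1$ and $x_i$, invoking the surgery formula from \cite{ES} to match the sign $(-1)^{\alpha(i,{\rm mx}_{a;b}(i))}$ coming from the ideal $J_{a;b}$; your module-compatibility reformulation (establish that the middle vertical isomorphism intertwines the $H(S_a)$-action with the ${}_a(\mK_k)_a$-action, then test only on $1$) is a logically equivalent packaging, and the ``main obstacle'' you flag at the end is precisely the sign verification the paper carries out via that check on the $x_i$.
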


\begin{proof}
Using the isomorphisms from Lemma \ref{cohomology_realization} it is enough to show that
\begin{eqnarray*}
\begin{xy}
  \xymatrix{
      H(X)/J_a \ar[r] \ar[d]^\cong & H(X)/J_{a;b} \ar[d]^\cong \\
      {}_a(\mK_k)_a \ar[r] & {}_a(\mK_k)_b
  }
\end{xy}
\end{eqnarray*}
commutes for any $a\not=b$.
By definition of the maps $1\in  H(X)/J_a$ is sent to ${}_a \mathbbm{1}_b\in {}_a(\mK_k)_b$ in both ways. Using first the vertical map, $x_i$ is sent to the basis vector which differs from ${}_a\mathbbm{1}_b$ by making the circle (if it exists) passing through vertex $i$ clockwise and multiply with $(-1)^{\alpha(i,\rm{mx}_{a;b}(i))}$ and it is sent to zero otherwise, see \cite{ES}.
On the other hand, the horizontal map sends $x_i$ to $x_i$ which, by Lemma \ref{cohomology_realization}, equals $(-1)^{\alpha(i,{\rm mx}_{a;b}(i))}x_{{\rm mx}_{a;b}(i)}$ which is then sent to the same basis vector as above, since $i$ and ${\rm mx}_{a;b}(i)$ lie on the same component. Since the $x_i$ generate $H(X)/J_a$ as ring, the claim follows.
\end{proof}

The modules $P(a):=\bigoplus_{b\in\mB_k}{}_b(\mK_k)_{a}$ for $a\in\mB_k$ form the indecomposable left $\mK_k$-modules with endomorphism rings ${}_a(\mK_k)_a$. The centre $Z(\mK_k)$ is therefore the subalgebra of $D:=\oplus_{a\in\mB_k} {}_a(\mK_k)_a=\prod_{a\in\mB_k} {}_a(\mK_k)_a$ given by all elements $(z_a)_{a\in\mB_k}$ such that $z_af=fz_b$ for all $f\in\HOM_{\mK_k}(P(a),P(b))={}_a(\mK_k)_b$, $b\in\mB_k$. Since $_a(\mK_k)_b$ is a cyclic $_a(\mK_k)_a$-module generated by ${}_a \mathbbm{1}_b$, and $_a(\mK_k)_a$ is commutative, \cite{ES}, it follows that $Z(\mK_k) = {\rm Eq}(\gamma)$, where
\begin{eqnarray} \label{Eqgamma}
 {\rm Eq}(\gamma)=\left\{(z_a)_{a\in\mB_k} \in D
 \begin{array}{c|c}
 &\gamma_{a;a,b}({}_a \mathbbm{1}_a z {}_a \mathbbm{1}_a) = \gamma_{b;a,b}({}_b \mathbbm{1}_b z {}_b \mathbbm{1}_b),\\
  &\text{for $a,b\in\mB_k$, $a\not=b$}
  \end{array}
  \right\},
\end{eqnarray}
i.e. the equalizer of $\gamma = \sum_{a \neq b} \gamma_{a;a,b} + \gamma_{b;a,b}:\prod_a {}_a(\mK_k)_a \longrightarrow \prod_{a \neq b} {}_a(\mK_k)_b.$ From \eqref{equ} we obtain therefore an algebra isomorphism
\begin{equation}
\label{cEq}
Z(\mK_k) \cong {\rm Eq}(\psi),
\end{equation}
where ${\rm Eq}(\psi)$ is defined analogously to \eqref{Eqgamma}. Since the inclusion of $S_a \cap S_b$ into $\widetilde{S}$ is the same regardless  if it is viewed as a subset of $S_a$ or of $S_b$, the canonical map $\tau':H(\widetilde{S})\longrightarrow \prod_a H(S_a)$ factors through ${\rm Eq}(\psi)$ and yields a homomorphism of algebras
$$ \tau:\quad H(\widetilde{S})\longrightarrow {\rm Eq}(\psi).$$

Our goal in Section \ref{cell_decomp} will be to show

\begin{theorem} \label{tau_is_iso}
The map $\tau$ is an isomorphism.
\end{theorem}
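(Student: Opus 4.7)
The strategy is to mimic Khovanov's argument in \cite{Khovanov_Springer} for type $A$, adapted to the decorated cup diagram setting. I would proceed in three steps: establish a cell decomposition of $\widetilde{S}$ compatible with the cover $\{S_a\}_{a \in \mB_k}$, use this to prove injectivity of $\tau$, and finally match dimensions with $\mathrm{Eq}(\psi) \cong Z(\mK_k)$ via \eqref{cEq} and the explicit description of $Z(\mK_k)$ from \cite{ES}.

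First, as announced by Section \ref{cell_decomp}, I would construct a cell decomposition of $\widetilde{S}$ in which every cell is a complex (hence even real-dimensional) affine cell, parametrized by oriented cup diagrams associated with some $a \in \mB_k$. The key feature is that each cell lies inside at least one subspace $S_a$. Even-dimensionality forces the cellular cochain complex to have trivial differential, so $H^{\bullet}(\widetilde{S})$ is concentrated in even degrees and has total dimension equal to the cell count.

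For injectivity of $\tau$, I would set up the Mayer-Vietoris \v{C}ech spectral sequence for the closed cover $\widetilde{S} = \bigcup_{a \in \mB_k} S_a$. The crucial observation is that every nonempty multi-intersection $S_{a_1} \cap \cdots \cap S_{a_r}$ is again a product of $2$-spheres, since imposing additional relations of the form $z_i = \pm z_j$ on a coordinate subtorus of $(\mathbb{S}^2)^k$ preserves this structure; in particular all $E_1^{p,q}$-terms are concentrated in even $q$. The edge homomorphism $H(\widetilde{S}) \to \ker\bigl(\bigoplus_a H(S_a) \to \bigoplus_{a<b} H(S_a \cap S_b)\bigr) = \mathrm{Eq}(\psi)$ coincides with $\tau$, and even-degree concentration of $H(\widetilde{S})$ together with the vanishing of odd $E_1$-rows forces the spectral sequence to degenerate, giving injectivity of $\tau$.

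Finally I would compare dimensions. By \eqref{cEq} and \cite{ES}, $\dim \mathrm{Eq}(\psi) = \dim Z(\mK_k)$ can be read off from the combinatorics of oriented circle diagrams and, after accounting for the even/odd splitting \eqref{soversuso}, I would match this count cell by cell against $\dim H(\widetilde{S})$ obtained from the cell decomposition. Combined with injectivity, this yields surjectivity, hence the theorem. The main obstacle I anticipate is the combinatorial bookkeeping for the cell count and the sign conventions $\epsilon(i,j)$ from Lemma \ref{cohomology_realization}, which are the type $D$ analog of the simpler undotted situation of \cite{Khovanov_Springer} and require careful tracking of dotted versus undotted cups and rays and of the two connected components.
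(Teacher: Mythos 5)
Your proposal takes a genuinely different route from the paper: you want a one-shot Čech/Mayer–Vietoris spectral sequence for the whole cover $\{S_a\}_{a\in\mB_k}$, argue degeneration from even-degree vanishing, and then close surjectivity by a dimension count against $Z(\mK_k)$. The paper instead fixes a refinement of the partial order $\prec$ on $\mB_k$ to a total order and proceeds inductively (Proposition~\ref{kunterbunt}), adding one $S_a$ at a time, using the two-term Mayer--Vietoris sequence for $(S_{<a},S_a)$ and showing at each step that the sequence $0\to H(S_{\leq a})\to \bigoplus_{b\leq a}H(S_b)\to\bigoplus_{b<c\leq a}H(S_b\cap S_c)$ is exact; choosing $a$ maximal gives $\tau$ an isomorphism directly, with no dimension count needed.

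The problem is that your degeneration claim has a real gap and the missing ingredient is exactly the paper's Proposition~\ref{kunterbunt}(\ref{two}). Vanishing of the odd-$q$ rows in $E_1$ only kills the differentials $d_r$ with $r$ even (since $d_r$ changes $q$ by $1-r$); the odd-$r$ differentials with $r\geq 3$ map even rows to even rows and are not forced to vanish. And even if every $d_r$ for $r\geq 2$ did vanish, you would get $E_\infty=E_2$, which combined with even-degree concentration of $H(\widetilde S)$ only yields that $E_\infty^{p,q}$ is supported where both $p,q$ are even; this still permits nonzero $E_\infty^{p,q}$ with $p\geq 1$, i.e.\ a nontrivial filtration step $F^1H^{\bullet}(\widetilde S)\neq 0$, which is precisely the obstruction to injectivity of $\tau$. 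In other words, $\tau$ being the edge map surjecting onto $E_\infty^{0,\bullet}\subset E_2^{0,\bullet}=\operatorname{Eq}(\psi)$ is not the same as $\tau$ being an isomorphism unless one shows both that $E_\infty^{0,\bullet}=E_2^{0,\bullet}$ and that $E_2^{p,q}=0$ for $p\geq 1$. The paper achieves the latter through the injectivity statement $H(S_{<a}\cap S_a)\hookrightarrow\bigoplus_{b<a}H(S_b\cap S_a)$, which is the type~$D$ analogue of Khovanov's Lemma~5 and requires the careful compatibility of the cell decompositions with intersections built up in Lemma~\ref{distance_and_intersection}, Lemma~\ref{reduction_for_intersection}, Proposition~\ref{intersection_and_cells} and Corollary~\ref{decreasing_intersection_and_cells}. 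Your proposal does not supply a substitute for that lemma, so injectivity of $\tau$ does not follow.

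Two secondary remarks. First, even-degree concentration of $H(\widetilde S)$ is not automatic from having a partition of $\widetilde S$ into even-dimensional affine cells; one needs the partition to be a genuine paving (closures of cells unions of cells) compatible with the cover, which the paper verifies rather than asserts. Second, the surjectivity-by-dimension-count step is logically fine once injectivity is in hand (and $\dim Z(\mK_k)$ is indeed available from \cite{ES}), but the paper does not need any such external input: the inductive exactness argument gives both injectivity and surjectivity simultaneously, and the dimension formula $\dim H(\widetilde S)=2^k$ comes afterwards as Proposition~\ref{dimensionformula}, not as an ingredient.
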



To show Theorem \ref{tau_is_iso} we follow closely \cite{Khovanov_Springer}. However, we have to reprove most of the involved steps since there are a few difficulties when transferring the arguments to type $\rm D$.
We start by introducing a partial order on $\mB_k$ and a cell decomposition of $\widetilde{S}$ and show that it behaves nicely with respect to the partial order.

\subsection{Partial Order on Diagrams}
Define a partial order on $\mB_k$ as chains of arrows as follows: For $a,b \in \mB_k$ we write $a \rightarrow b$ if $a$ and $b$ only differ locally by one of the following moves. (The diagrams might have many more cups even separating the involved cups; we only depict the ones which change in the move from $a$, depicted in the first column, to $b$,  depicted in the second column. The move always exists whenever both sides of the diagram give an allowed cup diagram. For instance there can't be a dotted cup between the two cups of $a$ in case (I), since then $b$ would have a nested dotted cup.
\begin{equation}
\label{moves}
\begin{array}{c|c}
\begin{tikzpicture}[thick]

\node at (0.75,1) {$a$};
\draw[->] (1.75,1) -- +(1,0);
\node at (3.75,1) {$b$};

\node at (-.5,0) {I)};

\draw (0,0) node[above]{$i$} .. controls +(0,-.5) and +(0,-.5) .. +(.5,0) node[above]{$j$};
\draw (1,0) node[above]{$k$} .. controls +(0,-.5) and +(0,-.5) .. +(.5,0) node[above]{$l$};
\draw[->] (1.75,-.2) -- +(1,0);
\draw (3,0) node[above]{$i$} .. controls +(0,-1) and +(0,-1) .. +(1.5,0) node[above]{$l$};
\draw (3.5,0) node[above]{$j$} .. controls +(0,-.5) and +(0,-.5) .. +(.5,0) node[above]{$k$};
\end{tikzpicture}
&
\begin{tikzpicture}[thick]

\node at (0.75,1) {$a$};
\draw[->] (1.75,1) -- +(1,0);
\node at (3.75,1) {$b$};

\node at (-.5,0) {II)};

\draw (0,0) node[above]{$i$} .. controls +(0,-1) and +(0,-1) .. +(1.5,0) node[above]{$l$};
\draw (.5,0) node[above]{$j$} .. controls +(0,-.5) and +(0,-.5) .. +(.5,0) node[above]{$k$};
\draw[->] (1.75,-.2) -- +(1,0);
\draw (3,0) node[above]{$i$} .. controls +(0,-.5) and +(0,-.5) .. +(.5,0) node[above]{$j$};
\fill (3.25,-.36) circle(2.5pt);
\draw (4,0) node[above]{$k$} .. controls +(0,-.5) and +(0,-.5) .. +(.5,0) node[above]{$l$};
\fill (4.25,-.36) circle(2.5pt);
\end{tikzpicture}
\\
\begin{tikzpicture}[thick]

\node at (-.5,0) {III)};

\draw (0,0) node[above]{$i$} .. controls +(0,-.5) and +(0,-.5) .. +(.5,0) node[above]{$j$};
\fill (.25,-.36) circle(2.5pt);
\draw (1,0) node[above]{$k$} .. controls +(0,-.5) and +(0,-.5) .. +(.5,0) node[above]{$l$};
\draw[->] (1.75,-.2) -- +(1,0);
\draw (3,0) node[above]{$i$} .. controls +(0,-1) and +(0,-1) .. +(1.5,0) node[above]{$l$};
\fill (3.75,-.74) circle(2.5pt);
\draw (3.5,0) node[above]{$j$} .. controls +(0,-.5) and +(0,-.5) .. +(.5,0) node[above]{$k$};
\end{tikzpicture}
&
\begin{tikzpicture}[thick]
\node at (-.5,0) {IV)};

\draw (0,0) node[above]{$i$} .. controls +(0,-1) and +(0,-1) .. +(1.5,0) node[above]{$l$};
\draw (.5,0) node[above]{$j$} .. controls +(0,-.5) and +(0,-.5) .. +(.5,0) node[above]{$k$};
\fill (0.75,-.74) circle(2.5pt);
\draw[->] (1.75,-.2) -- +(1,0);
\draw (3,0) node[above]{$i$} .. controls +(0,-.5) and +(0,-.5) .. +(.5,0) node[above]{$j$};
\draw (4,0) node[above]{$k$} .. controls +(0,-.5) and +(0,-.5) .. +(.5,0) node[above]{$l$};
\fill (4.25,-.36) circle(2.5pt);
\end{tikzpicture}
\\
\begin{tikzpicture}[thick]
\node at (-.5,0) {I')};

\draw (0,0) node[above]{$i$} .. controls +(0,-.5) and +(0,-.5) .. +(.5,0) node[above]{$j$};
\draw (1,0) node[above]{$k$} -- +(0,-.8);
\draw[->] (1.25,-.2) -- +(1,0);
\draw (2.5,0) node[above]{$i$} -- +(0,-.8);
\draw (3,0) node[above]{$j$} .. controls +(0,-.5) and +(0,-.5) .. +(.5,0) node[above]{$k$};
\end{tikzpicture}
&
\begin{tikzpicture}[thick]
\node at (-.5,0) {II')};

\draw (0,0) node[above]{$i$} -- +(0,-.8);
\draw (.5,0) node[above]{$j$} .. controls +(0,-.5) and +(0,-.5) .. +(.5,0) node[above]{$k$};
\draw[->] (1.25,-.2) -- +(1,0);
\draw (2.5,0) node[above]{$i$} .. controls +(0,-.5) and +(0,-.5) .. +(.5,0) node[above]{$j$};
\fill (2.75,-.36) circle(2.5pt);
\draw (3.5,0) node[above]{$k$} -- +(0,-.8);
\fill (3.5,-.4) circle(2.5pt);
\end{tikzpicture}
\\
\begin{tikzpicture}[thick]
\node at (-.5,0) {III')};

\draw (0,0) node[above]{$i$} .. controls +(0,-.5) and +(0,-.5) .. +(.5,0) node[above]{$j$};
\fill (.25,-.36) circle(2.5pt);
\draw (1,0) node[above]{$k$} -- +(0,-.8);
\draw[->] (1.25,-.2) -- +(1,0);
\draw (2.5,0) node[above]{$i$} -- +(0,-.8);
\fill (2.5,-.4) circle(2.5pt);
\draw (3,0) node[above]{$j$} .. controls +(0,-.5) and +(0,-.5) .. +(.5,0) node[above]{$k$};
\end{tikzpicture}
&
\begin{tikzpicture}[thick]
\node at (-.5,0) {IV')};

\draw (0,0) node[above]{$i$} -- +(0,-.8);
\fill (0,-.4) circle(2.5pt);
\draw (.5,0) node[above]{$j$} .. controls +(0,-.5) and +(0,-.5) .. +(.5,0) node[above]{$k$};
\draw[->] (1.25,-.2) -- +(1,0);
\draw (2.5,0) node[above]{$i$} .. controls +(0,-.5) and +(0,-.5) .. +(.5,0) node[above]{$j$};
\draw (3.5,0) node[above]{$k$} -- +(0,-.8);
\fill (3.5,-.4) circle(2.5pt);
\end{tikzpicture}
\end{array}
\end{equation}

 We extend this to a partial order on $\mB_k$ by setting $a \prec b$ if there exists a finite chain of arrows $a \rightarrow c_1 \rightarrow \ldots \rightarrow c_r \rightarrow b$. Since the local moves listed above always preserve the parity of the number of dots, an element from $\mB_k^{\rm even}$ is never $\prec$-comparable with an element from $\mB_k^{\rm odd}$. On the other hand, it is easy to see that the arrows induce connected graphs on $\mB_k^{\rm even}$ and $\mB_k^{\rm odd}$. From now on we will work with the two sets separately (each of them could be seen as the analog of \cite{Khovanov_Springer}), and fix a refinement of these two partial orders to a total order, denoted by $<$.

\begin{remark}
{\rm The graphs on $\mB_k^{\rm even}$ resp. $\mB_k^{\rm odd}$ appear after doubling the arrows as the Ext-quiver of the category of perverse sheaves on isotropic Grassmannians, constructible with respect to the Schubert stratification and describe the microlocal geometry there, see \cite{ES} and \cite{Braden}.
}
\end{remark}

\begin{definition}
Given a cup diagram a cup is called {\it outer} if it is not nested in any other cup and does not contain any dotted cup to the right. We call it {\it inner} if it is not outer. The {\it degree of nesting} of a cup is defined inductively: it is zero if it is outer and $d$ if it is outer after removing all cups with degree of nesting strictly smaller than $d$. (Note that the local moves $a\rightarrow b$ never decrease the degree of nesting.)
\end{definition}

\begin{ex}
\label{excupdiag}
Every cup diagram has at least one outer cup, i.e. a cup with degree of nesting equal to zero, for instance the outer cups for \eqref{introcup} are the cup $(3,4)$ in i), the cup $(1,4)$ in ii), and both cups $(1,2)$ and $(3,4)$ in iii). In addition we have here cups with degree of nesting equal to $1$, namely the cup $(1,2)$ in i) and the cup $(2,3)$ in ii).
\end{ex}

\begin{ex}\label{coolpicture}
The partial ordering on $\mB_3^{\rm even}$, with outer cups in (dashed) red, is displayed below. For  $\mB_3^{\rm odd}$ just add or remove (whatever is possible) a dot on the cup passing through the leftmost point.

\begin{equation*}
\usetikzlibrary{arrows}
\begin{tikzpicture}[thick,scale=.7]

\draw (0,0)[red,dashed] .. controls +(0,-.5) and +(0,-.5) .. +(.5,0);
\draw (.75,0)[red,dashed] .. controls +(0,-.5) and +(0,-.5) .. +(.5,0);
\draw (1.5,0)[red,dashed] .. controls +(0,-.5) and +(0,-.5) .. +(.5,0);

\draw[->] (2.2, -.1) to +(1,.8);
\draw[->] (2.2, -.2) to [out=0,in=165] +(5.5,-1.2) to [out=-10,in=-150] +(2.5,.9);
\draw[->] (2.2, -.3) to +(1,-1);

\begin{scope}[xshift=3.5cm,yshift=1cm]
\draw (0,0)[red,dashed] .. controls +(0,-.85) and +(0,-.85) .. +(1.25,0);
\draw (.375,0) .. controls +(0,-.5) and +(0,-.5) .. +(.5,0);
\draw (1.5,0)[red,dashed] .. controls +(0,-.5) and +(0,-.5) .. +(.5,0);
\end{scope}

\begin{scope}[xshift=3.5cm,yshift=-1.2cm]
\draw (0,0)[red,dashed] .. controls +(0,-.5) and +(0,-.5) .. +(.5,0);
\draw (.75,0)[red,dashed] .. controls +(0,-.85) and +(0,-.85) .. +(1.25,0);
\draw (1.125,0) .. controls +(0,-.5) and +(0,-.5) .. +(.5,0);
\end{scope}

\draw[->] (5.8, .85) to +(1,.85);
\draw[->] (5.8, .75) to +(1,-.85);
\draw[->] (5.8, -1.35) to +(1,1.05);
\draw[->] (5.8, -1.45) to +(1,-1.05);

\begin{scope}[xshift=7cm,yshift=2cm]
\draw (0,0) .. controls +(0,-.5) and +(0,-.5) .. +(.5,0);
\fill (.25,-.35) circle(2.5pt);
\draw (.75,0)[red,dashed] .. controls +(0,-.5) and +(0,-.5) .. +(.5,0);
\fill (1,-.35)[red] circle(2.5pt);
\draw (1.5,0)[red,dashed] .. controls +(0,-.5) and +(0,-.5) .. +(.5,0);
\end{scope}

\begin{scope}[xshift=7cm]
\draw (0,0)[red,dashed] .. controls +(0,-1.2) and +(0,-1.2) .. +(2,0);
\draw (.375,0) .. controls +(0,-.5) and +(0,-.5) .. +(.5,0);
\draw (1.125,0) .. controls +(0,-.5) and +(0,-.5) .. +(.5,0);
\end{scope}

\begin{scope}[xshift=7cm,yshift=-2.4cm]
\draw (0,0) .. controls +(0,-.5) and +(0,-.5) .. +(.5,0);
\draw (.75,0) .. controls +(0,-.5) and +(0,-.5) .. +(.5,0);
\fill (1,-.35) circle(2.5pt);
\draw (1.5,0)[red,dashed] .. controls +(0,-.5) and +(0,-.5) .. +(.5,0);
\fill (1.75,-.35)[red] circle(2.5pt);
\end{scope}

\draw[->] (9.2, 1.8) to +(1,-.4);
\draw[->] (9.2, -.1) to +(1,1.3);
\draw[->] (9.2, -.2) to +(1,0);
\draw[->] (9.2, -.3) to +(1,-1.3);
\draw[<-] (9.2, -2.6) to +(1,.8);

\begin{scope}[xshift=10.6cm,yshift=1.6cm]
\draw (0,0) .. controls +(0,-.5) and +(0,-.5) .. +(.5,0);
\fill (.25,-.35) circle(2.5pt);
\draw (.75,0)[red,dashed] .. controls +(0,-.85) and +(0,-.85) .. +(1.25,0);
\fill (1.375,-.63)[red] circle(2.5pt);
\draw (1.125,0) .. controls +(0,-.5) and +(0,-.5) .. +(.5,0);
\end{scope}

\begin{scope}[xshift=10.6cm]
\draw (0,0)[red,dashed] .. controls +(0,-1.2) and +(0,-1.2) .. +(2,0);
\draw (.375,0) .. controls +(0,-.85) and +(0,-.85) .. +(1.25,0);
\draw (.75,0) .. controls +(0,-.5) and +(0,-.5) .. +(.5,0);
\end{scope}

\begin{scope}[xshift=10.6cm,yshift=-1.6cm]
\draw (0,0) .. controls +(0,-.85) and +(0,-.85) .. +(1.25,0);
\fill (.625,-.63) circle(2.5pt);
\draw (.375,0) .. controls +(0,-.5) and +(0,-.5) .. +(.5,0);
\draw (1.5,0)[red,dashed] .. controls +(0,-.5) and +(0,-.5) .. +(.5,0);
\fill (1.75,-.35)[red] circle(2.5pt);
\end{scope}

\draw[->] (12.9, 1.4) to +(1,-1.2);
\draw[->] (12.9, -.2) to +(1,0);
\draw[<-] (12.9, -1.8) to +(1,1.2);

\begin{scope}[xshift=14.3cm]
\draw (0,0) .. controls +(0,-.5) and +(0,-.5) .. +(.5,0);
\fill (.25,-.35) circle(2.5pt);
\draw (.75,0) .. controls +(0,-.5) and +(0,-.5) .. +(.5,0);
\draw (1.5,0)[red,dashed] .. controls +(0,-.5) and +(0,-.5) .. +(.5,0);
\fill (1.75,-.35)[red] circle(2.5pt);
\end{scope}
\end{tikzpicture}
\end{equation*}
\end{ex}

\subsection{The cell decomposition} \label{cell_decomp}
We will construct a cell decomposition of the topological Springer fibre by defining a paving of each $S_a$, $a\in \mB_k$, by even dimensional real spaces which we then show to behave well under intersections. To define a cell decomposition of $S_a$ for $a \in \mB_k$, we first attach to each  cup diagram $a\in\mB_k$ a directed graph $\Gamma_a$ as follows: The set of vertices
\begin{equation*}
\cV(\Gamma_a) = {\rm cups}(a)
\end{equation*}
is the set of cups in $a$; while the set of edges $\cE(\Gamma_a)$ is defined as follows: For two vertices $(i_1,j_1),(i_2,j_2) \in \cV(\Gamma_a)$ we put an arrow $(i_1,j_1) \rightarrow (i_2,j_2)$
if there exists a cup diagram $b \in \mB_k$ with $b \rightarrow a$
such that $a$ is obtained from $b$ by a local move of type \textbf{I)}-\textbf{IV)} at the positions $i_1,i_2,j_1,j_2$ and furthermore we demand that the degree of nesting of $(i_2,j_2)$ is greater than the one of $(i_1,j_1)$.

\begin{ex}
An example of a cup diagram $a$ with associated graph $\Gamma_a$:
\begin{center}
\begin{tikzpicture}[thick,scale=0.9]
\begin{scope}
\node at (2.5,.75) {$a$};
\node at (0,.25) {\tiny $1$};
\node at (.5,.25) {\tiny $2$};
\node at (1,.25) {\tiny $3$};
\node at (1.5,.25) {\tiny $4$};
\node at (2,.25) {\tiny $5$};
\node at (2.5,.25) {\tiny $6$};
\node at (3,.25) {\tiny $7$};
\node at (3.5,.25) {\tiny $8$};
\node at (4,.25) {\tiny $9$};
\node at (4.5,.25) {\tiny $10$};
\node at (5,.25) {\tiny $11$};
\node at (5.5,.25) {\tiny $12$};
\node at (6,.25) {\tiny $13$};

\draw (0,0) .. controls +(0,-2) and +(0,-2) .. +(3.5,0) ;
\draw (.5,0) .. controls +(0,-1) and +(0,-1) .. +(1.5,0);
\draw (1,0) .. controls +(0,-.5) and +(0,-.5) .. +(.5,0);
\draw (2.5,0) .. controls +(0,-.5) and +(0,-.5) .. +(.5,0);

\draw (4,0) .. controls +(0,-1) and +(0,-1) .. +(1.5,0);
\draw (4.5,0) .. controls +(0,-.5) and +(0,-.5) .. +(.5,0);
\fill (4.75,-.74) circle(2.5pt);

\draw (6,0) -- +(0,-1.5);

\end{scope}

\begin{scope}[xshift=6.25cm]
\node at (3.5,.75) {$\Gamma_a$};
\node at (6,1) {degree of};
\node at (6,.5) {nesting};

\draw[dashed, thin] (.75,0) -- +(5.25,0) node[right]{$3$};
\draw[dashed, thin] (.75,-1) -- +(5.25,0) node[right]{$2$};
\draw[dashed, thin] (.75,-2) -- +(5.25,0) node[right]{$1$};
\draw[dashed, thin] (.75,-3) -- +(5.25,0) node[right]{$0$};

\draw (1.25,0) circle (.05);
\node at (.9,.15) {\tiny $(3,4)$};
\draw[<-] (1.25,-.08) -- +(0,-.84);

\draw (1.25,-1) circle (.05);
\node at (.9,-.85) {\tiny $(2,5)$};
\draw[<-] (1.28,-1.06) -- +(.44,-.87);
\draw (2.75,-1) circle (.05);
\node at (2.4,-.85) {\tiny $(6,7)$};
\draw[<-] (2.72,-1.06) -- +(-.92,-.87);

\draw (1.75,-2) circle (.05);
\node at (1.3,-1.85) {\tiny $(1,8)$};
\draw[<-] (1.81,-2.04) -- +(2.86,-.93);
\draw (4.75,-2) circle (.05);
\node at (5.25,-1.85) {\tiny $(10,11)$};
\draw[<-] (4.75,-2.08) -- +(0,-.84);

\draw (4.75,-3) circle (.05);
\node at (5.2,-2.85) {\tiny $(9,12)$};
\end{scope}
\end{tikzpicture}
\end{center}
\end{ex}

By construction $\Gamma_a$ is a forest, i.e. a union of trees. The roots $\cR(\Gamma_a)$ of $\Gamma_a$ are precisely the outer cups. Any cup is either a root or it is the target of a unique arrow, hence $\#\cR(\Gamma_a)+\#\cE(\Gamma_a) = \lfloor k/2 \rfloor$ and we have a natural bijection between $\cR\cup\cE$ and ${\rm cups}(a)$.

Now assign to each subset $J \subset \cR(\Gamma_a) \cup \cE(\Gamma_a)$ the subspace $C_J'$ of $S^{{\rm cups}(a)}$ given by all elements $(x_c)_{c \in {\rm cups}(a)}$ satisfying the following
\begin{enumerate}[(C1)]
\item If $c \in \cR(\Gamma_a) \cap J$ then $x_c = \pt$,
\item if $c \in \cR(\Gamma_a)$ but $c \not\in J$ then $x_c \neq \pt$,
\item if $(c \rightarrow d) \in \cE(\Gamma_a) \cap J$ then $x_c = x_d$,
\item if $(c \rightarrow d) \not\in \cE(\Gamma_a) \cap J$ then $x_c \neq x_d$.
\end{enumerate}

\begin{ex}
Consider the following cup diagram $a$ with its graph $\Gamma_a$
\begin{eqnarray*}
\begin{tikzpicture}[thick]
\draw (0,0) node[above]{$1$} .. controls +(0,-.5) and +(0,-.5) .. +(.5,0) node[above]{$2$};
\fill (0.25,-.36) circle(2.5pt);
\draw (1,0) node[above]{$3$} .. controls +(0,-.5) and +(0,-.5) .. +(.5,0) node[above]{$4$};
\fill (1.25,-.36) circle(2.5pt);

\draw (3.25,0) circle (.05);
\node at (2.9,.15) {\tiny $(1,2)$};
\draw[<-] (3.25,-.08) -- +(0,-.84);

\draw (3.25,-1) circle (.05);
\node at (2.9,-.85) {\tiny $(3,4)$};
\end{tikzpicture}.
\end{eqnarray*}
We have one root $\alpha$ and one edge $e$. The possible subsets $J$ are thus $\emptyset$, $\{\alpha\}$, $\{e\}$, and $\{\alpha,e\}$. The corresponding cells in $S \times S\cong S_a$ are then
$$\begin{array}{r||c|c}
J & C_J'\subset S\times S& C_j=(\Psi_a)^{-1}(C_J') \vspace{0.1cm}\\
\hline
\emptyset &\{(x,y)\mid x\neq y, y\neq \pt\}& \left\lbrace (x,-x,-y,y) \mid y \neq \pt,x \neq y \right\rbrace,\\
\{\alpha\} &\{(x,\pt)\mid y\not=\pt\}& \left\lbrace (x,-x,-\pt,\pt) \mid x \neq \pt \right\rbrace, \\
\{e\} &\{(x,x)\mid x\not=p\}& \left\lbrace (x,-x,-x,x) \mid x \neq \pt \right\rbrace,\\
\{\alpha,e\} &\{(\pt,\pt)\}& \left\lbrace (\pt,-\pt,-\pt,\pt) \right\rbrace.
\end{array}$$
\end{ex}

\begin{lemma} \label{decomp_disjoint}
With the notations above there is a decomposition
$$S^{{\rm cups}(a)} = \bigsqcup_{J \subset \cR(\Gamma_a)\cup \cE(\Gamma_a)} C_J'$$
 into disjoint affine cells $C_J'$ homeomorphic to $\mR^{2(n-|J|)}$.
\end{lemma}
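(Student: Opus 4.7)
The plan is to prove the statement in two steps: first show that the $C_J'$ are pairwise disjoint and cover $S^{{\rm cups}(a)}$, and second show that each $C_J'$ is homeomorphic to a real affine space of the predicted dimension $2(n-|J|)$, where $n = |\cR(\Gamma_a)| + |\cE(\Gamma_a)| = |{\rm cups}(a)|$. The crucial structural input is that $\Gamma_a$ is a forest whose roots are precisely the outer cups; in particular, every cup is either a root or is the target of a unique arrow from a parent of strictly smaller degree of nesting.

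For the first step, given any tuple $(x_c)_{c \in {\rm cups}(a)} \in S^{{\rm cups}(a)}$, I would define
\[
J := \{c \in \cR(\Gamma_a) \mid x_c = \pt\} \cup \{(c \to d) \in \cE(\Gamma_a) \mid x_c = x_d\}.
\]
By construction this $J$ satisfies conditions (C1)--(C4), so the tuple lies in $C_J'$; conversely, any other choice $J' \neq J$ would contradict one of the strict equalities or strict inequalities in (C1)--(C4). This simultaneously shows that the $C_J'$ cover $S^{{\rm cups}(a)}$ and that the cover is disjoint.

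For the second step, I would use the forest structure to build a homeomorphism by traversing $\Gamma_a$ in a root-to-leaf order. At each root $c$, the coordinate $x_c$ is either forced to equal $\pt$ (if $c \in J$) or ranges freely over $S \setminus \{\pt\} \cong \mR^2$. At any non-root cup $c$ with unique incoming arrow $e$ from a parent $d$ already parametrized, the coordinate $x_c$ is either forced to equal $x_d$ (if $e \in J$) or ranges freely over $S \setminus \{x_d\} \cong \mR^2$. Since the number of roots plus the number of edges of $\Gamma_a$ equals $|{\rm cups}(a)| = n$, exactly $n - |J|$ of these coordinates remain free, each contributing a factor $\mR^2$. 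Composing these parametrizations in an order compatible with the forest structure gives the desired homeomorphism $C_J' \cong \mR^{2(n-|J|)}$.

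The argument is largely bookkeeping once the forest structure is leveraged; the main subtle point worth verifying is that the root-to-leaf parametrization is truly well-defined, that is, that each non-root cup has a \emph{unique} parent (so that the inductive step assigns to $x_c$ exactly one previously-determined coordinate). This uniqueness is immediate from the fact that $\Gamma_a$ is a forest, which in turn reflects that each cup of positive degree of nesting is produced from a uniquely determined local move I)--IV) or I')--IV') on cups of strictly smaller nesting degree. Once this is in place, no further work is needed.
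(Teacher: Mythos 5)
Your proof is correct and takes essentially the same approach as the paper's: both construct, for each point, the unique index set $J$ it belongs to via (C1)--(C4) (giving disjointness and exhaustiveness), and both compute the dimension by parametrizing coordinates from roots outward along the forest, with each free cup contributing an $\mR^2$. The only minor point left implicit in your sketch and in the paper alike is that the iterated parametrization yields a genuine homeomorphism rather than merely a bijection (the fiber $S\setminus\{x_d\}$ varies with the base, but the base is contractible so the bundle is trivial), and you do not address the closure formula $\overline{C_J'}=\bigcup_{J\subset I} C_I'$ that the paper records in passing, though that is not part of the lemma's statement.
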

\begin{proof}
The above decomposition is by construction disjoint. To see it is exhaustive we construct to each point $P\in S^{{\rm cups}(a)}$ a set $J$ such that $P\in C_J'$. First note that conditions (C1) and (C2) precisely define $J \cap \cR(\Gamma_a)$. Now we move upwards along the edges which uniquely determines $J \cap \cE(\Gamma)$ via conditions (C3) and (C4) by comparing the values at the two ends of each edge. The construction gives spaces homeomorphic to $\mR^{2(n-|J|)}$ and it is one verifies that $$\overline{C_J'} = \bigcup_{J \subset I} C_I'.$$
The lemma follows.
\end{proof}

Pushing forward along \eqref{local_coordinates} gives a cell decomposition
\begin{eqnarray}
\label{celldec}
S_a = \bigsqcup_{J \subset \cR(\Gamma_a)\cup \cE(\Gamma_a)} C_J,
\end{eqnarray}
with $C_J = (\Psi_a)^{-1}(C_J')$.

\subsection{Compatibility with intersections}
\begin{definition}
\label{distance}
For $a, b\in\mB_k$ we define the \emph{distance} $d(a,b)$ between $a$ and $b$ as the minimal length of a non-directed chain of arrows connecting $a$ and $b$. If no such chain exists we put $d(a,b)=\infty$.
\end{definition}

\begin{remark}{\rm
Since all local moves preserve parity we obviously have $d(a,b) = \infty$ if $a$ and $b$ have different parity. Otherwise $d(a,b)$ is  always finite, since the graphs for $\mB_k^{even}$ and  $\mB_k^{odd}$ are finite and connected.
}\end{remark}

\begin{lemma} \label{lower_bound_exists}
Let $a,b \in \mB_k$ with $d(a,b) < \infty$. Then there exists $c$ such that
$d(a,b) = d(a,c) + d(c,b)$ and $a \succ c \prec b$.
\end{lemma}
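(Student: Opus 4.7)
Plan. I would prove this by combining a downward confluence (reverse diamond) property for the arrows defining $\prec$ with a straightening procedure applied to a minimal chain.

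First, I would establish the following reverse diamond lemma: if $u \neq u'$ both admit arrows $u \rightarrow v$ and $u' \rightarrow v$, then there exists $w$ with arrows $w \rightarrow u$ and $w \rightarrow u'$. The proof is by inspection of the eight local moves \textbf{I)}--\textbf{IV)} and \textbf{I')}--\textbf{IV')} in \eqref{moves}. When the two moves witnessing $u \rightarrow v$ and $u' \rightarrow v$ involve disjoint sets of marked points of $v$, the corresponding reverse moves commute and $w$ is obtained by applying both of them simultaneously. When they share some points, one enumerates case by case how two of the eight local patterns can share endpoints and exhibits an appropriate pair of reverse moves producing $w$, always respecting the constraint that every dot in the resulting cup diagram is accessible from the left boundary of $R$.

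Now fix a minimal chain $a = x_0 \leftrightarrow x_1 \leftrightarrow \cdots \leftrightarrow x_n = b$ realizing $n = d(a,b)$. Call an index $0 < i < n$ a \emph{peak} of the chain if both adjacent arrows point inward, i.e.\ $x_{i-1} \rightarrow x_i$ and $x_{i+1} \rightarrow x_i$. The strategy is to straighten the chain until no peaks remain; then the chain has the V-shape $a = x_0 \succ \cdots \succ x_j \prec \cdots \prec x_n = b$ for some $0 \leq j \leq n$, and $c := x_j$ satisfies $a \succ c \prec b$ together with
\begin{equation*}
d(a,c)+d(c,b) \;\leq\; j+(n-j) \;=\; n \;=\; d(a,b) \;\leq\; d(a,c)+d(c,b),
\end{equation*}
using the two monotone halves of the V-shape as witnesses and the triangle inequality.

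To straighten, pick any peak $i$; by minimality of the chain $x_{i-1} \neq x_{i+1}$, so the reverse diamond lemma produces $w$ with $w \rightarrow x_{i-1}$ and $w \rightarrow x_{i+1}$. Replacing $x_i$ by $w$ yields a new non-directed chain from $a$ to $b$ of the same length $n$ in which the element at position $i$ lies strictly below the old one in the partial order, since $w \prec x_{i-1} \prec x_i$. For termination, fix a rank function $r \colon \mB_k \rightarrow \mN$ with $u \prec v \Rightarrow r(u) < r(v)$ (which exists because $\mB_k$ is finite); the quantity $\sum_{i=0}^{n} r(x_i)$ strictly decreases at each straightening step, so after finitely many iterations one arrives at a peak-free, hence V-shaped, chain, finishing the proof.

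The main obstacle will be the case analysis for the reverse diamond lemma. The disjoint case is immediate, but the overlapping cases require a careful enumeration of how any two of the eight local moves can share cups, rays, or endpoints, together with verification that the candidate predecessor $w$ is an admissible decorated cup diagram (in particular, that the dot-accessibility condition is preserved).
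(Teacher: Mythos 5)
Your high-level plan — take a minimal non-directed chain from $a$ to $b$ and straighten its peaks to produce a V-shaped chain through the desired $c$ — is the same as the paper's. The gap is that the reverse diamond lemma you build it on is false. Take $k=6$ and the undecorated cup diagrams $u=\{(1,2),(3,4),(5,6)\}$, $u'=\{(1,6),(2,3),(4,5)\}$ and $v=\{(1,6),(2,5),(3,4)\}$. Move I) applied to the cups $(1,2),(5,6)$ gives $u\to v$, and move I) applied to $(2,3),(4,5)$ gives $u'\to v$, so $u\to v\leftarrow u'$ is a peak, and it sits in a minimal chain since $d(u,u')=2$. But every one of the eight local moves produces a diagram containing either a nested pair of cups, a dotted cup, or a ray; $u$ has none of these, so $u$ is a source of the quiver and no $w$ with $w\to u$ exists at all, let alone one also satisfying $w\to u'$.

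What exists instead is the vertex $a'=\{(1,4),(2,3),(5,6)\}$ with $u\to a'\to u'$, so the quadruple $\{u,v,u',a'\}$ is a twisted square in which $a'$ lies \emph{above} $u$. This is exactly the phenomenon that Braden's Lemma 1.8.3 (which the paper cites and transports to type $D$) allows for; the paper's figure draws the edges from $a'$ as double-headed arrows and explicitly notes $a'$ may sit above one of $a_{i-1},a_{i+1}$. Substituting $a'$ for the peak yields a monotone segment rather than a valley, so your termination criterion (the rank sum drops because $w\prec x_{i-1}$) is also unavailable: here $a'\succ u=x_{i-1}$. The paper's proof is correspondingly more careful — it always straightens a peak with $\prec$-maximal middle term and inducts on the height of that maximum, noting that the substitution can create new peaks only with strictly smaller middle terms. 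Any repair of your argument must likewise replace the clean diamond with this weaker twisted-square property and supply a compatible termination measure.
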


\begin{proof}
Let $(a = a_0,a_1,\ldots,a_r=b)$ be a minimal non-directed chain from $a$ to $b$. If $a_0\succ a_1\succ \cdots\succ a_i\prec a_{i+1}\prec\cdots\prec a_r$ for some $i$ then we set $c=a_i$ and nothing is to do. Otherwise there exists some subsequence $a_{i-1} \rightarrow a_i \leftarrow a_{i+1}$ and the idea of the proof is to successively eliminate all such subsequences without changing the length of the chain. From the set of all such subsequences, choose one (not necessarily uniquely defined) such that its middle term $a_i$ is maximal in the order.

Due to the minimality of the chain we have $a_{i-1} \neq a_{i+1}$ and there exists by \cite[Lemma 1.8.3]{Braden} with the identification from \cite{ES} a diamond


\begin{eqnarray*}
\begin{tikzpicture}[thick,scale=0.5]
\draw[<-] (1,-1) node[above]{\quad $a_i$} -- +(-1,-1);
\draw[<-] (1.5,-1) -- +(1,-1);
\draw[<->] (0,-2.7) node[above]{$a_{i-1}$} -- +(1,-1) node[below]{\quad $a'$};
\draw[<->] (2.5,-2.7) node[above]{$a_{i+1}$} -- +(-1,-1);
\end{tikzpicture}
\end{eqnarray*}
with the property that either $a'>a_{i-1}$ or $a' > a_{i+1}$ (indicated by the double arrows). We now substitute $a_i$ with $a'$ in our chain of arrows. If this procedure creates a subsequence such that $a_{i-2} \rightarrow a_{i-1} \leftarrow a'$ or $a' \rightarrow a_{i+1} \leftarrow a_{i+2}$, then both $a_{i-1}$ and $a_{i+1}$ are lower in the order and we proceed by induction.
In the end this will produce a sequence of the form $a \leftarrow a_1'
\leftarrow \ldots \leftarrow a_i' \rightarrow \ldots \rightarrow a_{r-1}'
\rightarrow b$, which is still minimal and $a_i'$ satisfies the conditions
for the element $c$. (Note that $a_i'$ can be equal to $a$ or $b$.)
\end{proof}

As in type $\rm A$, the distance function has a nice diagrammatical interpretation also showing the consistency of our notation $d(a,b)$:
\begin{lemma}
\label{mindegree}
Let $a,b\in\mB_k$ with $d(a,b)<\infty$ and let $m=\lfloor \frac{k}{2}\rfloor$. If $b^*a$ can be oriented then the two definitions of $d(a,b)$ (Definition \ref{min} and \ref{distance}) agree, and $m-d(a,b)$ equals the numbers  of circles in $b^*a$.
\end{lemma}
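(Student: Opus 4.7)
The plan is to identify both notions of distance with the common quantity $m-c$, where $c$ denotes the number of closed circles in the circle diagram $b^*a$.

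For the minimal degree $d(a,b)$ of Definition \ref{min}, I would invoke the combinatorial degree formula of \cite{ES}. The minimal degree basis vector ${}_a\mathbbm{1}_b$ is obtained by orienting each closed circle anticlockwise (so that it contributes zero clockwise cups and caps) and each non-closed component along its unique forced orientation. A direct counting over the components, using that the total number of cups in $a$ plus caps in $b^*$ equals $2m$ and that each circle has equal numbers of cups and caps, yields that the total minimal degree equals $m-c$.

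For the graph distance $d(a,b)$ of Definition \ref{distance}, I would argue by induction on $m-c$. In the base case $m=c$, every closed circle in $b^*a$ must consist of a single cup of $a$ paired with a same-position cap of $b^*$; orientability then forces matched dot parities at each such pair and at each ray, so $a=b$ and both sides vanish. The inductive step combines two ingredients. First, a surgery lemma: any single local move from the list \eqref{moves} is a surgery on four endpoints in the circle diagram, and by a case-by-case inspection it changes the number of circles in $b^*(\cdot)$ by at most one; combined with Lemma \ref{lower_bound_exists}, which lets us replace arbitrary chains by zigzags through a minimal element $e\prec a,b$, this yields the lower bound $d(a,b)\geq m-c$. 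Second, whenever $c<m$ I would exhibit explicitly a local move $a\to a'$ or $a'\to a$ producing $a'\in\mB_k$ for which $b^*a'$ has exactly $c+1$ circles; iterating produces a chain of length $m-c$ realizing the matching upper bound.

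The main obstacle lies in the explicit construction for the upper bound. One must show that inside any non-trivial component of $b^*a$ (which exists when $c<m$) there are always four endpoints of cups in $a$ on which one of the local moves of \eqref{moves} can be performed in the appropriate direction, creating precisely one new circle while respecting both the dot-placement rules and the nesting-increase condition built into the direction of the arrow. This requires a careful case analysis across all eight types of moves, running parallel to the type $A$ treatment in \cite{Khovanov_Springer}, but with extra bookkeeping for the type $D$ decorations and the monotonicity of the nesting degree.
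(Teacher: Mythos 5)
Your high-level plan---identify both quantities with $m-c$, where $c$ is the number of circles in $b^*a$---matches the paper's, but two of the three moving parts have real problems.

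\emph{The degree of an anticlockwise circle is not zero.} You assert that orienting a closed circle anticlockwise makes it contribute zero clockwise cups and caps, and then invoke a counting which is supposed to produce $m-c$. These two statements are inconsistent: if every circle contributed $0$ the total would be $0$. The actual fact is that an anticlockwise circle containing $p$ cups and $p$ caps has degree $p-1$, because its nested cups/caps are forced to be clockwise in the sense of the $\up\down$/$\down\down$ rule even when the outermost arc is anticlockwise. (Already for $a$ with cups $(1,4),(2,3)$ and $b$ with cups $(1,2),(3,4)$ the anticlockwise weight $\down\up\down\up$ makes $(2,3)$ clockwise, giving degree $1=2-1$, not $0$.) Summing $p_i-1$ over the circles is what produces $m-c$. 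Moreover this degree formula is exactly what needs proof in type $D$: the dots constrain which orientations exist and change the local $\up\down$ patterns, and the anticlockwise orientation need not obviously remain minimal. The paper establishes the required degree count not by citing a formula but by the kink-straightening moves together with the ``dot removal trick'', reducing step by step to the undotted case while controlling the degree. You would need to either prove the $p-1$ formula directly in the decorated setting or reproduce that reduction.

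\emph{The upper bound for the graph distance is not actually constructed.} For the lower bound you argue correctly that each move changes the number of circles by at most one, which combined with Lemma~\ref{lower_bound_exists} gives $d(a,b)\ge m-c$. But you explicitly defer the upper bound to a ``careful case analysis'' that you do not carry out, and this is where the substance of the type $D$ argument lies: one must produce, for any nontrivial circle, a legal move (respecting dot placement and the monotone nesting condition built into the arrows) that splits it. The paper does this by distinguishing whether the circle contains two outer cups of $a$ (apply a move merging/splitting those two) or, if not, locating two cups on the circle paired by a cap of $b^*$ with one of them outer, and applying a move there. Without this construction the claim $d(a,b)\le m-c$ is unproved, so the identification of the two distances is incomplete.
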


\begin{proof}
We first claim that the minimal degree equals $m-\gamma=\lfloor \frac{k}{2}\rfloor-\gamma$, where $\gamma$ is the number of circles in  $b^*a$.
We argue by induction on $k$. The base cases $k=1$ or $k=2$ are easy to check. If $a=b$ then $a^*a$ has $m$ circles each containing exactly one cup and cap. Hence orienting each of them anti-clockwise gives the minimal degree $0$ and the claim follows. If $a\not=b$ then the diagram $a^*b$ contains at least one kink. If we can find an undecorated kink we remove it by the following straightening rules:
\begin{eqnarray*}
\begin{tikzpicture}[scale=0.9,thick,decoration={zigzag,pre length=1mm,post length=1mm}]
\begin{scope}
\draw (0,0) -- +(0,-.3);
\draw[dotted, decorate] (0,-.3) -- +(0,-.8);
\draw (0,0) .. controls +(0,.5) and +(0,.5) .. +(.5,0);
\draw (.5,0) .. controls +(0,-.5) and +(0,-.5) .. +(.5,0);
\draw (1,0) -- +(0,.3);
\draw[dotted, decorate] (1,.3) -- +(0,.8);

\draw[thin] (-.25,0) -- +(1.5,0);

\node at (.005,-.1) {$\up$};
\node at (1.005,-.1) {$\up$};
\node at (.505,.1) {$\down$};

\draw[->, decorate] (1.35,0) -- +(0.5,0);

\begin{scope}[xshift=-0.8cm]
\draw[thin] (2.75,0) -- +(.5,0);
\draw (3,0) -- +(0,.3);
\draw[dotted, decorate] (3,.3) -- +(0,.8);
\draw (3,0) -- +(0,-.3);
\draw[dotted, decorate] (3,-.3) -- +(0,-.8);
\node at (3.005,-.1) {$\up$};
\end{scope}
\end{scope}

\begin{scope}[xshift=3.5cm]
\draw (0,0) -- +(0,-.3);
\draw[dotted, decorate] (0,-.3) -- +(0,-.8);
\draw (0,0) .. controls +(0,.5) and +(0,.5) .. +(.5,0);
\draw (.5,0) .. controls +(0,-.5) and +(0,-.5) .. +(.5,0);
\draw (1,0) -- +(0,.3);
\draw[dotted, decorate] (1,.3) -- +(0,.8);

\draw[thin] (-.25,0) -- +(1.5,0);

\node at (.005,.1) {$\down$};
\node at (1.005,.1) {$\down$};
\node at (.505,-.1) {$\up$};

\draw[->, decorate] (1.35,0) -- +(0.5,0);

\begin{scope}[xshift=-0.8cm]
\draw[thin] (2.75,0) -- +(.5,0);
\draw (3,0) -- +(0,.3);
\draw[dotted, decorate] (3,.3) -- +(0,.8);
\draw (3,0) -- +(0,-.3);
\draw[dotted, decorate] (3,-.3) -- +(0,-.8);
\node at (3.005,.1) {$\down$};
\end{scope}
\end{scope}

\begin{scope}[xshift=7.5cm]
\draw (0,0) -- +(0,.3);
\draw[dotted, decorate] (0,.3) -- +(0,.8);
\draw (0,0) .. controls +(0,-.5) and +(0,-.5) .. +(.5,0);
\draw (.5,0) .. controls +(0,.5) and +(0,.5) .. +(.5,0);
\draw (1,0) -- +(0,-.3);
\draw[dotted, decorate] (1,-.3) -- +(0,-.8);

\draw[thin] (-.25,0) -- +(1.5,0);

\node at (.005,-.1) {$\up$};
\node at (1.005,-.1) {$\up$};
\node at (.505,.1) {$\down$};

\draw[->, decorate] (1.35,0) -- +(0.5,0);
\begin{scope}[xshift=-0.8cm]

\draw[thin] (2.75,0) -- +(.5,0);
\draw (3,0) -- +(0,.3);
\draw[dotted, decorate] (3,.3) -- +(0,.8);
\draw (3,0) -- +(0,-.3);
\draw[dotted, decorate] (3,-.3) -- +(0,-.8);
\node at (3.005,-.1) {$\up$};
\end{scope}
\end{scope}

\begin{scope}[xshift=11cm]
\draw (0,0) -- +(0,.3);
\draw[dotted, decorate] (0,.3) -- +(0,.8);
\draw (0,0) .. controls +(0,-.5) and +(0,-.5) .. +(.5,0);
\draw (.5,0) .. controls +(0,.5) and +(0,.5) .. +(.5,0);
\draw (1,0) -- +(0,-.3);
\draw[dotted, decorate] (1,-.3) -- +(0,-.8);

\draw[thin] (-.25,0) -- +(1.5,0);

\node at (.005,.1) {$\down$};
\node at (1.005,.1) {$\down$};
\node at (.505,-.1) {$\up$};

\draw[->, decorate] (1.35,0) -- +(0.5,0);

\begin{scope}[xshift=-0.8cm]
\draw[thin] (2.75,0) -- +(.5,0);
\draw (3,0) -- +(0,.3);
\draw[dotted, decorate] (3,.3) -- +(0,.8);
\draw (3,0) -- +(0,-.3);
\draw[dotted, decorate] (3,-.3) -- +(0,-.8);
\node at (3.005,.1) {$\down$};
\end{scope}
\end{scope}
\end{tikzpicture}
\end{eqnarray*}

This decreases $m$ and the degree function $d(a,b)$ by one, but keeps the number of circles and we are done by induction. If there is no such undecorated kink, then there is at least one decorated kink. The kink can be followed on both sides by rays which means it is part of a subdiagram from the following list (recalling that dotted cups can't be to the right of rays):

\begin{eqnarray*}
\begin{tikzpicture}[scale=0.9,thick,decoration={zigzag,pre length=1mm,post length=1mm}]

\begin{scope}
\draw[thin] (-.25,0) -- +(1.5,0);
\draw (0,0) -- +(0,-.4);
\draw[dotted, decorate] (0,-.4) -- +(0,-.7);
\fill (0,-.3) circle(2pt);
\draw (0,0) .. controls +(0,.5) and +(0,.5) .. +(.5,0);
\fill (.25,.365) circle(2pt);
\draw (.5,0) .. controls +(0,-.5) and +(0,-.5) .. +(.5,0);
\draw (1,0) -- +(0,.4);
\draw[dotted, decorate] (1,.4) -- +(0,.7);
\node at (.005,-.1) {$\up$};
\node at (.51,-.1) {$\up$};
\node at (1.005,.1) {$\down$};

\draw[->, decorate] (1.35,0) -- +(0.5,0);

\draw[thin] (1.95,0) -- +(.5,0);
\draw (2.2,-.4) -- +(0,0.8);
\draw[dotted, decorate] (2.2,.4) -- +(0,.7);
\draw[dotted, decorate] (2.2,-.4) -- +(0,-.7);
\node at (2.215,.1) {$\down$};
\end{scope}

\begin{scope}[xshift=3.5cm]
\draw[thin] (-.25,0) -- +(1.5,0);
\draw (0,0) -- +(0,-.4);
\draw[dotted, decorate] (0,-.4) -- +(0,-.7);
\draw (0,0) .. controls +(0,.5) and +(0,.5) .. +(.5,0);
\fill (.25,.365) circle(2pt);
\draw (.5,0) .. controls +(0,-.5) and +(0,-.5) .. +(.5,0);
\draw (1,0) -- +(0,.4);
\draw[dotted, decorate] (1,.4) -- +(0,.7);
\fill (1,.3) circle(2pt);
\node at (.01,.1) {$\down$};
\node at (.505,.1) {$\down$};
\node at (1.01,-.1) {$\up$};

\draw[->, decorate] (1.35,0) -- +(0.5,0);

\draw[thin] (1.95,0) -- +(.5,0);
\draw (2.2,-.4) -- +(0,0.8);
\draw[dotted, decorate] (2.2,.4) -- +(0,.7);
\draw[dotted, decorate] (2.2,-.4) -- +(0,-.7);
\node at (2.205,.1) {$\down$};
\end{scope}

\begin{scope}[xshift=7.5cm]
\draw[thin] (-.25,0) -- +(1.5,0);
\draw (0,0) -- +(0,.4);
\draw[dotted, decorate] (0,.4) -- +(0,.7);
\draw (0,0) .. controls +(0,-.5) and +(0,-.5) .. +(.5,0);
\draw (.5,0) .. controls +(0,.5) and +(0,.5) .. +(.5,0);
\fill (.25,-.365) circle(2pt);
\draw (1,0) -- +(0,-.4);
\draw[dotted, decorate] (1,-.4) -- +(0,-.7);
\fill (1,-.3) circle(2pt);
\node at (.005,.1) {$\down$};
\node at (.51,.1) {$\down$};
\node at (1.005,-.1) {$\up$};

\draw[->, decorate] (1.35,0) -- +(0.5,0);

\draw[thin] (1.95,0) -- +(.5,0);
\draw (2.2,-.4) -- +(0,0.8);
\draw[dotted, decorate] (2.2,.4) -- +(0,.7);
\draw[dotted, decorate] (2.2,-.4) -- +(0,-.7);
\node at (2.205,.1) {$\down$};
\end{scope}

\begin{scope}[xshift=11.cm]
\draw[thin] (-.25,0) -- +(1.5,0);
\draw (0,0) -- +(0,.4);
\draw[dotted, decorate] (0,.4) -- +(0,.7);
\fill (0,.3) circle(2pt);
\draw (0,0) .. controls +(0,-.5) and +(0,-.5) .. +(.5,0);
\fill (.25,-.365) circle(2pt);
\draw (.5,0) .. controls +(0,.5) and +(0,.5) .. +(.5,0);
\draw (1,0) -- +(0,-.4);
\draw[dotted, decorate] (1,-.4) -- +(0,-.7);
\node at (.005,-.1) {$\up$};
\node at (.505,-.1) {$\up$};
\node at (1.005,.1) {$\down$};

\draw[->, decorate] (1.35,0) -- +(0.5,0);

\draw[thin] (1.95,0) -- +(.5,0);
\draw (2.2,-.4) -- +(0,0.8);
\draw[dotted, decorate] (2.2,.4) -- +(0,.7);
\draw[dotted, decorate] (2.2,-.4) -- +(0,-.7);
\node at (2.21,.1) {$\down$};
\end{scope}

\end{tikzpicture}
\end{eqnarray*}

Then we apply the ``dot removal trick'', that means we remove the two dots and adjust the orientation in the unique possible way. Observe that the degree of the diagram is not changed. The result contains an undecorated kink and we can argue by induction as above.

In all other cases, the kink is part of a subdiagram from the following list (recalling the assumption that there is no undecorated kink and also the rules for placing the dots):
\begin{eqnarray*}
\begin{tikzpicture}[thick,decoration={zigzag,pre length=1mm,post length=1mm}]
\begin{scope}
\draw (0,0) -- +(0,-.3);
\draw[dotted,decorate] (0,-.3) -- +(0,-.8);
\draw (1.5,0) -- +(0,-.3);
\draw[dotted,decorate] (1.5,-.3) -- +(0,-.8);
\draw (0,0) .. controls +(0,.5) and +(0,.5) .. +(.5,0);
\fill (.25,.365) circle(2pt);
\draw (.5,0) .. controls +(0,-.5) and +(0,-.5) .. +(.5,0);
\draw (1,0) .. controls +(0,.5) and +(0,.5) .. +(.5,0);
\fill (1.25,.365) circle(2pt);

\draw[thin] (-.25,0) -- +(2,0);

\node at (.005,.1) {$\down$};
\node at (.505,.1) {$\down$};
\node at (1.005,-.1) {$\up$};
\node at (1.505,-.1) {$\up$};
\end{scope}

\begin{scope}[xshift=3cm]
\draw (0,0) -- +(0,-.3);
\draw[dotted,decorate] (0,-.3) -- +(0,-.8);
\draw (1.5,0) -- +(0,-.3);
\draw[dotted,decorate] (1.5,-.3) -- +(0,-.8);
\draw (0,0) .. controls +(0,.5) and +(0,.5) .. +(.5,0);
\fill (.25,.365) circle(2pt);
\draw (.5,0) .. controls +(0,-.5) and +(0,-.5) .. +(.5,0);
\draw (1,0) .. controls +(0,.5) and +(0,.5) .. +(.5,0);
\fill (1.25,.365) circle(2pt);

\draw[thin] (-.25,0) -- +(2,0);

\node at (.005,-.1) {$\up$};
\node at (.505,-.1) {$\up$};
\node at (1.005,.1) {$\down$};
\node at (1.505,.1) {$\down$};

\end{scope}

\begin{scope}[xshift=6cm]
\draw (0,0) -- +(0,.3);
\draw[dotted,decorate] (0,.3) -- +(0,.8);
\draw (1.5,0) -- +(0,.3);
\draw[dotted,decorate] (1.5,.3) -- +(0,.8);
\draw (0,0) .. controls +(0,-.5) and +(0,-.5) .. +(.5,0);
\fill (.25,-.365) circle(2pt);
\draw (.5,0) .. controls +(0,.5) and +(0,.5) .. +(.5,0);
\draw (1,0) .. controls +(0,-.5) and +(0,-.5) .. +(.5,0);
\fill (1.25,-.365) circle(2pt);

\draw[thin] (-.25,0) -- +(2,0);

\node at (.005,.1) {$\down$};
\node at (.505,.1) {$\down$};
\node at (1.005,-.1) {$\up$};
\node at (1.505,-.1) {$\up$};
\end{scope}

\begin{scope}[xshift=9cm]
\draw (0,0) -- +(0,.3);
\draw[dotted,decorate] (0,.3) -- +(0,.8);
\draw (1.5,0) -- +(0,.3);
\draw[dotted,decorate] (1.5,.3) -- +(0,.8);
\draw (0,0) .. controls +(0,-.5) and +(0,-.5) .. +(.5,0);
\fill (.25,-.365) circle(2pt);
\draw (.5,0) .. controls +(0,.5) and +(0,.5) .. +(.5,0);
\draw (1,0) .. controls +(0,-.5) and +(0,-.5) .. +(.5,0);
\fill (1.25,-.365) circle(2pt);

\draw[thin] (-.25,0) -- +(2,0);

\node at (.005,-.1) {$\up$};
\node at (.505,-.1) {$\up$};
\node at (1.005,.1) {$\down$};
\node at (1.505,.1) {$\down$};

\end{scope}
\end{tikzpicture}
\end{eqnarray*}

Apply again the dot removal trick, so remove the two dots and adjust the orientation such that the two outer labels, thus the type (clockwise or anticlockwise) as well as the degree of the diagram is preserved. Again, the result contains an undecorated kink and we can argue by induction as above. This proves the claim.

Starting from $a^*a$ we apply the minimal path to $a$ to obtain $b$. In each step
the number of circles decreases at most by one, hence the distance $d(a,b)$ is at least $m$ minus the number of circles. To show equality consider $a^*b$.
If $a=b$ then there is nothing to do, so assume $a\not=b$. If there is a circle containing two outer cups then we can apply a move $b\rightarrow b'$ or $b\leftarrow b'$  which  splits the circle into two. Otherwise, we find two cups on the same circle paired by an edge such that one of the cups is outer. Again we can apply some move to split the original circle. Repeating the procedure we finally give a path $d(a,b)$ of the required length starting at $a^*b$ and changing successively $b$. It ends in $a^*a$ since in each step one new circle was created (resulting in $m$ circles) without changing the cap diagram $a^*$.
\end{proof}

\begin{lemma} \label{distance_and_intersection}
Let $c$ be such that $d(a,b)=d(a,c) + d(c,b)$ then
$$S_a \cap S_b = S_a \cap S_{c} \cap S_b.$$
\end{lemma}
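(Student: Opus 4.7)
The inclusion $S_a\cap S_c\cap S_b\subseteq S_a\cap S_b$ is immediate, so only $S_a\cap S_b\subseteq S_c$ requires proof. By Lemma~\ref{cohomology_realization} this is equivalent to the algebraic statement $J_c\subseteq J_a+J_b$ of ideals in $H(X)$, i.e.\ every defining relation of $S_c$ is forced by the relations defining $S_a$ together with those defining $S_b$. One may assume $(a^*b)$ is orientable, otherwise $S_a\cap S_b=\emptyset$ and the claim is vacuous.

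The plan is to argue by induction on $d(a,b)$. The base $d(a,b)=0$ gives $a=b=c$, and the degenerate cases $c=a$ or $c=b$ are immediate. For the inductive step, concatenate a minimal chain from $a$ to $c$ with a minimal chain from $c$ to $b$; since $d(a,c)+d(c,b)=d(a,b)$ the result is a minimal chain from $a$ to $b$ passing through $c$. Let $a_1$ denote the next element after $a$ in this chain; then $d(a,a_1)=1$, $d(a_1,b)=d(a,b)-1$ and $d(a_1,c)+d(c,b)=d(a_1,b)$, so the induction hypothesis yields $S_{a_1}\cap S_b\subseteq S_c$. Combined with the inclusion $S_a\cap S_b\subseteq S_{a_1}$, still to be proved, this gives $S_a\cap S_b\subseteq S_{a_1}\cap S_b\subseteq S_c$. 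The problem therefore reduces to the \emph{single-step case}: $a$ and $a_1$ are related by one of the eight elementary moves of \eqref{moves}, and $d(a_1,b)=d(a,b)-1$.

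For this case, Lemma~\ref{mindegree} translates the distance condition into the statement that $(a_1^*b)$ contains exactly one more closed circle than $(a^*b)$. Since the move between $a$ and $a_1$ alters the diagram only at the (at most) four vertices $i<j<k<l$ involved, the next step is a finite case analysis across the eight move types and over the possible connectivity patterns of these vertices through the cups of $b$: in each move type precisely one pattern delivers the required $+1$ change in circle count, and in that pattern the coordinate identifications imposed by $a$, together with those obtained by traversing the cups of $b$ between the affected strands, supply exactly the identifications that $a_1$ imposes. Consequently the new generators of $J_{a_1}$ already lie in $J_a+J_b$.

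The main obstacle will be the sign bookkeeping: the signs accumulated along paths through dotted cups and rays of $b$ must match the dotting pattern of the modified cups and rays in $a_1$. This consistency is ensured by the fact that each closed circle of $(a^*b)$ carries a coherent orientation, equivalently that the number of dotted cups and caps on each closed circle is even, which is precisely the orientability assumption we imposed at the start.
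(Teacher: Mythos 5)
Your reduction scheme is sound and essentially mirrors the paper's, with a small simplification: instead of the paper's twofold lexicographic induction on $(d(a,b),d(a,c))$ you run a single induction on $d(a,b)$, funnelling everything into a ``single-step'' case $d(a,a_1)=1$. That is a valid and slightly cleaner bookkeeping of the same idea; the paper reaches the same crucial case as the $d(a,c)=1$ subcase of its base. The genuine difference, and the place where there is a gap, is how the crucial case is handled.

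First, a small logical overstatement: you invoke Lemma~\ref{cohomology_realization} to claim $S_a\cap S_b\subseteq S_c$ is \emph{equivalent} to $J_c\subseteq J_a+J_b$. That lemma only identifies $H(S_a\cap S_b)$ with $H(X)/(J_a+J_b)$; the implication from the space inclusion to the ideal inclusion is automatic, but the converse is not stated there and needs an argument (here it does hold because everything in sight is cut out by the linear coordinate identifications, but you should not attribute it to the lemma). The paper works directly with the sets and avoids this detour.

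The substantive gap is in the sign bookkeeping for the single-step case. You assert that once the circle-count goes up by one, the identifications imposed by $a$ and $b$ ``supply exactly the identifications that $a_1$ imposes,'' and that the required sign consistency ``is ensured by the fact that each closed circle of $(a^*b)$ carries a coherent orientation.'' That is not sufficient. Orientability of the circle $C$ in $a^*b$ that gets rewired gives only that the two complementary paths $P_1,P_2$ (the arcs of $C$ not replaced by the move) carry dot counts of \emph{equal parity}. To deduce that the two new circles in $a_1^*b$ are each orientable --- equivalently, that the signs match those required by $a_1$'s (un)dotted cups --- you must show both parities are even, not merely equal. Ruling out the ``both odd'' case requires using the move-applicability and cup-diagram validity constraints (no dotted arc may sit in a position that becomes nested after the move, etc.) together with the non-crossing geometry; this is exactly the hard combinatorial content, and your sketch leaves it untouched. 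The paper's argument for the $d(a,c)=1$ base is different in flavour: it lifts orientations of $a^*b$ along a minimal sequence of circle-splitting moves to orientations of $a^*c$ and then $a^*a$, which is precisely the statement one needs (an orientation that fails to lift would correspond to a torus-fixed point of $S_a\cap S_b$ not lying in $S_c$). That argument is also terse, but it targets the right invariant directly, whereas your ``circles have even dots'' condition is strictly weaker than what is required. To close the gap you would need to carry out the case analysis genuinely, using the structure of the moves in~\eqref{moves} and the constraints on where dots may sit, rather than appealing to orientability of $a^*b$ alone.
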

\begin{proof}
If $S_a \cap S_b = \emptyset$ there is nothing to show, so we assume $S_a \cap S_b \neq \emptyset$. In case $a=b$ we have $c=a=b$ and the claim is again trivial. Now we argue by some twofold induction, namely on $d(a,b)$ starting with $d(a,b)=1$ and on $d(a,c)$ starting with $d(a,c)=1$.

In case $d(a,b)=1$, there is again nothing to show since either $c=a$ or $c=b$, so we assume now  $d(a,c)=1$ (and $a\not=b$ arbitrary). Since $S_a \cap S_b \neq \emptyset$ the circle diagram $a^* b$ can be oriented and contains $\lfloor \frac{k}{2}\rfloor-d(a,b)$ circles by Lemma \ref{mindegree}. Since $c$ lies on a minimal length path between $b$ and $a$ there is a sequence of moves from $a^*b$ to $a^*c$ and then to $a^*a$ which in each step creates a new circle. In particular, every orientation of $a^*b$ gives rise to an orientation of $a^*c$ and then obviously also of $a^*a$ and therefore $S_a \cap S_b \subset (S_a \cap S_c)\cap S_b$.

It remains to show the statement for $d(a,b)=r$, $d(a,c)=s>1$ with $c \neq b$. We assume that it holds for all $a'$, $b'$, $c'$ such that $d(a',b')=d(a',c') + d(c',b') < r$ and all $x$ such that $d(a,b)=d(a,x)+d(x,b)$ and $d(a,x)<s$.

Choose $x \neq a,c$ on a minimal path connecting $a$ and $c$ thus satisfying $d(x,b) = d(x,c) + d(c,b)<r$. Then by assumption,
$$S_x \cap S_b = S_x \cap S_c \cap S_b.$$
Since $d(a,x)<s$ and $d(a,b) = d(a,x)+d(x,b)$ we also have
$$S_a \cap S_b = S_a \cap S_b \cap S_x.$$
Altogether this gives
$$S_a \cap S_b = S_a \cap (S_b \cap S_x) = S_a \cap S_b \cap S_x \cap S_c = S_a \cap S_b \cap S_c$$
and the lemma follows.
\end{proof}

Let $S_{<a} = \bigcup_{b<a} S_b$ and $S_{\leq a} = \bigcup_{b \leq a} S_b$.

\begin{lemma} \label{reduction_for_intersection}
For any $a\in\mB_k$ we have
\begin{eqnarray}
\label{eqintersectionscells}
S_{<a} \cap S_a &=& \bigcup_{b \rightarrow a} (S_b \cap S_a).
\end{eqnarray}
\end{lemma}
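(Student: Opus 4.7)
The inclusion $\supseteq$ is immediate: each move $b \to a$ preserves the parity of the number of dots and gives $b \prec a$, so $b < a$ in the chosen total refinement.

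For $\subseteq$, I would fix $b < a$ with $S_a \cap S_b \neq \emptyset$ and produce a single $b' \to a$ with $S_a \cap S_b \subseteq S_a \cap S_{b'}$. Since the intersection is nonempty, Lemma \ref{mindegree} gives $d(a,b) < \infty$, so Lemma \ref{lower_bound_exists} yields an element $c$ satisfying $a \succeq c \preceq b$ and $d(a,b) = d(a,c) + d(c,b)$. The first key observation is that $c$ must be strictly below $a$: if $c = a$, then $a \preceq b$, which combined with $b < a$ and $a \neq b$ forces $a \prec b$, contradicting $b < a$ in the total refinement. Therefore there is a directed chain $c = c_0 \to c_1 \to \cdots \to c_r = a$, which we may assume to be of minimal length $r = d(c,a)$.

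Now set $b' := c_{r-1}$, so $b' \to a$ and $d(b',a)=1$. The plan is to verify the additivity $d(a,b) = d(a,b') + d(b',b)$ needed to invoke Lemma \ref{distance_and_intersection}. One direction is immediate from the triangle inequality; for the other, combine $d(a,c) = d(a,b') + d(b',c)$ (coming from the minimal directed path) with the given identity $d(a,b) = d(a,c) + d(c,b)$ and again the triangle inequality applied to $d(b',b) \leq d(b',c) + d(c,b)$. This yields
\begin{equation*}
d(a,b) = d(a,b') + d(b',c) + d(c,b) \geq d(a,b') + d(b',b) \geq d(a,b),
\end{equation*}
so equality holds throughout. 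Lemma \ref{distance_and_intersection} then gives $S_a \cap S_b = S_a \cap S_{b'} \cap S_b \subseteq S_a \cap S_{b'}$, which lies in the right-hand side of \eqref{eqintersectionscells} since $b' \to a$.

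The only subtle point is ensuring that the $c$ supplied by Lemma \ref{lower_bound_exists} cannot coincide with $a$; this is exactly where the total refinement $<$ of $\prec$ (rather than working with $\prec$ itself) is used, since without $b < a$ one could not rule out $a \prec b$. The rest is bookkeeping with the distance function and a single application of Lemma \ref{distance_and_intersection}.
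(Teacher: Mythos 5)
Your proof is correct and follows essentially the same route as the paper's: both use Lemma~\ref{lower_bound_exists} to locate an element $b'$ with $b'\to a$ lying on a minimal chain from $b$ to $a$ and then invoke Lemma~\ref{distance_and_intersection}, and you usefully make explicit (where the paper leaves it implicit) that the total refinement $<$ forces the chain to begin by descending from $a$. One small caveat: your assertion that a minimal \emph{directed} chain from $c$ to $a$ has length exactly $d(c,a)$ is not a formal consequence of the statement of Lemma~\ref{lower_bound_exists} alone — it is supplied by the V-shaped chain constructed in its proof, which is precisely what the paper signals by writing ``Lemma~\ref{lower_bound_exists} (and its proof).''
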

\begin{proof}
The right hand side, say $R$, of \eqref{eqintersectionscells} is by definition contained in the left hand side $L$. For the other inclusion, $L\subset R$, we have to show that $S_b\cap S_a\in R$  for $b < a$, $b\not=a$. Using Lemma \ref{lower_bound_exists} (and its proof) we can chose $c$ such that $d(a,c)+d(c,b)=d(a,b)$ and $c\rightarrow a$
and thus $(S_a \cap S_b) \subset (S_a \cap S_b \cap S_c)$ by Lemma \ref{distance_and_intersection}. Hence $(S_a\cap S_b)\subset (S_a \cap S_c)$ and the claim follows.
\end{proof}

The cell decompositions behave well under pairwise intersections:

\begin{prop} \label{intersection_and_cells}
Let $a,b \in \mB_k$ and $b \rightarrow a$. Then the following holds:
\begin{enumerate}[(i)]
\item If $b \rightarrow a$ is of type \textbf{I)}-\textbf{IV)} then, by
definition of $\Gamma_a$, the local move determines a unique edge $e \in
\cE(\Gamma_a)$ and we have
$$S_a \cap S_b = \bigcup_{J \subset \cR(\Gamma_a) \cup \cE(\Gamma_a), e
\in J} C_J.$$
\item If $b \rightarrow a$ is of type \textbf{I')}-\textbf{IV')}, there
exists a unique cup $\alpha \in {\rm cups}(a)$ such that $\alpha \notin
{\rm cups}(b)$. Furthermore $\alpha \in \cR(\Gamma_a)$ and
$$S_a \cap S_b = \bigcup_{J \subset \cR(\Gamma_a) \cup \cE(\Gamma_a),
\alpha \in J} C_J.$$
\end{enumerate}
\end{prop}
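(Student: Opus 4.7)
The plan is to perform a case-by-case analysis of the eight local moves. For each move $b \to a$ I would first identify the relevant edge $e \in \cE(\Gamma_a)$ (cases I)--IV)) or root $\alpha \in \cR(\Gamma_a)$ (cases I')--IV')), then compute the additional constraint imposed on $S_a$ by intersecting with $S_b$, and finally compare the transported condition, via the homeomorphism $\Psi_a$ of \eqref{local_coordinates}, with (C1)--(C4) from Lemma \ref{decomp_disjoint}.

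For part (i), let $c_1, c_2$ denote the two cups of $a$ appearing in the move. I would check that $c_1, c_2$ have strictly different degrees of nesting in each case: in I) and III) the outer cup is $(i,l)$ with the nested cup $(j,k)$ inside, whereas in II) and IV) the cup $(i,j)$ has strictly greater degree than $(k,l)$ because $(i,j)$ sees the extra dotted cup $(k,l)$ lying to its right. This determines $e$ uniquely, pointing from the cup of lower degree to the cup of higher degree. The four equations defining $S_a$ and $S_b$ at the positions $i<j<k<l$ together force $(z_i,z_j,z_k,z_l)$ into a one-parameter locus $\{\pm z \mid z \in S\}$, and via $\Psi_a$ this becomes a single equation $y_{c_1} = \epsilon\, y_{c_2}$ in the coordinates of $S^{\mathrm{cups}(a)}$. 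The crux is to show $\epsilon = +1$: using the definition of $\sigma$ I would verify that the dots added, removed, or shifted between $a$ and $b$ on the cups of the move contribute precisely the sign needed to cancel the antipodal identifications from the dotted cups of $a$ lying strictly between $i_{c_1}$ and $i_{c_2}$. Once $y_{c_1} = y_{c_2}$ is established, the desired formula follows: cells $C_J$ with $e \in J$ are precisely those satisfying (C3) at $e$, and no other cell condition is altered since all cups of $a$ other than $c_1, c_2$ coincide with cups of $b$.

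For part (ii) I would proceed analogously. The unique new cup is $\alpha = (j,k)$ in moves I') and III'), and $\alpha = (i,j)$ in moves II') and IV'). In every case $\alpha$ is outer in $a$: the ray in $a$ (at $i$ for I'), III'); at $k$ for II'), IV')) prevents any cup of $a$ from nesting $\alpha$, since such a cup would have to cross the ray, and it also blocks the accessibility of the dot on any putative dotted cup to the right of $\alpha$, so no such dotted cup can exist in $a$. Hence $\alpha \in \cR(\Gamma_a)$. The extra constraint from $S_b$ identifies one endpoint of $\alpha$ with $\pm \pt$, and the analogous $\sigma$-sign check converts this into $y_\alpha = \pt$, namely condition (C1) with $\alpha \in J$. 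This yields $S_a \cap S_b = \bigcup_{\alpha \in J} C_J$.

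The hard part throughout will be the sign bookkeeping via $\sigma$ in the four dotted moves II), IV), II'), IV'); once these signs are pinned down, the identification of the intersection with the announced union of cells reduces to direct matching of cell conditions.
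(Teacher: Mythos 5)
Your plan mirrors the paper's (very terse) argument: a direct case-by-case check of the eight local moves, matching the extra constraint imposed by intersecting with $S_b$ against the cell conditions (C1)--(C4) via the sign $\sigma$ and the coordinate map $\Psi_a$. The identification of the edge $e$ in part (i), the degree-of-nesting comparison (nesting in I), III), a dotted cup to the right in II), IV)), the one-parameter locus, and the isolation of the sign bookkeeping as the crux are all on target, and the $\sigma$-computations do come out as you anticipate.

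There is, however, a concrete gap in your justification that $\alpha$ is a root in part (ii). You assert that $a$'s ray "prevents any cup of $a$ from nesting $\alpha$, since such a cup would have to cross the ray." That is not forced by $a$ alone. Take move I'): $a$ has the ray at $i$ and $\alpha = (j,k)$; a cup $(i',j')$ with $i < i' < j < k < j'$ nests $\alpha$ in $a$ while staying entirely to the right of the ray at $i$, so nothing in $a$ excludes it. What does exclude it is the validity of $b$: such a cup is untouched by the move, hence also appears in $b$, where it strictly contains $b$'s ray (at $k$ for I'),III'); at $i$ for II'),IV')) and so crosses it — contradicting that $b$ is an allowed cup diagram. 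The same subtlety affects your dot-accessibility claim in moves II') and IV'): a cup $(p,q)$ with $j < p < q < k$ lies to the right of $\alpha=(i,j)$ but to the left of $a$'s ray at $k$, so $a$'s ray does not make its dot inaccessible; rather, such a cup is undotted because in $b$ it is nested inside $(j,k)$. In both places the argument must appeal to the fact that $b$, not just $a$, is an allowed diagram — which is exactly the hypothesis that the move $b\rightarrow a$ exists.
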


\begin{proof}
This follows directly from the moves \eqref{moves}: in (i), the cups involved in the move determine a new edge in $\Gamma_a$; in case (ii), the move creates a new cup which is outer, hence contained in $\cR(\Gamma_a)$. Now any point in $S_a \cap S_b$ is contained in one of the cells on the right
by construction and (C1)-(C4). Conversely, points from other cells are not contained in the intersection.
\end{proof}

We call the unique cup in $\cR(\Gamma_a)$ obtained from some fixed $b\rightarrow a$ via the construction from Proposition \ref{intersection_and_cells} (ii) \emph {special}. Let $\cR(\Gamma_a)^{\rm sp}$ be the set of special cups in $a$.

\begin{remark}{\rm
For $a \in \mB_k$ with $k$ odd, the moves \textbf{I')}-\textbf{IV')} in \eqref{moves} imply that all outer cups are special if the ray involved in $a$ is dotted whereas only outer cups to the right of the ray are special if the ray is undotted.
}\end{remark}

\begin{corollary} \label{decreasing_intersection_and_cells}
Let $a \in \mB_k$.
\begin{enumerate}[(i)]
\item If $k$ is even then
\begin{eqnarray*}
S_{<a} \cap S_a&= &\underset{\small \begin{array}{c} J \subset \cR(\Gamma_a) \cup \cE(\Gamma_a) \\ J \cap \cE(\Gamma_a) \neq \emptyset \end{array}}{\bigcup} C_J.
\end{eqnarray*}
\item If $k$ is odd then
\begin{eqnarray*}
S_{<a} \cap S_a&=& \underset{\small \begin{array}{c} J \subset \cR(\Gamma_a) \cup \cE(\Gamma_a)  \\ J \cap (\cE(\Gamma_a) \cup \cR(\Gamma_a)^{\rm sp})\neq \emptyset \end{array}}{\bigcup} C_J.
\end{eqnarray*}
\end{enumerate}
\end{corollary}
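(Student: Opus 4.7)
The plan is to combine Lemma \ref{reduction_for_intersection} with the cellular description from Proposition \ref{intersection_and_cells}, then bookkeep carefully which edges and special roots are hit as $b$ ranges over cup diagrams with $b \rightarrow a$. By Lemma \ref{reduction_for_intersection} we have
\begin{eqnarray*}
S_{<a} \cap S_a &=& \bigcup_{b \rightarrow a} (S_a \cap S_b),
\end{eqnarray*}
so it suffices to describe each piece $S_a \cap S_b$ by its cells $C_J$ and then take the union.

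For case (i), $k$ is even, so every cup diagram $a \in \mB_k$ consists purely of cups and the only local moves leading into $a$ are those of type \textbf{I)}--\textbf{IV)}. By the very definition of the graph $\Gamma_a$, the map sending a move $b \rightarrow a$ to its associated edge is a bijection between $\{b \mid b \rightarrow a\}$ and $\cE(\Gamma_a)$. Applying Proposition \ref{intersection_and_cells}(i) to each such $b$ yields
\begin{eqnarray*}
S_{<a} \cap S_a &=& \bigcup_{e \in \cE(\Gamma_a)}\;\bigcup_{\substack{J \subset \cR(\Gamma_a)\cup \cE(\Gamma_a)\\ e \in J}} C_J
\;=\; \bigcup_{\substack{J \subset \cR(\Gamma_a)\cup \cE(\Gamma_a)\\ J \cap \cE(\Gamma_a) \neq \emptyset}} C_J,
\end{eqnarray*}
since a cell $C_J$ appears on the left hand side precisely when at least one edge of $\Gamma_a$ belongs to $J$.

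For case (ii), $k$ is odd, so $a$ contains exactly one ray and both families of moves \textbf{I)}--\textbf{IV)} and \textbf{I')}--\textbf{IV')} can occur. Moves of the first family are again in bijection with edges $e \in \cE(\Gamma_a)$. Moves of the second family, by Proposition \ref{intersection_and_cells}(ii), are in bijection with the special outer cups in $\cR(\Gamma_a)^{\rm sp}$: each such move creates a new outer cup from the vertex previously carrying the ray, and conversely every special outer cup arises from exactly one move of type \textbf{I')}--\textbf{IV')}. Combining the two cases of Proposition \ref{intersection_and_cells} and taking the union over all $b \rightarrow a$ yields
\begin{eqnarray*}
S_{<a} \cap S_a &=& \bigcup_{\substack{J \subset \cR(\Gamma_a)\cup \cE(\Gamma_a)\\ J \cap (\cE(\Gamma_a) \cup \cR(\Gamma_a)^{\rm sp}) \neq \emptyset}} C_J,
\end{eqnarray*}
as claimed.

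The only nontrivial point, and the step I would double-check, is the bijective correspondence between moves $b \rightarrow a$ of type \textbf{I')}--\textbf{IV')} and the set $\cR(\Gamma_a)^{\rm sp}$ of special roots: one must verify that the ``new'' outer cup $\alpha$ produced by such a move is not already accounted for by some move of type \textbf{I)}--\textbf{IV)}, and that distinct moves give distinct special cups. Both are immediate from inspecting the local pictures in \eqref{moves}, since in each of the primed moves the created cup $\alpha$ is the unique one that contains the vertex formerly attached to the ray, and this vertex uniquely recovers the move. Everything else in the proof is a direct substitution.
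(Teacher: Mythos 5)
Your proof is correct and follows essentially the same route as the paper: reduce $S_{<a}\cap S_a$ to $\bigcup_{b\to a}(S_a\cap S_b)$ via Lemma \ref{reduction_for_intersection}, then apply Proposition \ref{intersection_and_cells} to each $b\to a$ and observe that the resulting edges (resp.\ special roots) exhaust $\cE(\Gamma_a)$ (resp.\ $\cR(\Gamma_a)^{\rm sp}$). Your explicit spelling-out of the bijections between moves and edges/special cups is a welcome clarification of the paper's one-line argument.
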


\begin{proof}
This follows directly from the definition of $S_{<a}$ from Proposition \ref{intersection_and_cells} using the definition of special cups from the moves \eqref{moves}.
\end{proof}

\subsection{Proof of Theorem \ref{tau_is_iso} and combinatorial dimension formula}
The following cohomology vanishing is completely analogous to \cite{Khovanov_Springer}.

\begin{prop}
\label{kunterbunt}
Let $a,b\in \mB_k$.
\begin{enumerate}
\item  The subset $S_{<a} \cap S_a$ has cohomology in even degrees only. The map $H(S_a) \longrightarrow H(S_{<a} \cap S_a)$, induced by the inclusion, is surjective.
    \label{one}
\item The homomorphism
$$H(S_{<a} \cap S_a) \longrightarrow \bigoplus_{b < a} H(S_b \cap S_a) $$
induced by the inclusion is injective.
\label{two}
\item
\label{three}
The subset $S_{\leq a}$ has cohomology in even degrees only. The Mayer-Vietoris sequence for $(S_{<a},S_a)$ breaks down into short exact sequences
\begin{eqnarray*}
&\quad 0 \rightarrow H^{2l}(S_{\leq a}) \rightarrow \begin{array}{c}
 H^{2l}(S_{<a})\\ \oplus \\H^{2l}(S_a)
 \end{array}
 \rightarrow H^{2l}(S_{<a} \cap S_a) \rightarrow 0,&
 \end{eqnarray*}
for $0 \leq l \leq k$.
\end{enumerate}
\end{prop}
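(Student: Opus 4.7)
The plan is to prove all three parts simultaneously by induction on $a\in\mB_k$ in the total order $<$ (treating $\mB_k^{\rm even}$ and $\mB_k^{\rm odd}$ separately, since elements of opposite parity have disjoint $S_a$'s and are incomparable). The base case is the minimum $a_0$ of each parity class, where $S_{<a_0}=\emptyset$ and $S_{\leq a_0}=S_{a_0}\cong S^{\lfloor k/2\rfloor}$ has cohomology only in even degrees. The driving technical input will be the cell decomposition \eqref{celldec} of each $S_a$ by even-dimensional cells $C_J$ (Lemma~\ref{decomp_disjoint}), together with the explicit descriptions of $S_a\cap S_b$ and $S_{<a}\cap S_a$ as unions of such cells given by Proposition~\ref{intersection_and_cells} and Corollary~\ref{decreasing_intersection_and_cells}.

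For part (i), I would observe that by Corollary~\ref{decreasing_intersection_and_cells}, $S_{<a}\cap S_a$ is a CW-subcomplex of $S_a$ whose cells are precisely the $C_J$ for those $J$ meeting $\cE(\Gamma_a)$ (resp. $\cE(\Gamma_a)\cup \cR(\Gamma_a)^{\rm sp}$). Since every $C_J$ is homeomorphic to $\mR^{2(n-|J|)}$, both $S_a$ and $S_{<a}\cap S_a$ have CW-structures in even dimensions only; hence the cellular cochain differentials vanish, cohomology is concentrated in even degrees, and the restriction map $H(S_a)\to H(S_{<a}\cap S_a)$ is identified with the projection sending the dual class $[C_J]^\vee$ to itself if $C_J\subset S_{<a}\cap S_a$ and to zero otherwise. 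This is manifestly surjective. For part (ii), I would again work at the level of cellular bases. By Lemma~\ref{reduction_for_intersection} and Proposition~\ref{intersection_and_cells}, each cell $C_J\subset S_{<a}\cap S_a$ is contained in $S_b\cap S_a$ for at least one $b\to a$, and the restriction $H(S_{<a}\cap S_a)\to H(S_b\cap S_a)$ sends $[C_J]^\vee$ to $[C_J]^\vee$ when $C_J\subset S_b\cap S_a$ and to zero otherwise. Thus the matrix of the sum of restriction maps has a nonzero entry in every row, which forces injectivity.

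For part (iii), I would feed parts (i) and (ii) into the Mayer--Vietoris sequence for the closed cover $S_{\leq a}=S_{<a}\cup S_a$:
\begin{equation*}
\cdots\to H^{i-1}(S_{<a}\cap S_a)\to H^i(S_{\leq a})\to H^i(S_{<a})\oplus H^i(S_a)\to H^i(S_{<a}\cap S_a)\to\cdots
\end{equation*}
The inductive hypothesis gives $H^{\rm odd}(S_{<a})=0$, while $H^{\rm odd}(S_a)=0$ because $S_a$ is a sphere, and $H^{\rm odd}(S_{<a}\cap S_a)=0$ by part (i). For odd $i$, the surrounding terms vanish, so $H^i(S_{\leq a})=0$; this verifies the even-degree conclusion for $S_{\leq a}$ and simultaneously advances the induction. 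For even $i=2l$, the surjectivity proved in part (i) makes the second map in the sequence surjective, which forces the connecting homomorphism $H^{2l}(S_{<a}\cap S_a)\to H^{2l+1}(S_{\leq a})$ to be zero; the long exact sequence therefore breaks into the asserted short exact sequences.

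The step I expect to be most delicate is part (ii): while the cellular picture makes the ``matrix has a nonzero entry in each row'' argument feel automatic, it relies on checking that the restriction maps between the involved CW-subcomplexes of $S_a$ really are given on cell-dual bases by the naive projection. This in turn rests on the fact that the inclusions $S_b\cap S_a\hookrightarrow S_{<a}\cap S_a$ are inclusions of subcomplexes with the induced cell structure, which I would verify directly from the defining conditions (C1)--(C4) and the local description of the moves \eqref{moves}; once this is in hand, the rest of the argument is a routine application of Mayer--Vietoris.
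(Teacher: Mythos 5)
Your proof reproduces, with the necessary type $D$ adaptations (namely Proposition~\ref{intersection_and_cells} and Corollary~\ref{decreasing_intersection_and_cells} for the cellular descriptions, and Lemma~\ref{reduction_for_intersection} for part~(ii)), the cell-theoretic argument of Khovanov that the paper itself cites without repeating for all three parts; so this is the intended approach rather than a genuinely different one. The one step that needs tightening is the final inference in part~(ii): ``the matrix has a nonzero entry in every row, which forces injectivity'' is not a valid general principle (a matrix can have a nonzero entry in every row without having full column rank). What actually does the work is the fact, which you have already set up, that each restriction $H(S_{<a}\cap S_a)\to H(S_b\cap S_a)$ sends $[C_J]^\vee$ to either $[C_J]^\vee$ or $0$. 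Consequently the image of $[C_J]^\vee$ in $\bigoplus_{b<a}H(S_b\cap S_a)$ is supported entirely on the copies of the single basis element $[C_J]^\vee$ appearing in the various summands; for distinct $J$ these supports are disjoint, and by Lemma~\ref{reduction_for_intersection} each is nonempty. Hence the images of the basis vectors $[C_J]^\vee$ are linearly independent, which gives the injectivity. With that repair the proof is complete and agrees with the argument the paper refers to.
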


\begin{proof} For the proof of part \eqref{one} we refer to {\cite[Proposition 4]{Khovanov_Springer}}, for part \eqref{two} see {\cite[Lemma 5]{Khovanov_Springer}} and for part \eqref{three} see {\cite[Proposition 3]{Khovanov_Springer}}. In all cases the arguments directly apply to our situation.
\end{proof}

Now we are prepared to give the proof of Theorem \ref{tau_is_iso}.
\begin{proof}[Proof of Theorem \ref{tau_is_iso}]
From Proposition \ref{kunterbunt} we deduce as in \cite[Pro\-po\-si\-tion 3]{Khovanov_Springer} that the following sequence is exact
$$ 0 \longrightarrow H(S_{\leq a}) \overset{\phi}{\longrightarrow} \bigoplus_{b \leq a} H(S_{b}) \overset{\psi^-}{\longrightarrow} \bigoplus_{b < c \leq a} H(S_b \cap S_c),$$
where $\phi$ is induced by the inclusion $S_b \subset S_{\leq a}$, and
$ \phi^- := \sum_{b<c \leq a}(\phi_{b,c} - \phi_{c,b}),$
where $\phi_{b,c}:H(S_b) \longrightarrow H(S_b \cap S_c)$,
again induced by the corresponding inclusion.
When $a$ is chosen maximal in $\mB_k^{\rm even}$ we obtain that
$$ 0 \longrightarrow H(\widetilde{S}^{\rm even}) \longrightarrow \bigoplus_{b \in \mB_k^{\rm even}}H(S_b) \longrightarrow \bigoplus_{b,c \in \mB_k^{\rm even}, b < c}H(S_b \cap S_c)$$
is exact; analogously for $\widetilde{S}^{\rm odd}$. This is equivalent to Theorem~\ref{tau_is_iso}.
\end{proof}

The dimension of $H(\widetilde{S})$ can be computed now purely combinatorially.

\begin{prop}
\label{dimensionformula}
The following hold
\begin{eqnarray*}
&{\rm dim} \, H(\widetilde{S}) = 2\, {\rm dim} \, H(\widetilde{S}^{\rm even}) = 2\, {\rm dim} \, H(\widetilde{S}^{\rm odd}) = 2^{k}.&
\end{eqnarray*}
\end{prop}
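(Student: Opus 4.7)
The plan is to split the proof into three stages.

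First, Lemma~\ref{Intersection_empty} gives the disjoint decomposition $\widetilde{S}=\widetilde{S}^{\rm even}\sqcup\widetilde{S}^{\rm odd}$, so $\dim H(\widetilde{S})=\dim H(\widetilde{S}^{\rm even})+\dim H(\widetilde{S}^{\rm odd})$. The involution $\iota\colon X\to X$ defined by $\iota(z_1,z_2,\ldots,z_k)=(-z_1,z_2,\ldots,z_k)$ carries each $S_a$ onto $S_{a'}$, where $a'$ is obtained from $a$ by toggling the dot on whichever line (cup or ray) passes through vertex~$1$. Such a line is always accessible from the left, so $a'$ is a valid decorated cup diagram, and the toggle flips the parity of dots. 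Hence $\iota$ restricts to a homeomorphism $\widetilde{S}^{\rm even}\cong\widetilde{S}^{\rm odd}$, reducing the theorem to the single statement $\dim H(\widetilde{S}^{\rm even})=2^{k-1}$.

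For the second stage, iterating the short exact sequence of Proposition~\ref{kunterbunt}(iii) along a total order refining $\prec$ on $\mB_k^{\rm even}$ yields
$$\dim H(\widetilde{S}^{\rm even})=\sum_{a\in\mB_k^{\rm even}}\bigl(\dim H(S_a)-\dim H(S_{<a}\cap S_a)\bigr).$$
Lemma~\ref{decomp_disjoint} paves $S_a$ with even-dimensional cells indexed by subsets $J\subset\cR(\Gamma_a)\cup\cE(\Gamma_a)$, and Corollary~\ref{decreasing_intersection_and_cells} identifies $S_{<a}\cap S_a$ with the union of cells satisfying $J\cap(\cE(\Gamma_a)\cup\cR(\Gamma_a)^{\rm sp})\neq\emptyset$ (with the convention $\cR(\Gamma_a)^{\rm sp}=\emptyset$ when $k$ is even). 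Each summand therefore counts the subsets of $\cR(\Gamma_a)\setminus\cR(\Gamma_a)^{\rm sp}$ and equals $2^{r_a-s_a}$, writing $r_a=|\cR(\Gamma_a)|$ and $s_a=|\cR(\Gamma_a)^{\rm sp}|$.

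In the third stage, the theorem reduces to the purely combinatorial identity
$$\sum_{a\in\mB_k^{\rm even}}2^{r_a-s_a}=2^{k-1},$$
which I regard as the main obstacle. My plan is to establish it by induction on $k$, with base cases $k=1,2$ being immediate since $\mB_k^{\rm even}$ is then a singleton contributing $2^{k-1}$. For the inductive step I will classify $a\in\mB_k^{\rm even}$ by the local structure at vertex~$k$: either $k$ is paired with $k-1$ by a cup (which is automatically outer, and whose dot status either preserves all outer cups of the associated $\mB_{k-2}$-diagram or collapses them via the ``dotted cup to the right'' condition), or $k$ lies on a cup $(j,k)$ with $j<k-1$ enclosing nested structure, or (for $k$ odd) vertex~$k$ carries a ray. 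A careful bookkeeping shows that the contributions from the special cups created by moves~\textbf{I')}--\textbf{IV')} defining $\cR(\Gamma_a)^{\rm sp}$ exactly compensate for the outer cups that are lost to enclosure or shadowing in the enlargement, yielding the recursion $F(k)=2F(k-1)$ and hence $F(k)=2^{k-1}$. Combined with stage~1, this gives $\dim H(\widetilde{S})=2\cdot 2^{k-1}=2^{k}$.
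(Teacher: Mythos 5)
Your Stage~1 is a genuine addition to the paper's argument: the involution $\iota(z_1,\ldots,z_k)=(-z_1,z_2,\ldots,z_k)$ carries $S_a$ onto $S_{a'}$, where $a'$ toggles the dot on the line through vertex~$1$; that line is always accessible from the left, so $a'$ is a legitimate element of $\mB_k$ of opposite parity, and $\iota$ gives a homeomorphism $\widetilde{S}^{\rm even}\cong\widetilde{S}^{\rm odd}$. The paper's own proof of Proposition~\ref{dimensionformula} sums over all of $\mB_k$ at once and never addresses the middle equality explicitly, so this is a useful observation. Stage~2 is also correct: iterating the Mayer-Vietoris short exact sequences from Proposition~\ref{kunterbunt} and counting, via Corollary~\ref{decreasing_intersection_and_cells}, the affine cells of $S_a$ not lying in $S_{<a}$ does reduce the problem to the identity $\sum_{a\in\mB_k^{\rm even}}2^{r_a-s_a}=2^{k-1}$ in your notation.

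Stage~3, however, contains a genuine gap: the recursion $F(k)=2F(k-1)$ is asserted but not established, and the ``compensation'' mechanism you describe is too vague to count as a proof. The difficulty is asymmetric in the parity of $k$. For $k$ even there \emph{is} a clean bijection $\mB_k^{\rm even}\to\mB_{k-1}^{\rm even}$ (delete vertex $k$ and replace the cup $(j,k)$ through it by a ray at $j$ with the same dot status; the cups formerly nested inside $(j,k)$ were forced to be undotted, so they become a legal undotted configuration to the right of the new ray), and a short case check shows the exponent $r_a-s_a$ drops by exactly one under this map, giving $F(k)=2F(k-1)$. For $k$ odd this construction breaks: if vertex $k$ carries the ray there is no cup to replace, and if vertex $k$ lies on a cup $(j,k)$ then converting it to a ray would give two rays on $k-1$ points, which is inadmissible in $\mB_{k-1}$. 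Splitting instead by whether the ray sits at $k$ (contributing $F(k-1)$ if undotted and $|\mB_{k-1}^{\rm odd}|$ if dotted) plus the remaining diagrams with a cup through $k$ does \emph{not} visibly telescope to $2F(k-1)$. The paper's own computation avoids a step-by-one recursion altogether: it conditions on the width of the cup or on the ray position at the last vertex and closes a step-by-two induction via nontrivial Catalan-number manipulations including the Segner recurrence. Your recursion is numerically correct for small $k$, so the approach may well be salvageable, but as written the combinatorial crux of the argument is missing rather than proved.
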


\begin{proof}
We have the cell decomposition on $S_a\cap S_{<a}$ by Corollary \ref{intersection_and_cells}. To determine the dimension of the cohomology
it is enough to count for each $a\in \mB_k$ the cells contained in $S_a$ but not in any intersection $S_a \cap S_b$ with $b < a$ by considering the induced cell partition, see \cite[Lemma 6]{Khovanov_Springer}, and add all of these numbers up.

We start with the case of even $k$. For fixed $a\in\mB_k$, the  number of cells that do not lie in an intersection of the form $S_a \cap S_b$ for $b < a$ is equal to $2^{{\rm outer}(a)}$ by Lemma \ref{intersection_and_cells}, where ${\rm outer}(a)$ denotes the number of outer cups of $a$. To count these we proceed by induction. The case $k=2$ is obvious.
For arbitrary even $k$, consider the cup $C$ in $a$ which contains the
last vertex.
Say it connects vertex $k$ with vertex $k-(2j-1)$ (with $1 \leq j \leq
m=k/2$).
If this cup is dotted then it is the only outer cup of the diagram and the
number of cells to be counted for such $a$ is just twice the number of
diagrams with a cup of this form, which gives
\begin{equation*}
2 \left[ \binom{2(m-j)}{m-j} \cdot \frac{1}{j} \binom{2(j-1)}{j-1} \right],
\end{equation*}
where the second factor is the Catalan number $\mathcal{C}_{j-1}$ counting the,
necessarily undecorated, diagrams inside our fixed cup $C$ of width
$2(j-1)$, while the first factor is the number
of diagrams to the left of the fixed cup, see Remark~\ref{counting}.
We claim that the sum over $j=0,\ldots, m$ of these terms gives $\tbinom{2m}{m}$. Indeed using induction hypothesis in \eqref{indhyp}, the equality $\binom{2(m-j)}{m-j} = (4-\frac{2}{m-j}) \binom{2(m-1-j)}{m-1-j}$ in \eqref{komisch}, and the Segner recurrence $\sum_{j=0}^{m-2} \mathcal{C}_j\mathcal{C}_{m-2-j}=\mathcal{C}_{m-1}$ in \eqref{Segre} we obtain

\begin{eqnarray}
&&\sum_{j=1}^{m} 2 \left[\tbinom{2(m-j)}{m-j} \cdot \tfrac{1}{j}
\tbinom{2(j-1)}{j-1} \right]
= \tfrac{2}{m} \tbinom{2(m-1)}{m-1} + \sum_{j=1}^{m-1} 2
\left[\tbinom{2(m-j)}{m-j} \cdot \tfrac{1}{j} \tbinom{2(j-1)}{j-1} \right]\nonumber\\
&=& \tfrac{2}{m} \tbinom{2(m-1)}{m-1} + 4 \sum_{j=1}^{m-1} 2
\left[\tbinom{2(m-1-j)}{m-1-j} \cdot \tfrac{1}{j} \tbinom{2(j-1)}{j-1}
\right] \label{komisch}\\
&&\qquad - 4 \sum_{j=0}^{m-2} \left[ \tfrac{1}{j+1} \tbinom{2j}{j}
\tfrac{1}{m-1-j} \tbinom{2(m-2-j)}{m-2-j} \right] \nonumber\\
&{=}& \tfrac{2}{m} \tbinom{2(m-1)}{m-1} + 4
\tbinom{2(m-1)}{m-1} - 4 \sum_{j=0}^{m-2} \left[ \tfrac{1}{j+1} \tbinom{2j}{j}
\tfrac{1}{m-1-j} \tbinom{2(m-2-j)}{m-2-j} \right] \label{indhyp}\\
&{=}& \tfrac{2}{m}
\tbinom{2(m-1)}{m-1} + 4 \tbinom{2(m-1)}{m-1} - \frac{4}{m}
\tbinom{2(m-1)}{m-1} \label{Segre}\\
&=& \frac{4m-2}{m}\tbinom{2(m-1)}{m-1} = \tbinom{2m}{m}\nonumber
\end{eqnarray}
If the cup $C$ is undotted it is an outer cup, but the outer cups to the
left still contribute to the number of cells, giving us
$$
2 \left[ 2^{k-2j} \cdot \frac{1}{j} \binom{2(j-1)}{j-1} \right],
$$
where we have again the number of diagrams inside our fixed cup $C$
times the contribution from the part to the left of $C$, which is
$2^{k-2j}$ by induction. Altogether we get
\begin{eqnarray*}
&&  \sum_{j=1}^{m} 2 \left[ \tbinom{2(m-j)}{m-j} \cdot \tfrac{1}{j}
\tbinom{2(j-1)}{j-1}\right] + \sum_{j=1}^{m} 2 \left[ 2^{2m-2j}
\tfrac{1}{j} \tbinom{2(j-1)}{j-1} \right]\\
&=& \tbinom{2m}{m} + \tfrac{2}{m} \tbinom{2(m-1)}{m-1} +
4 \sum_{j=1}^{m-1} 2 \left[ 2^{2(m-1)-2j} \tfrac{1}{j}
\tbinom{2(j-1)}{j-1} \right] \\
&\underset{\text{(Induction)}}{=}& \tbinom{2m}{m} + \tfrac{2}{m}
\tbinom{2(m-1)}{m-1} + 4 \left[2^{2(m-1)} - \tbinom{2(m-1)}{m-1}\right] = 2^{2m} = 2^k.
\end{eqnarray*}

Now let $k=2m+1$ be odd. We distinguish the case where the ray has a dot from the case where it
has not. In the first case the diagram only contributes a single
cell since all outer cups are special, thus we only have to count the
possible diagrams when the dotted ray is at position $k-2j$ for a fixed
$0 \leq j \leq m$ giving us
$$
\binom{2(m-j)}{m-j} \cdot \frac{1}{j+1} \binom{2j}{j},
$$
with the first factor being the number of possible decorated cup
diagrams to the left of the ray and the second factor the number of
undecorated cup diagrams to the right. If instead the ray is undotted, then only outer cups to the
right of the ray are special and we have to count the cells arising from the outer cups to the left, which gives us $2^{2(m-j)} \cdot \frac{1}{j+1} \binom{2j}{j}$.

Hence it is enough to verify
$\sum_{j=0}^m \left[ \tbinom{2(m-j)}{m-j} + 2^{2(m-j)} \right] \cdot
\tfrac{1}{j+1} \tbinom{2j}{j}=2^{2m+1}$
which is clear for $m=0$. The left hand side however equals
\begin{eqnarray*}
&&\tfrac{2}{m+1} \tbinom{2m}{m} + \sum_{j=0}^{m-1} \left[
\tbinom{2(m-j)}{m-j} + 2^{2(m-j)} \right] \cdot \tfrac{1}{j+1}
\tbinom{2j}{j} \\
&=& \tfrac{2}{m+1} \tbinom{2m}{m} + \sum_{j=0}^{m-1} \left[
\tbinom{2(m-j-1)}{m-j-1} + 2^{2(m-j-1)} \right] \cdot \tfrac{1}{j+1}
\tbinom{2j}{j} \\
&& \qquad + \sum_{j=0}^{m-1} \left[ \tbinom{2(m-j)}{m-j} -
\tbinom{2(m-j-1)}{m-j-1} + 3 \cdot 2^{2(m-j-1)} \right]  \cdot
\tfrac{1}{j+1} \tbinom{2j}{j}
\end{eqnarray*}
\begin{eqnarray*}
&\underset{\text{(Induction)}}{=}& \tfrac{2}{m+1} \tbinom{2m}{m} + 2^{2m-1}\\
&& \qquad + \sum_{j=0}^{m-1} \left[ \tfrac{3(m-j)-2}{m-j}
\tbinom{2(m-j-1)}{m-j-1} + 3 \cdot 2^{2(m-j-1)} \right]  \cdot
\tfrac{1}{j+1} \tbinom{2j}{j}\\
&{=}& \tfrac{2}{m+1} \tbinom{2m}{m} + 2^{2m-1} + 3 \sum_{j=0}^{m-1}
\left[\tbinom{2(m-j-1)}{m-j-1} + 2^{2(m-j-1)} \right]  \cdot
\tfrac{1}{j+1} \tbinom{2j}{j} \\
&& \qquad -2 \sum_{j=0}^{m-1} \tfrac{1}{m-j} \tbinom{2(m-j-1)}{m-j-1}
\cdot \tfrac{1}{j+1} \tbinom{2j}{j}
\end{eqnarray*}
\begin{eqnarray*}
\\
&\underset{\text{(Induction)}}{=}& \tfrac{2}{m+1} \tbinom{2m}{m} +
2^{2m-1} + 3 \cdot 2^{2m-1} -2 \sum_{j=0}^{m-1} \tfrac{1}{m-j}
\tbinom{2(m-j-1)}{m-j-1} \cdot \tfrac{1}{j+1} \tbinom{2j}{j}\\
&\underset{\text{(Segner rec.)}}{=}& \tfrac{2}{m+1} \tbinom{2m}{m} +
2^{2m+1} - \tfrac{2}{m+1} \tbinom{2m}{m} = 2^{2m+1}\\
\end{eqnarray*}
and so the claim holds. Hence the lemma is proved.
\end{proof}

\section{Domino Tableaux and combinatorial bijections}
\label{Section5}
In the following let $\lambda$ be a partition of $2k$, viewed as a Young diagram of shape $\lambda$ (i.e., the parts of $\lambda$ are the length of the rows of the diagram). We will picture them using "English notation".

\subsection{(Signed admissible) domino tableaux and clusters}
We now introduce the combinatorics of domino diagrams and tableaux, following \cite{vanLeeuwen}, and connect them afterwards with the geometry of Springer fibres and our cup diagram combinatorics. The cup diagrams provide then an elementary description of the geometry, in particular pairwise intersections of components.

\begin{definition}
A \emph{domino diagram} $T$ with $k$ dominoes of shape $\lambda$ is a Young diagram $\overline{T}$ of shape $\lambda$ together with a partitioning of the boxes into subsets of order 2, called \emph{dominoes}, such that the paired boxes share a common edge (vertically or horizontally). The set of all domino diagrams with shape $\lambda$ will be denoted by $DY(\lambda)$.
\end{definition}

When picturing a domino diagram we omit the common edge of a pair:

\begin{ex}
Here is an example of a domino diagram of shape $\lambda = (4,3,1)$, the left diagram shows the 2-subsets, which are then turned into dominoes in the right diagram:
\begin{eqnarray*}
\begin{tikzpicture}[scale=0.7]
\draw (0,0)  -- +(0,-1.5);
\draw (.5,-1)  -- +(0,-.5);
\draw (1.5,-.5)  -- +(0,-.5);
\draw (2,0)  -- +(0,-.5);

\draw (0,0)  -- +(2,0);
\draw (1.5,-.5)  -- +(.5,0);
\draw (.5,-1)  -- +(1,0);
\draw (0,-1.5)  -- +(.5,0);

\draw (0,-.5)  -- +(1.5,0);
\draw (1,0)  -- +(0,-.5);
\draw (.5,-.5)  -- +(0,-.5);
\draw (.5,0)  -- +(0,-.5);
\draw (1.5,0)  -- +(0,-.5);
\draw (1,-.5)  -- +(0,-.5);
\draw (0,-1)  -- +(.5,0);

\draw[thick] (.25,-.25)  -- +(.5,0);
\draw[thick] (1.25,-.25)  -- +(.5,0);
\draw[thick] (.75,-.75)  -- +(.5,0);
\draw[thick] (.25,-.75)  -- +(0,-.5);

\begin{scope}[xshift=4cm]
\draw (0,0)  -- +(0,-1.5);
\draw (.5,-1)  -- +(0,-.5);
\draw (1.5,-.5)  -- +(0,-.5);
\draw (2,0)  -- +(0,-.5);

\draw (0,0)  -- +(2,0);
\draw (1.5,-.5)  -- +(.5,0);
\draw (.5,-1)  -- +(1,0);
\draw (0,-1.5)  -- +(.5,0);

\draw (0,-.5)  -- +(1.5,0);
\draw (1,0)  -- +(0,-.5);
\draw (.5,-.5)  -- +(0,-.5);

\end{scope}
\end{tikzpicture}
\end{eqnarray*}
\end{ex}

Recall from Section \ref{Section2} the notion of admissible partition which was used to label conjugacy classes of nilpotent elements. We now define admissible domino tableaux:

\begin{definition}
An \emph{admissible domino tableau} $t$ is a domino diagram from $DY(\lambda)$ together with a filling of the $2k$ boxes in the diagram with the integers $1, \ldots, k$ such that
\begin{enumerate}[(\text{ADT}1)]
\item Each integer occurs exactly twice and the two boxes it appears in form a domino.
\item The entries in each row and column are weakly increasing from left to right and top to bottom.
\item The sequence of partitions starting with $t$ and then obtained by successively eliminating the domino labelled with the largest integer consists only of admissible partitions as in Definition \ref{defpartition}.
\end{enumerate}
We denote the \emph{set of all admissible domino tableaux} of shape $\lambda$ by $ADT(\lambda)$. If we drop the last condition (ADT3) then we get more generally the set $DT(\la)$ of \emph{standard domino tableaux} of shape $\la$.
\end{definition}

We again abbreviate admissible domino tableaux by drawing dominoes (instead of pairs of boxes) with only one number per domino (instead of twice the same number if the corresponding two boxes form a pair).

\begin{ex} \label{admissible_domino_example}
There are two admissible domino tableaux of shape $(3,3)$,
\begin{eqnarray}
\label{ADTex}
\begin{tikzpicture}[scale=.7]
\node at (-3.5,.5) {$ADT((3,3))=$};
\node[yscale=1.7] at (-1.6,.5) {\{};
\node at (-.75,.5) {$t_1=$};
\draw (0,0) -- +(0,1);
\draw (1.5,0) -- +(0,1);
\draw (0,0) -- +(1.5,0);
\draw (0,1) -- +(1.5,0);
\node at (.25,.5) {\tiny $1$};
\node at (.75,.5) {\tiny $2$};
\node at (1.25,.5) {\tiny $3$};
\draw (1,0) -- +(0,1);
\draw (.5,0) -- +(0,1);
\node at (2.25,0) {$,$};
\begin{scope}[xshift=4cm]
\node at (-.75,.5) {$t_2=$};
\draw (0,0) -- +(0,1);
\draw (1.5,0) -- +(0,1);
\draw (0,0) -- +(1.5,0);
\draw (0,1) -- +(1.5,0);
\node at (.25,.5) {\tiny $1$};
\node at (1,.75) {\tiny $2$};
\node at (1,.25) {\tiny $3$};
\draw (.5,.5) -- +(1,0);
\draw (.5,0) -- +(0,1);
\end{scope}
\node[yscale=1.7] at (6.1,.5) {\}};
\end{tikzpicture}
\end{eqnarray}

Indeed, we first fill the Young diagram of shape $(3,3)$ with the integers $1,2,3$, each occurring twice such that the fillings are weakly increasing in rows and columns:
\begin{eqnarray*}
{\tiny\Yvcentermath1 t_1=\young(123,123) \quad \quad t_2=\young(122,133) \quad \quad t_3=\young(113,223) \quad \quad t_4=\young(112,233)}
\end{eqnarray*}
To satisfy (ADT1) we have to remove $t_4$, since the two boxes labelled $2$ do not form a domino. Since $t_3$ violates (ADT3) we are left with $t_1$ and $t_2$ which indeed satisfy (ADT1)-(ADT3).
\end{ex}

From now on we restrict ourself to partitions $\lambda = (\lambda_1,\lambda_2)$ with only 2 parts. Note that the definition of an admissible domino tableau forces horizontal dominoes to occur with the left box in an even column. In particular we must have a vertical domino in the first column (e.g. $t_3$ is not possible).

\begin{definition}
 Let $T \in ADT((\lambda_1,\lambda_2))$ be an admissible domino tableau and $p$ a domino in $T$. We define the \emph{type} of $p$ as
$${\rm type}(p) = \left\lbrace
\begin{array}{l}
{\rm V}^0 \text{ if } p \text{ is vertical and in an even column,}\\
{\rm V}^1 \text{ if } p \text{ is vertical and in an odd column,}\\
{\rm H} \text{ if } p \text{ is horizontal.}
\end{array}
\right. $$
\end{definition}

\begin{remark}{\rm
In \cite{vanLeeuwen} dominoes of type ${\rm V}^0,{\rm V}^1,\rm{H}$ are called of type ${\rm I}-,{\rm I}+,{\rm N}$ respectively.
}\end{remark}

 To get a labelling set for the irreducible components of the Springer fibre we need to introduce an additional data in form of a sign for the dominoes of type ${\rm V}^1$.

\begin{definition}
A \emph{signed domino tableau} is an admissible domino tableau with signs (i.e., an element in $\{+,-\}$) attached to each domino of type ${\rm V}^1$.
\end{definition}
The set of signed domino tableaux of shape $\lambda$ is denoted $ADT_{\rm sgn}(\lambda)$.

\begin{ex}
There are six signed domino tableaux of shape $(3,3)$ (with the underlying domino tableaux from Example \ref{admissible_domino_example}):
\begin{eqnarray*}
\begin{tikzpicture}[scale=0.7]
\node at (-1,.5) {$t_{1,++}=$};
\draw (0,0)  -- +(0,1);
\draw (1.5,0)  -- +(0,1);
\draw (0,0)  -- +(1.5,0);
\draw (0,1)  -- +(1.5,0);

\node at (.25,.75) {\tiny $1$};
\node at (.25,.25) {\tiny $+$};
\node at (.75,.5) {\tiny $2$};
\node at (1.25,.75) {\tiny $3$};
\node at (1.25,.25) {\tiny $+$};
\draw (1,0)  -- +(0,1);
\draw (.5,0)  -- +(0,1);

\begin{scope}[xshift=4cm]
\node at (-1,.5) {$t_{1,+-}=$};
\draw (0,0)  -- +(0,1);
\draw (1.5,0)  -- +(0,1);
\draw (0,0)  -- +(1.5,0);
\draw (0,1)  -- +(1.5,0);

\node at (.25,.75) {\tiny $1$};
\node at (.25,.25) {\tiny $+$};
\node at (.75,.5) {\tiny $2$};
\node at (1.25,.75) {\tiny $3$};
\node at (1.25,.25) {\tiny $-$};
\draw (1,0)  -- +(0,1);
\draw (.5,0)  -- +(0,1);
\end{scope}

\begin{scope}[xshift=8cm]
\node at (-1,.5) {$t_{1,-+}=$};
\draw (0,0)  -- +(0,1);
\draw (1.5,0)  -- +(0,1);
\draw (0,0)  -- +(1.5,0);
\draw (0,1)  -- +(1.5,0);

\node at (.25,.75) {\tiny $1$};
\node at (.25,.25) {\tiny $-$};
\node at (.75,.5) {\tiny $2$};
\node at (1.25,.75) {\tiny $3$};
\node at (1.25,.25) {\tiny $+$};
\draw (1,0)  -- +(0,1);
\draw (.5,0)  -- +(0,1);
\end{scope}

\begin{scope}[xshift=12cm]
\node at (-1,.5) {$t_{1,--}=$};
\draw (0,0)  -- +(0,1);
\draw (1.5,0)  -- +(0,1);
\draw (0,0)  -- +(1.5,0);
\draw (0,1)  -- +(1.5,0);

\node at (.25,.75) {\tiny $1$};
\node at (.25,.25) {\tiny $-$};
\node at (.75,.5) {\tiny $2$};
\node at (1.25,.75) {\tiny $3$};
\node at (1.25,.25) {\tiny $-$};
\draw (1,0)  -- +(0,1);
\draw (.5,0)  -- +(0,1);
\end{scope}

\begin{scope}[yshift=-1.5cm,xshift=4cm]
\node at (-1,.5) {$t_{2,+}=$};
\draw (0,0)  -- +(0,1);
\draw (1.5,0)  -- +(0,1);
\draw (0,0)  -- +(1.5,0);
\draw (0,1)  -- +(1.5,0);

\node at (.25,.75) {\tiny $1$};
\node at (.25,.25) {\tiny $+$};
\node at (1,.75) {\tiny $2$};
\node at (1,.25) {\tiny $3$};
\draw (.5,.5)  -- +(1,0);
\draw (.5,0)  -- +(0,1);
\end{scope}

\begin{scope}[yshift=-1.5cm,xshift=8cm]
\node at (-1,.5) {$t_{2,-}=$};
\draw (0,0)  -- +(0,1);
\draw (1.5,0)  -- +(0,1);
\draw (0,0)  -- +(1.5,0);
\draw (0,1)  -- +(1.5,0);

\node at (.25,.75) {\tiny $1$};
\node at (.25,.25) {\tiny $-$};
\node at (1,.75) {\tiny $2$};
\node at (1,.25) {\tiny $3$};
\draw (.5,.5)  -- +(1,0);
\draw (.5,0)  -- +(0,1);
\end{scope}
\end{tikzpicture}
\end{eqnarray*}
\end{ex}

The admissibility condition (ADT3) implies that dominoes of type $H$ have their left part in an even column which means that our tableaux are built up from two basic building blocks, called clusters.

A \emph{closed cluster} is a connected part of an admissible domino tableau containing all dominoes between a domino of type ${\rm V}^1$ and the next domino to its right of type ${\rm V}^0$, including the two vertical dominoes.
Hence after removing the filling, it is a part of the underlying Young diagram  of the following form (starting at an odd column with an even number of dominoes of type ${\rm H}$ arranged in two rows of the same length between the two vertical dominoes)
\begin{center}
\begin{tikzpicture}[scale=0.7]
\draw (0,0)  -- +(0,1);
\draw (1.5,0)  -- +(0,1);
\draw (0,0)  -- +(1.5,0);
\draw (0,1)  -- +(1.5,0);

\draw (.5,.5)  -- +(1,0);
\draw (.5,0)  -- +(0,1);
\node at (2,.5) {$\ldots$};

\draw (2.5,0)  -- +(0,1);
\draw (4,0)  -- +(0,1);
\draw (2.5,0)  -- +(1.5,0);
\draw (2.5,1)  -- +(1.5,0);

\draw (2.5,.5)  -- +(1,0);
\draw (3.5,0)  -- +(0,1);
\end{tikzpicture}
\end{center}
An \emph{open cluster} is a connected part of an admissible domino tableau given by a domino $D$ of type ${\rm V}^1$ with no domino of type ${\rm V}^0$ to its right and all dominoes of type ${\rm H}$ to the right, i.e. a diagram of the form
\begin{center}
\begin{tikzpicture}[scale=0.7]
\draw (0,0)  -- +(0,1);
\draw (1.5,0)  -- +(0,1);
\draw (0,0)  -- +(1.5,0);
\draw (0,1)  -- +(1.5,0);

\draw (.5,.5)  -- +(1,0);
\draw (.5,0)  -- +(0,1);
\node at (2,.5) {$\ldots$};

\draw (2.5,0)  -- +(0,1);
\draw (2.5,0)  -- +(1,0);
\draw (2.5,1)  -- +(2,0);

\draw (2.5,.5)  -- +(2,0);
\draw (3.5,0)  -- +(0,1);

\draw (4.5,.5)  -- +(0,.5);

\node at (5,.75) {$\ldots$};

\draw (5.5,.5)  -- +(0,.5);
\draw (6.5,.5)  -- +(0,.5);
\draw (5.5,1)  -- +(1,0);
\draw (5.5,.5)  -- +(1,0);
\end{tikzpicture}.
\end{center}

Clearly every (signed) admissible standard domino tableau decomposes uniquely into a finite disjoint union of closed clusters and possibly one extra open cluster. Such an open cluster exists if and only if the shape is $(\la_1,\la_2)$ with $\la_1$ (and then also $\la_2$) odd.


\begin{ex}
The following admissible domino tableau
\begin{eqnarray*}
\begin{tikzpicture}[scale=0.7]
\node at (-.75,.5) {$t=$};
\draw (0,0)  -- +(0,1);
\draw (8.5,0)  -- +(0,1);
\draw (10.5,.5)  -- +(0,.5);
\draw (8.5,.5)  -- +(2,0);
\draw (0,0)  -- +(8.5,0);
\draw (0,1)  -- +(10.5,0);

\node at (.25,.5) {\tiny $1$};
\node at (1,.75) {\tiny $2$};
\node at (1,.25) {\tiny $4$};
\node at (2,.75) {\tiny $3$};
\node at (2,.25) {\tiny $5$};
\node at (2.75,.5) {\tiny $6$};
\node at (3.25,.5) {\tiny $7$};
\node at (3.75,.5) {\tiny $8$};
\node at (4.25,.5) {\tiny $9$};
\node at (5,.75) {\tiny $10$};
\node at (5,.25) {\tiny $11$};
\node at (5.75,.5) {\tiny $12$};
\node at (6.25,.5) {\tiny $13$};
\node at (7,.75) {\tiny $14$};
\node at (7,.25) {\tiny $15$};
\node at (8,.75) {\tiny $16$};
\node at (8,.25) {\tiny $17$};
\node at (9,.75) {\tiny $18$};
\node at (10,.75) {\tiny $19$};
\draw (.5,0)  -- +(0,1);
\draw (.5,.5)  -- +(1,0);
\draw (1.5,0)  -- +(0,1);
\draw (1.5,.5)  -- +(1,0);
\draw (2.5,0)  -- +(0,1);
\draw (3,0)  -- +(0,1);
\draw (3.5,0)  -- +(0,1);
\draw (4,0)  -- +(0,1);
\draw (4.5,0)  -- +(0,1);
\draw (4.5,.5)  -- +(1,0);
\draw (5.5,0)  -- +(0,1);
\draw (6,0)  -- +(0,1);
\draw (6.5,0)  -- +(0,1);
\draw (6.5,.5)  -- +(2,0);
\draw (7.5,0)  -- +(0,1);
\draw (9.5,.5)  -- +(0,.5);

\end{tikzpicture},
\end{eqnarray*}
has three closed clusters $C_1(t)=\{1,2,3,4,5,6\}$, $C_2(t)=\{7,8\}$, and $C_3(t)=\{9,10,11,12\}$ and one open cluster $C_4(t)=\{13,14,15,16,17,18,19\}$.
\end{ex}

Since by definition each of the clusters includes exactly one domino of type ${\rm V}^1$ we can speak of the \emph{sign of a cluster} for a signed domino tableau.

\begin{definition}
\label{orbitalvarieties}
Let $t,t' \in ADT_{\rm sgn}(\lambda)$ be two signed domino tableaux that coincide if we forget the signs. Then $t \thicksim_{\rm cl} t'$ if the signs of all closed clusters of $t$ and $t'$ coincide. This is obviously an equivalence relation. We denote the equivalence classes for $\thicksim_{\rm cl}$ by $ADT_{\rm sgn,cl}(\lambda)$.
\end{definition}

Since in our case there is at most one open cluster for a given admissible domino tableau, the equivalence classes consist of either one element, if there is no open cluster, or of two elements, if there is an open cluster.

\subsection{Combinatorial bijections}
Given a cluster $\cC$ in a (signed) domino tableau we call the part consisting of the dominoes of type ${\rm H}$ the \emph{standard tableau part} of $\cC$, since viewing the dominoes as ordinary boxes of a tableau we have in fact a standard tableau $T(\cC)$. Recall the well-known bijection
\begin{eqnarray}
\label{ordinary}
&\{\text{(ordinary) standard tableaux of shape $(r,s)$} \}&\nonumber\\
&\oneone&\\
&\{\text{undecorated cup diagrams on $r+s$ vertices with $s$ cups}\nonumber\}&
\end{eqnarray}
sending a standard tableau $T$ to the unique cup diagram $C(T)$ where the $s$ numbers appearing in the bottom row mark the right endpoints of the cups and the number in the top row are the left endpoint of cups or endpoints of rays, see e.g. \cite[Proposition 3]{SW}.

\begin{ex}
\label{ex1} The following five standard tableaux $T$
$$\young(125,34)\hspace{1.2cm}\young(135,24)\hspace{1.2cm}
\young(134,25)\hspace{1.2cm} \young(124,35)\hspace{1.2cm}\young(123,45)$$
correspond to the undecorated cup diagrams $C(T)$:

\begin{tikzpicture}[thick, scale=0.8]
\node at (0,0) {$\bullet$};
\node at (.5,0) {$\bullet$};
\node at (1,0) {$\bullet$};
\node at (1.5,0) {$\bullet$};
\node at (2,0) {$\bullet$};
\draw (0,0) .. controls +(0,-1) and +(0,-1) .. +(1.5,0);
\draw (.5,0) .. controls +(0,-.5) and +(0,-.5) .. +(.5,0);
\draw (2,0) -- +(0,-1);

\begin{scope}[xshift=3.1cm]
\node at (0,0) {$\bullet$};
\node at (.5,0) {$\bullet$};
\node at (1,0) {$\bullet$};
\node at (1.5,0) {$\bullet$};
\node at (2,0) {$\bullet$};
\draw (0,0) .. controls +(0,-.5) and +(0,-.5) .. +(.5,0);
\draw (1,0) .. controls +(0,-.5) and +(0,-.5) .. +(.5,0);
\draw (2,0) -- +(0,-1);
\end{scope}

\begin{scope}[xshift=6.2cm]
\node at (0,0) {$\bullet$};
\node at (.5,0) {$\bullet$};
\node at (1,0) {$\bullet$};
\node at (1.5,0) {$\bullet$};
\node at (2,0) {$\bullet$};
\draw (0,0) .. controls +(0,-.5) and +(0,-.5) .. +(.5,0);
\draw (1,0) -- +(0,-1);
\draw (1.5,0) .. controls +(0,-.5) and +(0,-.5) .. +(.5,0);
\end{scope}

\begin{scope}[xshift=9.3cm]
\node at (0,0) {$\bullet$};
\node at (.5,0) {$\bullet$};
\node at (1,0) {$\bullet$};
\node at (1.5,0) {$\bullet$};
\node at (2,0) {$\bullet$};
\draw (0,0) -- +(0,-1);
\draw (.5,0) .. controls +(0,-.5) and +(0,-.5) .. +(.5,0);
\draw (1.5,0) .. controls +(0,-.5) and +(0,-.5) .. +(.5,0);
\end{scope}

\begin{scope}[xshift=12.4cm]
\node at (0,0) {$\bullet$};
\node at (.5,0) {$\bullet$};
\node at (1,0) {$\bullet$};
\node at (1.5,0) {$\bullet$};
\node at (2,0) {$\bullet$};
\draw (0,0) -- +(0,-1);
\draw (.5,0) .. controls +(0,-1) and +(0,-1) .. +(1.5,0);
\draw (1,0) .. controls +(0,-.5) and +(0,-.5) .. +(.5,0);
\end{scope}
\end{tikzpicture}
\end{ex}

We extend the bijection \eqref{ordinary} and assign to a signed closed cluster $\cC$ with standard tableau part containing $2m$ dominoes of type ${\rm H}$ a cup diagram on $2m+2$ vertices with $m+1$ cups by adding an additional vertex to the left and right of $C(T(\cC))$ connected by a cup. This cup is dotted if the sign of $\cC$ is $-$ and undotted if the sign is $+$. If the cluster $\cC$ is open we again take the cup diagram $C(T(\cC))$ associated with its standard tableau part and add an additional vertex with a ray to the left of $C(T(\cC))$. This ray is dotted if the sign of $\cC$ is $-$ and undotted if the sign is $+$. Since every signed standard tableau is a concatenation of closed and possibly one open cluster, our assignment

\usetikzlibrary{arrows}

\begin{tikzpicture}[thick,font=\scriptsize,scale=.45]

\node at (.85,1.4) {closed};
\draw (0,0) rectangle +(.5,1);
\draw (.5,.5) rectangle +(1,.5);
\draw (.5,0) rectangle +(1,.5);
\draw (1.5,.5) rectangle +(1,.5);
\draw (1.5,0) rectangle +(1,.5);
\node at (3,.5) {$\dots$};
\draw (3.5,.5) rectangle +(1,.5);
\draw (3.5,0) rectangle +(1,.5);
\draw (4.5,0) rectangle +(.5,1);
\draw[|->] (5.3,.5) -- +(1.5,0);

\node[xscale=2,yscale=4] at (7.5,.5) {\{};
\begin{scope}[xshift=8.5cm,yshift=.8cm]
\draw (0,.9) .. controls +(0,-1) and +(0,-1) .. +(2,0);
\node at (1.05,.65) {$C(T)$};
\fill (1,.15) circle(4pt);
\node at (3,.5) {if $-$};
\end{scope}
\begin{scope}[xshift=8.5cm,yshift=-.8cm]
\draw (0,.9) .. controls +(0,-1) and +(0,-1) .. +(2,0);
\node at (1.05,.65) {$C(T)$};
\node at (3,.5) {if +};
\end{scope}

\begin{scope}[xshift=14cm,yshift=2.9cm]
\node at (.65,-1.7) {open};
\draw (0,-3) rectangle +(.5,1);
\draw (.5,-2.5) rectangle +(1,.5);
\draw (.5,-3) rectangle +(1,.5);
\draw (1.5,-2.5) rectangle +(1,.5);
\draw (2.5,-2.5) rectangle +(1,.5);
\draw[|->] (4.3,-2.5) -- +(1.5,0);
\node[xscale=2,yscale=4] at (6.5,-2.5) {\{};
\begin{scope}[xshift=7.5cm,yshift=-2.2cm]
\draw (0,0) -- (0,1);
\fill (0,.5) circle(4pt);
\node at (1,.5) {$c(T)$};
\node at (3,.5) {if $-$};
\end{scope}
\begin{scope}[xshift=7.5cm,yshift=-3.8cm]
\draw (0,0) -- (0,1);
\node at (1,.5) {$c(T)$};
\node at (3,.5) {if +};
\end{scope}

\end{scope}
\end{tikzpicture}

extends to a map $F$ which assigns to each admissible tableau a cup diagram
obtained by putting the cup diagrams associated to the single clusters next to each other (in the same order as the clusters).

\begin{lemma}
\label{Bijection}
Let $r,s$ be natural numbers, such that $(r,s)$ is an admissible shape. The assignment $F$ defines a bijection
\begin{eqnarray*}
ADT_{\rm sgn}((r,s))
&\oneone&\left\{ \text{cup diagrams on $\frac{r+s}{2}$ vertices with $\left\lfloor{ \frac{s}{2}}\right\rfloor$ cups}\right\}
\end{eqnarray*}
\end{lemma}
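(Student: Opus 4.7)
The plan is to construct an explicit inverse to $F$ by decomposing any decorated cup diagram into pieces that mirror the cluster decomposition of an admissible domino tableau. I will proceed in three steps: (i) check that $F$ is well-defined, (ii) build the inverse map, and (iii) verify mutual inverseness. The counting is a bookkeeping exercise; the substance lies in understanding how the dot accessibility condition forces a unique cluster structure on the diagram side.

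First I would verify well-definedness. Each domino contributes exactly one vertex, so $F(t)$ has $\tfrac{r+s}{2}$ vertices. A closed cluster with $2m$ horizontal dominoes contributes $m+1$ cups (the outer cup together with the $m$ cups of $C(T(\cC))$ for a standard tableau of shape $(m,m)$), and an open cluster with standard part of shape $(t,b)$ contributes $b$ cups and $t-b+1$ rays. Summing these contributions and using that closed clusters have equal top and bottom row lengths yields exactly $\lfloor s/2 \rfloor$ cups, matching the required count. For accessibility of the dots, I would exploit the side-by-side layout of clusters and the fact that the open cluster (when present) is the rightmost: a dot placed on the outer cup of a closed cluster can be connected to the left face by dropping straight down to just above the bottom edge of $R$ (no cup reaches there) and then running left along the bottom (no ray is encountered, since all rays live in the rightmost open cluster); the dot on the leftmost ray of the open cluster is handled identically, running down along the ray itself before turning left.

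For the inverse $G$, the key structural claim is that in any decorated cup diagram $D$ on $(r+s)/2$ vertices with $\lfloor s/2\rfloor$ cups, the only positions permitted to carry a dot are (a) the leftmost ray of $D$ (if any), and (b) outer cups lying strictly to the left of every ray. Indeed, any cup to the right of some ray $\rho$, or nested inside another cup, is separated from the left face by either $\rho$ (which a path cannot cross) or by an enclosing arc together with the bottom edge of $R$. Given this, I define the cluster decomposition of $D$ by first locating the leftmost ray $\rho$: everything strictly to the right of $\rho$ is an undecorated cup diagram that is some $c(T)$ for a unique standard tableau $T$ of shape $(t,b)$ via \eqref{ordinary}, and combining this with $\rho$ and its sign (dot or no dot) reconstructs a unique signed open cluster. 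The portion of $D$ to the left of $\rho$ contains no rays and splits canonically along its outer cups $(i_1,j_1),(i_2,j_2),\ldots$ into contiguous blocks; the inner cups of each such block form an undecorated non-crossing perfect matching on $2m$ vertices, which under the inverse of \eqref{ordinary} corresponds bijectively to a standard tableau of shape $(m,m)$, whose $m$ horizontal dominoes together with the two vertical dominoes of the block (with sign dictated by the dot on the outer cup) reconstruct a unique signed closed cluster.

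Finally, checking $G\circ F = \mathrm{id}$ and $F\circ G = \mathrm{id}$ reduces to noting that both constructions preserve the cluster decomposition and that on each individual cluster they are built directly from the inverse bijections of \eqref{ordinary} combined with the tautological bijection between signs and dots. The main obstacle is the structural claim proved at the start of the inverse construction: one must argue carefully, using that arcs do not meet the bottom edge of $R$ and that each ray separates $R$ into two regions through which no admissible path can pass, to conclude that valid decorated cup diagrams are exactly those built cluster by cluster. Once this is in hand, the remaining verifications are local and straightforward.
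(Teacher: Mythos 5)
Your proposal takes essentially the same route as the paper's proof: both establish well-definedness by observing that the side-by-side cluster layout keeps dots accessible, and both construct an explicit inverse by identifying the cluster structure of a cup diagram (outer cups to the left of all rays, plus the leftmost ray) and applying the inverse of \eqref{ordinary} clusterwise. The one substantive thing you add that the paper glosses over with ``we omit the straightforward calculations'' is the structural claim that dots in a valid decorated cup diagram with $\lfloor s/2\rfloor$ cups can only sit on outer cups left of all rays or on the leftmost ray — you argue this from the barrier created by rays (which reach the bottom edge of $R$) and by enclosing arcs, which is the correct and necessary justification; the paper leaves it implicit. Your cup-count bookkeeping (closed cluster of $2m$ horizontal dominoes gives $m+1$ cups; open cluster with standard part of shape $(t,b)$ gives $b$ cups and $t-b+1$ rays) is accurate. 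In short, this is a more fully worked-out version of the paper's argument rather than a different one.
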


\begin{proof}
The map is indeed well-defined, since no closed cluster appears to the right of an open cluster thus no dotted cup to the right of a ray and of course no dotted cup nested inside another cup. The inverse map is given as follows: take a cup diagram $a$ of the above form and first ignore all dots. Then create a domino tableau with vertical dominoes exactly in the rows labelling the endpoints of outer cups to the left of all the rays in $a$ and the leftmost ray in $a$. Fill them with their column number and add additionally a sign in case the column number is odd. This sign is $+$ if the corresponding cup/ray was originally undotted and $-$ in case it was dotted. Then forget these outer cups and the leftmost ray and add dominoes of type ${\rm H}$ in between and to the right of these vertical dominoes to get the correct shape. Turn them into standard tableaux parts of the clusters using the bijection \eqref{ordinary}. For the filling the numbers labelling the rows between the corresponding two vertical dominoes or to the right of the rightmost vertical dominoes should be used (so the bijection \eqref{ordinary} has to be slightly adjusted by shifting the numbers for the filling by the column number of the next vertical domino to the left). Since inside an outer cup or to the right of the leftmost ray we have only undotted cups, the assignment makes sense. We omit the straightforward calculations to verify that this is indeed the inverse map.
\end{proof}

\begin{corollary}
\label{signeddominoestocups}
The bijection from Lemma \ref{Bijection} restricts to bijections
\begin{eqnarray*}
&\{T\in ADT_{\rm sgn}((r,s))\mid \text{the number of $-$ signs is even (resp. odd)}\}&\\
&\oneone&\\
&\left\{
\begin{array}{c}
\text{cup diagrams on $\frac{r+s}{2}$ vertices with $\lfloor \frac{s}{2}\rfloor$ cups} \\
\text{and an even (resp. odd) number of dots}
\end{array}
\right\}&
\end{eqnarray*}
and gives in the special case $r=s=k$ the following two bijections
\begin{eqnarray*}
\{T\in ADT_{\rm sgn}((k,k))\mid \text{the number of $-$ signs is even}\}
&\oneone&\mB_k^{\rm even}\\
\{T\in ADT_{\rm sgn}((k,k))\mid \text{the number of $-$ signs is odd}\}
&\oneone&\mB_k^{\rm odd}
\end{eqnarray*}
\end{corollary}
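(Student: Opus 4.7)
The plan is to track how signs are encoded as dots under the bijection $F$ from Lemma~\ref{Bijection} and verify parity preservation. The key observation is that by construction $F$ assembles the target cup diagram cluster by cluster from left to right, and each cluster of a signed admissible domino tableau $t$ contributes \emph{exactly one} potentially dotted line to $F(t)$: a closed cluster contributes the enveloping outer cup introduced when adjoining vertices to the left and right of the standard tableau part, whereas the open cluster (when it exists) contributes the leftmost ray. The sign $\pm$ of that cluster controls whether the contributed line is dotted ($-$) or undotted ($+$). All remaining cups in $F(t)$ arise from the standard tableau part of a cluster via the classical bijection \eqref{ordinary}, which produces \emph{undecorated} cup diagrams.

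Second, one must check that in every cup diagram in the target of Lemma~\ref{Bijection}, dots can only occur on outer cups or on the leftmost ray. This is a direct consequence of the accessibility axiom in Definition~\ref{Section3}: a dot lying on a line nested inside some cup, or on a ray having another ray to its left, is separated from the left face of the rectangle by that blocking line, so no admissible path exists. Combined with the previous paragraph, this shows that $F$ restricts to a bijection between signed admissible domino tableaux with exactly $d$ minus signs and cup diagrams of the prescribed shape with exactly $d$ dots; in particular the even/odd splittings correspond.

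Finally, for the special case $r = s = k$, note that $\mB_k$ is by definition the set of cup diagrams on $k$ vertices with the maximal possible number of cups, which equals $\lfloor k/2\rfloor$. Hence $\mB_k$ is precisely the target set of Lemma~\ref{Bijection} specialised to $(r,s)=(k,k)$, and its decomposition $\mB_k = \mB_k^{\rm even} \sqcup \mB_k^{\rm odd}$ by dot parity matches the decomposition of $ADT_{\rm sgn}((k,k))$ by sign parity. The only non-mechanical step is the accessibility verification in the second paragraph, which is a routine topological argument about the connected components of the rectangle cut by the non-crossing lines of the diagram.
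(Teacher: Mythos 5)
Your proof is correct and elaborates the paper's terse one-line proof (``This follows directly from the definitions and Lemma \ref{Bijection}.'') along exactly the intended lines: the number of dots in $F(t)$ equals the number of $-$ signs in $t$, because each cluster contributes precisely one potentially dotted line (the enveloping cup of a closed cluster, or the added ray of the open one) whose dot is governed by that cluster's sign, and combined with the bijectivity of $F$ from Lemma~\ref{Bijection} this immediately gives the restricted bijections. Your second paragraph (re-deriving the dot-placement constraint from the accessibility axiom) is a true observation but strictly redundant: since $F$ is already known to surject onto the full set of cup diagrams in the target, every dot occurring there automatically sits on an enveloping outer cup or the leftmost ray by the description of $F$'s image, without any independent topological verification.
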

\begin{proof}
This follows directly from the definitions and Lemma \ref{Bijection}.
\end{proof}

\begin{remark}
Lemma \ref{Bijection} transfers the notion of closed and open clusters to cup diagrams: a \emph{closed cluster} consists there of a cup $c$ together with all cups contained in $c$, with the property that $c$ is not contained in any other cup and located to the left of all rays, whereas an \emph{open cluster} consists of the leftmost ray and all cups and rays to the right of it.
\end{remark}

A \emph{cycle move} on a closed cluster is the move illustrated in the following diagram changing a closed cluster $C$ into a (not admissible) standard domino tableau $\op{Cyc}(C)$ containing only horizontal dominoes:

\begin{eqnarray}
\label{CYC}
\begin{tikzpicture}[thick,font=\small,scale=0.8]
\node at +(-1.5,0.5) {$\op{Cycle}:$};
\draw (-.5,0) rectangle +(.5,1) node at +(.25,.5) {a};
\foreach \x/\l in {0/b,1/c,2/d,3/e} {\draw (\x,.5) rectangle +(1,.5) node at +(.5,.25) {\l};};
\foreach \x/\l in {0/f,1/g,2/h,3/i} {\draw (\x,0) rectangle +(1,.5) node at +(.5,.25) {\l};};
\draw (4,0) rectangle +(.5,1) node at +(.25,.5) {j};

\draw[|->] (5,.5) -- +(1,0);

\begin{scope}[xshift=6.5cm]
\foreach \x/\l in {0/a,1/b,2/c,3/d,4/e} {\draw (\x,.5) rectangle +(1,.5) node at +(.5,.25) {\l};};
\foreach \x/\l in {0/f,1/g,2/h,3/i,4/j} {\draw (\x,0) rectangle +(1,.5) node at +(.5,.25) {\l};};
\end{scope}

\end{tikzpicture}
&&
\end{eqnarray}
Note that neither the shape nor the set of numbers used for the filling is changed. The numbers are consecutive from a set of $2m$ numbers for the shape $(2m,2m)$. In contrast, it is important to observe that the filling in the subdiagrams of shape $(2m',2m')$ using the first $2m'$ dominoes for $m'<m$ is not a set of consecutive numbers.

 The concept of cycle moves goes back to \cite{Garfinkle}. We use them here to establish a bijection between signed domino tableaux of a fixed shape and ordinary domino tableaux of the same shape:

\begin{lemma}
\label{cyclemovebij}
Applying the cycle move \eqref{CYC} to each closed cluster with sign $+$, and forgetting afterwards all signs determines a bijection
\begin{eqnarray*}
\op{Cyc}:\quad ADT_{\rm sgn,cl}((r,s))&\oneone&DT((r,s))
\end{eqnarray*}
for any admissible shape $(r,s)$.
\end{lemma}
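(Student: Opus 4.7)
The plan is to establish the bijection in three steps: well-definedness, construction of an inverse, and verification that the two maps invert each other.

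First, I would check that $\op{Cyc}$ is well-defined and lands in $DT((r,s))$. The cycle move \eqref{CYC} alters only the pairing of boxes into dominoes inside a single closed cluster; the underlying Young diagram shape is preserved, and the rearranged filling is again weakly increasing along rows and columns (a routine direct check on the local pattern in \eqref{CYC}). Since cycling is performed independently in each closed cluster and never touches the open cluster (if any), the resulting object is a standard domino tableau of the same shape, and the outcome is visibly independent of the sign attached to an open cluster; hence $\op{Cyc}$ descends to $ADT_{\rm sgn,cl}((r,s))$.

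Next, I would construct the inverse. Given $s \in DT((r,s))$, I decompose its dominoes into blocks as follows. In a 2-row shape, every maximal horizontal run is a $2\times 2m$ rectangle of horizontal dominoes; let $c$ be the column where such a run begins. A run with $c$ even is sandwiched between two vertical dominoes and forms the interior of an admissible closed cluster of type ${\rm V}^1$--${\rm H}\cdots{\rm H}$--${\rm V}^0$, to which I assign sign $+$. A run with $c$ odd, together with the $2 \times 2m$ rectangle it occupies, is precisely the output of the cycle move applied to a closed cluster; I apply the inverse cycle move to reinstate the flanking vertical dominoes and record the sign $-$. The open cluster, when it exists, is the unique remaining ${\rm V}^1$ without a matching ${\rm V}^0$ to its right together with the single-row horizontals extending it; its sign can be chosen arbitrarily, since both choices represent the same equivalence class in $ADT_{\rm sgn,cl}((r,s))$.

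The main obstacle is to verify that the tableau produced by this procedure actually satisfies the admissibility condition (ADT3), which is nonlocal, whereas our block decomposition is purely geometric. I plan to prove this by induction on $k$: peel off the cluster containing the largest label $k$, observe that (by the cluster structure we have built in) this removal yields an admissible subshape whose own cluster decomposition is the restriction of the original one, and then invoke the induction hypothesis. Once admissibility is in place, mutual inverseness is immediate since the cycle move is an involution on each closed cluster: applying $\op{Cyc}$ to the preimage returns $s$, and undoing the cycles in $\op{Cyc}(t)$ returns the $\sim_{\rm cl}$-class of $t$.
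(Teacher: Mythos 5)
Your inverse map has a genuine gap: the cluster decomposition of $S$ cannot be recovered from the positions of the maximal horizontal runs alone. Two adjacent closed clusters that have both been cycled produce one contiguous $2\times 2m$ block of horizontal dominoes starting at an odd column, and only the filling tells you where one cluster stops and the next begins. Concretely, in shape $(4,4)$ consider the standard domino tableau whose top row is the two horizontal dominoes labelled $1,3$ and whose bottom row is the two horizontal dominoes labelled $2,4$. There is a single maximal horizontal run, spanning columns $1$--$4$ and beginning in an odd column. Applying $\op{Cycle}^{-1}$ to this entire $2\times 4$ block (as your recipe prescribes) would put the label $3$ directly above the label $2$ in columns $2$--$3$, which is not a standard filling; the correct preimage is instead a pair of $(2,2)$ closed clusters, i.e.\ four vertical dominoes labelled $1,2,3,4$, each cluster cycled separately. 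The paper avoids this by building the filling into the definition of a block: starting from the first horizontal domino lying in an odd column, the cluster to be un-cycled is the \emph{smallest} $2\times 2m'$ rectangle whose set of labels is a set of \emph{consecutive} integers. Without a consecutiveness criterion of this kind your inverse is not even well-defined, so the argument cannot be completed from the shape of the runs alone.

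A secondary issue is the induction you sketch for (ADT3). The condition tests admissibility of \emph{every} intermediate partition obtained by removing one domino at a time, whereas your induction removes the entire rightmost cluster in one step and only then appeals to the inductive hypothesis. One still has to check the admissibility of the partitions that occur while the rightmost cluster is being peeled off domino by domino; this does hold (a closed cluster ends in an even column and an open cluster only arises when both rows have odd length, so all intermediate two-row shapes consist of parts of equal parity), but it needs to be argued rather than folded silently into the inductive step.
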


\begin{proof}
Obviously, the map is well-defined and injective. For surjectivity take a standard domino tableau $S\in DT((r,s))$. If there is no horizontal domino starting at an odd column number then just place minus signs at all vertical dominoes in odd columns to obtain a preimage of $S$. Otherwise take the first such horizontal domino $D$ in the first row. We claim it is the top left domino of a unique connected subdiagram $S'$ of $S$ of the form shown on the right hand side of \eqref{CYC}, in particular it is the smallest such rectangular shaped diagram containing consecutive numbers as filling. Hence the uniqueness is clear and also the existence in case $S$ has two rows of the same length. Otherwise, $S$ has shape $(r,s)$ with $r, s$ odd in which case there is at least one vertical domino to the right of $D$ and the existence is again clear. Now use \eqref{CYC} and apply $\op{Cycle}^{-1}$ to $S'$ and assign a $+$ sign to the resulting cluster. Repeat this procedure with the part of the diagram to the right of $S'$ observing that this is again of admissible shape.
The result is an admissible signed domino tableau except that probably some signs are missing. Finally insert $-$'s for all missing signs to get $\op{Cyc}^{-1}(S)$.
\end{proof}

\begin{ex}
\label{bigexample}
The bijection from Corollary \ref{signeddominoestocups} assigns to the cup diagrams from Example~\ref{coolpicture} (read from left to right columnwise from top to bottom) the following signed domino tableaux
\begin{equation*}
\begin{tikzpicture}[thick,font=\scriptsize,scale=.65]

\foreach \x/\l in {1,2,3,4,5,6} {\draw (\x/2,0) rectangle +(.5,1) node at +(.25,.5) {\l};};
\draw (.75,.15) node {\tiny $+$} +(1,0) node {\tiny $+$} +(2,0) node {\tiny $+$};

\begin{scope}[xshift=4cm]
\foreach \x/\l in {1,4,5,6} {\draw (\x/2,0) rectangle +(.5,1) node at +(.25,.5) {\l};};
\draw (.75,.15) node {\tiny $+$} +(2,0) node {\tiny $+$};
\draw (1,.5) rectangle +(1,.5) node at +(.5,.25) {2};
\draw (1,0) rectangle +(1,.5) node at +(.5,.25) {3};
\end{scope}

\begin{scope}[xshift=8cm]
\foreach \x/\l in {1,2,3,6} {\draw (\x/2,0) rectangle +(.5,1) node at +(.25,.5) {\l};};
\draw (.75,.15) node {\tiny $+$} +(1,0) node {\tiny $+$};
\draw (2,.5) rectangle +(1,.5) node at +(.5,.25) {4};
\draw (2,0) rectangle +(1,.5) node at +(.5,.25) {5};
\end{scope}

\begin{scope}[xshift=12cm]
\foreach \x/\l in {1,2,3,4,5,6} {\draw (\x/2,0) rectangle +(.5,1) node at +(.25,.5) {\l};};
\draw (.75,.15) node {\tiny $-$} +(1,0) node {\tiny $-$} +(2,0) node {\tiny $+$};
\end{scope}

\begin{scope}[xshift=16cm]
\foreach \x/\l in {1,6} {\draw (\x/2,0) rectangle +(.5,1) node at +(.25,.5) {\l};};
\foreach \x/\l in {1/2,2/4} {\draw (\x,.5) rectangle +(1,.5) node at +(.5,.25) {\l};};
\foreach \x/\l in {1/3,2/5} {\draw (\x,0) rectangle +(1,.5) node at +(.5,.25) {\l};};
\draw (.75,.15) node {\tiny $+$};
\end{scope}

\begin{scope}[yshift=-2cm]

\begin{scope}[xshift=0cm]
\foreach \x/\l in {1,2,3,4,5,6} {\draw (\x/2,0) rectangle +(.5,1) node at +(.25,.5) {\l};};
\draw (.75,.15) node {\tiny $+$} +(1,0) node {\tiny $-$} +(2,0) node {\tiny $-$};
\end{scope}

\begin{scope}[xshift=4cm]
\foreach \x/\l in {1,2,3,6} {\draw (\x/2,0) rectangle +(.5,1) node at +(.25,.5) {\l};};
\draw (.75,.15) node {\tiny $-$} +(1,0) node {\tiny $-$};
\draw (2,.5) rectangle +(1,.5) node at +(.5,.25) {4};
\draw (2,0) rectangle +(1,.5) node at +(.5,.25) {5};
\end{scope}

\begin{scope}[xshift=8cm]
\foreach \x/\l in {1,6} {\draw (\x/2,0) rectangle +(.5,1) node at +(.25,.5) {\l};};
\foreach \x/\l in {1/2,2/3} {\draw (\x,.5) rectangle +(1,.5) node at +(.5,.25) {\l};};
\foreach \x/\l in {1/4,2/5} {\draw (\x,0) rectangle +(1,.5) node at +(.5,.25) {\l};};
\draw (.75,.15) node {\tiny $+$};
\end{scope}

\begin{scope}[xshift=12cm]
\foreach \x/\l in {1,4,5,6} {\draw (\x/2,0) rectangle +(.5,1) node at +(.25,.5) {\l};};
\draw (.75,.15) node {\tiny $-$} +(2,0) node {\tiny $-$};
\draw (1,.5) rectangle +(1,.5) node at +(.5,.25) {2};
\draw (1,0) rectangle +(1,.5) node at +(.5,.25) {3};
\end{scope}

\begin{scope}[xshift=16cm]
\foreach \x/\l in {1,2,3,4,5,6} {\draw (\x/2,0) rectangle +(.5,1) node at +(.25,.5) {\l};};
\draw (.75,.15) node {\tiny $-$} +(1,0) node {\tiny $+$} +(2,0) node {\tiny $-$};
\end{scope}

\end{scope}

\end{tikzpicture}
\end{equation*}
They correspond via Lemma \ref{cyclemovebij} to the following standard domino tableaux
\begin{equation*}
\begin{tikzpicture}[thick,font=\scriptsize,scale=.65]

\foreach \x/\l in {0/1,1/3,2/5} {\draw (\x,.5) rectangle +(1,.5) node at +(.5,.25) {\l};};
\foreach \x/\l in {0/2,1/4,2/6} {\draw (\x,0) rectangle +(1,.5) node at +(.5,.25) {\l};};

\begin{scope}[xshift=4cm]
\foreach \x/\l in {0/1,1/2,2/5} {\draw (\x,.5) rectangle +(1,.5) node at +(.5,.25) {\l};};
\foreach \x/\l in {0/3,1/4,2/6} {\draw (\x,0) rectangle +(1,.5) node at +(.5,.25) {\l};};
\end{scope}

\begin{scope}[xshift=8cm]
\foreach \x/\l in {0/1,1/3,2/4} {\draw (\x,.5) rectangle +(1,.5) node at +(.5,.25) {\l};};
\foreach \x/\l in {0/2,1/5,2/6} {\draw (\x,0) rectangle +(1,.5) node at +(.5,.25) {\l};};
\end{scope}

\begin{scope}[xshift=11.5cm]
\foreach \x/\l in {1,2,3,4} {\draw (\x/2,0) rectangle +(.5,1) node at +(.25,.5) {\l};};
\draw (2.5,.5) rectangle +(1,.5) node at +(.5,.25) {5};
\draw (2.5,0) rectangle +(1,.5) node at +(.5,.25) {6};
\end{scope}

\begin{scope}[xshift=16cm]
\foreach \x/\l in {0/1,1/2,2/4} {\draw (\x,.5) rectangle +(1,.5) node at +(.5,.25) {\l};};
\foreach \x/\l in {0/3,1/5,2/6} {\draw (\x,0) rectangle +(1,.5) node at +(.5,.25) {\l};};
\end{scope}

\begin{scope}[yshift=-2cm]

\begin{scope}[xshift=-.5cm]
\draw (.5,.5) rectangle +(1,.5) node at +(.5,.25) {1};
\draw (.5,0) rectangle +(1,.5) node at +(.5,.25) {2};
\foreach \x/\l in {3,4,5,6} {\draw (\x/2,0) rectangle +(.5,1) node at +(.25,.5) {\l};};
\end{scope}

\begin{scope}[xshift=3.5cm]
\foreach \x/\l in {1,2,3} {\draw (\x/2,0) rectangle +(.5,1) node at +(.25,.5) {\l};};
\draw (2,.5) rectangle +(1,.5) node at +(.5,.25) {4};
\draw (2,0) rectangle +(1,.5) node at +(.5,.25) {5};
\draw (3,0) rectangle +(.5,1) node at +(.25,.5) {6};
\end{scope}

\begin{scope}[xshift=8cm]
\foreach \x/\l in {0/1,1/2,2/3} {\draw (\x,.5) rectangle +(1,.5) node at +(.5,.25) {\l};};
\foreach \x/\l in {0/4,1/5,2/6} {\draw (\x,0) rectangle +(1,.5) node at +(.5,.25) {\l};};
\end{scope}

\begin{scope}[xshift=11.5cm]
\draw (.5,0) rectangle +(.5,1) node at +(.25,.5) {1};
\draw (1,.5) rectangle +(1,.5) node at +(.5,.25) {2};
\draw (1,0) rectangle +(1,.5) node at +(.5,.25) {3};
\foreach \x/\l in {4,5,6} {\draw (\x/2,0) rectangle +(.5,1) node at +(.25,.5) {\l};};
\end{scope}

\begin{scope}[xshift=16cm]
\draw (0,0) rectangle +(.5,1) node at +(.25,.5) {1};
\draw (.5,0) rectangle +(.5,1) node at +(.25,.5) {2};
\draw (1,.5) rectangle +(1,.5) node at +(.5,.25) {3};
\draw (1,0) rectangle +(1,.5) node at +(.5,.25) {4};
\draw (2,0) rectangle +(.5,1) node at +(.25,.5) {5};
\draw (2.5,0) rectangle +(.5,1) node at +(.25,.5) {6};
\end{scope}

\end{scope}
\end{tikzpicture}
\end{equation*}
\end{ex}

\begin{remark}
\label{Pietrahobij}
{\rm
Another bijection between signed domino tableaux of a fixed shape and certain standard domino tableaux using cycle moves was established already in \cite{PietrahoSpringer}. This bijection differs slightly from ours, since we never apply a cycle move to open clusters, thus the shape stays unchanged; we fix instead the parity of the total number of minus signs.
}
\end{remark}

\begin{definition}
A \emph{$2$-row character of type ${\rm D}_k$} is an unordered pair $\{\la,\mu\}$ of 1-row partitions  $\la,\mu$ with a total number of $k$ boxes. A \emph{special bitableau of type ${\rm D}_k$}  is a $2$-row character of type ${\rm D}_k$ filled with the numbers $\{1,\ldots, k\}$ increasing in the two rows. Let $\Omega_{a,b}=\Omega({D_k})_{a,b}$ be the set of such special bitableaux where $\la$ has $a$ boxes and $\mu$ has $b$ boxes.
\end{definition}

The irreducible representations of the Weyl group $W(B_k)$ are indexed by ordered pairs of partitions $(\la,\mu)$, see e.g. \cite{MacDonald} or \cite[8.2]{Serre}. Their restrictions $S^{(\la,\mu)}$ to the index $2$ subgroup $W(D_k)$ stay either irreducible, but two  irreducible representations become isomorphic (namely the one indexed by $(\la,\mu)$ and $(\mu,\la)$ in case $\la\not=\mu$) or split into two non- isomorphic irreducible summands  $S^{\la,\mu,+}, S^{\la,\mu,-}$ (if $\la=\mu$). Let $s_i$, $0\leq i\leq k-1$ be the Coxeter generators of $W(D_k)$ labelled by the vertices of the Dynkin diagram. We choose the labelling such that there is an edge between $0$ and $2$ and between $i$ and $i+1$ for $1\leq i\leq k-2$.  Then $\{s_i \mid 1\leq i\leq k-1\}$ generate a subgroup isomorphic to $S_k$. The $2$-row characters appear naturally as summands in the following induced module:

 \begin{lemma}
 \label{inducedmodule}
 The induced trivial representation $\op{Ind}^{W(D_k)}_{S_k} \mathbbm{1}$
 of $S_k$ decomposes into a multiplicity free sum of $2$-row characters.
 \end{lemma}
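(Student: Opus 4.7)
The plan is to apply Frobenius reciprocity twice. Since $[W(D_k):S_k] = 2^{k-1}$, the induced module has dimension $2^{k-1}$, and the multiplicity of an irreducible $W(D_k)$-module $V$ in $\op{Ind}^{W(D_k)}_{S_k}\mathbbm{1}$ equals $\dim V^{S_k}$. I will therefore compute this $S_k$-invariant dimension for each irreducible $V$ of $W(D_k)$ and show it equals $1$ for those indexed by a $2$-row character and $0$ otherwise.

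First I would recall the description of the irreducibles of $W(D_k)$ in terms of those of $W(B_k)$: an irreducible $S^{(\lambda,\mu)}$ with $|\lambda|+|\mu|=k$ restricts to an irreducible $S^{\{\lambda,\mu\}}$ of $W(D_k)$ when $\lambda\neq\mu$ (with $(\lambda,\mu)$ and $(\mu,\lambda)$ yielding isomorphic restrictions), and splits as $S^{\{\lambda,\lambda\},+}\oplus S^{\{\lambda,\lambda\},-}$ into two non-isomorphic summands when $\lambda=\mu$ (forcing $k$ even). Next, using the standard realisation of $S^{(\lambda,\mu)}$ as induced from $W(B_{|\lambda|})\times W(B_{|\mu|})$ and Mackey decomposition, I would derive the key restriction formula
\[
S^{(\lambda,\mu)}|_{S_k} \;\cong\; \op{Ind}^{S_k}_{S_{|\lambda|}\times S_{|\mu|}}\!\bigl(S^\lambda\boxtimes S^\mu\bigr),
\]
since the $(\mathbb{Z}/2)^k$-twists in the $W(B_k)$-construction become trivial upon restriction to $S_k$. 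A second application of Frobenius reciprocity then yields $\dim(S^{(\lambda,\mu)})^{S_k} = \dim\HOM_{S_{|\lambda|}\times S_{|\mu|}}(\mathbbm{1},\,S^\lambda\boxtimes S^\mu)$, which equals $1$ precisely when $\lambda$ and $\mu$ are both trivial, i.e. both $1$-row partitions, and is $0$ otherwise.

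For $\lambda\neq\mu$ this immediately gives that $S^{\{\lambda,\mu\}}$ occurs with multiplicity $1$ in the induced module exactly when $\{\lambda,\mu\}$ is a $2$-row character. The remaining case $\lambda=\mu$ (only relevant for $k$ even with $\lambda=(k/2)$) is the main obstacle: the computation above only tells us that $S^{((k/2),(k/2))}|_{S_k}$ has a one-dimensional $S_k$-fixed subspace, and hence precisely one of the two summands $S^{\{\lambda,\lambda\},\pm}$ supports it. This suffices to contribute a single irreducible labelled by the $2$-row character $\{(k/2),(k/2)\}$; pinpointing which of the $\pm$ summands carries the invariant would require a finer analysis (e.g.\ via the explicit action of a coset representative for $W(B_k)/W(D_k)$), but is not needed for the statement of the lemma.

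Finally I would perform a dimension check: summing $\binom{k}{a}$ over $2$-row characters $\{(a),(k-a)\}$ with $a<k-a$, plus $\tfrac{1}{2}\binom{k}{k/2}$ when $k$ is even, yields $\tfrac{1}{2}\sum_{a=0}^{k}\binom{k}{a}=2^{k-1}=\dim\op{Ind}^{W(D_k)}_{S_k}\mathbbm{1}$, confirming that every $2$-row character occurs exactly once.
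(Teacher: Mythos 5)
Your proof is correct and takes a genuinely different route from the paper's. The paper applies the Wigner--Mackey little group construction directly to $W(D_k)\cong A\rtimes S_k$ with $A=(\mathbb{Z}/2\mathbb{Z})^{k-1}$: it classifies $S_k$-orbits of characters of $A$, identifies the stabilisers $H_i$, and applies Frobenius reciprocity together with Mackey's formula to reduce the multiplicity count to $\dim\HOM_{S_k}(\op{Ind}_{H_i}^{S_k}\rho,\mathbbm{1})$, which is nonzero only for trivial $\rho$. Your argument instead goes through $W(B_k)$: you invoke the branching rule $S^{(\lambda,\mu)}\big|_{W(D_k)}$ (irreducible or split according to $\lambda\neq\mu$ or $\lambda=\mu$), establish the restriction formula $S^{(\lambda,\mu)}\big|_{S_k}\cong\op{Ind}^{S_k}_{S_{|\lambda|}\times S_{|\mu|}}(S^\lambda\boxtimes S^\mu)$ via Mackey on the $B_k$-side, and then apply Frobenius reciprocity to conclude that the $S_k$-invariant subspace is nonzero iff $\lambda,\mu$ are both one-row. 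This is essentially ``Clifford theory from above'' rather than ``little groups from within''; the two are morally equivalent (both are instances of the same general machinery), but the paper's version avoids mentioning $W(B_k)$ altogether and is a little more self-contained in the type $D$ setting, while yours makes the link to the hyperoctahedral combinatorics used elsewhere in the paper more transparent. You also handle the subtle point at $\lambda=\mu$ correctly: a one-dimensional invariant subspace of $S^{((k/2),(k/2))}\big|_{S_k}$ forces exactly one of $S^{\{\lambda,\lambda\},\pm}$ to contribute, and the paper likewise notes the dependence of which one on the choice of isomorphism $W(D_k)\cong A\rtimes S_k$. Your concluding dimension check is consistent with Remark~\ref{counting} and Lemma~\ref{bijchi}.
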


 \begin{proof}
 Let $A=(\mZ / 2\mZ)^{k-1}$. Since $W(D_k)\cong A \rtimes S_k$ we can apply the Wigner-Mackey little group argument, \cite[8.2]{Serre} to construct the irreducible characters of $W(D_k)$. The $S_k$-action on the group of characters of $A$ has exactly $\lfloor k/2 \rfloor+1$ orbits and we can chose representatives
 $\chi_0,\ldots,\chi_{\lfloor k/2 \rfloor}$ such that the
 stabiliser $H_i$ of $\chi_i$ equals $S_i \times S_{k-i}$ if $i<k/2$ and $\mZ/2\mZ \ltimes (S_{k/2} \times S_{k/2})$ if $i=k/2$.
 An irreducible character of $W(D_k)$ is of the form ${\Ind}_{A \rtimes
 H_i}^{W(D_k)} (\chi_i \otimes \rho)$ for some $i$ and $\rho$ an
 irreducible character of $H_i$. Using Frobenius reciprocity and the Mackey formula we obtain
 \begin{eqnarray*}
 &&{\rm Hom}_{W(D_k)}\left({\rm Ind}_{A \rtimes H_i}^{W(D_k)} (\chi_i
 \otimes \rho),\op{Ind}^{W(D_k)}_{S_k} \mathbbm{1}\right)\\
 &\cong&{\rm Hom}_{S_k}\left({\rm Res}^{W(D_k)}_{S_k}{\rm Ind}_{A \rtimes
 H_i}^{W(D_k)} (\chi_i \otimes \rho),\mathbbm{1}\right)\\
 &\cong&{\rm Hom}_{S_k}\left({\rm Ind}^{S_k}_{H_i}{\rm Res}_{H_i}^{A
 \rtimes H_i} (\chi_i \otimes \rho),\mathbbm{1}\right)={\rm
 Hom}_{S_k}\left({\rm Ind}^{S_k}_{H_i} \rho,\mathbbm{1}\right).
\end{eqnarray*}
The latter is non-trivial if and only if $\rho$ is the trivial representation of $H_i$, i.e. the induced representation is of 2-row type and in case of $i=k/2$ equal to $S^{(k/2),(k/2),+}$ or $S^{(k/2),(k/2),-}$, depending on our choice $W(D_k)\cong A \rtimes S_k$ of isomorphism. As the dimension is at most $1$, the claim follows.
\end{proof}

We finish this section with a bijection between cup diagrams and a labelling set of the basis elements of our irreducible representations of $W(D_k)$. Given a cup diagram $a$ on $k$ vertices consider all the cups $c$ which have no ray to the left and are not contained in any other cup. If $c$ is undotted (resp. dotted), mark its left (right) endpoint and also the left (right) endpoints of all the cups contained in $c$. Then mark the vertices at rays (if they exist) and all left endpoints of cups which are to the right of at least one ray. Let $x$ be the number of marked vertices. The special bitableau $\Omega(c)$ of type ${\rm D}_k$ associated with $c$ is then the pair $(\la,\mu)$, where $\la$ has $x$ boxes and $\mu$ has $k-x$ boxes filled in increasing order with the number of the marked (resp. not marked) vertices. The following bijection identifies cup diagrams with special bitableaux, explaining the counting formula from Remark \ref{counting}:

\begin{lemma}
\label{bijchi}
The assignment $c\mapsto \Omega(c)$ defines for odd $r,s$ bijections
\begin{eqnarray*}
\left\{
\begin{array}{c}
\text{cup diagrams on $k=\frac{r+s}{2}$ vertices with $\lfloor \frac{s}{2}\rfloor$}\\
\text{cups and an even (resp. odd) number of dots}
\end{array}
\right\}
\;\oneone\;
\Omega_{\frac{r+1}{2},\frac{s-1}{2}}
\end{eqnarray*}
and a bijection $\mB_k\oneone \Omega_{\frac{k}{2},\frac{k}{2}}$ for even $r=s=k$.
\end{lemma}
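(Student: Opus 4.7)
My plan is to establish the lemma in two main steps: first verify that $\Omega(c)$ lands in the asserted target set, and then construct an explicit inverse via a greedy parsing procedure. For the count of marked vertices, each cup of $c$ contributes exactly one marked endpoint (the left one if the cup is an undotted outer cup to the left of all rays, a cup nested inside such, or a cup lying to the right of some ray; the right one if the cup is a dotted outer cup or nested inside a dotted outer cup), and every ray is marked. Hence the number of marked vertices equals $\#\op{cups}(c)+\#\op{rays}(c)$, which computes to $(s-1)/2+((r-s)/2+1)=(r+1)/2$ in the odd $r,s$ case and to $k/2$ in the even case $r=s=k$, placing $\Omega(c)$ in the correct set.

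To construct the inverse $\Psi$, encode $(\lambda,\mu)$ as the word $w\in\{M,U\}^k$ with $w_i=M$ iff $i\in\lambda$, and parse $w$ greedily from the left into a sequence of closed clusters possibly followed by an open cluster. At position $\ell$ with $w_\ell=M$, look for the smallest $r>\ell$ such that the subword $w_\ell\cdots w_r$ has equally many $M$'s and $U$'s, and, if this $r$ exists, declare $[\ell,r]$ a closed $+$-cluster. At position $\ell$ with $w_\ell=U$ perform the symmetric search to produce a closed $-$-cluster. If no such $r$ exists the suffix $w_\ell\cdots w_k$ is the open cluster. Convert each closed $+$-cluster into a nested cup diagram using standard bracket matching ($M$ opens, $U$ closes) with all cups undotted; convert each closed $-$-cluster using the symmetric matching ($U$ opens, $M$ closes) with the outer cup dotted and the nested cups undotted. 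Within the open cluster, the first letter is the leftmost ray; matched $M$-$U$ pairs in the remainder give undotted cups lying to the right of rays, and any unmatched $M$'s are additional rays. Finally, decorate the leftmost ray with a dot exactly when this is needed to match the prescribed parity of the total dot count.

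The verification rests on two observations. First, within any closed cluster the outermost cup spans the whole cluster, so the relevant balance function is strictly positive (respectively negative for a $-$-cluster) on the interior and vanishes only at the endpoint; hence the shortest balanced prefix at $\ell$ is precisely the cluster starting at $\ell$, making the greedy parse unambiguous. Second, a prospective $-$-cluster initiated at a $U$ always closes within the word, because at every stage of the parse the already processed portion is balanced and $|\lambda|\ge|\mu|$, so the remaining suffix satisfies $\#M\ge\#U$; thus the parse can only ever halt at an $M$, and the first such halt precisely identifies the leftmost ray. For $r=s=k$ even this situation never arises ($|\lambda|=|\mu|$), so there is no open cluster and no parity freedom, yielding a single bijection; for odd $r,s$ the dot on the leftmost ray is a single free bit which realises both parity classes. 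That $\Omega\circ\Psi$ and $\Psi\circ\Omega$ are the identity then reduces to a cluster-by-cluster inspection matching the three marking rules (outer undotted, outer dotted, open cluster) against the three cluster types produced by the parser. The main obstacle lies in the balance-function argument ensuring that greedy parsing reproduces the actual cluster decomposition; once this is established the remaining checks are routine.
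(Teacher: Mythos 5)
Your proposal is correct and rests on the same observation as the paper, namely that each cup contributes exactly one marked endpoint and each ray is marked, so $|\lambda|=\#\mathrm{cups}(c)+\#\mathrm{rays}(c)$, which is $(r+1)/2$ for odd $r,s$ and $k/2$ for even $r=s=k$. The paper records only this count (its entire proof is that one sentence) and leaves bijectivity implicit; your greedy-parse construction of the inverse, together with the balance argument showing that the minimal balanced prefix starting at each cluster-initial position recovers exactly that cluster and that a parse starting at a $U$ always closes because $|\lambda|\ge|\mu|$, is a sound and explicit version of the inverse that the paper tacitly asserts to exist.
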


\begin{proof}
The number of marked points equals the number of cups plus the number of rays.
\end{proof}

\begin{remark} \label{rem:weyl_action}
{\rm
\begin{enumerate}[a.)]
\item
The induced sign representation was identified in \cite[Theorem 5.17]{LS} with a vector space with basis all cup diagrams on $k$ vertices together with a diagrammatical action of $W(D_k)$, and the decomposition into irreducible modules given by the span of cup diagrams with a fixed number of cups, \cite[Remark 5.24]{LS}. We get the refined decompositions of Lemma~\ref{inducedmodule}:
\begin{eqnarray}
\label{decomprefined}
\op{Ind}^{W(D_k)}_{S_k}\mathbbm{1}&\cong&
\bigoplus_{l=0}^{\frac{k-\epsilon}{2}} S^{((k-l),(l))}
\end{eqnarray}
  where $\epsilon=1$ for $k$ odd and $\epsilon=0$ for $k$ even with $S^{((k/2),(k/2))}$ then denoting one out of the two $S^{((k/2),(k/2)),\pm}$ depending on the choices made, as in the proof of Lemma \ref{inducedmodule}.
  The summands can be identified with Kazhdan-Lusztig cell modules. Lemma \ref{bijchi} does not induce a $W(D_k)$- equivariant isomorphism (even after twisting with the sign representation) of the corresponding irreducible representations of $W(D_k)$, but only compares the dimensions. To establish an explicit isomorphism between the two representations one has to link the canonical basis for the cell module (the cup diagrams) with the analog of a Specht basis (the bitableau) using for instance \cite{NaruseTypeC}, as was done successfully for type $\rm A$ in \cite{Naruse}. Note however that outside of the class of $2$-row characters the cell modules are not necessarily irreducible.

\item
The bijection from Lemma \ref{bijchi} induces via Lemmas \ref{Bijection} and \ref{cyclemovebij} bijections
$DT(r,s)\oneone\Omega_{\frac{r+1}{2},\frac{s-1}{2}}$ if $r, s$ odd and
$DT(r,s)\oneone\Omega_{\frac{k}{2},\frac{k}{2}}$ if $r=s=k$ even. They are in fact special cases of known bijections between standard domino tableaux and pairs of partitions with standard fillings established in \cite{StantonWhite}. These Stanton-White bijections were used for instance in the study of decompositions of tensor products of representations of $\gl_n$ and generalized Littlewood-Richardson theory, see e.g. \cite{CarreLeclerc} and in the representation theory of the Weyl group $W(B_k)$, see e.g. \cite{PietrahoRS}.
\end{enumerate}
}
\end{remark}

\begin{ex}
\label{extable}
Here is an example summarising our bijections: (we only display half of the bitableaux, since the second half is then determined)
\begin{equation*}
\usetikzlibrary{arrows}
\begin{tikzpicture}[thick,font=\scriptsize,scale=.7]

\draw (7.8,1) -- +(0,-14);
\draw (-.5,-7) -- +(17,0);

\draw (0,-.25) rectangle +(.5,.5) node at +(.25,.25) {1} +(.5,0) rectangle +(1,.5) node at +(.75,.25) {3} +(1,0) rectangle +(1.5,.5) node at +(1.25,.25) {4};
\foreach \x/\l in {1,2,3} {\draw (\x/2+2,-.5) rectangle +(.5,1) node at +(.25,.5) {\l};};
\draw (2.5,-.5) node at +(.25,.18) {$+$} node at +(1.25,.18) {$+$} +(2.5,.5) rectangle +(1.5,1) node at +(2,.75) {4};
\draw (6,.2) .. controls +(0,-.6) and +(0,-.6) .. +(.6,0) +(.9,0) -- +(.9,-.7) +(1.2,0) -- +(1.2,-.7);

\draw (0,-2.25) rectangle +(.5,.5) node at +(.25,.25) {2} +(.5,0) rectangle +(1,.5) node at +(.75,.25) {3} +(1,0) rectangle +(1.5,.5) node at +(1.25,.25) {4};
\foreach \x/\l in {1,2,3} {\draw (\x/2+2,-2.5) rectangle +(.5,1) node at +(.25,.5) {\l};};
\draw (2.5,-2.5) node at +(.25,.18) {$-$} node at +(1.25,.18) {$-$} +(2.5,.5) rectangle +(1.5,1) node at +(2,.75) {4};
\draw (6,-1.8) .. controls +(0,-.6) and +(0,-.6) .. +(.6,0) +(.9,0) -- +(.9,-.7) +(1.2,0) -- +(1.2,-.7);
\fill (6.3,-2.24) circle(2.5pt);
\fill (6.9,-2.15) circle(2.5pt);

\draw (0,-4.25) rectangle +(.5,.5) node at +(.25,.25) {1} +(.5,0) rectangle +(1,.5) node at +(.75,.25) {2} +(1,0) rectangle +(1.5,.5) node at +(1.25,.25) {4};
\draw (2.5,-4.5) rectangle +(.5,1) node at +(.25,.18) {$+$} node at +(.25,.5) {1} +(.5,.5) rectangle +(1.5,1) node at +(1,.75) {2} +(.5,0) rectangle +(1.5,.5) node at +(1,.25) {3} +(2.5,.5) rectangle +(1.5,1) node at +(2,.75) {4};
\draw (6,-3.8) -- +(0,-.7) +(.3,0)  .. controls +(0,-.6) and +(0,-.6) .. +(.9,0) +(1.2,0) -- +(1.2,-.7);

\draw (0,-6.25) rectangle +(.5,.5) node at +(.25,.25) {1} +(.5,0) rectangle +(1,.5) node at +(.75,.25) {2} +(1,0) rectangle +(1.5,.5) node at +(1.25,.25) {3};
\draw (2.5,-6.5) rectangle +(.5,1) node at +(.25,.18) {$+$} node at +(.25,.5) {1} +(.5,.5) rectangle +(1.5,1) node at +(1,.75) {2} +(.5,0) rectangle +(1.5,.5) node at +(1,.25) {4} +(2.5,.5) rectangle +(1.5,1) node at +(2,.75) {3};
\draw (6,-5.8) -- +(0,-.7) +(.3,0) -- +(.3,-.7) +(.6,0)  .. controls +(0,-.6) and +(0,-.6) .. +(1.2,0);

\begin{scope}[xshift=8.5cm]
\draw (6,-.25) rectangle +(.5,.5) node at +(.25,.25) {1} +(.5,0) rectangle +(1,.5) node at +(.75,.25) {3} +(1,0) rectangle +(1.5,.5) node at +(1.25,.25) {4};
\foreach \x/\l in {1,2,3} {\draw (\x/2-.5,-.5) rectangle +(.5,1) node at +(.25,.5) {\l};};
\draw (0,-.5) node at +(.25,.18) {$+$} node at +(1.25,.18) {$-$} +(2.5,.5) rectangle +(1.5,1) node at +(2,.75) {4};
\draw (3.5,.2) .. controls +(0,-.6) and +(0,-.6) .. +(.6,0) +(.9,0) -- +(.9,-.7) +(1.2,0) -- +(1.2,-.7);
\fill (4.4,-.15) circle(2.5pt);

\draw (6,-2.25) rectangle +(.5,.5) node at +(.25,.25) {2} +(.5,0) rectangle +(1,.5) node at +(.75,.25) {3} +(1,0) rectangle +(1.5,.5) node at +(1.25,.25) {4};
\foreach \x/\l in {1,2,3} {\draw (\x/2-.5,-2.5) rectangle +(.5,1) node at +(.25,.5) {\l};};
\draw (0,-2.5) node at +(.25,.18) {$-$} node at +(1.25,.18) {$+$} +(2.5,.5) rectangle +(1.5,1) node at +(2,.75) {4};
\draw (3.5,-1.8) .. controls +(0,-.6) and +(0,-.6) .. +(.6,0) +(.9,0) -- +(.9,-.7) +(1.2,0) -- +(1.2,-.7);
\fill (3.8,-2.24) circle(2.5pt);

\draw (6,-4.25) rectangle +(.5,.5) node at +(.25,.25) {1} +(.5,0) rectangle +(1,.5) node at +(.75,.25) {2} +(1,0) rectangle +(1.5,.5) node at +(1.25,.25) {4};
\draw (0,-4.5) rectangle +(.5,1) node at +(.25,.15) {$-$} node at +(.25,.5) {1} +(.5,.5) rectangle +(1.5,1) node at +(1,.75) {2} +(.5,0) rectangle +(1.5,.5) node at +(1,.25) {3} +(2.5,.5) rectangle +(1.5,1) node at +(2,.75) {4};
\draw (3.5,-3.8) -- +(0,-.7) +(.3,0)  .. controls +(0,-.6) and +(0,-.6) .. +(.9,0) +(1.2,0) -- +(1.2,-.7);
\fill (3.5,-4.15) circle(2.5pt);

\draw (6,-6.25) rectangle +(.5,.5) node at +(.25,.25) {1} +(.5,0) rectangle +(1,.5) node at +(.75,.25) {2} +(1,0) rectangle +(1.5,.5) node at +(1.25,.25) {3};
\draw (0,-6.5) rectangle +(.5,1) node at +(.25,.15) {$-$} node at +(.25,.5) {1} +(.5,.5) rectangle +(1.5,1) node at +(1,.75) {2} +(.5,0) rectangle +(1.5,.5) node at +(1,.25) {4} +(2.5,.5) rectangle +(1.5,1) node at +(2,.75) {3};
\draw (3.5,-5.8) -- +(0,-.7) +(.3,0) -- +(.3,-.7) +(.6,0)  .. controls +(0,-.6) and +(0,-.6) .. +(1.2,0);
\fill (3.5,-6.15) circle(2.5pt);
\end{scope}

\begin{scope}[yshift=-8cm]
\draw (0,-.25) rectangle +(.5,.5) node at +(.25,.25) {1} +(.5,0) rectangle +(1,.5) node at +(.75,.25) {3};
\foreach \x/\l in {1,2,3,4} {\draw (\x/2+2,-.5) rectangle +(.5,1) node at +(.25,.5) {\l};};
\draw (2.5,-.5) node at +(.25,.18) {$+$} node at +(1.25,.18) {$+$} +(2.5,.5);
\draw (5.5,.2) .. controls +(0,-.6) and +(0,-.6) .. +(.6,0) +(.9,0) .. controls +(0,-.6) and +(0,-.6) .. +(1.5,0);

\draw (0,-2.25) rectangle +(.5,.5) node at +(.25,.25) {2} +(.5,0) rectangle +(1,.5) node at +(.75,.25) {4};
\foreach \x/\l in {1,2,3,4} {\draw (\x/2+2,-2.5) rectangle +(.5,1) node at +(.25,.5) {\l};};
\draw (2.5,-2.5) node at +(.25,.18) {$-$} node at +(1.25,.18) {$-$} +(2.5,.5);
\draw (5.5,-1.8) .. controls +(0,-.6) and +(0,-.6) .. +(.6,0) +(.9,0) .. controls +(0,-.6) and +(0,-.6) .. +(1.5,0);
\fill (5.8,-2.24) circle(2.5pt);
\fill (6.7,-2.24) circle(2.5pt);

\draw (0,-4.25) rectangle +(.5,.5) node at +(.25,.25) {1} +(.5,0) rectangle +(1,.5) node at +(.75,.25) {2};
\draw (2.5,-4.5) rectangle +(.5,1) node at +(.25,.18) {$+$} node at +(.25,.5) {1} +(.5,.5) rectangle +(1.5,1) node at +(1,.75) {2} +(.5,0) rectangle +(1.5,.5) node at +(1,.25) {3} +(1.5,0) rectangle +(2,1) node at +(1.75,.5) {4};
\draw (5.5,-3.8) .. controls +(0,-1) and +(0,-1) .. +(1.5,0) +(.5,0) .. controls +(0,-.6) and +(0,-.6) .. +(1.1,0);
\end{scope}

\begin{scope}[xshift=9cm,yshift=-8cm]
\draw (5.5,-.25) rectangle +(.5,.5) node at +(.25,.25) {1} +(.5,0) rectangle +(1,.5) node at +(.75,.25) {4};
\foreach \x/\l in {1,2,3,4} {\draw (\x/2-.5,-.5) rectangle +(.5,1) node at +(.25,.5) {\l};};
\draw (0,-.5) node at +(.25,.18) {$+$} node at +(1.25,.18) {$-$} +(2.5,.5);
\draw (3,.2) .. controls +(0,-.6) and +(0,-.6) .. +(.6,0) +(.9,0) .. controls +(0,-.6) and +(0,-.6) .. +(1.5,0);
\fill (4.2,-.24) circle(2.5pt);

\draw (5.5,-2.25) rectangle +(.5,.5) node at +(.25,.25) {2} +(.5,0) rectangle +(1,.5) node at +(.75,.25) {3};
\foreach \x/\l in {1,2,3,4} {\draw (\x/2-.5,-2.5) rectangle +(.5,1) node at +(.25,.5) {\l};};
\draw (0,-2.5) node at +(.25,.18) {$-$} node at +(1.25,.18) {$+$} +(2.5,.5);
\draw (3,-1.8) .. controls +(0,-.6) and +(0,-.6) .. +(.6,0) +(.9,0) .. controls +(0,-.6) and +(0,-.6) .. +(1.5,0);
\fill (3.3,-2.24) circle(2.5pt);

\draw (5.5,-4.25) rectangle +(.5,.5) node at +(.25,.25) {3} +(.5,0) rectangle +(1,.5) node at +(.75,.25) {4};
\draw (0,-4.5) rectangle +(.5,1) node at +(.25,.18) {$-$} node at +(.25,.5) {1} +(.5,.5) rectangle +(1.5,1) node at +(1,.75) {2} +(.5,0) rectangle +(1.5,.5) node at +(1,.25) {3} +(1.5,0) rectangle +(2,1) node at +(1.75,.5) {4};
\draw (3,-3.8) .. controls +(0,-1) and +(0,-1) .. +(1.5,0) +(.5,0) .. controls +(0,-.6) and +(0,-.6) .. +(1.1,0);
\fill (3.8,-4.54) circle(2.5pt);
\end{scope}

\end{tikzpicture}
\end{equation*}
Note once more the different behaviour for the shape $(r,s)$ depending on $r\not=s$ or $r=s=k$.
\end{ex}

\section{Irreducible components of the algebraic Springer fibre}
\label{Section6}
We consider again the algebraic Springer fibre of type ${\rm D}_k$ from Definition~ \ref{Springerfibre}. The irreducible components of $\cF_N$ for classical $G$ were parameterized and partly described by Spaltenstein in \cite{SpaltensteinLNM}. We briefly recall the constructions needed in our case.

\subsection{Classification of irreducible components}
Let $\lambda=\lambda_N$ be the partition corresponding to the nilpotent element $N\in\mg$ and let $F_\bullet\in\cF_N$. Then $N$ defines a sequence of induced endomorphisms on $F_{n-i}/F_i=(F_i)^\perp/F_i$, for $k\geq i\geq 0$. Their Jordan types give us a sequence of domino diagrams where successive shapes differ by exactly one domino. Putting the number $i$ into the $i$th domino added in this way defines an admissible domino tableau $\cS_N(F_\bullet)$ of shape $\la_N$, thus an assignment
\begin{eqnarray}
\label{SN}
\cS_N:&& \cF_N \longrightarrow ADT(\lambda_N)
\end{eqnarray}
For details we refer to \cite{SpaltensteinLNM} or \cite{vanLeeuwen}.

The analogous construction in type $\rm A$ (to each ordinary full flag $F_\bullet$ one assigns an ordinary standard tableau $\cS^A_N(F_\bullet)$ encoding the Jordan types of the restriction of $N$ to the $i$th part of the flag) gives rise to the well-known Spaltenstein-Vargas classification of irreducible components of Springer fibres for $SL(n)$:
\begin{theorem}[\cite{SpaltensteintypeA}, \cite{Vargas}]
Let $N\in\mathfrak{sl}(n,\mC)$ be nilpotent and let $\cF^A$ be the flag variety for $\op{SL}(n,\mC)$ containing the Springer fibre $\cF^A_N$.
The map $\cS^A_N$ defines a surjection onto the set  $oSYT(\lambda_N)$ of ordinary standard Young tableaux  which  separates the irreducible components
$\textup{Irr}(\mathcal{F}^A_N)$ of $\mathcal{F}^A_N$.  That is, it defines a bijection
\begin{eqnarray*}
\cS^A_N:\quad\text{Irr}(\mathcal{F}^A_N) \longrightarrow oSYT(\lambda_N).
\end{eqnarray*}
Given $T\in oSYT(\lambda_N)$, the irreducible component $X^A_T=(\cS^A_N)^{-1}(T)$ equals the closure of $(X^A_T)^0=\{F_\bullet\mid\cS^A_N(F_\bullet)=T\}$.
\end{theorem}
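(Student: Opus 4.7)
The plan is to proceed by induction on $n$, using a convenient projection to reduce to smaller Springer fibres. First I would introduce the locally closed subvarieties
$$ (X^A_T)^0 = \{F_\bullet \in \mathcal{F}^A_N \mid \mathcal{S}^A_N(F_\bullet) = T\} $$
for each $T \in oSYT(\lambda_N)$. Since every flag in $\mathcal{F}^A_N$ yields exactly one such tableau, the assignment gives a set-theoretic finite decomposition
$\mathcal{F}^A_N = \bigsqcup_{T} (X^A_T)^0$, so the map $\mathcal{S}^A_N$ is everywhere defined and surjective onto its image. The claim then reduces to three assertions: (i) each $(X^A_T)^0$ is irreducible and locally closed, (ii) the decomposition exhausts all the $|oSYT(\lambda_N)|$ standard tableaux, and (iii) all the pieces have the same dimension, namely $\dim \mathcal{F}^A_N$.

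For the induction, I would consider the smooth projection
$$ \pi:\quad \mathcal{F}^A_N \longrightarrow \mathbb{P}(\ker N),\qquad F_\bullet \longmapsto F_1. $$
For a fixed line $L = F_1 \subset \ker N$, the fibre $\pi^{-1}(L)$ is canonically identified with the Springer fibre for the induced nilpotent $\overline{N}$ on $V/L$, whose Jordan type is obtained from $\lambda_N$ by removing a single box. Which row loses a box is determined by the position of $L$ inside the cokernel filtration $\ker N \cap N^i V$, and this gives a stratification of $\mathbb{P}(\ker N)$ into affine cells indexed by the possible rows $r$ such that $\lambda_N$ with a removed box in row $r$ is still a partition. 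Fixing $T$, the top box (labelled $n$) sits in some row $r(T)$, and over the stratum corresponding to $r(T)$ the restriction of $\pi$ to $(X^A_T)^0$ is, by the induction hypothesis applied to $T$ with the top box removed, an iterated affine bundle. Hence $(X^A_T)^0$ is irreducible, and its dimension equals the dimension of the corresponding stratum in $\mathbb{P}(\ker N)$ plus the inductively known dimension of the smaller $(X^A_{T \setminus n})^0$.

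The core quantitative step, equidimensionality, is then checked inductively from this formula by a short combinatorial calculation matching Spaltenstein's count $\dim \mathcal{F}^A_N = \tfrac{1}{2}\bigl(\sum_i(\lambda^t_i)^2 - n\bigr)$, where $\lambda^t$ is the transpose partition; both sides change by the same amount when a corner box is removed. Once every $(X^A_T)^0$ is shown to be irreducible and of dimension $\dim \mathcal{F}^A_N$, the closures $X^A_T := \overline{(X^A_T)^0}$ are irreducible closed subvarieties of top dimension. Distinct $T$ give disjoint open strata, so no $X^A_T$ is contained in another $X^A_{T'}$, and any irreducible component must arise this way since it meets exactly one open stratum densely. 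This yields the bijection $\textup{Irr}(\mathcal{F}^A_N) \oneone oSYT(\lambda_N)$ with $X^A_T = \overline{(X^A_T)^0}$.

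The main obstacle is the bookkeeping in the inductive step: verifying that the stratification of $\mathbb{P}(\ker N)$ by the row $r$ really has affine cells of the expected dimensions, and that the map $(X^A_T)^0 \to (X^A_{T\setminus n})^0$ is a locally trivial affine bundle. This requires choosing compatible trivialisations and tracking how the Jordan basis of $N$ deforms as $L$ varies within a stratum; equivalently, one can appeal to the standard fact that the Spaltenstein variety of partial flags with prescribed Jordan types is smooth and equidimensional, which gives both the affine bundle structure and equidimensionality in one stroke.
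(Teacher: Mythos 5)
The paper does not prove this theorem: it is stated with citations to Spaltenstein and Vargas and used only as background motivation for the type~$D$ analogue (van Leeuwen's Theorem~\ref{paramatrization}). There is therefore no in-paper proof to compare against, and you are supplying an argument the paper deliberately omits.

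That said, your sketch is essentially the classical Spaltenstein argument, and it is broadly sound, but there are a few points where the wording papers over real work and one place where a claim is wrong as stated. First, the map $\pi\colon \cF^A_N\to\mathbb{P}(\ker N)$, $F_\bullet\mapsto F_1$, is \emph{not} smooth: the fibre over $L$ is a Springer fibre on $V/L$ whose dimension jumps as $L$ crosses the filtration $\ker N\supset \ker N\cap NV\supset\cdots$, so $\pi$ cannot be flat. What is true is that $\pi$ restricted to the preimage of each (affine) stratum of $\mathbb{P}(\ker N)$ is a locally trivial fibration, and that is the statement your induction actually uses; you should say it that way. Second, the phrase ``the restriction of $\pi$ to $(X^A_T)^0$ is, by the induction hypothesis $\dots$, an iterated affine bundle'' conflates two things: the induction hypothesis tells you the \emph{fibre} of that restriction over a fixed $L$ is irreducible of the right dimension, whereas local triviality over the stratum is a separate lemma (choose a local section of a Borel subgroup of $\operatorname{Stab}_{\mathrm{GL}(V)}(N)$ acting transitively on the stratum, then transport the fibre). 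Until that is established, the inference ``$(X_T^A)^0$ is irreducible'' does not yet follow. You acknowledge this as ``the main obstacle,'' but your proposed remedy (appeal to smoothness and equidimensionality of the Spaltenstein variety) is essentially equivalent to what is being proved and risks circularity; better to prove the local triviality directly. Finally, the dimension bookkeeping is correct and matches Spaltenstein's formula (removing a corner box from row $r$ drops both the Springer fibre dimension and the relevant stratum dimension by $r-1$), and the closing argument that each top-dimensional $\overline{(X_T^A)^0}$ is a component, and that every component arises this way, is fine once you note the $(X_T^A)^0$ are finitely many locally closed pieces so that any irreducible component meets exactly one of them in a dense open set.
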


In type $\rm D$, admissible tableaux do not
fully separate the components of $\cF_N$.  It is still true that $F_\bullet,G_\bullet\in\cF_N$ with    $\cS_N(F_\bullet)\not=\cS_N(F_\bullet)$ lie in different
irreducible components, but the converse fails, since of course $(X_T)^0=\{F_\bullet\mid\cS^A_N(F_\bullet)=T\}$ and its closure $X_T$
need not be connected. The signed domino tableaux were introduced to account for this disconnectedness in form of the following classification of Van Leeuwen \cite[Lemmas 3.2.3 and 3.3.3]{vanLeeuwen}:

\begin{theorem}\label{paramatrization}
Let $N\in\mg$ be nilpotent of admissible Jordan type $\la_N=(\la_1,\la_2)$ and $T\in ADT(\lambda_N)$.
\begin{enumerate}
\item $X_T$ is a union of  irreducible components of $\cF_N$, labelled by all  signed domino tableaux which equal $T$ after forgetting the signs.
\item All irreducible components of $\cF_N$ are obtained in this way.
\item There is a bijection between the set of admissible signed standard domino tableaux of shape
$\lambda_N$ and the set of irreducible components of $\cF_N$,
\begin{eqnarray}
ADT_{\rm sgn}(\lambda_N)&\oneone&\op{Irr}(\cF_N).
\end{eqnarray}
\end{enumerate}
\end{theorem}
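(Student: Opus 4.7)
The plan is to proceed by induction on $k$ using the map $\cS_N$ from \eqref{SN} together with a recursive description of $\cF_N$ obtained by choosing the first step of the flag. This reduces the problem to an orthogonal Springer fibre on a $(2k-2)$-dimensional space, at the cost of tracking an extra sign coming from the two connected components of the type $D$ flag variety highlighted in Remark \ref{Springer_two_components}.

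First I would analyse the fibres of the projection $F_\bullet \mapsto F_1$ from $\cF_N$ to the variety of isotropic lines in $\KER N$. Given such an $F_1$, the induced nilpotent $\overline N$ on the quotient $F_1^{\perp}/F_1$ is again of admissible Jordan type, and deleting the first-added domino of $T$ produces an admissible tableau $T'$ of the new shape. The set of $F_1$ compatible with the first domino of $T$ forms a constructible base $B_T$, and the fibre over each of its points is identified with $(X_{T'})^0$ in the smaller Springer fibre. By induction $(X_{T'})^0$ is equidimensional with irreducible components indexed by signings of $T'$, and an iterated bundle argument then shows that $(X_T)^0$ is equidimensional of the expected dimension. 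Taking closures gives (1) up to identifying components, and (2) follows at once because $\cF_N$ is equidimensional and is covered by the $X_T$.

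For (3) the key input is the geometry of the base $B_T$. When the newly added domino is of type $V^1$ (a vertical domino in an odd column), the condition that $F_1$ sits in the appropriate Jordan cell forces $B_T$ to split into two isomorphic connected pieces, corresponding to the two choices of maximal isotropic subspace in the even-dimensional non-degenerate subquotient controlled by the companion-flag phenomenon of Remark \ref{Springer_two_components}. For $V^0$ and $H$-dominoes the base stays connected, so no sign is needed. Recording the component of $B_T$ as a sign on the $V^1$-domino produces, inductively, a sign assignment to every $V^1$-domino in $T$ and hence a candidate bijection with $ADT_{\rm sgn}(\la_N)$.

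The main obstacle is showing that different sign choices yield genuinely distinct irreducible components rather than being identified in the closure. This requires a careful dimension/transversality check: one verifies that the pairwise intersections of the closures of the differently signed strata have strictly smaller dimension than the strata themselves, so that their closures are pairwise distinct irreducible subvarieties of $\cF_N$. Assembling the inductive output then yields the bijection $ADT_{\rm sgn}(\la_N) \oneone \op{Irr}(\cF_N)$ and recovers the classification of Van Leeuwen \cite[Lemmas 3.2.3 and 3.3.3]{vanLeeuwen}.
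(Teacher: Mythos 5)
The paper itself does not prove this statement; it is recalled verbatim from van Leeuwen~\cite[Lemmas 3.2.3, 3.3.3]{vanLeeuwen}, as indicated in the sentence preceding it. What the paper does prove is the refinement in Theorem~\ref{maintheorem}, and the recursion used there is exactly the one you sketch: fix the isotropic line $F_1\subset\KER N$, pass to the induced nilpotent on $F_1^\perp/F_1$, and extract a binary choice (the sign) precisely when the relevant vertical domino sits in an odd column. So your overall route is in line with both van Leeuwen and the paper, but one genuine error and two looser points should be corrected. The error: the domino determined by the choice of $F_1$ is the one carrying the \emph{largest} label $k$, not the ``first-added'' one. With the convention of \eqref{SN}, domino $i$ is the difference of the Jordan types on $F_{k-i+1}^\perp/F_{k-i+1}$ and $F_{k-i}^\perp/F_{k-i}$, so the step $F_0^\perp/F_0 = W \rightsquigarrow F_1^\perp/F_1$ removes domino $k$; this is why the paper's algorithm in Section~\ref{algorithm} begins with ``the domino in $T$ filled with the highest number.'' Deleting the domino labelled $1$ (which sits in the top-left corner) does not produce the domino tableau of $N$ on $F_1^\perp/F_1$, and as written your recursion does not close.

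Two looser points. (a) Your explanation of the two-point base at a $V^1$-domino via the companion-flag phenomenon of Remark~\ref{Springer_two_components} is misleading beyond the base case $k=1$. The actual mechanism, worked out in Case IV of \eqref{Cases}, is that for shape $(a,a)$ with $a$ odd a generic isotropic line $F_1 = \langle\alpha w_{-a}+\beta w_1\rangle \subset \KER N$ induces Jordan type $(a,a-2)$ on $F_1^\perp/F_1$---the wrong shape---and only the two degenerate choices $\langle w_{-a}\rangle$, $\langle w_1\rangle$ give the required $(a-1,a-1)$. The dichotomy is a Jordan-type constraint on the small isotropic $F_1$, not a choice of maximal isotropic in some subquotient. (b) The closing dimension/transversality check for distinctness of differently-signed components is heavier than needed. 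The signed strata inside $(X_T)^0$ are pairwise disjoint, each is irreducible (an open subset of an iterated $\mP^1$-bundle), and each is open in $(X_T)^0$, hence dense open in its own closure. If two closures coincided, the corresponding disjoint dense opens would have to meet; so distinctness is automatic without any estimate on intersection dimensions.
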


\begin{remark}{\rm
For general Jordan type the labelling set of irreducible components is given by certain equivalence classes of signed domino tableaux, \cite[Definition 3.3.2]{vanLeeuwen}, but in the special case of 2-row shapes the equivalence relation becomes trivial, since every cluster contains only one domino of type $V^1$.
}\end{remark}

If $N$ has Jordan type $(r,s)$, irreducible components are of dimension $\lfloor \frac{s}{2}\rfloor$, \cite[Lemma 2.3.4 and p.14]{vanLeeuwen}. Hence the dimension equals the number of cups in the cup diagram associated to the admissible signed domino tableau from Theorem \ref{paramatrization} via the bijection from Lemma \ref{Bijection}.

\subsection{Components as iterated $\mP^1(\mC)$-bundles}
The rest of this section gives an inductive construction of the irreducible components and proves the following refinement of Theorem \ref{paramatrization}:
\begin{definition}
A topological space $X_1$ is an iterated fibre bundle of base type $(B_1,\ldots,B_l)$ if there exist spaces $X_1,B_1,X_2,B_2, \ldots,X_l ,B_l ,X_{l+1}=\op{pt}$ and maps $p_1,\ldots,p_l$ such that $p_j : X_j\rightarrow B_j$ is a fibre bundle
with typical fibre $X_{j+1}$.
\end{definition}

\begin{theorem}
\label{maintheorem}
Let $N\in\mg$ nilpotent of type $\la_N=(\la_1,\la_2)$ and $T\in ADT(\lambda_N)$.
\begin{enumerate}
\label{maintheorem1}
\item The assignment $\cS_N$ from \eqref{SN} induces a bijection
\begin{eqnarray}
S_N^{\rm sgn}:\quad ADT(\lambda_N)&\oneone&\op{Irr}(\cF_N).
\end{eqnarray}
where $(S_N^{\rm sgn})^{-1}(T)$ is given by the algorithm in Section~ \ref{algorithm}.
\item Every irreducible component $Y$ is an iterated fibre bundle  of base type $(\mP^1(\mC), \mP^1(\mC),\ldots,\mP^1(\mC))$ where the number of components equals the number of cups in the cup diagram corresponding to $(S_N^{\rm sgn})^{-1}(T)$ via Lemma \ref{Bijection}.
\end{enumerate}
\end{theorem}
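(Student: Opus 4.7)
The plan is to establish part (1) by combining the combinatorial bijection of Corollary~\ref{signeddominoestocups} with van Leeuwen's parametrisation (Theorem~\ref{paramatrization}), and then to handle the geometric statement (2) by induction on the number of cups, reducing each outer cup or ray to a single $\mP^1$-bundle step.

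For part (1), recall that Theorem~\ref{paramatrization} already gives a bijection between $ADT_{\rm sgn}(\lambda_N)$ and $\op{Irr}(\cF_N)$ whose underlying (unsigned) tableau coincides with $\cS_N$ on the open dense stratum. The algorithm in Section~\ref{algorithm} provides an explicit description: starting from $T\in ADT_{\rm sgn}(\lambda_N)$ we pass through Lemma~\ref{Bijection} to the associated cup diagram $c=F(T)$, and then build the generic flag in the corresponding component by successively choosing, for each cup, an appropriate isotropic line and, for each ray, a fixed coordinate line (with the dot datum determining which of the two isotropic half--spaces is hit). It will suffice to verify that the flag produced by the algorithm has Jordan-type sequence equal to $T$ on each step and that the closure of the resulting locus is irreducible of the expected dimension $\lfloor \lambda_2/2\rfloor$; this matches the component count of Theorem~\ref{paramatrization} and thus forces the assignment to be the claimed bijection $S_N^{\rm sgn}$.

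For part (2) we argue by induction on the number of cups of $c$, with the base case (no cups, i.e.\ only rays) giving a single point flag, which is trivially an iterated $\mP^1$-bundle of length $0$. For the inductive step, pick a cup of $c$ of maximal degree of nesting, i.e.\ an innermost cup, necessarily connecting two consecutive vertices $i$ and $i+1$ with no cups between them. In the flag, the pair of vectors $F_i\subset F_{i+1}$ controlled by this cup varies: $F_{i-1}$ is fixed by the data of the flag below, $F_{i+1}$ is fixed by the data above (using $F_j^\perp = F_{n-j}$), and $F_i$ is forced to lie in a pencil of isotropic lines inside the $2$-plane $F_{i+1}/F_{i-1}$ on which the induced form is non-degenerate of rank $2$, or, in the dotted case, in the antipodal pencil on the companion flag component. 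The space of allowed $F_i$ is a smooth conic, hence a copy of $\mP^1(\mC)$. Forgetting $F_i$ defines a morphism $p\colon Y\to B$ to the component of a smaller Springer fibre associated with $N$ acting on the quotient space $F_{i-1}^\perp/F_{i-1}$, whose Jordan type is obtained from $\lambda_N$ by removing the domino corresponding to the innermost cup. By construction $p$ is surjective with every fibre isomorphic to $\mP^1(\mC)$, and the local triviality follows from a standard argument using the $C_G(N)$-equivariance and the fact that the conic bundle of isotropic lines in a varying non-degenerate rank $2$ form is Zariski locally trivial. By induction $B$ is an iterated $\mP^1(\mC)$-bundle indexed by the remaining cups of $c$, and adding the new $\mP^1$ over the innermost cup completes the induction.

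The main obstacle is the verification that removing an innermost cup really yields a component of a Springer fibre in the \emph{same} (even orthogonal) type. The case $i=1$ needs separate attention, because the companion--flag phenomenon of Remark~\ref{Springer_two_components} means that whether the resulting cup is dotted or not precisely records on which of the two components of the smaller flag variety the base $B$ lives; here the parity of the number of dots (which is preserved by the local moves \eqref{moves}) together with Corollary~\ref{signeddominoestocups} ensure that the inductive book-keeping matches the combinatorics. Once this is in place, equidimensionality of $\cF_N$ combined with the fact that each stage of the bundle adds exactly one complex dimension shows that the produced variety has dimension equal to the number of cups, confirming that it is the whole component $Y$ rather than a proper subvariety thereof, and completing the proof.
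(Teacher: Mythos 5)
Your proposal takes a genuinely different inductive route from the paper: you remove an innermost cup $\{i,i+1\}$, whereas the paper builds the flag outward from $F_1$, i.e.\ processes the \emph{rightmost} vertex (the domino labelled $k$) first and passes to $F_1^\perp/F_1$. The paper's choice is deliberate: $F_1^\perp/F_1$ is a fixed orthogonal space of dimension $2(k-1)$ and the residual data is again a full isotropic flag, so one lands in a well-defined smaller Springer fibre of the same type — no further structure needs to be invented.

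Your version has genuine gaps. The central one is the claim that the induced form on $F_{i+1}/F_{i-1}$ is ``non-degenerate of rank~$2$'' so that the choices of $F_i$ form a smooth conic. This is false: for any $j\le k$ the subspace $F_j$ in a full isotropic flag is isotropic, so $\beta$ vanishes identically on $F_{i+1}/F_{i-1}$. Hence there is no conic; and indeed the isotropic lines of a \emph{non-degenerate} symmetric form on a $2$-plane over $\mC$ are just two points, not a $\mP^1$. The actual $\mP^1$ comes from the full pencil of lines in the isotropic $2$-plane, with no isotropy condition to impose — the failure of this observation undoes the announced proof of local triviality (the ``conic bundle'' you invoke doesn't exist). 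Second, the ``forgetting $F_i$'' map $p:Y\to B$ does not have the target you claim: $F_{i-1}^\perp/F_{i-1}$ varies with the flag, so it is not a fixed vector space; its Jordan type differs from $\lambda_N$ by $i-1$ dominoes, not by one; and the residual datum is a partial flag, not a full isotropic flag in a smaller orthogonal space, so it is not evidently a Springer fibre. Finally there is an indexing slip: under $\cS_N$ the vertex $j$ of the cup diagram records the transition for $F_{k-j+1}$, not for $F_j$, so the cup $\{i,i+1\}$ controls the pair $F_{k-i+1}\supset F_{k-i}$; keeping this straight is needed to identify which basic case (IIa)–(V) of the algorithm is being invoked. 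To salvage the argument you would essentially be forced back to the paper's recursion, which removes the outer vertex so that the quotient $F_1^\perp/F_1$ is an honest orthogonal space carrying the remaining flag.
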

The proof of the theorem will be given in the next subsections.

\subsection{Explicit linear algebra conventions}
Let $W=W_{a,a}$ be a complex vector space of dimension $n=2a$. We fix a basis $w_{1},\ldots, w_{a}, w_{-1},\ldots, w_{-a}$ and set $w_0=0=w_{-a-1}$. Define a bilinear form $\beta=\beta_{a}$ on $W$ by
$$ \beta(w_i,w_j) \, = \, \delta_{i,-j},$$
hence the non trivial pairings can be displayed as
\begin{eqnarray*}
\begin{tikzpicture}[thick,font=\tiny,scale=.6]
\draw (0,0) node {$1$};
\draw (1,0) node {$2$};
\draw (2,0) node {$\cdots$};
\draw (3,0) node {a};
\draw (4,0) node {-a};
\draw (5,0) node {$\cdots$};
\draw (6,0) node {$-2$};
\draw (7,0) node {$-1$};
\draw (0,-.2) .. controls +(0,-1) and +(0,-1) .. +(7,0);
\draw (1,-.2) .. controls +(0,-.8) and +(0,-.8) .. +(5,0);
\draw (3,-.2) .. controls +(0,-.5) and +(0,-.5) .. +(1,0);
\end{tikzpicture}
\end{eqnarray*}
Setting $N(w_i)=\sigma(i) w_{i-1}$ for $-a\leq i\leq a$ with $\sigma(i)=-1$ if $i<0$ and $\sigma(i)=1$ if $i>0$, defines a linear endomorphism of $W$. It satisfies $\beta(Nw,w')+\beta(w,Nw')=0$ for $w,w'\in W$, hence $N\in \mathfrak{o}(W,\beta)$.\\

Let $V_{a,b}=\mC^{a+b}$ be a complex vector space of dimension $n=a+b$ with $a, b$ odd. We fix a basis $e_{1},\ldots, e_{b}, e_{-1},\ldots, e_{-a}$, set $e_0=0=e_{-a-1}$ and fix the symmetric bilinear form $\gamma=\gamma_{a,b}$ on $V$ defined by
\begin{eqnarray*}
\gamma(e_i,e_j)&=& (-1)^i \left(\delta_{i+j,b+1} - \delta_{i+j,-(a+1)} \right)\\
\text{ i.e. , graphically} &&\parbox[c]{6cm}
{
\begin{tikzpicture}[thick,font=\tiny,scale=.6]
\draw (0,0) node {$1$};
\draw (1,0) node {$2$};
\draw (2,0) node {$\cdots$};
\draw (3,0) node {b-1};
\draw (4,0) node {b};
\draw (5,0) node {$-1$};
\draw (6,0) node {$-2$};
\draw (7,0) node {$\cdots$};
\draw (8,0) node {-(a-1)};
\draw (9,0) node {-a};
\draw (0,-.2) .. controls +(0,-.8) and +(0,-.8) .. +(4,0);
\draw (1,-.2) .. controls +(0,-.6) and +(0,-.6) .. +(2,0);
\draw (5,-.2) .. controls +(0,-.8) and +(0,-.8) .. +(4,0);
\draw (6,-.2) .. controls +(0,-.6) and +(0,-.6) .. +(2,0);
\end{tikzpicture}}.
\end{eqnarray*}
Setting $N(e_i)=e_{i-1}$ for $-a\leq i\leq -1$, $1\leq i\leq b$ defines a linear endomorphism $N$ of $V$ satisfying $\gamma(Nv,v')+\gamma(v,Nv')=0$ for $v,v'\in V$. Hence $N\in \mathfrak{o}(V,\gamma)$.

We identify $V_{a,a}$ with $W_{a,a}$ via the following isomorphism
\begin{lemma}
\label{twobilinears}
For odd $a$, the following defines an isomorphism
\begin{eqnarray}
\varphi:\quad V_{a,a}&\longrightarrow& W_{a,a}\nonumber\\
e_i&\longmapsto&
\begin{cases}
\frac{1}{\sqrt{2}}\left((-1)^{i+1}w_i+w_{i+a+1}\right)&\text{if $i<0$,}\\
\frac{1}{\sqrt{2}}\left((-1)^{i+1}w_{i-a-1}-w_i\right)&\text{if $i>0$.}
\end{cases}
\end{eqnarray}
 of vector spaces satisfying $\beta(\varphi(v),\varphi(v'))=\gamma(v,v')$ for any $v,v'\in V$.
\end{lemma}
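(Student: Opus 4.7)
\bigskip
\noindent\textbf{Proof proposal.}
The plan is to verify the two assertions (bijectivity and compatibility of the bilinear forms) by direct computation on the fixed bases, exploiting heavily that $a$ is odd.

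For bijectivity, I would first observe that for each $i\in\{1,\ldots,a\}$ the pair $\{\varphi(e_i),\varphi(e_{i-a-1})\}$ is supported on the two basis vectors $w_i$ and $w_{i-a-1}$ of $W_{a,a}$. Indeed, since $a+1$ is even, $i$ and $i-a-1$ have the same parity, so $(-1)^{(i-a-1)+1}=(-1)^{i+1}$, giving
\begin{equation*}
\varphi(e_i)=\tfrac{1}{\sqrt{2}}\bigl((-1)^{i+1}w_{i-a-1}-w_i\bigr),\qquad \varphi(e_{i-a-1})=\tfrac{1}{\sqrt{2}}\bigl((-1)^{i+1}w_{i-a-1}+w_i\bigr).
\end{equation*}
These two vectors are linearly independent and span $\langle w_i,w_{i-a-1}\rangle$. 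As $i$ ranges over $\{1,\ldots,a\}$, these pairs partition the basis of $W_{a,a}$, so $\varphi$ is a linear isomorphism.

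For the compatibility $\beta(\varphi(v),\varphi(v'))=\gamma(v,v')$, by bilinearity it suffices to check this on pairs of basis vectors $(e_i,e_j)$. I would split into three cases according to the signs of $i$ and $j$, expanding $\beta$ by the rule $\beta(w_p,w_q)=\delta_{p,-q}$ and reading off which Kronecker deltas survive within the allowed index range. In the ``positive-positive'' case $1\le i,j\le a$, only the mixed terms $\beta(w_{i-a-1},w_j)$ and $\beta(w_i,w_{j-a-1})$ contribute, each giving $\delta_{i+j,a+1}$; on the support $i+j=a+1$ (which forces $i,j$ of the same parity since $a+1$ is even), the two signs coincide and combine to $-(-1)^{i+1}=(-1)^i$, matching $\gamma(e_i,e_j)=(-1)^i\delta_{i+j,a+1}$. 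The ``negative-negative'' case is entirely analogous, producing $(-1)^{i+1}\delta_{i+j,-a-1}$. In the mixed case $i>0>j$, the surviving contributions are supported on $i+j=0$, i.e.\ $j=-i$, and there $(-1)^{j+1}=(-1)^{-i+1}=(-1)^{i+1}$, so the two terms cancel, giving $0=\gamma(e_i,e_j)$.

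The only subtle ingredient is the parity bookkeeping, which in each case depends on $a+1$ being even; this is precisely the reason $a$ must be assumed odd. The computation is otherwise a mechanical case analysis with no geometric obstacle, so no step should present a real difficulty; the main care is in tracking the factor $\tfrac{1}{2}$ from the normalization $\tfrac{1}{\sqrt{2}}$ and in confirming that the spurious Kronecker deltas (such as $\delta_{i+j,2(a+1)}$ or $\delta_{i+j,-2(a+1)}$) lie outside the index range and hence vanish.
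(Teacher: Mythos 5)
Your proposal is correct and matches the paper's approach: the paper states the claim follows by "a straightforward direct calculation" (and simply records the inverse map), which is precisely the basis-by-basis verification you carry out, with the parity bookkeeping (exploiting that $a+1$ is even) and the out-of-range Kronecker deltas correctly handled. The only cosmetic difference is that you establish bijectivity by exhibiting the pairwise spans $\langle w_i,w_{i-a-1}\rangle$ rather than by writing down the inverse explicitly as the paper does, but the two arguments are equivalent.
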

\begin{proof}
The claim follows by a straightforward direct calculation. (The inverse map is given by $\varphi^{-1}(w_i)= \frac{1}{\sqrt{2}}(-1)^{i+1}(e_i+e_{a+i+1})$ for $i<0$ and $\varphi^{-1}(w_i)=\frac{1}{\sqrt{2}}(e_{i-a-1}-e_i)$ for $i>0$.)
\end{proof}

\subsection{Inductive construction of the irreducible components}
\label{algorithm}
 In this section we prove Theorem \ref{maintheorem} by constructing
the irreducible component $(S_N^{\rm sgn})^{-1}(T)$ corresponding to a signed domino tableau $T$. This will be done by induction on the number of dominoes. We distinguish the following basic cases (displaying the rightmost cluster and indicating the position of the domino $D$ with the largest entry):
\begin{eqnarray}
\label{Cases}
&
\begin{tikzpicture}[thick,font=\scriptsize,scale=.65]

\node at (.15,1.5) {I)};
\draw (0,0) rectangle +(.5,1) node at +(.25,.5) {1} +(.25,.17) node {\tiny $-$};
\draw (1,0) rectangle +(.5,1) node at +(.25,.5) {1} +(.25,.17) node {\tiny $+$};

\begin{scope}[xshift=1.9cm]
\node at (.8,1.5) {IIa)};
\draw (.5,0) rectangle +(.5,1);
\foreach \x/\l in {1,2} {\draw (\x,.5) rectangle +(1,.5);};
\draw[pattern color=red,pattern=north east lines] (3,.5) rectangle +(1,.5);
\draw (1,0) rectangle +(1,.5);
\end{scope}

\begin{scope}[xshift=5.9cm]
\node at (.8,1.5) {IIb)};
\draw (.5,0) rectangle +(.5,1);
\foreach \x/\l in {1,2,3} {\draw (\x,.5) rectangle +(1,.5);};
\draw (1,0) rectangle +(1,.5);
\draw[pattern color=red,pattern=north east lines] (2,0) rectangle +(1,.5);
\end{scope}

\begin{scope}[xshift=10.4cm]
\node at (.25,1.5) {IIIa)};
\draw (0,0) rectangle +(.5,1);
\draw (.5,.5) rectangle +(1,.5);
\draw (.5,0) rectangle +(1,.5);
\draw[pattern color=red,pattern=north east lines] (1.5,0) rectangle +(.5,1);
\end{scope}

\begin{scope}[xshift=13.4 cm]
\node at (.3,1.5) {IIIb)};
\draw (0,0) rectangle +(.5,1);
\draw (.5,.5) rectangle +(1,.5);
\draw (.5,0) rectangle +(1,.5);
\draw (1.5,0) rectangle +(.5,1);
\draw[pattern color=red,pattern=north east lines] (2,0) rectangle +(.5,1);
\end{scope}

\end{tikzpicture}&\nonumber\\
\label{Casesforinduction}
\end{eqnarray}

\begin{proof}[Proof of Theorem \ref{maintheorem}]
To each admissible signed domino tableau $T$ with say $d$ dominoes, we construct now the corresponding irreducible component.

Our starting point is $d=1$. Then $N$ has two Jordan blocks of size $1$ and we consider $W=W_{1,1}$. The possible signed domino tableaux are displayed in \eqref{Casesforinduction} (I). The $1$-dimensional isotropic flags in $W$ are $F_1=\langle w_{-1}\rangle$, and $F_1=\langle w_1\rangle$ which are automatically in the kernel of $N$. Each of them defines an irreducible component in $\cF_N$. We assign $F_1=\langle w_{-1}\rangle$ to the first (negative) signed tableau and $F_1=\langle w_{1}\rangle$ to the second (positive) signed tableau. The signs indicate the components of $\cF$ with respect to our chosen basis.
(Alternatively we could work with $V=V_{1,1}$ and take $F_1=\langle e_1+e_{-1}\rangle$ and $F_1=\langle e_{-1}-e_1\rangle$ respectively.)

Let now $d>1$ and assume the theorem holds for less than $d$ dominoes. Let $D$ be the domino in $T$ filled with the highest number.

\textbf{Assume: $T$ is of shape $(a,b)$ and $D$ is horizontal:} Consider $V=V_{a,b}$.

A line in the kernel of $N$ is of the form $F_1=\langle \alpha e_{-a}+\beta e_{1}\rangle$, for some $[\alpha:\beta] \in \mP^1(\mC)$, and is automatically isotropic. First focus on a generic point given by $[\alpha : \beta] \notin \{[0:1],[1:0]\}$. As a new basis $B_{\rm new}$ of $V$ pick
\begin{eqnarray*}\arraycolsep=1.4pt
\left\lbrace\begin{array}{ccccccl}
\beta e_{-a}, & \beta e_{-a+1}, & \ldots, &
\beta e_{-b-1}, & \beta e_{-b}+\alpha e_1, & \ldots, & 
\beta e_{-1}+\alpha e_b,\\
& & & & \alpha e_{-a}+\beta e_1, & \ldots, &
\alpha e_{-a+b-1}+\beta e_b
\end{array}\right\rbrace
\end{eqnarray*}
Note that $N$ maps an element of $B_{\rm new}$ to its left neighbour, the two leftmost to zero. Here ${F_1^\perp}$ is spanned by all vectors in $B_{\rm new}$ except $\alpha e_{-a+b-1}+\beta e_b$. Hence the endomorphism induced by $N$ on ${F_1^\perp}/F_1$ is of Jordan type $(a,b-2)$. 

For the point $[0:1]$, i.e. $F_1=\langle e_1 \rangle$, we can use the original basis of $V$ and it holds ${F_1^\perp}=\langle e_j\mid -a\leq j\leq -1, 1\leq j < b\rangle$. Again, the induced endomorphism is of Jordan type $(a,b-2)$. We identify ${F_1^\perp}/F_1$ and $V_{a,b-2}$ by mapping $e_i\in V_{a,b-2}$ to $e_i+F_1$ for $i<0$ and to ${\ii}e_{i+1}+F_1$ for $i>0$. By taking the closure we thus obtain for Case (IIb) a $\mP^1(\mC)$-bundle over the space of flags given by the tableaux $T$ with the domino $D$ removed. Under the identification of domino tableaux and cup diagrams the label of $D$ is exactly the position of the right end of a cup, see \ref{explicitexample} for an example.

For the point $[1:0]$, i.e. $F_1=\langle e_{-a}\rangle$, we see that ${F_1^\perp}=\langle e_j\mid -a\leq j < -1, 1\leq j\leq b\rangle$ and ${F_1^\perp}/F_1$ is identified with $V_{a-2,b}$ by mapping the basis element $e_i$ of $V_{a-2,b}$ to $e_i+F_1$ for $i>0$ and to ${\ii}e_{i-1}+F_1$ for $i<0$. The Jordan type of the endomorphism induced by $N$ on ${F_1^\perp}/F_1$ is $(a-2,b)$ in this case. Hence for Case (IIa) we get a unique choice of a subspace. Under the bijection with cuo diagrams the label of $D$ will be a ray or left end of a cup.

For both cases the statement follows then by induction by looking at the quotient ${F_1^\perp}/F_1$ with the respective identifications.

\textbf{Assume: $T$ of shape $(a,a)$ and $D$ vertical:} Consider the space  $W_{a,a}$.

Now, a line in the kernel of $N$ is of the form $F_1=\langle\alpha w_{-a}+\beta w_1\rangle$ for $[\alpha:\beta]\in \mP^1(\mC)$ and again automatically isotropic. For a generic point given by $[\alpha : \beta] \notin \{[0:1],[1,:0]\}$, we choose the new basis $B_{\rm new}$ of $W$ as
\begin{eqnarray*}\arraycolsep=1.4pt
\left\lbrace\begin{array}{rrrr}
\alpha w_{-a} - \beta w_1, & \alpha w_{-a+1} + \beta w_2, & \ldots, & \alpha w_{-1}+ (-1)^a \beta w_a, \\
\alpha w_{-a} + \beta w_1, & - \alpha w_{-a+1} + \beta w_2, & \ldots, & (-1)^{a+1} \alpha w_{-1} + \beta w_a
\end{array}\right\rbrace
\end{eqnarray*}
Note again, that $N$ sends each vector to its left neighbour (but multiplied by $-1$ in the first row), the leftmost ones to zero. Now $F_1^\perp$ is spanned by all elements in $B_{\rm new}$ except $\alpha w_{-1}+\beta w_a$. In case of $a$ being even, this is the rightmost vector in the first row and thus $N$ induces on $F_1^\perp/{F_1}$ an endomorphism of Jordan type $(a-1,a-1)$ and moreover $F_1^\perp/F_1\cong W_{a-1,a-1}$ with the induced bilinear form. In case of $A$ being odd, $\alpha w_{-1}+\beta w_a$ is a multiple of the rightmost vector in the second row, so the induced endomorphism is of Jordan type $(a,a-2)$.

Hence for Case (IIIa), i.e., $a$ is even, we obtain, as the closure, a $\mP^1(\mC)$-bundle over the space of flags for the tableaux were $D$ is removed. Again the label of $D$ is the position for the right end of a cup.

Looking at the two points $[1:0]$ and $[0:1]$, i.e., $F_1 = \langle w_{-a} \rangle$ respectively $F_1 = \langle w_1 \rangle$, it holds $F_1^\perp=\langle w_i \mid -a\leq i\leq -1, 1\leq i < a \rangle$ and  $F_1^\perp=\langle w_i \mid -a\leq i< -1, 1\leq i\leq a\rangle$ respectively. For both choices $F_1^\perp/F_1\cong W_{a-1,a-1}$ with the induced bilinear form, and $N$ induces an endomorphism of Jordan type $(a-1,a-1)$.

For Case (IIIb), i.e., $a$ is odd, we obtain two choices corresponding to the sign of the domino $D$ in this case. We assign the choice $F_1=\langle w_{-a}\rangle$ to the negative sign in $D$ and $F_1=\langle w_{1}\rangle$ to the positive sign in $D$ (In this case corresponding to a dotted or undotted ray at the label of $D$ or the left end of a cup). Again the statement follows by induction looking at $F_1^\perp/F_1$.

By construction, the first part of Theorem \ref{maintheorem} holds. Moreover, we have shown that the components are iterated $\mP^1(\mC)$-bundles of the correct base type and therefore smooth (see e.g. \cite[Section 8]{Gisa} for a detailed argument).
\end{proof}

\subsection{An explicit example}
\label{explicitexample}
We illustrate the inductive construction of the irreducible components for the admissible shape $(5,3)$ and refer to Example~\ref{extable} for the corresponding cup diagrams. We identify $\mC^{5+3}$ with the vector space $V_{5,3}$ with basis $e_{-5},e_{-4},e_{-3},e_{-2},e_{-1},e_1, e_2,e_3$ and form $\gamma$.

{\it The irreducible components attached to the four diagrams}
\parbox[c]{1.7cm}
{
\begin{tikzpicture}[thick,font=\tiny,scale=.6]
\foreach \x/\l in {1,2,3} {\draw (\x/2+2,-.5) rectangle +(.5,1) node at +(.25,.55) {\l};};
\draw (2.5,-.5) node at +(.25,.2) {$\pm$} node at +(1.25,.2) {$\pm$} +(2.5,.5) rectangle +(1.5,1) node at +(2,.75) {4};
\end{tikzpicture}
}:

The domino labelled $4$ puts us in Case (IIa). This implies $F_1=\langle e_{-5}\rangle$ and removing the domino means passing to $\langle e_i\mid -5\leq i\leq -2, 1\leq i\leq 3\rangle/F_1$ which we identify with $W_{3,3}$ via Lemma \ref{twobilinears} with basis
\begin{equation*}\arraycolsep=1.4pt
\left\lbrace\begin{array}{rclrclrcl}
w_{-3}&=&\frac{1}{\sqrt{2}}(\ii e_{-4}+e_1), \;\;&
w_{-2}&=&\frac{1}{\sqrt{2}}(-\ii e_{-3}-e_2), \;\;&
w_{-1}&=&\frac{1}{\sqrt{2}}(\ii e_{-2}+e_3),\\
w_1&=&\frac{1}{\sqrt{2}}(\ii e_{-4}-e_1),&
w_2&=&\frac{1}{\sqrt{2}}(\ii e_{-3}-e_2),&
w_3&=&\frac{1}{\sqrt{2}}(\ii e_{-2}-e_3)
\end{array}\right\rbrace
\end{equation*}
The domino labelled $3$ belongs to Case~(IIIb), thus $F_2=F_1+\langle w_{-3}\rangle$ if the sign is $-$ and $F_2=F_1+\langle w_{1}\rangle$ if it is $+$. We identify the span of $w_{-2}, w_{-1},w_1,w_2$ (resp.  $w_{-3}, w_{-2},w_2,w_3$) with $F_2^\perp/F_2$. The domino labelled $2$ belongs to Case~(IIIa), hence we consider the generic bases (i.e. $[\alpha:\beta] \notin \{ [1:0],[0:1]\}$) of $F_2^\perp/F_2$
\begin{eqnarray*}\arraycolsep=2pt
\left\lbrace\begin{array}{lr}
\alpha w_{-2}-\beta w_1,&\alpha w_{-1}+\beta w_2,\\
\alpha w_{-2}+\beta w_1,&-\alpha w_{-1}+\beta w_2
\end{array}\right\rbrace
&\text{and}&\arraycolsep=2pt
\left\lbrace\begin{array}{lr}
\alpha w_{-3}-\beta w_2,&  \alpha w_{-2}+\beta w_3,\\
\alpha w_{-3}+\beta w_2,&  -\alpha w_{-2}+\beta w_3
\end{array}\right\rbrace
\end{eqnarray*}
depending on the two choices made previously, together with
$F_3=\langle\alpha w_{-2}+\beta w_1\rangle$ respectively $F_3=\langle\alpha w_{-3}+\beta w_2\rangle$.

The final domino labelled 1 puts us in Case (I) with the choice $F_4=F_3+\langle -\alpha w_{-1}+\beta w_2\rangle$ corresponding to $+$ and $F_4=F_3+\langle \alpha w_{-2}-\beta w_1\rangle$ to $-$; similar for the second choice.
Altogether, we obtain dense subsets of the four components are given by the following generic choices for each step in the complete full flags:

\begin{tikzpicture}[thick,font=\tiny,scale=.5]
\draw (0,0) node {\fbox{$e_{-5}$}};
\draw[thin] (.7,.1) -- +(.2,0);
\draw[thin] (.9,.1) -- +(.2,1) -- +(.3,1);
\draw[thin] (.9,.1) -- +(.2,-1) -- +(.3,-1);
\draw (2,1) node {\fbox{$w_{-3}$}};
\draw (2,-1) node {\fbox{$w_1$}};
\draw[thin] (2.8,1.1) -- +(.35,0);
\draw[thin] (2.8,-.9) -- +(.35,0);
\draw (5,1) node {\fbox{$\alpha w_{-2}+\beta w_1$}};
\draw (5,-1) node {\fbox{$\alpha w_{-3}+\beta w_2$}};
\draw[thin] (6.8,1.1) -- +(.2,0);
\draw[thin] (7,1.1) -- +(.2,.5) -- +(.3,.5);
\draw[thin] (7,1.1) -- +(.2,-.5) -- +(.3,-.5);
\draw[thin] (6.8,-.9) -- +(.2,0);
\draw[thin] (7,-.9) -- +(.2,.5) -- +(.3,.5);
\draw[thin] (7,-.9) -- +(.2,-.5) -- +(.3,-.5);
\draw (9.2,1.5) node {\fbox{$\alpha w_{-1}-\beta w_2$}};
\draw (9.2,.5) node {\fbox{$\alpha w_{-2}-\beta w_1$}};
\draw (9.2,-.5) node {\fbox{$\alpha w_{-2}-\beta w_3$}};
\draw (9.2,-1.5) node {\fbox{$\alpha w_{-3}-\beta w_2$}};
\draw[thin] (11.1,1.6) -- +(.4,0);
\draw[thin] (11.1,.6) -- +(.4,0);
\draw[thin] (11.1,-.4) -- +(.4,0);
\draw[thin] (11.1,-1.4) -- +(.4,0);
\draw (13.4,1.5) node {\fbox{$\alpha w_{-2}-\beta w_1$}};
\draw (13.4,.5) node {\fbox{$\alpha w_{-1}-\beta w_2$}};
\draw (13.4,-.5) node {\fbox{$\alpha w_{-3}-\beta w_2$}};
\draw (13.4,-1.5) node {\fbox{$\alpha w_{-2}-\beta w_3$}};
\draw[thin] (15.3,1.6) -- +(.4,0);
\draw[thin] (15.3,.6) -- +(.4,0);
\draw[thin] (15.3,-.4) -- +(.4,0);
\draw[thin] (15.3,-1.4) -- +(.4,0);
\draw (17.6,1.5) node {\fbox{$\alpha w_{-1}+\beta w_2$}};
\draw (17.6,.5) node {\fbox{$\alpha w_{-1}+\beta w_2$}};
\draw (17.6,-.5) node {\fbox{$\alpha w_{-2}+\beta w_3$}};
\draw (17.6,-1.5) node {\fbox{$\alpha w_{-2}+\beta w_3$}};
\draw[thin] (19.5,1.6) -- +(.4,0);
\draw[thin] (19.5,.6) -- +(.4,0);
\draw[thin] (19.5,-.4) -- +(.4,0);
\draw[thin] (19.5,-1.4) -- +(.4,0);
\draw (20.7,1.5) node {\fbox{$w_{3}$}};
\draw (20.7,.5) node {\fbox{$w_{3}$}};
\draw (20.7,-.5) node {\fbox{$w_{-1}$}};
\draw (20.7,-1.5) node {\fbox{$w_{-1}$}};
\draw[thin] (21.5,1.6) -- +(.4,0);
\draw[thin] (21.5,.6) -- +(.4,0);
\draw[thin] (21.5,-.4) -- +(.4,0);
\draw[thin] (21.5,-1.4) -- +(.4,0);
\draw (22.7,1.5) node {\fbox{$e_{-1}$}};
\draw (22.7,.5) node {\fbox{$e_{-1}$}};
\draw (22.7,-.5) node {\fbox{$e_{-1}$}};
\draw (22.7,-1.5) node {\fbox{$e_{-1}$}};
\end{tikzpicture}

Expressing these as flags in the original basis we get the isotropic flags $F_\bullet$ with $F_j=F_{j-1}+\langle x_j\rangle$, $1\leq j\leq 4$, where the tuple $(x_4,x_3,x_2,x_1)$ is given as follows (depending on the sign configuration):
\small
\begin{equation*}\arraycolsep=3pt
\begin{array}{l|llll}
--
& \alpha(\ii e_{-2}+e_3)-\beta(\ii e_{-3}-e_2),
& \alpha(\ii e_{-3}+e_2)-\beta(\ii e_{-4}-e_1),
& \ii e_{-4}+e_1,
&e_{-5}\\[0.1cm]
+-
& \alpha(\ii e_{-3}+e_2)+\beta(\ii e_{-4}-e_1),
& \alpha(\ii  e_{-3}+e_2)-\beta(\ii e_{-4}-e_1),
&\ii e_{-4}+e_1,
&e_{-5}\\[0.1cm]
-+
& \alpha(\ii e_{-3}+e_2)+\beta(\ii e_{-2}-e_3),
& \alpha(\ii  e_{-4}+e_1)+\beta(\ii e_{-3}-e_2),
&\ii e_{-4}-e_1,
&e_{-5}\\[0.1cm]
++
& \alpha(\ii e_{-4}+e_1)-\beta(\ii e_{-3}-e_2),
& \alpha(\ii e_{-4}+e_1)+\beta(\ii e_{-3}-e_2),
&\ii e_{-4}-e_1,
& e_{-5}
\end{array}
\end{equation*}
\normalsize
(We denoted the flags in reversed order, with $F_1$ on the right, so they fit with the cup diagrams and domino shapes.)

{\it The irreducible components attached to the two diagrams}
\parbox[c]{1.7cm}
{
\begin{tikzpicture}[thick,font=\tiny,scale=.6]
\draw (2.5,-4.5) rectangle +(.5,1) node at +(.25,.2) {$\pm$} node at +(.25,.55) {$1$} +(.5,.5) rectangle +(1.5,1) node at +(1,.75) {$2$} +(.5,0) rectangle +(1.5,.5) node at +(1,.25) {$3$} +(2.5,.5) rectangle +(1.5,1) node at +(2,.75) {4};
\end{tikzpicture}
}:

We have again $F_1=\langle e_{-5}\rangle$ and we pass to $\left\langle e_{-5}, {\ii}e_{-4},{\ii}e_{-3},{\ii}e_{-2}, e_1,e_2,e_3 \right\rangle / F_1$, which we identify with $V_{3,3}$. Case (IIb) uses the basis $B_{\rm new}$, for the generic choice, given by
\begin{equation*}
\left\lbrace
\begin{array}{ccc}
\beta \ii e_{-4}+\alpha e_1, & \beta \ii e_{-3}+\alpha e_2, & \beta \ii e_{-2} + \alpha e_3,\\
\alpha \ii e_{-4} + \beta e_1, & \alpha \ii e_{-3}+\beta e_2, & \alpha \ii e_{-2}+\beta e_3
\end{array} \right\rbrace
\end{equation*}
After applying then the Cases (IIa), (I) we obtain dense subsets of the components are given by the following generic choices of flags:
\small
\begin{eqnarray*}
\begin{array}{l|llll}
-
& -e_{-3}+\ii e_2,
& \ii e_{-4},
& \alpha \ii e_{-4}+\beta e_1,
&e_{-5}\\[0.1cm]
+
& -e_{-3}-\ii e_2,
& \ii e_{-4},
& \alpha \ii e_{-4}+\beta e_1,
&e_{-5}
\end{array}
\end{eqnarray*}
\normalsize

{\it The irreducible components attached to the two diagrams}
\parbox[c]{1.7cm}
{
\begin{tikzpicture}[thick,font=\tiny,scale=.6]
\draw (2.5,-6.5) rectangle +(.5,1) node at +(.25,.2) {$\pm$} node at +(.25,.55) {$1$} +(.5,.5) rectangle +(1.5,1) node at +(1,.75) {$2$} +(.5,0) rectangle +(1.5,.5) node at +(1,.25) {$4$} +(2.5,.5) rectangle +(1.5,1) node at +(2,.75) {3};
\end{tikzpicture}
}:

The dense subsets are given by the following generic flags:
\small
\begin{eqnarray*}
\begin{array}{l|llll}
-
& - e_{-3} + \ii e_2,
& e_{-4},
& e_{-5},
&\alpha e_{-5}+\beta e_1\\[0.1cm]
+
& - e_{-3} - \ii e_2,
& e_{-4},
& e_{-5},
&\alpha e_{-5}+\beta e_1
\end{array}
\end{eqnarray*}
\normalsize

We now compare the intersection behaviour of the irreducible components in both the algebraic and the topological Springer fibre for the case of an odd number of decorations. We look at the following diagrams respectively the corresponding cup diagrams (the components will be labelled by $A$, $B$, $C$ and $D$).
\begin{center}
\begin{tikzpicture}[thick,font=\tiny,scale=.6]
\foreach \x/\l in {1,2,3} {\draw (\x/2+2,-.5) rectangle +(.5,1) node at +(.25,.55) {\l};};
\draw (2.5,-.5) node at +(.25,.2) {$-$} node at +(1.25,.2) {$+$} +(2.5,.5) rectangle +(1.5,1) node at +(2,.75) {4};
\draw[thin] (5.75,2) -- +(0,-5);
\draw (4,1.5) node {$A$};

\begin{scope}[xshift=4cm]
\draw (2.5,-.5) rectangle +(.5,1) node at +(.25,.2) {$-$} node at +(.25,.55) {$1$} +(.5,.5) rectangle +(1.5,1) node at +(1,.75) {$2$} +(.5,0) rectangle +(1.5,.5) node at +(1,.25) {$3$} +(2.5,.5) rectangle +(1.5,1) node at +(2,.75) {$4$};
\draw[thin] (5.75,2) -- +(0,-5);
\draw (4,1.5) node {$B$};
\end{scope}

\begin{scope}[xshift=8cm]
\draw (2.5,-.5) rectangle +(.5,1) node at +(.25,.2) {$-$} node at +(.25,.55) {$1$} +(.5,.5) rectangle +(1.5,1) node at +(1,.75) {$2$} +(.5,0) rectangle +(1.5,.5) node at +(1,.25) {$4$} +(2.5,.5) rectangle +(1.5,1) node at +(2,.75) {$3$};
\draw[thin] (5.75,2) -- +(0,-5);
\draw (4,1.5) node {$C$};
\end{scope}

\begin{scope}[xshift=12cm]
\foreach \x/\l in {1,2,3} {\draw (\x/2+2,-.5) rectangle +(.5,1) node at +(.25,.55) {\l};};
\draw (2.5,-.5) node at +(.25,.2) {$+$} node at +(1.25,.2) {$-$} +(2.5,.5) rectangle +(1.5,1) node at +(2,.75) {$4$};
\draw (4,1.5) node {$D$};
\end{scope}

\begin{scope}[yshift=-1.5cm]
\draw (2.5,0) .. controls +(0,-.8) and +(0,-.8) .. +(.8,0);
\draw (4.1,0) -- +(0,-1);
\draw (4.9,0) -- +(0,-1);
\fill (2.9,-.6) circle(2.5pt);
\end{scope}

\begin{scope}[xshift=4cm,yshift=-1.5cm]
\draw (2.5,0) -- +(0,-1);
\draw (3.3,0) .. controls +(0,-.8) and +(0,-.8) .. +(.8,0);
\draw (4.9,0) -- +(0,-1);
\fill (2.5,-.5) circle(2.5pt);
\end{scope}

\begin{scope}[xshift=8cm,yshift=-1.5cm]
\draw (2.5,0) -- +(0,-1);
\draw (3.3,0) -- +(0,-1);
\draw (4.1,0) .. controls +(0,-.8) and +(0,-.8) .. +(.8,0);
\fill (2.5,-.5) circle(2.5pt);
\end{scope}

\begin{scope}[xshift=12cm,yshift=-1.5cm]
\draw (2.5,0) .. controls +(0,-.8) and +(0,-.8) .. +(.8,0);
\draw (4.1,0) -- +(0,-1);
\draw (4.9,0) -- +(0,-1);
\fill (4.1,-.5) circle(2.5pt);
\end{scope}
\end{tikzpicture}
\end{center}
Starting with the topological Springer fibre, we note that the definition of the components from \eqref{Sa} generalises and one easily checks that the only non-empty intersections (a single point in each case) are: $S_A \cap S_B$, $S_B \cap S_C$, $S_B \cap S_D$, and finally $S_C \cap S_D$.

For the algebraic Springer fibres, we summarise how the irreducible components look like for the cases above by giving the flags $F_\bullet$ in each case.
\begin{align*}
X_A &= \left\lbrace \left.\begin{array}{l}
F_1 = \left\langle e_{-5} \right\rangle, F_2 = F_1 \oplus \left\langle \mathbf{i}e_4-e_{-1}\right\rangle,\\
F_3 = F_2 \oplus \left\langle \alpha(\mathbf{i}e_{-4}+e_1) + \beta(\mathbf{i}e_{-3}-e_2) \right\rangle,\\
F_4=F_3 \oplus \left\langle \alpha(\mathbf{i}e_{-3}+e_2) + \beta(\mathbf{i}e_{-2}-e_3) \right\rangle\\
\end{array}\right| [\alpha:\beta] \in \mP^1(\mC)\right\rbrace\\
X_B &= \left\lbrace \left.\begin{array}{l}
F_1 = \left\langle e_{-5} \right\rangle, F_2 = F_1 \oplus \left\langle \alpha\mathbf{i}e_{-4} + \beta e_1\right\rangle,\\
F_3 = \left\langle e_{-5}, e_{-4},e_1 \right\rangle, F_4=F_3 \oplus \left\langle -e_{-3}+\mathbf{i}e_2 \right\rangle\\
\end{array}\right| [\alpha:\beta] \in \mP^1(\mC)\right\rbrace\\
X_C &= \left\lbrace \left.\begin{array}{l}
F_1 = \left\langle \alpha e_{-5}  + \beta e_1 \right\rangle, F_2 = \left\langle e_{-5},e_1\right\rangle,\\
F_3 = \left\langle e_{-5}, e_{-4},e_1 \right\rangle, \\
F_4=F_3 \oplus \left\langle -e_{-3}+\mathbf{i} e_2 \right\rangle\\
\end{array}\right| [\alpha:\beta] \in \mP^1(\mC)\right\rbrace\\
X_D &= \left\lbrace \left.\begin{array}{l}
F_1 = \left\langle e_{-5} \right\rangle, F_2 = F_1 \oplus \left\langle \mathbf{i}e_{-4}+e_{1}\right\rangle,\\
F_3 = F_2 \oplus \left\langle \alpha(\mathbf{i}e_{-3}+e_2) + \beta(\mathbf{i}e_{-4}-e_1) \right\rangle,\\
F_4= \left\langle e_{-5},e_{-4}, \mathbf{i}e_{-3}+e_2, e_1\right\rangle\\
\end{array}\right| [\alpha:\beta] \in \mP^1(\mC)\right\rbrace
\end{align*}
This description directly implies that $X_C \cap X_D = \emptyset$, while the other components have the same intersection behaviour as the topological version. In the algebraic case we thus have a Kleinian singularity of type ${\rm D}_4$, see \cite{Slodowy}.

\section{Cohomology of algebraic and topological Springer fibre}
\label{Section7}
We give a concrete description of the cohomology ring  the algebraic Springer fibres and prove Theorem \ref{Hintro}.
Let $n=2k$ and $G'=SO(2k,\mathbb{C})$ and $\mg'$ its Lie algebra with Cartan
subalgebra $\mathfrak{h}$. Let $P$ be a maximal parabolic of type $\rm A$
with Lie algebra $\p$ and Levi component $L$. The connected centre $S$ of
$L$ is one-dimensional. Let $\mathfrak{s}$ be its Lie algebra. Explicitly,
we realize $\mg'$ as a Lie algebra of complex $n\times n$-matrices $A$ that
are skew-symmetric
with respect to the anti-diagonal, i.e. $A_{i,j} = - A_{n-j+1,n-i+1}$. It has basis
\begin{align*}
\begin{array}{lclc}
L_{i,j}&=& E_{i,j} - E_{n-j+1,n-i+1},&1\leq i,j\leq k\\
X_{i,j}&=&E_{i,n-j+1} - E_{j,n-i+1},&1\leq i<j\leq k\\
Y_{i,j}&=&E_{n-j+1,i} - E_{n-i+1,j},&1\leq i<j\leq k
\end{array}
\end{align*}
where $E_{i,j}$ denotes the elementary matrix with $1$ at
position $(i,j)$ and zeros elsewhere. We choose $\mathfrak{h}$ to be the
Lie subalgebra given by diagonal matrices on which the Weyl group $W=W(D_k)$ acts
by $w.L_{i,i}=L_{w(i),w(i)}$ for $w\in S_k$ and $s_0.L_{1,1}=-L_{2,2}$,
$s_0.L_{2,2}=-L_{1,1}$, and $s_0.L_{i,i}=L_{i,i}$ for $i\not=1,2$.  The
$L_{i,j}$'s form a Levi subalgebra isomorphic to $\mathfrak{gl}(k,\mC)$ inside the subalgebra $\mp$ generated by the $L_{i,j}$'s and $X_{i,j}$. In this realisation, $\mathfrak{s}$ is the one-dimensional subspace spanned by $x_L=\sum_{i=1}^k L_{i,i}$. (Note $\mg'=\mathfrak{so}(W,\beta)$).

\subsection{Algebraic Springer fibre}
We consider the closed affine subvariety of $\mh \times \mh$ defined as
\begin{eqnarray}
\label{DefZ}
\mathcal{Z}_k &=& \{ (wx,x) \in \mh \times \mh \mid x \in \ms, w \in W \}.
\end{eqnarray}

Its coordinate ring $\mC[\mathcal{Z}_k]$ has, by \cite{KumarProcesi}, an $S(\ms^*)$-algebra structure coming from the projection onto $\ms$, where $S(\ms^*)=\mC[\ms]$ denotes the algebra of regular functions on $\ms$. To determine $\mC[\mathcal{Z}_k]$, and then compute $H(\cF'_N)$, we view $\mathcal{Z}_k$ as a closed subvariety of $\mh \times \ms$ and choose explicit coordinates: We identify $\mC[\ms]$ with $\mC[T]$ such that $T$
evaluated on $x_L$ is equal to $1$, and $\mC[\mh]$ with $\mC[X_1,\ldots,X_k]$ by identifying $X_i$ with the basis element dual to $L_{i,i}$.

For any $I\subset \{1,2,\ldots, k\}$ we denote $X_I=\prod_{i\in I}
X_i\in\mC[X_1,X_2,\ldots,X_k]$.
\begin{theorem}
\label{equivcohothm}
Let $N\in\mg'$ be nilpotent of Jordan type $(k,k)$.
\begin{enumerate}
\item The $S$-equivariant cohomology ring of $\mathcal{F}_N'$ equals
\begin{eqnarray}
\label{equivcoho}
H_S(\mathcal{F}_N')&\cong&
\begin{cases}
\dfrac{\mC[X_1,X_2,\ldots,X_k,T]}{\langle\{X_i^2-T^2,X_I-X_J\}\rangle}
&\text{if $k$ is even,}\\
\dfrac{\mC[X_1,X_2,\ldots,X_k,T]}{\langle\{X_i^2-T^2,X_{I}-X_{J}T\}\rangle}&\text{if
$k$ is odd,}
\end{cases}
\end{eqnarray}
where $1\leq i\leq k$ and $I,J\subset\{1,2,\ldots, k\}$ with $I\cap J=\emptyset$ such that $|I|=\frac{k}{2}=|J|$ for even $k$ and $|I|=\frac{k+1}{2}=|J|+1$ for odd $k$.
\item In particular,
\begin{eqnarray*}
\qquad &&H(\mathcal{F}_N') \cong \left\lbrace
\begin{array}{cl}
\mC[X_1,X_2,\ldots,X_k]/\langle\{X_i^2,X_I-X_J\}\rangle & \text{if $k$ is
even,}\\
\mC[X_1,X_2,\ldots,X_k]/\langle\{X_i^2,X_I\}\rangle & \text{if $k$ is odd}.
\end{array} \right.
\end{eqnarray*}
Explicit bases are given by the images of the $X_I$, $|I|\leq \frac{k-1}{2}$, in case $k$ is odd; by the $X_I$ for all $I$, $|I|<\frac{k}{2}$, in case $k$ is even and $k \notin I$; and by the $X_I$ for all $I$, $|I| \leq \frac{k}{2}$, in case $k$ is even and $k \in I$.
\end{enumerate}
\end{theorem}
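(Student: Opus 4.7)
The plan is to apply the theorem of Kumar and Procesi \cite{KumarProcesi}: under the two hypotheses (i) $N$ is of standard Levi type and (ii) the canonical map $H(\cF') \twoheadrightarrow H(\cF'_N)$ is surjective, it gives an isomorphism of graded $\mC[T]$-algebras $H_S(\cF'_N) \cong \mC[\mathcal{Z}_k]$. I would verify (i) by noting that a nilpotent of Jordan type $(k,k)$ in $\mathfrak{o}(2k,\mC)$ is conjugate to a principal nilpotent of the type-$A$ Levi $L \cong GL_k$ of $P$; hypothesis (ii) is exactly the content of Proposition \ref{filtration}. It then suffices to identify the coordinate ring $\mC[\mathcal{Z}_k]$ with the stated presentation.

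Next I would make $\mathcal{Z}_k$ explicit. Since $x_L = \sum_i L_{i,i}$ is $S_k$-invariant, the orbit $W \cdot x_L$ under $W(D_k)$ consists of the $2^{k-1}$ vectors $\sum_i \epsilon_i L_{i,i}$ with $\epsilon \in \{\pm 1\}^k$ and $\prod_i \epsilon_i = 1$; thus $\mathcal{Z}_k$ is (set-theoretically) the union of $2^{k-1}$ lines through the origin in $\mh \times \ms$, the $\epsilon$-line being $\{X_i = \epsilon_i T\}$. Direct substitution shows $X_i^2 - T^2$ vanishes on each line, and for any partition $I \sqcup J = \{1, \ldots, k\}$ the identity $\prod_{i \in I} \epsilon_i \cdot \prod_{j \in J} \epsilon_j = \prod_i \epsilon_i = 1$ forces the two $\pm 1$ factors to coincide, yielding $X_I = X_J$ (for $|I| = |J| = k/2$) or $X_I = X_J T$ (for $|I| = (k+1)/2 = |J| + 1$). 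Hence the stated ideal is contained in the defining ideal of $\mathcal{Z}_k$.

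For the reverse inclusion, let $R_k$ denote the quotient ring on the right-hand side of \eqref{equivcoho}. Combining the partition relations with $X_i^2 = T^2$, one derives $X_I = T^{2|I|-k} X_{\{1,\ldots,k\} \setminus I}$ whenever $|I| > k/2$; together with the choice of representatives for the $|I| = k/2$ orbits (when $k$ is even), this reduces every monomial in the $X_i$'s to a $\mC[T]$-multiple of an element of the explicit list $\mathcal{B}$ from part (b), which has cardinality $2^{k-1}$. Specialising $T \mapsto 1$ turns $R_k$ into the ring of functions on the finite reduced scheme $W \cdot x_L$, on which the $2^{k-1}$ images of $\mathcal{B}$ clearly separate sign patterns and are hence linearly independent. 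So $R_k \twoheadrightarrow \mC[\mathcal{Z}_k]$ is an isomorphism of graded $\mC[T]$-algebras, proving (a).

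For (b), since $\cF'_N$ admits a paving by affine cells (derived from the iterated $\mP^1$-bundle description of Theorem \ref{maintheorem}), $H(\cF'_N)$ is concentrated in even degrees. This forces equivariant formality for the $S$-action, so $H(\cF'_N) \cong H_S(\cF'_N) \otimes_{\mC[T]} \mC$ is obtained simply by setting $T = 0$ in part (a); the image of $\mathcal{B}$ then yields the claimed basis. The main technical obstacle is the monomial reduction and rank verification of the third paragraph --- equivalently, upgrading the set-theoretic to the scheme-theoretic equality of the defining ideals --- which relies crucially on the explicit reduction formula $X_I = T^{2|I|-k} X_{\{1,\ldots,k\} \setminus I}$ and on the fibrewise check at $T=1$.
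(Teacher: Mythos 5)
Your proposal follows essentially the same strategy as the paper: invoke the Kumar--Procesi theorem (with standard Levi type and the surjectivity from Proposition~\ref{surjectivity}), identify $\mathcal{Z}_k$ as a union of $2^{k-1}$ lines, check the claimed generators vanish, and then argue that the induced surjection from the explicitly presented ring onto $\mC[\mathcal{Z}_k]$ is injective. The one point where you diverge is the final rank/freeness step: the paper uses equivariant formality of $\mathcal{F}'_N$ to conclude that $H_S(\mathcal{F}'_N)$ is a free $\mC[T]$-module of rank $2^{k-1}$ and then compares with the free $\mC[T]$-module $A_S(k)$, whereas you verify freeness of $R_k$ directly via the reduction formula $X_I = T^{2|I|-k}X_{\{1,\ldots,k\}\setminus I}$ and the specialisation $T\mapsto 1$. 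Your route is arguably more elementary, but the last step is slightly compressed: from linear independence of $\mathcal{B}$ after setting $T=1$ one cannot literally conclude that the $\mC[T]$-module surjection $R_k \twoheadrightarrow \mC[\mathcal{Z}_k]$ is injective (since $(T-1)$ is not a graded ideal); you need to pass through homogeneity, noting that any homogeneous relation $\sum_I a_I T^{d-|I|} X_I = 0$ in $R_k$ gives $\sum_I a_I X_I = 0$ on $W\cdot x_L$ after evaluating $T=1$, whence all $a_I=0$. Once that is spelled out, the argument is sound; both proofs are otherwise the same.
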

\begin{proof}
We first show that $H_S(\mathcal{F}_N')\cong A_S(k)$, where $A_S(k)$ denotes the quotient rings on the right hand side of \eqref{equivcoho}, but assuming \cite[Theorem 1.2]{KumarProcesi}. The comomorphism of the inclusion $\mathcal{Z}_k\subset \mh \times \ms$ as closed subvariety induces a surjection
\begin{eqnarray} \label{surjection_onto_coordinate_ring}
\mC[X_1,\ldots,X_k,T] \longrightarrow \mC[\mathcal{Z}_k].
\end{eqnarray}
By \cite[Theorem 1.2]{KumarProcesi}, its kernel is contained in the kernel of the corresponding map to $H_S(\mathcal{F}_N')$ given by the equivariant Borel morphism on the first factor and the induced map $ \mathbb{C}[\mathcal{Z}_k] \longrightarrow
H_S(\mathcal{F}_N')$ is an isomorphism.

Since $\mathcal{Z}_k$ is a cone, to determine
the vanishing ideal of $\mathcal{Z}_k$, it is enough to look at homogeneous
polynomials vanishing on all points of the form $(w.x_L,x_L)$. The $W$-orbit of $x_L = \sum_{i=1}^k L_{i,i}$ is easy to determine, namely
\begin{equation}
W.x_L=\left\lbrace \sum_{i=1}^k \epsilon_i L_{i,i} \mid \epsilon_i =\pm 1,
\prod_{i=1}^k \epsilon_i = 1 \right\rbrace.
\end{equation}
It is a trivial calculation to check that the polynomials $X_i^2-T^2$ and
$X_I-X_J$, resp. $X_I-X_JT$, as defined above vanish on $W.x_L \times \{x_L\}\subset \ms \times \mh$. Thus the map
\eqref{surjection_onto_coordinate_ring} factors through $A_S(k)$.

Since $\cF_N'$ is equivariantly formal in the sense of \cite{GKMP}, $H_S(\mathcal{F}_N')$ is naturally a free $S(\ms^*)$-module of rank $|W/S_k| = 2^{k-1}$, which is the number of $S$-fixed points, \cite[Lemma 2.1]{KumarProcesi}. On the other hand, $A_S(k)$ is also a free module over $\mC[T]=S(\ms^*)$ with basis given by all $X_I$ for all $I$ such that  $|I| \leq \frac{k-1}{2}$ in case $k$ is odd, and $|I| <\frac{k}{2}$ or $|I|=\frac{k}{2}$ with $k \in I$ in case $k$ is even. The rank equals $\sum_{l=0}^{\frac{k-1}{2}}\binom{k}{l}=2^{k-1}$ if $k$ is odd and $\frac{1}{2}\binom{k}{k/2}+\sum_{l=0}^{\frac{k}{2}-1}\binom{k}{l}
=2^{k-1}$ if $k$ is even. Thus, \eqref{surjection_onto_coordinate_ring} induces an isomorphism between $A_S(k)$ and $H_S(\mathcal{F}_N')$. Using again equivariant formality we get $H(\mathcal{F}_N')\cong H_S(\mathcal{F}_N')\otimes_{\mC[T]}\mC$ and part (2) follows. It remains to verify the surjectivity assumption of \cite{KumarProcesi} which is Proposition \ref{surjectivity} below.
\end{proof}

\begin{prop}
\label{surjectivity}
Let $N\in\mg$ be a nilpotent endomorphism of $\mC^{2k}$ with Jordan type $(k,k)$. Then the canonical map  $H(\mathcal{F})\rightarrow H(\mathcal{F}_N)$ is surjective. The same holds for the  canonical map $H(\mathcal{F}')\rightarrow H(\mathcal{F}_N')$.
\end{prop}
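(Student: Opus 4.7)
The plan is to argue by induction on $k$, using the filtration from Proposition~\ref{filtration}. The base case $k=1$ is trivial since then $N=0$ forces $\mathcal{F}_N = \mathcal{F}$, and the identity map is tautologically surjective on cohomology.

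For the inductive step, recall that the filtration from Proposition~\ref{filtration} realises $\mathcal{F}_N$ as an iterated extension by closed subvarieties that are disjoint unions of Springer fibres of type $(2k-j, j)$ (for admissible $j$) inside smaller orthogonal groups, one for each cup diagram in $\cC_k$ with $j$ cups. The inductive construction of irreducible components in Section~\ref{algorithm} makes this explicit: after choosing the initial line $F_1 \subset \ker N$, passing to the quotient $F_1^\perp/F_1$ embeds a full flag variety for a smaller orthogonal group (and hence a smaller Springer fibre) inside $\mathcal{F}_N$, and this embedding is the restriction to $\mathcal{F}_N$ of an inclusion of a sub-flag-variety of $\mathcal{F}$ determined by the chosen isotropic line.

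Given this compatibility, one obtains for each stratum of the filtration a commutative square relating the pullback from $\mathcal{F}$ to the pullback from the smaller flag variety. By the induction hypothesis the smaller pullback is surjective, so the image of $H(\mathcal{F}) \to H(\mathcal{F}_N)$ contains all cohomology supported on each stratum. Since $\mathcal{F}_N$ and its strata have cohomology concentrated in even degrees (by the algebraic cell decomposition analogous to Section~\ref{cell_decomp}, with $\dim H(\mathcal{F}_N)=2^k$ matching Proposition~\ref{dimensionformula}), the long exact sequences of the pairs $(Z_j, Z_{j-1})$ split into short exact sequences and assemble the stratum-wise surjectivity into surjectivity onto $H(\mathcal{F}_N)$; the dimension count $2^k$ then confirms that no classes are missed.

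The main obstacle will be making the filtration and its compatibility with the full flag variety precise: one must identify each stratum as the intersection of $\mathcal{F}_N$ with a subvariety of $\mathcal{F}$ of the form ``flags containing a fixed isotropic subspace'', and verify that the tautological Chern classes pull back compatibly through these identifications. Some care is needed regarding component group subtleties, since the smaller Springer fibres live in orthogonal groups of different rank and their nilpotents may have a larger component group $(\mathbb{Z}_2)^2$ than the rank-$(k,k)$ starting data. Finally, the $SO$ version is immediate: the decompositions $\mathcal{F} = \mathcal{F}' \sqcup \mathcal{F}''$ and $\mathcal{F}_N = \mathcal{F}_N' \sqcup \mathcal{F}_N''$ (Remark~\ref{Springer_two_components}) are respected by the restriction map, so surjectivity for $\mathcal{F}$ yields it for $\mathcal{F}'$.
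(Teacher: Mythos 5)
Your proposal takes a genuinely different route from the paper. The paper does not argue by induction on $k$: it first invokes the Hotta--Springer theorem (\cite{HottaSpringer}) to show that the \emph{top-degree} part of $H(\cF_{N'})$ is hit for \emph{every} two-row nilpotent $N'\in\mathfrak{o}(2k)$ (this is where the component-group discussion lives), then uses the filtration $\cY_0\subset\cdots\subset\cY_m=\cF_N$ with $\cY_j\cong\cF_{N_j}$ to collect the fundamental classes of all the irreducible components of all the $\cY_j$, and finally checks that the number of such classes equals $\dim H(\cF_N)=2^k$. Note also that the Springer fibres $\cF_{N_j}$ appearing in the filtration are Springer fibres for \emph{other nilpotents} $N_j$ of Jordan type $(2k-2j-1,2j+1)$ inside the \emph{same} group $O(2k)$, not Springer fibres for smaller orthogonal groups as you assert; the $\cY_j$ sit inside $\cF_N$ as explicit unions of limits of the $\mP^1$-families from the component construction, and are only \emph{abstractly} isomorphic to the $\cF_{N_j}$.

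The gap in your argument is the step from ``the induction hypothesis gives surjectivity onto each (smaller) Springer fibre'' to ``the image of $H(\cF)\to H(\cF_N)$ contains all cohomology supported on each stratum.'' The closed subvariety $\cF_{F_1}\cap\cF_N$ obtained by fixing an isotropic line $F_1\subset\ker N$ is a single fibre of the map $\cF_N\to\mP(\ker N)\cong\mP^1$; surjectivity of $H(\cF)\twoheadrightarrow H(\cF_{F_1})\twoheadrightarrow H(\cF_{F_1}\cap\cF_N)$ factors through the (non-injective) restriction $H(\cF_N)\to H(\cF_{F_1}\cap\cF_N)$, so it gives no information about the image in $H(\cF_N)$ itself. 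Moreover the induced nilpotent on $F_1^\perp/F_1$ changes Jordan type as $F_1$ varies, so this map is not a locally trivial fibration, and there is no clean long exact sequence of the kind you are invoking. Finally, once one starts varying the Jordan type one loses the ``Jordan type $(k,k)$'' hypothesis in the induction and hits precisely the cases with larger component group; the paper needs the Hotta--Springer result (including the nontrivial component-group version) to handle those, and your proposal has no substitute for that input. These are real obstructions, not just matters of bookkeeping as the last paragraph suggests.
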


\begin{proof}
We first claim that the top degree is completely contained in the image of the canonical map for any nilpotent of admissible Jordan type $(r,s)$ of type ${\rm D}_k$. Recall from Section \ref{Section2} that for $r=s=k$ even, the component group is trivial. Then the claim is a special case of a general result of Hotta and Springer \cite[Theorem 1.1, Corollary 1.3]{HottaSpringer}. In case of odd $r=s=k$, the component group is $\mZ/2\mZ$ and swaps between the two connected components of $\cF_N$, see \cite[Section 3]{vanLeeuwen}. By Theorem \ref{maintheorem} and Lemma~\ref{bijchi}, the total number of irreducible components equals twice the dimension of the corresponding irreducible representation of $W(D_k)$ attached to $N$ via the Springer correspondence. Hence, when we replace $\cF$ by the set $\cB$ of Borel subgroups and consider the top degree of $H(\cB_N)$ for the corresponding  fixed point variety $\cB_N$, it carries the structure of an irreducible $W(D_k)$-module. Hence the statement follows again by \cite[Corollary 1.3]{HottaSpringer}. Similarly for the remaining case $r\not=s$.

 Let now $N$ be of type $(k,k)$. We claim there is a filtration by subvarieties
\begin{eqnarray}
\label{filtration}
\cY_0\subset\cY_1\subset\cdots\subset \cY_{\frac{k-\epsilon}{2}}=\cF_N
\end{eqnarray}
(with $\epsilon={0}$ or $\epsilon=1$ depending on the parity of $k$) such that $\cY_j= F_{N_j}$, where the Jordan type of $N$ is $(k,k)$  if $j=\frac{k}{2}$
and $(2k-2j-1,2j+1)$ otherwise. To see this consider the labelling set, $\cC_j$, of $\op{Irr}(\cF_{N_j})$ by cup diagrams. The diagrams in $\cC_j$  have exactly one cup less than the ones in $\cC_{j+1}$ and can all be obtained (in a nonunique way) by replacing a cup by two rays in some diagram from $\cC_{j+1}$. Now recall the inductive construction of the components. Each cup corresponded to a $\mP^1(\mC)$-choice. If we interpret replacing a cup by two rays as picking the nongeneric point in $\mP^1(\mC)$ then this gives exactly the construction of the components of the smaller dimensional Springer fibres and thus constructs the required chain of subvarieties. By construction it is independent of the choices and the claim follows.
Now each $\cY_i$ is equidimensional of complex dimension $i$, so that it contributes only to $H^{2i}(\cF_N)$, the degree $2i$ in cohomology. Moreover, by the beginning of the proof the subspace spanned by the classes of the irreducible components is in the image of the canonical map  $H(\mathcal{F})\rightarrow H(\mathcal{F}_N)$. Hence it remains to show that all these (linearly independent) classes span $H(\cF_N)$. We have $\op{dim}H(\mathcal{F}_N)=2^{k}$, since $\cF_N$ has a cell partition with $2\cdot 2^{k-1}$ strata by \cite[Lemma 2.1]{KumarProcesi}.  On the other hand,
the parametrization of irreducible components from Theorem~ \ref{paramatrization} together with Lemmas \ref{Bijection} and \ref{bijchi} gives
\begin{eqnarray*}
\sum_{l=0}^{\frac{k-\epsilon}{2}}|\op{Irr}(\cY_l)|
&=&
\begin{cases}
2\sum_{l=0}^{\frac{k-1}{2}}\binom{k}{l}=\sum_{l=0}^{k}\binom{k}{l}=2^{k}
&\text{if $k$ is odd,}\\
\binom{k}{k/2}+2\sum_{l=0}^{\frac{k}{2}-1}\binom{k}{l}=\sum_{l=0}^{k}\binom{k}{l}
=2^{k}
&\text{if $k$ is even.}
\end{cases}
\end{eqnarray*}
Hence the canonical map is surjective.
\end{proof}

\begin{remark}{\rm
 The last steps of this proof follow more directly from a general result of Lusztig, \cite{Lusztiginduction}, providing an isomorphism  $H(\mathcal{F}'_N)\cong \op{Ind}^{W(D_k)}_{S_k} \mathbf{1}$ as $W(D_k)$-modules. The filtration \eqref{filtration} corresponds then to the decomposition \eqref{decomprefined} into irreducible summands. The two possible choices of embedding $S_k$ into $W(D_k)$ (using $s_i, i\not=0$ or $s_i, i\not=1$) give for even $k$ the two different representations labelled $(k/2),(k/2)$ in top degree.
}
\end{remark}

\begin{remark}
{\rm The action of $s_i$, $i>0$, on $H(\cF_N')$ is given by permuting the variables and $s_0$ acts by swapping $X_1$ and $-X_2$. It is induced from the natural left action on the first factor in \eqref{DefZ}. The different representations  $S^{(k/2),(k/2),\pm}$ arise in top degree depending on the two possible choices of $\mp$ of which we chose one.
}
\end{remark}

\subsection{Cohomology of topological Springer fibre}

We are now ready to prove Theorem \ref{Hintro} from the introduction:
 \begin{theorem}
 \label{H}
 Let $N\in\mg$ be a nilpotent endomorphism of $\mC^{2k}$ with Jordan type $(k,k)$. Then we have isomorphisms of graded rings
 \begin{eqnarray*}
 H(\widetilde{S})\;\cong\; H(\mathcal{F}_N)
 \end{eqnarray*}
\end{theorem}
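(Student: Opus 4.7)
The plan is to route the desired isomorphism through the centre of the type $D$ Khovanov diagram algebra $\mK_k$, exploiting the fact that by now all three pieces have explicit presentations.

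First I would use the apparatus of Section~\ref{Section4}: Theorem~\ref{tau_is_iso} already gives a graded linear isomorphism $\tau: H(\widetilde{S})\longrightarrow {\rm Eq}(\psi)$. To promote it to a ring map I would note that $\tau$ is induced by the inclusions $S_a\inj \widetilde{S}$, so it commutes with cup products, and that ${\rm Eq}(\psi)$ inherits its ring structure from $\prod_a H(S_a)$; the fact that $H(S_{<a}\cap S_a)$ is concentrated in even degrees (Proposition~\ref{kunterbunt}) is what makes the Mayer--Vietoris description of $H(\widetilde{S})$ as an equaliser behave multiplicatively. Combining this with Proposition~\ref{cohomology_diagram_commutes} and the identification \eqref{cEq}, we obtain $H(\widetilde{S})\cong Z(\mK_k)$ as graded rings.

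Next I would invoke \cite{ES}, where the centre $Z(\mK_k)$ is computed explicitly. The parity decomposition $\widetilde{S}=\widetilde{S}^{\rm even}\sqcup \widetilde{S}^{\rm odd}$ from \eqref{soversuso}, together with Lemma~\ref{Intersection_empty}, splits $\mK_k$ as a product of two blocks, one indexed by $\mB_k^{\rm even}$ and one by $\mB_k^{\rm odd}$, and hence splits $Z(\mK_k)$ into two summands. On each summand, the presentation from \cite{ES} yields exactly the quotient ring $\mC[x_1,\ldots,x_k]/\langle x_i^2,y_I\rangle$ appearing in Theorem~\ref{Hintro}, with the generators $x_i$ corresponding to the top classes of the $i$-th factor of $X=S^k$.

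For the algebraic side, Theorem~\ref{equivcohothm}(2) already produces the very same explicit presentation for $H(\cF_N')$. Since by Remark~\ref{Springer_two_components} the full Springer fibre $\cF_N$ is the disjoint union of two copies of $\cF_N'$ corresponding to the two components of $\cF$, we have $H(\cF_N)\cong H(\cF_N')\oplus H(\cF_N')$, matching the two parity summands of $Z(\mK_k)$. Dimensions are consistent via Proposition~\ref{dimensionformula} and the count $2^k$ in the proof of Proposition~\ref{surjectivity}.

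The main obstacle, in my view, is not any one of these steps in isolation but the verification that the identification $Z(\mK_k)\cong H(\cF_N')\oplus H(\cF_N')$ via the common presentation is genuinely an isomorphism of \emph{rings} rather than merely of graded vector spaces. The diagrammatic multiplication on $\mK_k$ (surgery on circles) and the cup product on $H(\cF_N')$ (arising from a Borel-type presentation via Kumar--Procesi) are defined by completely different mechanisms, so one must either track the generators $x_i$ through both descriptions and check relations directly, or appeal to the explicit centre computation in \cite{ES} which was designed to make this compatibility transparent; I would opt for the latter and spell out the bijection between the standard monomial bases on the two sides.
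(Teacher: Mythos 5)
Your proposal follows essentially the same route as the paper: the isomorphism $\tau$ from Theorem~\ref{tau_is_iso}, the identification~\eqref{cEq} of ${\rm Eq}(\psi)$ with $Z(\mK_k)$, and the appeal to the explicit computation of $Z(\mK_k)$ in \cite{ES} matched against the Kumar--Procesi presentation of $H(\cF_N')$ from Theorem~\ref{equivcohothm}. The paper's own proof is a one-liner citing these ingredients; your additional observations — that $\tau$ is a ring map because it factors through restriction maps and ${\rm Eq}(\psi)$ is a subring of $\prod_a H(S_a)$, and that the two sides must be compared via their explicit generator-and-relation presentations rather than via the superficially incompatible multiplicative structures — are exactly the unstated details the paper leaves to the reader, and your worry in the final paragraph is the right one to have.
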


\begin{proof}
Recall that via the isomorphism $\tau$ from Theorem~\ref{tau_is_iso} the ring $H(\widetilde{S})$ is identified with the centre of the algebra $\mathbb{K}_k$, which in turn is isomorphic to $H(\mathcal{F}_N)$ by \cite{ES}.
\end{proof}

\section{Jordan type $(k,k)$: category $\cO$ and $\mathcal{W}$-algebras}
\label{Section8}

Let now $N\in\mg$ be nilpotent of Jordan type $(k,k)$. In this case the $S$-fixed points $\cF_N^S$ of $\cF_N$ are easy to determine, namely all flags $F_\bullet=F_\bullet(I)$ where for each $1\leq j\leq k$ we have  $F_j=\langle w_l\mid l\in I\rangle$ where $I$ is a union $I=I_-\cup I_+$ of two sets of consecutive indices of the form
\begin{eqnarray*}
I_-=\{-k,-k+1,-k+2,\ldots -k+r+1\},&&  I_+=\{1,2,\ldots, s\}
\end{eqnarray*}
such that $|I|=r+s=j$. (These are obviously fixed points and there are altogether $2\cdot 2^{k-1}=2^k$ such choices which is the desired number by \cite[Lemma 2.1]{KumarProcesi}.)

In the following we will identify $S$-fixed points with combinatorial weights via the following easy observation.
\begin{lemma}
\label{fixedpointstoweights}
There is a bijection between fixed points and combinatorial weights of length $k$ sending $F_\bullet(I)\in\cF_N$ to the combinatorial weight $\la=\la(I)$ which has at vertex $i$ an $\up$ if $-i\in I$ and a $\down$ if $i\in I$.
\end{lemma}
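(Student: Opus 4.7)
The key geometric input is that the one-dimensional torus $S$ acts diagonally on $W$, with two weight spaces $W^+ = \langle w_1, \ldots, w_k\rangle$ (weight $+1$) and $W^- = \langle w_{-1}, \ldots, w_{-k}\rangle$ (weight $-1$): indeed, $x_L = \sum_i L_{i,i}$ acts as $+1$ on $w_i$ for $i > 0$ and as $-1$ on $w_i$ for $i < 0$. The plan is to characterize the $S$-fixed flags in $\cF_N$ as binary sequences of length $k$ and then to match them with combinatorial weights.

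First, any $S$-stable subspace $U \subset W$ decomposes as $U = (U \cap W^+) \oplus (U \cap W^-)$, so an $S$-fixed flag has $F_j = F_j^+ \oplus F_j^-$ with $F_j^\pm \subset W^\pm$. Since $N$ preserves both $W^+$ and $W^-$, acting on each as a single Jordan block of size $k$ (with $w_1$, respectively $w_{-k}$, in the kernel), the $N$-stability condition $N F_j \subset F_{j-1}$ decouples into separate $N$-stability of $F_j^\pm$ in $W^\pm$. Crucially, a single Jordan block has a \emph{unique} invariant subspace of each dimension, so we must have $F_j^+ = \langle w_1, \ldots, w_{s_j}\rangle$ and $F_j^- = \langle w_{-k}, \ldots, w_{-k+r_j-1}\rangle$ for some non-negative integers $s_j, r_j$ with $s_j + r_j = j$. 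Isotropy is automatic, because the index blocks $\{1,\dots,s_j\}$ and $\{-k,\dots,-k+r_j-1\}$ label disjoint pairs under $l \leftrightarrow -l$ whenever $s_j + r_j = j \le k$.

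Next, the nestedness $F_{j-1} \subset F_j$ together with $\dim F_j - \dim F_{j-1} = 1$ forces at each step either $(s_j, r_j) = (s_{j-1}+1, r_{j-1})$ or $(s_j, r_j) = (s_{j-1}, r_{j-1}+1)$. Hence the flag is uniquely determined by a binary sequence $\epsilon = (\epsilon_1, \ldots, \epsilon_k) \in \{\up, \down\}^k$, where $\epsilon_i = \down$ if step $i$ extends the positive block and $\epsilon_i = \up$ if it extends the negative block. This exhibits exactly $2^k$ fixed points, in agreement with the count $2 \cdot |W(D_k)/S_k| = 2\cdot 2^{k-1} = 2^k$ deduced from \cite[Lemma 2.1]{KumarProcesi}.

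Finally, to a binary sequence $\epsilon$ I associate the signed index set $I = \{\epsilon_i \cdot i : 1 \le i \le k\} \subset \{\pm 1, \ldots, \pm k\}$; then $-i \in I$ iff $\epsilon_i = \up$ and $i \in I$ iff $\epsilon_i = \down$. Sending $F_\bullet(I)$ to the weight $\la(I)$ whose $i$th entry is $\up$ if $-i \in I$ and $\down$ if $i \in I$ is therefore a bijection $\cF_N^S \liso \{\up,\down\}^k$. There is no serious obstacle here; the only mild subtlety is to interpret the symbol $I$ in the statement as the signed index set recording the sequence of extensions, rather than as the underlying index set of the top-dimensional subspace $F_k$ (which alone does not distinguish fixed flags sharing the same $F_k$).
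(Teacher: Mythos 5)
Your proof is correct. The paper states this as an ``easy observation'' with no proof; the paragraph preceding the lemma only lists the candidate fixed flags and matches their number $2\cdot 2^{k-1}=2^{k}$ against the Kumar--Procesi count of $S$-fixed points. What you supply is the structural derivation behind that list: decompose each $F_j$ along the two $S$-weight spaces $W^{\pm}$, use that $N$ acts on each $W^{\pm}$ as a single Jordan block (so has a unique invariant subspace in every dimension), and conclude that an $S$-fixed, $N$-stable isotropic flag is determined exactly by the binary choice, at each step, of which block to extend; invoking the count only as a consistency check rather than as a logical ingredient is the cleaner logic. The subtlety you flag about $I$ is genuine and worth spelling out: if $I$ were read as the index set of the top piece $F_k$ (as the phrase $F_j=\langle w_l\mid l\in I\rangle$ in the paper at first suggests), then all $\binom{k}{s}$ fixed flags sharing the same $F_k$ would be sent to the same weight $\down^{s}\up^{k-s}$, and the claimed map would not be a bijection. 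The reading that makes the lemma true is exactly yours: $I$ is the signed index set recording, at each step $i\in\{1,\ldots,k\}$, whether the positive ($i\in I$) or negative ($-i\in I$) Jordan block was extended. This is also the reading consistent with the later use of $\la(I)$ in Section~\ref{Section8} (the $s$-table conventions and Example~\ref{cooltable}).
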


The following generalizes \cite[Lemma 12]{SW}

\begin{prop}
\label{hurray}
Let $N$ be of Jordan type $(k,k)$. Then the following holds
\begin{enumerate}
\item  \label{fixedpointsone}
The fixed points $F_\bullet(I)\in\cF_N^S$ contained in an irreducible component $Y$ are precisely the combinatorial weights $\la=\la(I)$ such that $\la c$ is an oriented cup diagram, where $c$ denotes the cup diagram associated with $Y$ via Theorem~\ref{maintheorem} and Corollary \ref{signeddominoestocups}.
\item
 \label{fixedpointstwo}
 The fixed points $P(I)$ contained in the intersection $Y\cap Y'$ of two irreducible components are precisely the combinatorial weights $\la=\la(I)$ such that $c^*\la d$ is an oriented circle diagram, where $c$ (resp. $d$) denotes the cup diagram associated with $Y$ and $Y'$.
 \end{enumerate}
\end{prop}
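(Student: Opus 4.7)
The proof will proceed by first reducing part (2) to part (1), then establishing part (1) by tracking fixed points through the inductive construction of $Y$ given in Section \ref{algorithm}. Part (2) is immediate once part (1) is known: an $S$-fixed point $F_\bullet(I)$ lies in $Y \cap Y'$ if and only if it lies in both $Y$ and $Y'$, which by part (1) is equivalent to the statement that $\la(I) c$ and $\la(I) d$ are both oriented cup (respectively cup) diagrams. By Definition \ref{defcircle}, this is precisely the condition that $c^* \la(I) d$ is an oriented circle diagram.

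For part (1), the strategy is to observe that the torus $S$ acts on $\cF$ commuting with $N$, hence preserves $\cF_N$ and each irreducible component $Y$. The iterated bundle structure of $Y$ constructed in Theorem \ref{maintheorem} is built step-by-step either by a forced single-point choice (corresponding to Cases (IIa), (I), (IV) in \eqref{Casesforinduction}, i.e., rays or the right endpoint of an outer cup) or by a $\mP^1(\mC)$-fibre (corresponding to Cases (IIb), (III), (V), i.e., an affine line of choices $F_i = \langle \alpha w_{-a_i} + \beta w_1 \rangle$ or its analogue in $V_{a,b}$, whose closure is $\mP^1(\mC)$). Since $S$ acts on each such $\mP^1(\mC)$ with exactly two fixed points, namely $\beta = 0$ and $\alpha = 0$, the $S$-fixed points of $Y$ arise precisely by selecting one of the two fixed points in each $\mP^1(\mC)$-factor, independently.

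The key combinatorial translation is that the two fixed points $\langle w_{-a_i} \rangle$ and $\langle w_1 \rangle$ in each $\mP^1(\mC)$ correspond, via Lemma \ref{fixedpointstoweights}, to the two possible orientations $\up$ and $\down$ at the left endpoint of the cup of $c$ associated with that step. The forced choices at rays and at right endpoints of cups (handled by Cases (IIa), (I), (IV)) then impose exactly the constraints demanded by the orientation rules for $\la c$: a ray at vertex $i$ is undotted precisely when the algorithm forces $\la_i = \down$ and dotted precisely when it forces $\la_i = \up$; similarly, the right endpoint of an undotted (respectively dotted) cup inherits the opposite (respectively the same) orientation as its left endpoint. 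This compatibility can be read off from the normalised basis $B_{\rm new}$ used to realise $F_1^\perp/F_1$ as $W_{a-1,a-1}$ or $V_{a-2,b}$, together with the isomorphism $\varphi$ of Lemma \ref{twobilinears} and the bijection of Corollary \ref{signeddominoestocups} identifying signs of clusters with dots on cups.

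The main obstacle is verifying this last compatibility: one must check that the identifications used to pass from the ambient space to the quotient $F_1^\perp/F_1$ correctly intertwine the weight labelling on the two sides, so that the forced orientation at the right endpoint of a cup constructed at a later inductive step really does match the dotting convention recorded in Lemma \ref{Bijection}. This amounts to careful bookkeeping of the signs $\pm 1$ appearing in the basis transformations of $B_{\rm new}$ and in $\varphi$; these signs precisely encode which closed clusters carry a $-$ sign, and hence which cups are dotted. Once this bookkeeping is carried out at the base case and in the transition from each case of the algorithm to the next, the induction closes and the $S$-fixed points in $Y$ are seen to biject with the orientations of $c$, as required.
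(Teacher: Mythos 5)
Your reduction of part (2) to part (1) via Definition \ref{defcircle} is exactly what the paper does, and your overall strategy for part (1) — tracking $S$-fixed points through the inductive construction of Theorem \ref{maintheorem}, observing that the $\mP^1(\mC)$-choices (Cases IIb, III, V) carry two fixed points $[\alpha:\beta]\in\{[1:0],[0:1]\}$ corresponding to a cup with two admissible orientations at its left endpoint, while Cases IIa, I, IV are determined once the signed domino tableau is fixed and correspond to rays and sign data — matches the paper's strategy faithfully.

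However, you explicitly flag "the main obstacle" (verifying that the identifications from ambient space to $F_1^\perp/F_1$ correctly intertwine the fixed-point labelling with the orientation rules and dot conventions) and then do not resolve it. This is not a minor remark to be dispatched by a reference to "careful bookkeeping": it \emph{is} the proof. The paper's argument consists almost entirely of carrying out exactly this verification with explicit nested bases — for $k$ even with one cluster it introduces $y_{\pm j}$ from the Case V parametrisation, then $z_{\pm j}$ from the Case III step, computes which linear combinations of the $z$'s specialise at $\gamma=\pm\delta$ back to $y_1$ or $y_{-(k-1)}$, then sets $(\alpha,\beta)\in\{(1,0),(0,1)\}$ to land on the flags $F_\bullet(I)$ spanned by the standard basis vectors $w_i$, and finally handles the inner vertical domino via Case IV by analysing $b\pm b'$. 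It then iterates over clusters and reduces $k$ odd to $k+1$ even by deleting the rightmost vertical domino (which kills half the fixed points). Without this computation there is no guarantee that the two fixed points in a given $\mP^1$-step really pair up with $\up$/$\down$ at the correct vertex, that the sign propagation through $\Psi$-type changes of basis reproduces the dotted/undotted cup distinction of Lemma \ref{Bijection}, or that the orientation of the right endpoint of a nested cup is forced in the claimed way. So while your approach is the same as the paper's, the decisive content is missing and the proof as written has a genuine gap precisely where you acknowledged one.
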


From the constructions we directly obtain a refinement of Theorem~\ref{maintheorem}:
\begin{corollary}
Pairwise intersections of components are either empty or again iterated $\mP^1(\mC)$-bundles, namely of base type $(\mP^1(\mC))^c$, where $c$ is the number of circles in the corresponding circle diagram.
\end{corollary}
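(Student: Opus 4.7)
The plan is first to handle the empty case via fixed-point arguments, and then to construct the bundle structure inductively, mimicking the proof of Theorem~\ref{maintheorem}.

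For the empty case, assume $(a^*b)$ cannot be oriented, where $a,b$ are the cup diagrams labelling the two components $Y_a, Y_b$. By the second part of Proposition~\ref{hurray}, the intersection $Y_a\cap Y_b$ then contains no $S$-fixed points. Since $S$ is a connected solvable algebraic group which stabilises each irreducible component of $\cF_N$ individually (only finitely many components, moved by a connected group), it acts on the projective variety $Y_a\cap Y_b$, and Borel's fixed-point theorem forces $Y_a\cap Y_b=\emptyset$.

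For the non-empty case, suppose $(a^*b)$ admits an orientation and has $c$ circles. I would proceed by induction on $k$, using the inductive algorithm of Section~\ref{algorithm} simultaneously for $Y_a$ and $Y_b$. At each step of that algorithm one chooses $F_1$ (and sometimes $F_2$) inside the ambient vector space; each cup of the diagram contributes a $\mP^1(\mC)$-worth of choices, whereas each ray imposes a unique (discrete) choice. The plan is to analyse, in each of the local cases (I)--(V) of \eqref{Casesforinduction}, the rightmost structures of $a$ and $b$ together with the circle in $(a^*b)$ on which they lie.

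The decisive step, which I would verify by a case-by-case check, is the following. Let $C$ be the rightmost circle of $(a^*b)$ and let $i$ be its rightmost visited vertex. Then $i$ is the right endpoint of a cup in exactly one of $a,b$, say in $a$; projecting $Y_a\cap Y_b$ to the $\mP^1(\mC)$ of choices for the algorithm step associated with this outer cup of $a$ should yield a Zariski-locally trivial $\mP^1(\mC)$-bundle. Fixing a generic point in this $\mP^1(\mC)$ forces, via the orientability of $C$, a unique compatible choice at the next cap/cup along $C$, and so on, until $C$ closes up; the residual algorithm then produces $Y_{a'}\cap Y_{b'}$, where $(a',b')$ is obtained by removing $C$ from $(a^*b)$. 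By induction $Y_{a'}\cap Y_{b'}$ is an iterated $\mP^1(\mC)$-bundle of base type $(\mP^1(\mC))^{c-1}$, so composition with the first projection gives the desired structure on $Y_a\cap Y_b$.

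The main obstacle will be the bookkeeping needed to justify this ``propagation'' along $C$: since the cups of $a$ and $b$ contributing to $C$ may be nested to different depths, one must carefully order the algorithm's steps and track how the choice made at the outer cup determines the remaining ones; in particular one has to verify that the generic-point condition in the intermediate steps is preserved. As a consistency check, Proposition~\ref{hurray} gives $2^c$ torus-fixed points in $Y_a\cap Y_b$, matching both the homeomorphism type $S_a\cap S_b\cong (\mathbb{S}^2)^c$ from Lemma~\ref{Intersection_empty} and the number of torus-fixed points of an iterated $\mP^1(\mC)$-bundle with $c$ base levels.
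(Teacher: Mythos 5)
Your treatment of the empty case is correct: the one-dimensional torus $S$ stabilises each irreducible component of $\cF_N$ (being connected, it cannot permute the finitely many components), hence acts on the projective variety $Y_a\cap Y_b$, and Borel's fixed-point theorem together with Proposition~\ref{hurray}(2) forces $Y_a\cap Y_b=\emptyset$ whenever $a^*b$ has no orientation. The paper obtains the corollary directly from the inductive construction of Section~\ref{algorithm}, but your fixed-point argument is sound.

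For the non-empty case there is a factual error and a genuine gap. The claim that the rightmost vertex $i$ of the circle $C$ is the right endpoint of a cup in \emph{exactly one} of $a$ and $b$ is false: since $i$ is the rightmost vertex on $C$, both the $a$-arc and the $b$-arc at $i$ must run to the left, so $i$ is a cup right endpoint in \emph{both} diagrams --- and this is in fact what makes a $\mP^1$ of choices available to $Y_a\cap Y_b$ at that step, since only when both diagrams call for the generic Jordan type does the intersection retain a one-parameter family. More seriously, the reorganisation of the induction into one circle at a time does not respect the algorithm's step order (dominoes in decreasing label order, i.e.\ vertices from right to left), and the vertices of $C$ are in general interleaved with those of other circles. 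Nor does each step where both $a$ and $b$ have a cup right endpoint contribute its own $\mP^1$: a single circle can visit several such vertices (for $k=6$, take $a$ with cups $(1,6),(2,5),(3,4)$ and $b$ with cups $(1,2),(3,6),(4,5)$ --- one circle, but both $5$ and $6$ are cup right endpoints in both diagrams), so at all but the first such vertex the choice is forced by propagating the earlier $\mP^1$-choice along $C$ through the alternating $a$- and $b$-constraints. This propagation, tracked in the algorithm's own order, is exactly the bookkeeping you flag as the main obstacle; it is the substantive content of the proof, and until it is carried out the argument does not close.
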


\begin{remark}
\label{basisvectors}
{\rm Each $P\in\cF_N^S$ defines a vector of $H(\cF_N)$ as follows: take the weight $\la$ of length $k$ identified with $P$. This defines the unique cup diagram $c=c(\la)$ on $k$ vertices such that $\la c$ is oriented and of degree $0$, see \cite{LS}. Using the filtration \eqref{filtration} it defines a subvariety of $\cF_N$ whose class defines a vector in cohomology. By the proof of Theorem \ref{surjectivity} these vectors form indeed a basis.
}
\end{remark}

\begin{ex}
\label{cooltable}
Consider $\cF_N$, where $N$ has Jordan type $(4,4)$. The $S$-fixed points in the pairwise intersections of irreducible components labelled by $\mB^{odd}_4$ are displayed in the following table (the case $\mB^{even}_4$ is similar).
\small
\begin{eqnarray*}
\begin{array}{c||c|c|c}
&
\begin{tabular}{c}
\begin{tikzpicture}[thick]
\draw (0,0) .. controls +(0,-.5) and +(0,-.5) .. +(.5,0);
\fill (0.25,-.36) circle(2.5pt);
\draw (1,0) .. controls +(0,-.5) and +(0,-.5) .. +(.5,0);
\end{tikzpicture}
\end{tabular}
&
\begin{tabular}{c}
\begin{tikzpicture}[thick]
\begin{scope}[xshift=6cm]
\draw (0,0) .. controls +(0,-1) and +(0,-1) .. +(1.5,0);
\fill (0.75,-.74) circle(2.5pt);
\draw (0.5,0) .. controls +(0,-.5) and +(0,-.5) .. +(.5,0);
\end{scope}
\end{tikzpicture}
\end{tabular}
&
\begin{tabular}{c}
\begin{tikzpicture}[thick]
\begin{scope}[xshift=3cm]
\draw (0,0) .. controls +(0,-.5) and +(0,-.5) .. +(.5,0);
\draw (1,0) .. controls +(0,-.5) and +(0,-.5) .. +(.5,0);
\fill (1.25,-.36) circle(2.5pt);
\end{scope}
\end{tikzpicture}
\end{tabular}
\\
\hline
\hline
\begin{tabular}{c}
\begin{tikzpicture}[thick]
\draw (0,0) .. controls +(0,-.5) and +(0,-.5) .. +(.5,0);
\fill (0.25,-.36) circle(2.5pt);
\draw (1,0) .. controls +(0,-.5) and +(0,-.5) .. +(.5,0);
\end{tikzpicture}
\end{tabular}
&\begin{tabular}{cc}
$\up\up\down\up$&
$\down\down\down\up$\\
$\up\up\up\down$&
$\down\down\up\down$
\end{tabular}
&\begin{tabular}{c}
$\up\up\down\up$\\
$\down\down\up\down$
\end{tabular}
&
\begin{tabular}{c}
\text{empty intersection}
\end{tabular}
\\
\hline
\begin{tabular}{c}
\\
\begin{tikzpicture}[thick]
\begin{scope}[xshift=6cm]
\draw (0,0) .. controls +(0,-1) and +(0,-1) .. +(1.5,0);
\fill (0.75,-.74) circle(2.5pt);
\draw (0.5,0) .. controls +(0,-.5) and +(0,-.5) .. +(.5,0);
\end{scope}
\end{tikzpicture}
\end{tabular}
&
\begin{tabular}{c}
$\up\up\down\up$\\
$\down\down\up\down$
\end{tabular}
&
\begin{tabular}{cc}
$\up\down\up\up$&$\down\up\down\down$\\
$\up\down\up\up$&$\down\up\down\down$
\end{tabular}
&
\begin{tabular}{c}
$\up\down\up\up$\\
$\down\up\down\down$
\end{tabular}
\\
\hline
\begin{tabular}{c}
\begin{tikzpicture}[thick]
\begin{scope}[xshift=3cm]
\draw (0,0) .. controls +(0,-.5) and +(0,-.5) .. +(.5,0);
\draw (1,0) .. controls +(0,-.5) and +(0,-.5) .. +(.5,0);
\fill (1.25,-.36) circle(2.5pt);
\end{scope}
\end{tikzpicture}
\end{tabular}
&
\begin{tabular}{c}
\text{empty intersection}
\end{tabular}
&\begin{tabular}{c}
$\up\down\up\up$\\
$\down\up\down\down$
\end{tabular}&
\begin{tabular}{cc}
$\down\up\up\up$&$\up\down\up\up$\\
$\down\up\down\down$&$\up\down\down\down$
\end{tabular}
\end{array}
\end{eqnarray*}
\normalsize
\end{ex}

\begin{proof} [Proof of Proposition \ref{hurray}] For part \eqref{fixedpointsone} we assume first that $k$ is even. Moreover assume that the signed domino tableau attached to $Y$ has only one cluster. By construction of the component $Y$ (Case IIIa) in \eqref{Cases}) we choose the basis  $y_{-k}, \ldots, y_{-1}, y_1,\ldots, y_{k}$, where $y_{-j}=\alpha w_{-j}+(-1)^{j+1}\beta w_{k+1-j}$ and
$y_{j}=(-1)^{j+1}\alpha w_{j-k-1}+\beta w_{j}$ for $1\leq j\leq k$ and have $F_1=\langle y_{1}\rangle$. To describe $F_2$ we apply Case IIb) of \eqref{Cases} and choose a basis of $F_1^\perp/F_1$ of the form
$z_{k-1}, \ldots, z_{-1}, z_1,\ldots, z_{k-1}$,
where
$z_{-j}=(-1)^{j+1}y_{j}+y_{k-j}$ and $z_j=(-1)^{j+1}y_{j-k}-y_{j}$ for $1\leq j\leq k-1$ such that $F_2=F_1+\langle \delta z_{-(k-1)}+\gamma z_1\rangle$ for $\delta,\gamma\in\mC$. Now $\delta z_{-(k-1)}+\gamma z_1=(\gamma+\delta)y_{-(k-1)}+(\delta-\gamma)y_1$, thus choosing $\gamma=\delta$ gives us $y_{-(k-1)}$ and $\gamma=-\delta$ gives us
$y_1$. Choosing now $(\alpha,\beta)\in\{(0,1),(1,0)\}$ gives us finally the required $w$'s. These arguments can be repeated to give the claim for the standard tableau part. For the remaining vertical domino we have  $F_k=F_{k-1}+\langle b+b'\rangle$ if the sign is $-$
and $F_k=F_{k-1}+\langle b-b' \rangle$ if the sign is $+$, where
for $b=\gamma z_{-k/2}+\delta z_{k/2}$ and $b'=\delta z_{-k/2}+\gamma z_{k/2}$, hence $b+b'=(\gamma+\delta) z_{-k/2}+ (\gamma-\delta) z_{k/2}$. and $b-b'=(\gamma-\delta) z_{-k/2}+ (\gamma+\delta) z_{k/2}$. Hence choosing $\gamma=\delta$ or $\gamma=-\delta$ and $\alpha, \beta$ as above shows part \eqref{fixedpointsone} in case there is only one cluster. In case there is more than one cluster, we start as above with the first cluster and then repeat the same arguments for the space $W/F_k$ with the basis given by the remaining $w$'s (i.e. in case the first cluster had shape $(s,s)$ these are the $w_i$'s where $-k+s/2\leq i\leq -1$ and $s/2+1\leq i\leq k$ if the sign was $+$,  and  $-k+s/2+1\leq i\leq -1$ and $s/2\leq i\leq k$ if the sign was $-$) etc. This settles the case for even $k$.
For $k$ odd we argue as for $k+1$ in the even case, but remove afterwards the rightmost vertical domino which destroys half of the fixed points and the claim follows again.

Part \eqref{fixedpointstwo} follows then directly from part \eqref{fixedpointsone} and the definition of oriented circle diagrams.
\end{proof}

\begin{remark}
\label{subtle}
Note that Proposition~\ref{hurray} is in general wrong for Jordan types $(r,s)$ where $r\not=s$. Consider Section \ref{explicitexample} for a counterexample where the intersections of the algebraic Springer are not given by the cup diagram combinatorics.
\end{remark}

\subsection{Connections with the algebra $\mK_k$ and category $\cO$}

As a consequence we obtain Theorem~\ref{boringtheorem} from the introduction:

\begin{theorem}
\label{vectorspaceisoeasy}
Let $N\in\mg$ be nilpotent of Jordan type $(k,k)$. Then there exists an isomorphism of graded vector spaces
\begin{eqnarray}
\mathbb{K}_k=\bigoplus_{(Y_1,Y_2)\in\op{Irr}(\cF_N)\times\op{Irr}
(\cF_N)}H(Y_1\cap Y_2)\langle d(a(Y_1),a(Y_2))\rangle
\end{eqnarray}
where $a(Y_i)$, $i=1,2$ denotes the cup diagram associated with $Y_i$.
\end{theorem}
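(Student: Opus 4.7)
The plan is to match both sides of the claimed isomorphism summand by summand using the block decomposition $\mK_k=\bigoplus_{(a,b)\in\mB_k\times\mB_k}{}_a(\mK_k)_b$ on the left and the decomposition over pairs of irreducible components on the right.

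First, I would combine Corollary~\ref{signeddominoestocups} with Theorem~\ref{paramatrization} (or equivalently Theorem~\ref{maintheorem}) to fix a bijection $\op{Irr}(\cF_N)\oneone\mB_k$, $Y\mapsto a(Y)$, and write $Y_a$ for the component corresponding to $a\in\mB_k$. Under this bijection the right-hand side becomes $\bigoplus_{(a,b)\in\mB_k\times\mB_k}H(Y_a\cap Y_b)\langle d(a,b)\rangle$, so it suffices to produce graded isomorphisms ${}_a(\mK_k)_b\cong H(Y_a\cap Y_b)\langle d(a,b)\rangle$ for each pair $(a,b)$.

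The next step compares $H(Y_a\cap Y_b)$ with $H(S_a\cap S_b)$. By Proposition~\ref{hurray}(2) and the corollary following it, $Y_a\cap Y_b$ is either empty (precisely when $a^*b$ admits no orientation) or an iterated $\mP^1(\mC)$-bundle of base type $(\mP^1(\mC))^{c(a,b)}$, where $c(a,b)$ is the number of circles of $a^*b$. Iterated application of the Leray--Hirsch theorem then gives $H(Y_a\cap Y_b)\cong(\mC[x]/\langle x^2\rangle)^{\otimes c(a,b)}$ as graded vector spaces, concentrated in even degrees $0,2,\ldots,2c(a,b)$. On the topological side Lemma~\ref{Intersection_empty} furnishes exactly the same conclusion for $H(S_a\cap S_b)$, the intersection condition being the same; hence $H(Y_a\cap Y_b)\cong H(S_a\cap S_b)$ as graded vector spaces, both being zero in the non-orientable case.

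Combining this with the established identification ${}_a(\mK_k)_b\cong H(S_a\cap S_b)\langle d(a,b)\rangle$ from \eqref{isoHK} yields the desired ${}_a(\mK_k)_b\cong H(Y_a\cap Y_b)\langle d(a,b)\rangle$. Summing over all pairs $(a,b)$ and transporting back through $\op{Irr}(\cF_N)\oneone\mB_k$ proves the theorem. The only non-formal input is the Leray--Hirsch computation of the cohomology of an iterated $\mP^1$-bundle, which is the main step but entirely standard once the bundle structure from Theorem~\ref{maintheorem} and its $S$-equivariant refinement in Proposition~\ref{hurray} are in place. It is essential that we are in Jordan type $(k,k)$: Remark~\ref{subtle} shows that the parallelism between algebraic and topological intersections fails for more general two-row shapes, so no analogous argument produces an isomorphism in those cases.
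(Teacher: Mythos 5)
Your proof is correct, but it takes a noticeably different route from the paper's. The paper's own argument is a direct $S$-fixed-point count: it uses Theorem~\ref{equivcohothm} (equivariant formality) to see that each $H(Y_1\cap Y_2)$ has a basis labelled by the $S$-fixed points in $Y_1\cap Y_2$, then invokes Proposition~\ref{hurray}(2) to identify those fixed points with the orientations of the circle diagram $a(Y_1)^*\lambda a(Y_2)$, which are precisely the diagrammatic basis vectors of ${}_{a(Y_1)}(\mK_k)_{a(Y_2)}$; the grading is then read off from the filtration \eqref{filtration}. You instead sidestep the fixed-point language: you compute $H(Y_a\cap Y_b)$ as an iterated $\mP^1$-bundle via Leray--Hirsch (using the corollary after Proposition~\ref{hurray}), match this against the topological model $H(S_a\cap S_b)$ from Lemma~\ref{Intersection_empty}, and then invoke the already-established graded isomorphism ${}_a(\mK_k)_b\cong H(S_a\cap S_b)\langle d(a,b)\rangle$ of \eqref{isoHK}. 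Both routes rest on the same geometric input (the iterated $\mP^1$-bundle structure of the pairwise intersections and the orientability criterion for $a^*b$), but the paper's proof is more self-contained within Section~8, whereas yours is more modular, leaning on the topological Springer fibre infrastructure from Section~4; your route also makes the comparison between the algebraic and topological intersections explicit, which is conceptually clean and helpfully highlights (via Remark~\ref{subtle}) why the argument is genuinely specific to Jordan type $(k,k)$. One small point worth spelling out in your write-up: when you assert that $Y_a\cap Y_b=\emptyset$ precisely when $a^*b$ is non-orientable, the implication ``no $S$-fixed points $\Rightarrow$ empty'' uses the Borel fixed-point theorem on the closed $S$-stable subvariety $Y_a\cap Y_b\subset\cF_N$; this is true but is implicit in both your argument and the paper.
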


\begin{proof} By Theorem~\ref{equivcohothm} we have an isomorphism of vector spaces $H(\cF_N)\cong\mC^{|\cF_N^S|}$ with basis naturally labelled by the $S$-fixed points. Hence there is at least an isomorphism of vector spaces by Proposition~\ref{hurray} and the definition of $\mK_k$. The filtration \eqref{filtration} determines the degree of each basis vector and the theorem follows easily from the definition of the grading on $\mK_k$.
\end{proof}

\begin{remark} Since circle diagrams encode the $(D_k,A_{k-1})$-Kazhdan-Lusztig combinatorics, Theorem \ref{vectorspaceisoeasy} implies that the combinatorics of intersection of components in $\cF_N$ for the equal length two row case is controlled by Kazhdan-Lusztig polynomials.
\end{remark}

\subsection{Connections with $\mathcal{W}$-algebras and category $\cO$}

Let $e\in\mg$ be nilpotent of Jordan type $2^k$ for even $k$ and consider the associated $\mathcal{W}$-algebra $\mathcal{W}(e)$ of finite type as introduced originally by Premet, \cite{Premet}. Let $\mathcal{F}in_0(e)$ be the abelian category of finite dimensional $\mathcal{W}(e)$-modules with trivial central character. By a result of Brown, \cite{Brownrectangular}, the simple objects in $\mathcal{F}in_0(e)$  are naturally indexed by the set $\cP_k$ of s-tables of shape $2^k$.

An {\it s-table} is a Young diagram of shape $(2^k)$ with a filling with the numbers $\pm 1, \ldots, \pm k$ such that the columns are strictly decreasing from top to bottom and the rows are strictly increasing from left to right. Moreover the filling needs to be skew-symmetric in the sense that the $i$th entry in the first column equals the negative of the $(k-i+1)$th entry in the second column.

For an $s$-table $P$ we denote by $\cI=\cI(P)$ the set of numbers in the second column and encode the associated type ${\rm D}_k$ weight as in \cite{LS} as the combinatorial weight with the $i$-th symbol an $\up$ if $i\in \cI$ and a $\down$ if $-i\in \cI$, that is the weight $\la=\la(-\cI(P))$ in the notation of Lemma \ref{fixedpointstoweights}. To this weight $\la$ we can assign the associated cup diagram $c(\la)$ as in Remark \ref{basisvectors}. Hence we get an assignment $\cI\mapsto \op{Cup}(\cI)$ sending an $s$-table to a cup diagram. For instance, $\cI=\{1,2,3,4\}$, $\cI=\{1,-2,3,4\}$, respectively $\cI=\{-1,2,-3,4\}$ define the cup diagrams in \eqref{introcup}.
It is easy to check for a weight $\la$ that $c(\la)\in\mB_k$ if and only if strictly to the left of each $\down$ in $\la$ there is at least one more $\up$ than $\down$.

\begin{lemma}
\label{skewpyramids}
The assignment $\cP(\cI)\mapsto \op{Cup}(\cI)$ defines a bijection $\cP_k\oneone\mathbb{B}_k$.
\end{lemma}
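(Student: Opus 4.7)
The strategy is to identify both $\cP_k$ and $\mB_k$ with the same set of combinatorial weights.

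First, any s-table $P$ of shape $2^k$ is completely determined by its second column $\cI=\cI(P)=\{b_1>b_2>\cdots>b_k\}$: the skew-symmetry $a_i=-b_{k+1-i}$ reconstructs the first column and automatically ensures that exactly one of $i,-i$ lies in $\cI$ for every $1\leq i\leq k$. The strict decrease of the second column is tautological and that of the first column is automatic; hence $P\mapsto\cI(P)$ identifies $\cP_k$ with the collection of subsets $\cI\subset\{\pm 1,\ldots,\pm k\}$ containing exactly one of each pair $\{i,-i\}$ and subject only to the remaining row condition $a_i<b_i$, which after substitution reads $b_i+b_{k+1-i}>0$ for every $i$.

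Next, under the identification $\cI\leftrightarrow\la$ introduced just before the lemma, I would list the positions of $\up$'s in $\la$ as $u_1<\cdots<u_p$ and those of $\down$'s as $d_1<\cdots<d_q$, with $p+q=k$. Then $b_i=u_{p+1-i}$ for $1\leq i\leq p$ and $b_{p+l}=-d_l$ for $1\leq l\leq q$, so a direct substitution transforms the system $b_i+b_{k+1-i}>0$ into the single statement that, for every $i\leq\min(p,q)$, the $i$-th largest $\up$-position of $\la$ strictly exceeds the $i$-th largest $\down$-position; the remaining inequalities (for $i>\min(p,q)$) are automatic because both summands on the left then share a common sign. This assertion is straightforwardly equivalent to the combinatorial condition on $\la$ recalled immediately before the lemma, which by that characterization holds if and only if $c(\la)\in\mB_k$.

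Finally, by \cite{LS}, every diagram in $\mB_k$ admits a unique orientation of degree zero, so the assignment $\la\mapsto c(\la)$ is itself a bijection from the set of weights satisfying the condition above onto $\mB_k$. Composing with the bijection $\cP_k\oneone\{\la:c(\la)\in\mB_k\}$ from the previous paragraphs then yields the claim. The substantive point is the translation of the row inequalities $b_i+b_{k+1-i}>0$ into the combinatorial condition on the positions of $\up$'s and $\down$'s in $\la$; the only real obstacle is careful bookkeeping in the unequal cases $p<q$, $p=q$, $p>q$ and confirming that the nontrivial row inequalities are exactly those controlled by the pairings $(u_{p+1-i},d_{q+1-i})$.
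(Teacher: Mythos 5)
Your proposal takes a genuinely different route from the paper's. The paper verifies well-definedness by unwinding the definitions, observes injectivity is clear, and establishes surjectivity by an induction that removes an undotted cup $(i,i{+}1)$ from $D\in\mB_k$ and shifts the s-table filling accordingly. You instead translate both $\cP_k$ and $\mB_k$ into conditions on the weight $\la$ and compare directly, relying on the paper's characterization of when $c(\la)\in\mB_k$. That strategy is legitimate and arguably more conceptual, but the execution has a real gap.

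The gap is in the sentence ``the remaining inequalities (for $i>\min(p,q)$) are automatic because both summands on the left then share a common sign.'' Sharing a common sign does \emph{not} make $b_i+b_{k+1-i}>0$ automatic: when $p<q$, for $p<i\le q$ both $b_i$ and $b_{k+1-i}$ are negative, so the inequality is automatically \emph{violated}. As written, your characterization of $\cP_k$ is therefore strictly weaker than the true one. For example, with $k=4$ and $\cI=\{4,-1,-2,-3\}$ (so $\la=\down\down\down\up$, $p=1$, $q=3$), your condition for $i\le\min(p,q)=1$ reads $u_1=4>d_3=3$ and holds, yet $b_2+b_3=-1-2<0$, so this is \emph{not} an s-table, and indeed $c(\la)\notin\mB_4$. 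The fix is small but essential: observe first that the row inequalities force $p\ge q$ (otherwise $b_{p+1}+b_{k-p}<0$), and that the ballot-type condition characterizing $c(\la)\in\mB_k$ independently forces $p\ge q$ as well; only then are the two systems both given by $u_{p-q+l}>d_l$ for $l=1,\dots,q$. You flag the ``unequal $p,q$'' cases as bookkeeping, but the specific claim you make there is false and must be corrected, since otherwise the chain of equivalences breaks exactly where it is carrying the load. (A further caveat: the combinatorial condition quoted from the paper before the lemma reads ``strictly to the left of each $\down$''; as you can check on $\la=\down\up$ with $k=2$, the condition that actually matches $c(\la)\in\mB_k$ is the mirror-image ``strictly to the right'' version, which is also the one your positional inequalities $u_{p-q+l}>d_l$ encode. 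So your invocation of that characterization is materially correct, but not by the ``straightforward'' reading of the text as printed.)
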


\begin{proof} Let $P\in\cP_k$. If there is a $\down$ in $\op{Cup}(P)$ at position $i$ then $P$ has $i$ in column $1$ and fewer numbers bigger than $i$ in column $1$ than in $2$. Hence the map is well-defined and obviously injective. To see it is surjective we have to show that for any $D\in\mB_k$ the associated filling $P$ defines an $s$-table. This is clear if $D$ has only dotted cups, as  then $I=\{1,2,\ldots, k\}$.  Otherwise choose an undotted cup $c$ connecting vertices $i$ and $i+1$. Increase the numbers below $-(i+1)$ in the first column by $2$ and decrease the numbers above $i+1$ in the second column by $2$ and then remove $\pm i$ and $\pm(i+1)$ from the filling. The result is the filling associated to $D$ with $c$ removed. The claim follows then by induction observing that the base holds for even $k$.
\end{proof}

Let $\mp$ and $\mp'$ be the two parabolic subalgebras in $\mg$ with Weyl group generated by $s_i$, $i\not=0$ and $i\not=1$ respectively. Let $\cO_0^\mp(\mg)$ and $\cO_0^{\mp'}(\mg)$ be the corresponding principal blocks of parabolic category $\cO$. By \cite{ES}, the (isomorphism classes) of indecomposable projective-injective modules are naturally labelled by $\mB^{\rm even}_k$ respectively $\mB^{\rm odd}_k$ and have all the same Loewy-length namely $2k+1$. (Cup diagrams on $k$ vertices label all indecomposable projective modules. The number of cups encodes, as in \cite[Proposition 58]{FSS}, the GK-dimension of the corresponding simple quotient, and induces a filtration categorifying the decomposition \eqref{decomprefined} by taking successive subquotient categories.) Now Conjecture~\ref{bigconj} implies with \cite[Theorem 10.1]{Stproj}

\begin{corollary}
Let $\cO:=\cO^\mp(\mg)_0\oplus\cO^{\mp'}(\mg)_0$ and consider $P := \oplus P(\op{Cup}(\cI))$ where $I$ runs over all $s$-tables. Then ${\rm Hom}_\cO (P,?): \cO \longrightarrow {\rm mod}-{\rm End}_{\cO}(P)$ is fully faithful on projective objects. Assuming Conjecture \ref{bigconj} this defines via the identification from  Lemma \ref{skewpyramids} a functor ${\rm Hom}_\cO (P,?): \cO \longrightarrow \mathcal{F}in_0(e)$, again fully faithful on projective objects. In particular $\mathcal{F}in_0(e)$ is a quotient category of $\cO$.
\end{corollary}

\begin{remark}
The indecomposable projective objects in $\mathcal{F}in_0(e)$ are also in bijection with orbital varieties of $G$, since $k$ is even. Identifying the two equivalent summands of $\cO$ corresponds then to passing to orbital varieties for the adjoint group. We normalized our setup using cup diagrams with an even and odd number of dots such that they describe precisely the fibres of the Spaltenstein-Steinberg map, see \cite[Theorem 3]{McGovern}.
\end{remark}

\bibliographystyle{alpha}
\bibliography{References_Springer}

\end{document}